\newtheorem{theorem}{Theorem}[section]
\newtheorem*{theorem*}{Theorem}
\newtheorem*{definition*}{Definition}
\newtheorem*{prop*}{Proposition}
\newtheorem*{cor*}{Corollary}
\newtheorem*{lemma*}{Lemma}
\newtheorem*{claim*}{Claim}
\newtheorem{lemma}[theorem]{Lemma}
\newtheorem{corollary}[theorem]{Corollary}
\newtheorem{cor}[theorem]{Corollary}
\newtheorem{proposition}[theorem]{Proposition}
\newtheorem{prop}[theorem]{Proposition}
\theoremstyle{definition}
\newtheorem{definition}[theorem]{Definition}
\theoremstyle{remark}
\newtheorem{remark}[theorem]{Remark}\newtheorem{rmk}[theorem]{Remark}
\newcommand{\para}[1]{\medskip\noindent\textbf{#1.}}
\numberwithin{equation}{section}
\DeclareMathOperator{\CR}{cr}
\newcommand{\cO}{\mathcal O}
\newcommand{\inverse}{^{-1}}
\newcommand{\Z}{\mathbb{Z}}
\newcommand{\Hd}{\mathbb{H}^d}
\newcommand{\Hplane}{\mathbb{H}^2}
\newcommand{\op}{\operatorname}
\newcommand{\cal}{\mathcal}
\newcommand{\SO}{\op{SO}}
\newcommand{\bH}{\mathbb H}
\renewcommand{\frak}{\mathfrak}
\newcommand{\be}{\begin{equation}}
	\newcommand{\ee}{\end{equation}}
\renewcommand{\L}{\mathcal L}
\renewcommand{\epsilon}{\varepsilon}
\newcommand{\ep}{\epsilon}
\newcommand{\R}{\mathbb{R}}
\newcommand{\PSL}{\op{PSL}}
\newcommand{\cT}{\mathcal{T}}
\newcommand{\olim}{\omega\text{-}\!\mathrm{lim}_\Z}
\newcommand{\Qm}{\cal{Q}}
\newcommand{\MF}{\cal{MF}}
\newcommand{\ML}{\cal{ML}}
\newcommand{\Sp}{\mathsf{Sp}}
\newcommand{\Horobetaplus}[1]{\mathcal{H}_{+}\left(#1\right)}
\newcommand{\Horobetaminus}[1]{\mathcal{H}_{-}\left(#1\right)}
\newcommand{\Gammazero}{\Gamma_0}
\newcommand{\Sigmazero}{\Sigma_0}
\newcommand{\pZ}[1]{p_{\Z}\left(#1\right)}
\newcommand{\GdmodGamma}{G_d / \Gamma}
\newcommand{\GdmodGammazero}{G_d / \Gammazero}
\newcommand{\GmodGamma}{G / \Gamma}
\newcommand{\GmodGammazero}{G / \Gammazero}
\begin{document}
	
	\graphicspath{ {} }
	
	\title[Minimizing laminations in regular covers]{Minimizing laminations in regular covers, horospherical orbit closures, and circle-valued Lipschitz maps}
	\author{James Farre, Or Landesberg and Yair Minsky}
	
	\begin{abstract}
		We expose a connection between distance minimizing laminations and horospherical orbit closures in $ \Z $-covers of compact hyperbolic manifolds. For surfaces, we provide novel constructions of $ \Z $-covers with prescribed geometric and dynamical properties, in which an explicit description of all horocycle orbit closures is given. We further show that even the slightest of perturbations to the hyperbolic metric on a $ \Z $-cover can lead to drastic topological changes to horocycle orbit closures.
	\end{abstract}
	\maketitle
	
	\section{Introduction}
\label{sec:intro}

In this paper we will study the dynamical behavior of horocyclic and geodesic flows in
geometrically infinite hyperbolic manifolds (mostly in dimension 2).  These two flows,
while geometrically related, exhibit dramatically different dynamical behaviors. Indeed,
over a finite area surface $ \Sigma $, the geodesic flow is ``chaotic'' and supports a
plethora of invariant measures and orbit closures while the horocycle flow is extremely
rigid, with all non-periodic orbits  dense \cite{Hedlund} and equidistributed in $
T^1\Sigma $ \cite{Furstenberg,Dani-Smillie}.  Something similar is true for the
geometrically finite case (which in dimension 2 just means finitely-generated fundamental
group), see e.g.~\cite{Eberlein,Dal'bo,Burger,Roblin,Schapira}.

We consider arguably the simplest and most symmetric geometrically infinite setting, that
of $ \Z $-covers of compact surfaces. Let
$ G = \PSL_2(\R) $ be the group of orientation preserving isometries of real hyperbolic $
2 $-space $ \mathbb{H}^2 $, let
$ \Gamma_0 < G $ be a torsion-free uniform
lattice and let $ \Gamma \lhd \Gamma_0 $ with $ \Gamma_0/\Gamma \cong \Z $, that is, $
\mathbb{H}^2/\Gamma $ is a $ \Z $-cover of the compact surface $ \mathbb{H}^2/\Gamma_0 $.

Let $A = \{a_t:t\in\R\}$ denote the diagonal subgroup generating the geodesic flow on
$\GmodGamma$, and
let $N$ denote the lower unipotent subgroup generating the (stable with respect to $A$) horocycle flow. 
We call a horocycle orbit closure $\overline{Nx}$  \emph{non-maximal} if it is not all of $ \GmodGamma $.
	\medskip
	
	\noindent In this setting we: 
	\begin{enumerate}[leftmargin=*]%[label=(\Roman*),leftmargin=*]
		\item Study the structure of non-maximal horocycle orbit closures in $ \Z $-covers and expose their delicate dependence on a geometric optimization problem for associated circle-valued maps (\Cref{Thm intro: Q_omega is L^CR,Thm intro: structure of Nx bar,Thm intro:Nx bar in horoball}). 
		\item Describe novel constructions of $ \Z $-covers with prescribed geometric and dynamical properties (\Cref{Thm intro: tighten-lamination}); and, in doing so,
		\item Provide the first examples of $\Z$-covers with a full horocycle
                  orbit closure classification, including a description of orbit closures
                  that are neither minimal nor maximal (\Cref{weak-mixing-case}).  
            \item Demonstrate that the topological type of horocycle orbit closures varies discontinuously with the hyperbolic metric of a $\Z$-cover (\Cref{Theorem:non-rigidity}). 
	\end{enumerate}
	
	While the strongest results in this paper hold solely for $ \Z $-covers of compact hyperbolic surfaces, many of the techniques we develop are applicable in greater generality, both to higher-dimensional hyperbolic manifolds as well as maximal horospherical group actions on higher-rank homogeneous spaces.

\begin{remark}\label{rem:N-inv measures do not help}
	In contrast to the finite area setting, measure rigidity and equidistribution results for the horocycle flow over $ \Z $-covers have limited utility. Indeed, non-maximal horocycle orbit closures do not support any locally finite $ N $-invariant measures, since all such measures are $ AN $-quasi-invariant and hence have full support, see \cite{Sarig} as well as \cite{LL}.
\end{remark}
	
	At the heart of our analysis lies a connection to a seemingly unrelated geometric optimization problem of independent interest --- tight Lipschitz maps to the circle $ \R/\Z $.
	
	\subsection*{Tight circle-valued maps} 

 Given a compact hyperbolic $d$-manifold $\Sigmazero$ and a homotopically nontrivial  map $ f: \Sigma_0 \to \R/\Z $, we are interested in geometric properties of  maps realizing the minimum Lipschitz constant in the homotopy class of $f$.

    The homotopy class of $f$ is the same data as a cohomology class $\varphi \in H^1(\Sigma_0, \Z)$ recording the degree of the restriction to loops in $\Sigma_0$.
    Evidently, the minimum Lipschitz constant for circle valued maps representing $\varphi$ is bounded below by
	\[ \kappa = \sup_{\gamma} \frac{|\varphi(\gamma)|}{\ell(\gamma)}, \]
	where the $ \sup $ runs over all geodesic loops $\gamma$ and $ \ell(\gamma) $ is the length in $\Sigma_0$. 
    We call a map in $ [f] $  \emph{tight} if its Lipschitz constant is equal to $ \kappa $. 

    Given a Lipschitz map $f: \Sigmazero\to \R/\Z$, there is an upper semi-continuous function $ \ell_f: \Sigma_0\to \R_{\ge 0}$ measuring the local Lipschitz constant. The set $\ell_f^{-1}(\max \ell_f)$ is called the \emph{maximal stretch locus} of $f$. 

Daskalopoulos-Uhlenbeck \cite{DU:circle} and Gu\`eritaud-Kassel \cite{GK:stretch} studied tight maps, proving existence in each nontrivial homotopy class. They extended, in different ways, some of the ideas in Thurston's work on stretch maps between hyperbolic surfaces \cite{Thurston:stretch}: 
	
	\begin{theorem*}[\cite{DU:circle,GK:stretch}]
    
		There exists a tight map in every non-trivial homotopy class $ [f:\Sigma_0 \to \R/\Z] $, whose maximal stretch locus is a geodesic lamination.

        Moreover,\footnote{This was proved in \cite{GK:stretch}} %Gu\`eritaud and Kassel show 
        the intersection of the maximal stretch loci over all tight maps in the same homotopy class is a non-empty geodesic lamination $\mu_0\subset \Sigmazero$.
	\end{theorem*}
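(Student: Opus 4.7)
The plan is to prove existence by direct minimization, extract the lamination structure of a tight map's maximal stretch locus through local Lipschitz analysis, and derive nonemptiness of $\mu_0$ from convexity of the tight-map set. For \textbf{existence}, I would take a minimizing sequence $f_n \in [f]$ with $\mathrm{Lip}(f_n) \searrow \kappa_f := \inf_{g \in [f]} \mathrm{Lip}(g)$, normalized by additive constants so that $f_n(x_0) = 0$ at a basepoint. By Arzel\`a--Ascoli on the compact manifold $\Sigmazero$, a subsequence converges uniformly to some $f_\infty$ with $\mathrm{Lip}(f_\infty) \le \kappa_f$; uniform closeness in $\R/\Z$ forces $f_\infty$ to be homotopic to $f_n$ for large $n$, hence $f_\infty \in [f]$. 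Testing $f_\infty$ on geodesic loops $\gamma$ nearly realizing $\kappa$ yields $\mathrm{Lip}(f_\infty) \ge \kappa$, so $\kappa_f = \kappa$ and $f_\infty$ is tight.

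For the \textbf{lamination structure} of $E_f := \ell_f^{-1}(\kappa)$, note $E_f$ is closed by upper-semicontinuity of $\ell_f$. At $x \in E_f$, lifting to $\tilde f \colon \Hd \to \R$ and unpacking the definition of $\ell_f$, one extracts a sequence of chord pairs $y_n, z_n \to x$ with stretch ratios converging to $\kappa$. The chords $\overline{y_n z_n}$ subconverge to a geodesic direction $v_x$, and a blow-up/convexity argument (in the spirit of \cite{DU:circle,GK:stretch}) shows the complete geodesic $\gamma_x$ through $x$ in direction $v_x$ lies in $E_f$ and is strictly $\kappa$-stretched by $\tilde f$. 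Two such leaves cannot cross transversally because the target $\R/\Z$ is one-dimensional: at a.e.\ nearby point of differentiability, $d\tilde f$ would need operator norm $\kappa$ in two linearly independent directions, impossible for a rank-$1$ linear map. Hence $E_f$ is a closed union of pairwise non-crossing complete geodesics, i.e.\ a geodesic lamination.

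For \textbf{nonemptiness of $\mu_0$}, observe that tight maps form a convex set: lifting two tight maps $f_1, f_2$ to $\Gammazero$-equivariant $\tilde f_1, \tilde f_2 \colon \Hd \to \R$ with common cocycle $\varphi$, every convex combination $t \tilde f_1 + (1 - t) \tilde f_2$ stays equivariant with Lipschitz constant $\le \kappa$, so descends to a tight map in $[f]$. For the combination to stretch at rate $\kappa$ along a direction $v$ at $x$, both $f_1$ and $f_2$ must do so with compatible orientations (else a convex combination of $\pm\kappa$-values with opposite signs has absolute value $< \kappa$); hence $E_{tf_1 + (1-t)f_2} \subseteq E_{f_1} \cap E_{f_2}$. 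Averaging over finitely many tight maps produces a tight map whose stretch locus lies in the corresponding finite intersection, which is therefore nonempty. The finite intersection property for closed subsets of the compact $\Sigmazero$ yields $\mu_0 \neq \emptyset$, and a countable averaging (or weak-limit) argument then produces a single tight map realizing $\mu_0$ as its stretch locus, identifying $\mu_0$ as a geodesic lamination.

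The \textbf{main obstacle} is the lamination structure of each $E_f$: the blow-up argument producing complete geodesic leaves through every maximal-stretch point and excluding transverse intersections is the technical heart, requiring careful control of Lipschitz rescalings in the spirit of Thurston's stretch-map analysis \cite{Thurston:stretch}, as adapted to circle-valued targets in \cite{DU:circle,GK:stretch}.
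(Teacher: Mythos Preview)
The paper does not prove this theorem; it is quoted from \cite{DU:circle,GK:stretch} as background. What the paper \emph{does} prove is the related Proposition~\ref{proposition:definition of lamination L}: given a tight $1$-Lipschitz $\tau_0$, the projection of the $A$-invariant part of $L^{-1}(1)$ is a nonempty geodesic lamination. The paper's later Corollary~\ref{cor:chain_recurrent_canonical} shows the chain-recurrent part of this lamination is independent of the choice of tight map, which is a variant of the ``Moreover'' clause.

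Your outline is a correct sketch of the analytic route in the cited references: Arzel\`a--Ascoli for existence, local blow-up to extract leaves, and convexity of the tight-map set plus the finite intersection property for nonemptiness of $\mu_0$. The paper's own arguments take a genuinely different, dynamical route. For nonemptiness, instead of blow-up at a point, Proposition~\ref{proposition:definition of lamination L} takes a sequence of closed geodesics $\gamma_n$ with $|\varphi(\gamma_n)|/\ell(\gamma_n)\to 1$, passes to a weak-$*$ limit of the associated $MA$-invariant probability measures on the frame bundle, and shows the support of the limiting measure lies in $L^{-1}(1)$; $A$-invariance of the support then gives a full geodesic for free. For the lamination (no-crossing) step both approaches ultimately use the same broken-geodesic shortcut contradiction. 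For the ``Moreover'' clause, the paper avoids convex combinations entirely: it identifies $\L_0^{\CR}$ with $p_\Z(\Qm_\omega)$ via Proposition~\ref{prop:equality of omega limits}, and $\Qm_\omega$ manifestly depends only on the cover, not on $\tau_0$.

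What each buys: your convexity argument is cleaner and yields the full $\mu_0$ (not just its chain-recurrent part), matching \cite{GK:stretch}. The paper's dynamical approach ties the lamination directly to the quasi-minimizing set, which is what the rest of the paper needs, and works uniformly in all dimensions without the delicate local Lipschitz blow-up you flag as the main obstacle.
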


Our approach leads to an explicit description of the chain recurrent part of $\mu_0$ in dimension $d=2$, which is given in \Cref{Thm intro: Q_omega is L^CR} below.

	\subsection*{Quasi-minimizing points} The bridge between horocycle orbit closures and tight maps is given by the notion of a quasi-minimizing ray and a theorem of Eberlein, Dal'bo and Maucourant-Schapira. 

    Let $G_d = \SO^+(d,1)$ be the group of orientation preserving isometries of real hyperbolic $d$-space, $\Hd$. We may identify $G_d$ with $F\Hd$, the frame bundle of $\Hd$. As before, we denote by $A=\{a_t: t \in \R\}$ the one-parameter diagonal subgroup corresponding to the geodesic frame flow on $F\Hd$, and let $N$ be the stable horospherical subgroup (see \Cref{Subsection:Preliminaries - notations} for more details).
	
    Let $ \Sigma=\Hd/\Gamma $ be a $ \Z $-cover of a compact hyperbolic $d$-manifold $ \Sigma_0 $:
	
	\begin{definition}\label{def:quasi-minimizing}
		A point $ x \in \GdmodGamma \cong F\Sigma $ is called \emph{quasi-minimizing} if there exists a constant $ c \geq 0 $ for which
	\[ d_{\GdmodGamma}(a_t x,x) \geq t-c \quad \text{for all }t \geq 0, \]
	where $ a_tx $ is at distance $ t $ along the geodesic emanating from the frame $ x $.
	\end{definition}

	The above condition implies that the geodesic ray $ (a_tx)_{t\geq 0} $ escapes to infinity in $ \GdmodGamma $ at the fastest rate possible, up to an additive constant. Denote by $ \mathcal{Q} \subset \GdmodGamma $ the set of all quasi-minimizing points.
	
	The following theorem is of fundamental importance:
	\begin{theorem*}[\cite{Eberlein,Dal'bo,MS}]
		$ \overline{Nx}$ is non-maximal if and only if $ x $ is quasi-minimizing.
	\end{theorem*}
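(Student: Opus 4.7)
My plan is to deduce the equivalence from the classical dichotomy for horocycle orbits together with a direct computation linking the horospherical limit set to the quasi-minimizing condition. Let $\tilde x \in G_d$ be a lift of $x$, $\tilde y = \pi(\tilde x) \in \Hd$ its base point, and $\xi \in \partial\Hd$ the forward endpoint of the $a_t$-orbit of $\tilde x$. Call $\xi$ a \emph{horospherical limit point} of $\Gamma$ if $\inf_{\gamma \in \Gamma} \beta_\xi(\gamma \tilde y, \tilde y) = -\infty$, and write $\Lambda_\mathrm{h}(\Gamma)$ for the set of such points. I will establish the chain
\[ \overline{Nx} \text{ non-maximal} \iff \xi \notin \Lambda_\mathrm{h}(\Gamma) \iff x \text{ quasi-minimizing}. \]

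For the first equivalence I would use that the stable horocycle orbit of $\tilde x$ in $G_d$ projects to the horosphere $H_\xi$ at $\xi$ through $\tilde y$, so $\overline{Nx} = G_d/\Gamma$ is equivalent to density of the $\Gamma$-translates of $H_\xi$ in the frame bundle. If $\xi \notin \Lambda_\mathrm{h}(\Gamma)$, a uniform lower bound on the Busemann values $\beta_\xi(\gamma \tilde y, \tilde y)$ prevents any $\Gamma$-translate of $H_\xi$ from reaching arbitrarily deep into horoballs at $\xi$, leaving an open set in $G_d/\Gamma$ unreachable. The converse is a Hedlund--Eberlein density argument as developed in the cited works of Eberlein, Dal'bo, and Maucourant--Schapira: mixing of $a_t$ on the compact base $G_d/\Gamma_0$, together with the fact that $N$ is an unstable direction for the geodesic frame flow, converts depth in horoballs at $\xi$ into genuine density of the $N$-orbit in $G_d/\Gamma$.

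For the second equivalence I would argue via Busemann functions. Since the frame fiber is compact, $d_{G_d/\Gamma}(a_t x, x)$ agrees with $\inf_{\gamma \in \Gamma} d_\Hd(\gamma \tilde y, a_t \tilde y)$ up to an $O(1)$ additive error. Suppose $x$ is not quasi-minimizing: there exist $t_n \to \infty$ and $\gamma_n \in \Gamma$ with $d_\Hd(\gamma_n \tilde y, a_{t_n}\tilde y) < t_n - c_n$ and $c_n \to \infty$. Since $\beta_\xi(a_{t_n}\tilde y, \tilde y) = -t_n$ and Busemann functions are $1$-Lipschitz,
\[ \beta_\xi(\gamma_n \tilde y, \tilde y) \le -t_n + (t_n - c_n) = -c_n \longrightarrow -\infty, \]
so $\xi \in \Lambda_\mathrm{h}(\Gamma)$. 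Conversely, given $\gamma_n \in \Gamma$ with $s_n := -\beta_\xi(\gamma_n \tilde y, \tilde y) \to \infty$, I would let $a_{t_n}\tilde y$ be the nearest-point projection of $\gamma_n \tilde y$ onto the geodesic ray from $\tilde y$ to $\xi$. A short hyperbolic-geometry estimate, most transparent in the upper half-space model (where $|z|/v$ controls the minimal distance from $z$ to the vertical geodesic), gives $d_\Hd(\gamma_n \tilde y, a_{t_n} \tilde y) \le t_n - s_n + O(1)$, which defeats any prescribed quasi-minimizing constant $c$ for $n$ large.

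The principal obstacle is the depth-to-density step in the first equivalence; it is the genuine dynamical input and requires mixing of the geodesic flow on the compact base. In our $\Z$-cover setting this is available off the shelf, which is precisely what makes the theorem applicable in this context. The second equivalence is essentially a bookkeeping exercise with Busemann cocycles and uses nothing about the cover structure beyond working in a complete hyperbolic manifold.
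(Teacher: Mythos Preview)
Your two-step scheme matches the paper's structure: the paper also splits the statement into (i) the Eberlein--Dal'bo--Maucourant--Schapira dichotomy ``$\overline{Nx}=\mathcal{E}_\Gamma$ iff $g^+$ is horospherical'' and (ii) the equivalence ``$x$ quasi-minimizing iff $g^+$ non-horospherical'' (their Lemma~\ref{lemma:nonhoro-quasimini}). Your treatment of (ii) via Busemann cocycles is correct and is just a reformulation of the paper's geometric argument with balls $B(p_K(a_tg),t-C)$ converging to a horoball.

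The issue is your account of the hard direction of (i). First, a minor but genuine slip: in this paper's conventions $N$ is the \emph{stable} horospherical group (contracted by $a_t$ as $t\to+\infty$), not the unstable one. More substantively, your one-line summary ``mixing of $a_t$ on the compact base $G_d/\Gamma_0$ \ldots converts depth in horoballs into density'' is not how the argument runs, and it does not apply in the generality in which the theorem is stated (arbitrary Zariski-dense discrete $\Gamma'$, with no compact base to speak of). Horospherical limit points need not be conical, so there is no recurrence of $a_tx$ to exploit directly, and mixing on a base plays no role. The paper's sketch instead proceeds geometrically: if $g^+$ is horospherical, $\Gamma'$-translates of the horosphere through $g$ converge uniformly to $\partial\Hd$, hence eventually cross every loxodromic axis nearly orthogonally; compactness of closed geodesics then produces points of $\overline{Ng\Gamma'}$ on every closed geodesic. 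Density on each closed geodesic (and then in $\mathcal{E}_{\Gamma'}$) is obtained from \emph{non-arithmeticity of the length spectrum} of a Zariski-dense group, by playing two closed geodesics of incommensurable lengths against each other. If you want to keep this part as a citation, that is fine, but the mechanism you describe should be replaced by the horosphere-meets-axes plus length-spectrum argument.
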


	\begin{remark}   
	This result in fact holds for any Zariski-dense discrete subgroup $ \Gamma < G_d $,
        with a suitable interpretation of non-maximal, see \Cref{subsec:horospherical orbit closures}. See also \cite{LO} for a generalization to higher-rank homogeneous spaces.
	\end{remark}
 
	An immediate corollary is that all non-maximal horospherical orbit closures are contained in $ \mathcal{Q} $. Hence analyzing the set $ \mathcal{Q} $ is a good first step to understanding non-maximal orbit closures. 
	
    The quotient map $\Gamma_0 \to \Gamma_0/\Gamma \cong \Z$ determines a homotopy class of circle maps, and we let $ \mu_0 $ be the canonical maximal stretch lamination defined above for tight maps in this homotopy class. We show the following:	
	\begin{theorem}\label{Thm intro: Q_omega is L^CR}
		In dimension $d=2$, the geodesic $ \omega $-limit set of $ \mathcal{Q} $ as projected onto $ \Sigma_0 $ is the chain recurrent\footnote{See \Cref{def:chain_recurrent}} part of  $ \mu_0 $.
	\end{theorem}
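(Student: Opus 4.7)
The plan is to establish the two inclusions separately, exploiting the Gu\'eritaud-Kassel description of $\mu_0$ together with a quantitative bridge between the quasi-minimizing inequality and the asymptotic stretch of tight lifts $\tilde F : \Sigma \to \R$ along geodesic orbits. For the inclusion $(\supseteq)$, any $y$ in the chain-recurrent part of $\mu_0$ lies on a minimal sublamination $\mu$, and its leaf is dense in $\mu$. Orient a lift of this leaf to $\Sigma$ so that every tight $\tilde F$ increases along it; by \cite{GK:stretch}, $\tilde F$ is stretched at rate exactly $\kappa$ there. If $\tilde v \in T^1\Sigma$ is the unit tangent at a lift $\tilde y$ of $y$, then $\tilde F(a_t \tilde v) - \tilde F(\tilde v) = \kappa t$ and the $\kappa$-Lipschitz bound forces $d_\Sigma(a_t \tilde v, \tilde v) = t$, so $\tilde v \in \mathcal Q$ with $c=0$. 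Density of the leaf in $\mu$ then produces $y$ in the $\omega$-limit of the forward orbit of $\tilde v$ projected to $\Sigma_0$.

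For $(\subseteq)$, given $x \in \mathcal Q$ and $y$ in the $\omega$-limit of $x$'s projected orbit, fix $t_n \to +\infty$ with the projections of $a_{t_n}x$ converging to $y$ in $\Sigma_0$. Since $x \in \mathcal Q$, the sequence $a_{t_n}x$ exits every compact set of $T^1\Sigma$; choose $k_n \in \Z$ with $\tau^{-k_n}(a_{t_n}x)$ remaining in a fundamental domain for the deck generator $\tau$, and pass to a subsequential limit $\tilde y \in T^1\Sigma$ lifting $y$. The key rate lemma says that for any $z \in \mathcal Q$ and any tight lift $\tilde F$, there is a sign $\epsilon(z) \in \{\pm 1\}$ such that
\[
  \tilde F(a_t z) - \tilde F(z) = \epsilon(z)\,\kappa t + o(t), \qquad t \to +\infty.
\]
To prove it, compare $d_\Sigma(a_t z, z)$ with the distance $d_\Sigma(\tau^{n(t)}z, z)$ to the nearest deck translate: the Lipschitz bound forces $|n(t)| \le \kappa t + O(1)$, while the asymptotic $d_\Sigma(z, \tau^k z)/|k| \to 1/\kappa$---a direct consequence of $\kappa = \sup_\gamma \varphi(\gamma)/\ell(\gamma)$ combined with the hyperbolic inequality $d_{\Hplane}(\hat z, g\hat z) \ge \ell(g)$---together with the quasi-minimizing bound $d_\Sigma(a_t z, z) \ge t - c$ yields $|n(t)| \ge \kappa t - o(t)$. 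Applying the lemma to $x$ and taking the subsequential limit also upgrades $\tilde y$ to be \emph{bi}-quasi-minimizing, with asymptotic stretch rate $\kappa$ in both time directions.

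A closing argument then places $y$ in $\mu_0$. The function $h(t) := \tilde F(a_t\tilde y) - \tilde F(\tilde y) - \kappa t$ is non-increasing (by Lipschitz), vanishes at $0$, and satisfies $h(t) = o(|t|)$ at $\pm\infty$; hence the stretch deficit $\kappa - \partial_t \tilde F(a_t\tilde y)$ integrates to $o(T)$ on $[-T,T]$. Using closedness and flow-invariance of the maximal stretch locus $L_f$ of each tight $f$, together with the fact that $\ell_f$ is uniformly bounded below $\kappa$ on a neighborhood of any footpoint outside $L_f$, this rules out the orbit of $\tilde y$ spending a positive-density set of times outside $L_f$; flow-invariance then places the full orbit inside $L_f$, and intersecting over all tight $f$ yields $y \in \mu_0$. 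Finally, $\omega$-limit sets of flows are internally chain-recurrent, so containment of the projected $\omega$-limit set in $\mu_0$ forces it to lie in the chain-recurrent part.

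The principal obstacle is this final leaf-inclusion step: upgrading the \emph{averaged} asymptotic stretch-rate information supplied by the rate lemma to \emph{pointwise} membership of the $a$-orbit of $\tilde y$ in the closed lamination $L_f$. This requires carefully leveraging the gap structure of $L_f$, the monotonicity of $h$, and the flow-invariance of both $L_f$ and the $\omega$-limit set, to prevent the orbit from stagnating in a complementary region of $L_f$.
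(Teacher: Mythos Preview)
Your $(\supseteq)$ argument contains a genuine error: it is \emph{not} true that every point in the chain-recurrent part of $\mu_0$ lies on a minimal sublamination. In dimension $2$ a chain-recurrent lamination can contain isolated leaves that spiral onto minimal components at both ends; such a leaf is chain recurrent (one builds $\varepsilon$-trajectories by following the leaf into a minimal component, wandering there, and jumping to another isolated leaf heading back), yet it is not dense in any sublamination and is not itself a leaf of any minimal component. For such $y$ your argument produces nothing: there is no leaf through $y$ that recurs to $y$, so you cannot simply flow along it to exhibit $y$ as an $\omega$-limit point. The paper handles this case separately (Proposition~\ref{prop:equality of omega limits}) by concatenating $\varepsilon_n$-trajectories of the lamination with $\sum \varepsilon_n < \infty$, then straightening the resulting infinite broken path to a genuine geodesic ray; the summability guarantees the ray is quasi-minimizing and the construction forces it to accumulate on $y$.

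For $(\subseteq)$ your route is substantially more circuitous than necessary, and the gap you flag is real. The paper avoids the averaging-to-pointwise upgrade entirely by a direct pigeonhole: fix a tight $1$-Lipschitz lift $\tau$, and observe (Lemma~\ref{lemma:u_is_a_1C-qi}) that $\tau$ is a $(1,C)$-quasi-isometry $\Sigma\to\R$. If an $\omega$-limit point $y$ had local stretch strictly below $1$, then each of the infinitely many close passages of the quasi-minimizing ray near $y$ would cost a fixed $\delta\varepsilon>0$ in $\tau$-value; summing gives $\tau(a_{t_k}x)-\tau(x)\le t_k - k\delta\varepsilon$, which through the quasi-isometry contradicts $d(a_{t_k}x,x)\ge t_k - c$. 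This places $y$ directly in the maximal-stretch locus without any density or flow-invariance argument. Chain recurrence of the $\omega$-limit set then follows by the standard observation that segments of the quasi-minimizing ray between nearby returns to $y$ furnish $\varepsilon$-trajectories. Your rate lemma is correct but weaker than what is available: in fact $\tau(a_t x)-t$ is monotone and converges (this is the Busemann function $\beta_+$ of \S\ref{Sec:Uniform Busemann function}), giving an $O(1)$ error rather than $o(t)$, and this sharper control is what makes the direct argument work.
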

	
	In other words, for surfaces we have
	\[ \mu_0 = \{ q \in \Sigma_0 : \exists x \in \mathcal{Q} \text{ and }t_j \to \infty \text{ with } P(a_{t_j}x) \to q \}, \]
	where $ P: \GmodGamma \to \Sigma_0 $ is the natural projection from $ T^1\Sigma $. More generally, in higher dimensions we show that the geodesic $ \omega $-limit set of $ \mathcal{Q} $ is contained in the canonical maximal stretch lamination, see \Cref{Theorem:omega limit of Qm is a lamination in max stretch locus of u}.
	
	As a corollary we show: 
	\begin{cor}\label{Thm intro: Hausdorff dim=0}
        In dimension $d=2$, 
		the set of endpoints in $ S^1 $ of quasi-minimizing rays has Hausdorff dimension zero. Consequently, $\dim_{\mathrm{H}} \Qm = 2$.
	\end{cor}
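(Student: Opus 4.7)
The plan is to combine \Cref{Thm intro: Q_omega is L^CR} with a Birman--Series-type dimension bound to control the endpoint set at infinity, and then to slice $\mathcal Q$ into $2$-dimensional $AN$-orbits indexed by this $0$-dimensional set.

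First I would fix $x \in \mathcal Q$, choose a lift $\tilde x \in G$, and let $\xi \in \partial\mathbb{H}^2 = S^1$ denote the forward endpoint of the geodesic determined by $\tilde x$. By \Cref{Thm intro: Q_omega is L^CR}, the projection to $\Sigma_0$ of the forward ray $(a_t\tilde x)_{t\geq 0}$ has $\omega$-limit contained in the chain recurrent part of $\mu_0$, hence in $\mu_0$. The key step---and what I expect to be the main technical obstacle---is to upgrade this footpoint convergence in $\Sigma_0$ to asymptotic tracking of a single leaf of the lifted lamination $\tilde\mu_0 \subset \mathbb{H}^2$: for each large $t$, pick a leaf $\tilde\ell_t$ of $\tilde\mu_0$ close to the lift $\tilde\gamma(t)$ of the footpoint, extract a subsequence whose endpoint pairs converge in $S^1 \times S^1$, and use closedness of $\tilde\mu_0$ as a collection of geodesics in $\mathbb{H}^2$ to identify the limit as an honest leaf having $\xi$ as one of its endpoints. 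Ruling out the degenerate case where both endpoints of $\tilde\ell_t$ collapse to $\xi$ (which would correspond to a parabolic fixed point) is where I would use the cocompactness of $\Gamma_0$.

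Having identified $\xi$ with an endpoint of a leaf of $\tilde\mu_0$, I would then invoke the theorem of Birman and Series, which bounds the Hausdorff dimension of the set of complete simple geodesics on a closed hyperbolic surface. Lifting to $\mathbb{H}^2$ and projecting onto either endpoint factor, this implies that the endpoints in $S^1$ of simple complete geodesics form a set of Hausdorff dimension zero. Since leaves of any geodesic lamination are simple, the set $E \subset S^1$ of forward endpoints of quasi-minimizing rays lies inside this Birman--Series endpoint set, so $\dim_{\mathrm H} E = 0$.

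For the consequence $\dim_{\mathrm H}\mathcal Q = 2$, I would exploit the $AN$-invariance of $\mathcal Q$: $A$-invariance is immediate from \Cref{def:quasi-minimizing} together with the triangle inequality (and $d(a_s x, x) \leq |s|$), while $N$-invariance follows from the uniform boundedness of stable-horocycle distances under forward geodesic flow, since $a_t n a_{-t} \to e$ exponentially for $n \in N$. The forward-endpoint map $G \to S^1$ has $2$-dimensional $AN$-orbits as fibers, so locally $G/\Gamma$ fibers smoothly over $S^1$ with $2$-dimensional fibers. Covering $\mathcal Q$ by the preimages of points in $E$ yields $\dim_{\mathrm H} \mathcal Q \leq 2 + \dim_{\mathrm H} E = 2$, and the matching lower bound holds because $\mathcal Q$ is non-empty (any geodesic ray escaping to an end of the $\Z$-cover is quasi-minimizing) and $AN$-invariant, hence contains a $2$-dimensional $AN$-orbit.
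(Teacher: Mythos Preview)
Your approach has a genuine gap at the key step. You want to show that the forward endpoint $\xi$ of a quasi-minimizing ray is an endpoint of a \emph{leaf} of the lifted lamination $\tilde\mu_0$, so that you can invoke Birman--Series directly. But this need not hold. The paper explicitly remarks, just before \Cref{Theorem:Hausdorff 0}, that the set $\omega(\lambda)$ of endpoints of rays whose $\omega$-limit lies in a lamination $\lambda$ is \emph{strictly larger} than the set of endpoints of leaves of $\lambda$; controlling $\omega(\lambda)$ is a genuine extension of Birman--Series, not a consequence of it.

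Concretely, the degenerate case you flag --- the approximating leaves $\tilde\ell_t$ having both endpoints collapse to $\xi$ --- has nothing to do with parabolic fixed points and cannot be ruled out by cocompactness of $\Gamma_0$. It simply reflects that the leaves $\tilde\ell_t$ near $\tilde\gamma(t)$ lie farther and farther from the basepoint as $t\to\infty$; their $\Gamma_0$-translates back into a fundamental domain need not converge to a leaf through $\xi$. The construction in the proof of \Cref{prop:equality of omega limits} exhibits exactly this phenomenon: when $\lambda_0$ has an isolated leaf $\delta$ spiraling onto minimal components, one builds a quasi-minimizing ray $r^*$ that returns near a fixed point $p\in\delta$ infinitely often. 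Such a ray cannot be asymptotic to any single leaf of $\lambda_0$ (being asymptotic to $\delta$ would force $r^*$ eventually into a neighborhood of a minimal component, contradicting the infinite recurrence to $p$), so its lifted endpoint is not an endpoint of any leaf of $\tilde\mu_0$.

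The paper's route is therefore different in substance: \Cref{Theorem:Hausdorff 0} shows directly, via train-track counting, that for \emph{any} lamination $\lambda$ on a finite-area surface the larger set $\omega(\lambda)$ has Hausdorff dimension zero. One bounds the number of length-$L$ train routes in an $\epsilon$-track for $\lambda$ by $c_1 e^{sL}$ with $s$ arbitrarily small (\Cref{route growth}), and this subexponential growth converts into the dimension bound.

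Your second part, deducing $\dim_{\mathrm H}\mathcal Q = 2$ from $AN$-invariance and a product decomposition, matches the paper's argument via the Iwasawa decomposition. One small correction: it is not true that ``any geodesic ray escaping to an end of the $\Z$-cover is quasi-minimizing''; escape only requires $d(a_t x, x)\to\infty$, which is much weaker than $d(a_t x, x)\geq t - C$. Non-emptiness of $\mathcal Q$ is clear nonetheless (for instance from the bi-minimizing leaves in $\mathcal L$), and that is all the lower bound needs.
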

	
	\begin{remark}\;
		\begin{enumerate}[leftmargin=*]
		    \item As follows from \Cref{rem:N-inv measures do not help}, it was known that this set of endpoints is null with respect to any $\Gamma$-conformal measure on the boundary of $\Hplane$ (including Lebesgue), since otherwise $\Qm$ would support a locally finite $N$-invariant measure.
            \item The cardinality of this set of endpoints can be either countable or uncountable, as follows from the examples provided in \Cref{Sec_construction of minimizing laminations}.
		\end{enumerate}
	\end{remark}
	\medskip
	
	Much of the work in this paper is to investigate the subtle relationship between the structure and dynamics of the geodesic
        lamination $ \mu_0 $ and the structure and topology of non-maximal horocycle
        orbit closures. We uncover a number of aspects of this; for example in certain cases it's
        possible to give a complete orbit closure classification, see Theorem
        \ref{weak-mixing-case} below.

    In dimension $d=2$, we have many powerful tools that allow us to refine our understanding of tight maps and their stretch laminations.
    For the rest of the introduction, we will focus our attention on the case of surfaces ($G=G_2$).

	\subsection*{Constructing tight circle maps with prescribed laminations}

Let $ S_0 $ be an orientable closed topological surface of genus $ \geq 2 $. Using either \cite{DU:circle} or \cite{GK:stretch} provides a tight circle map $S_0\to \R/\Z$ for any
choice of nonzero $\varphi\in H^1(S_0,\Z)$ and hyperbolic metric on $S_0$. We establish the
following complementary result, which allows us to specify the maximal-stretch lamination
instead of the hyperbolic metric:

\begin{theorem}\label{Thm intro: tighten-lamination}
  Let $\varphi\in H^1(S_0,\Z)$ and let $\lambda$ be the support of an oriented measured
  lamination in $S_0$. Suppose that $\varphi$ is Poincar\'e-dual to an oriented
  multicurve $\alpha$, such that $\alpha$ intersects $\lambda$ transversely with positive
  orientation and $\alpha\cup\lambda$ binds the surface (all complementary components
  are compact disks). Then there exists a hyperbolic metric $g$  on $S_0$ and a tight circle map
  $f:(S_0,g) \to \R/\Z$ whose homotopy class is $\varphi$ and whose maximal stretch
  lamination is equal to $\lambda$. 
\end{theorem}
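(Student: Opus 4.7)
The plan is to build the hyperbolic metric $g$ and the circle map $f$ in tandem, passing to the $\Z$-cover $\pi\colon S \to S_0$ determined by $\varphi$, and using the binding hypothesis to control the combinatorics. A tight map $f\colon (S_0,g) \to \R/\Z$ in the class $\varphi$ lifts to a $\Z$-equivariant Lipschitz function $F\colon S \to \R$ with $F \circ T = F + 1$, where $T$ generates the deck group. Since each component of $\alpha$ has zero $\varphi$-value (by skew-symmetry of the intersection form on a surface), $\tilde\alpha := \pi^{-1}(\alpha)$ is a $\Z$-equivariant family of simple closed curves separating $S$ into fundamental domains for $T$, and the binding assumption implies that $\tilde\lambda \cup \tilde\alpha$ cuts $S$ into a $\Z$-equivariant pattern of compact topological disks.

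I would first choose any hyperbolic metric on $S_0$ realizing $\lambda$ as a geodesic lamination, and then deform it---via Thurston's shear coordinates relative to a maximal completion of $\lambda$, or a direct transverse pinching---to a metric $g$ in which (i) the supremum $\kappa := \sup_\gamma \varphi(\gamma)/\ell_g(\gamma)$ is attained precisely by measured laminations supported on $\lambda$, and (ii) the complementary disks of $\lambda \cup \alpha$ in $(S_0,g)$ are sufficiently fat to admit Lipschitz interpolations of the boundary data described below with constant strictly less than $\kappa$. Shortening the leaves of $\lambda$ simultaneously drives up the ratio $\varphi/\ell$ for sublaminations of $\lambda$ and inflates the complementary disks, so both properties can be arranged together.

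Next, I would construct $F$ in two stages. On each leaf of $\tilde\lambda$, define $F$ to be affine in arc length with slope $\kappa$, choosing additive constants so that the prescribed values on the boundaries of the complementary disks are consistent and so that $F \circ T = F + 1$; the $\Z$-equivariance plus the disk-binding combinatorics pin down these constants up to one global shift. Then, inside each complementary disk $D$, extend $F$ from its $\tilde\lambda$-boundary data to the interior by a Kirszbraun-type Lipschitz interpolation; by the metric choice this extension can be taken with Lipschitz constant strictly below $\kappa$, and the unit jump of $F$ appropriate to $T$ is carried naturally across lifts of $\alpha$ by the fundamental-domain structure.

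The descended map $f := F \bmod \Z\colon S_0 \to \R/\Z$ is then a Lipschitz circle map in the homotopy class $\varphi$. Its local Lipschitz constant equals $\kappa$ along $\lambda$ (where $F$ is affine with slope $\kappa$) and is strictly smaller off $\lambda$ (by the interpolation), so the maximal stretch locus of $f$ is exactly $\lambda$; since $\kappa = \sup_\gamma \varphi(\gamma)/\ell_g(\gamma)$, $f$ is tight. The main obstacle I expect is the metric deformation step: arranging a hyperbolic metric in which $\lambda$ is the unique measured-lamination optimizer for $\varphi$ while simultaneously keeping the complementary disks large enough to support strictly sub-$\kappa$ interpolation. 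This delicate balance is probably the heart of the construction, and will likely rely on explicit shear-coordinate estimates together with the binding hypothesis to control the geometry of the complementary disks.
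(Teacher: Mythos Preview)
Your proposal identifies the right shape of the argument but leaves the central step as an acknowledged obstacle, and that obstacle is in fact the whole content of the theorem. You propose to start from an arbitrary hyperbolic metric and deform it (via shear coordinates or ``transverse pinching'') until $\lambda$ becomes the unique optimizer for $\varphi$ and the complementary disks are geometrically tame. No mechanism is given for this, and it is not clear that shortening leaves of $\lambda$ can be done while keeping the metric hyperbolic and the additive constants on leaves consistent. The paper resolves this by invoking a structural result of Calderon--Farre (building on Mirzakhani and Thurston): for each measured lamination $\lambda_0$, the map sending a hyperbolic metric $\Sigma_0$ to the measure-equivalence class of its \emph{orthogeodesic foliation} $\cO_{\lambda_0}(\Sigma_0)$ is a homeomorphism from Teichm\"uller space onto the set of foliations binding with $\lambda_0$. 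One then takes the unique metric $\Sigma_0$ with $\cO_{\lambda_0}(\Sigma_0)=\alpha$, and the tight map is simply the quotient collapsing each orthogeodesic leaf, followed by an isometry of the leaf space (a metric graph) onto the circle. The strict contraction off $\lambda$ is then automatic, because nearest-point projection to a geodesic in $\bH^2$ is a strict contraction away from the geodesic.

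Two further points in your sketch would also need work even granting the metric. First, ``choosing additive constants so that the values on boundaries of complementary disks are consistent'' is a genuine cocycle condition on the lengths of $\lambda$-arcs between successive crossings with $\alpha$; for a generic metric it will fail, and arranging it is tantamount to the metric-selection problem you deferred. In the paper's construction this consistency comes for free because the map is a global collapse of a foliation, not a leafwise prescription followed by extension. Second, a Kirszbraun extension only preserves the Lipschitz constant; it does not produce a local Lipschitz constant strictly below $\kappa$ in the interior of the disks, so it would not force the maximal stretch locus to equal $\lambda$ rather than contain it.
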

In particular, any orientable measured lamination can occur as the stretch lamination of a suitable
tight map, and this is used in Theorem \ref{weak-mixing-case} below to describe a class
of $\Z$-covers for which the horocycle orbit closures can be classified. 

An interval exchange transformation can provide the data $(\varphi,\alpha,\lambda)$ for
use in Theorem \ref{Thm intro: tighten-lamination}. Starting with an interval exchange map $T:S^1\to S^1$, we suspend
it to obtain an annulus $[0,1]\times S^1$ with its two boundary circles identified by
$T$. This gives a surface $S$ with a foliation $F$ inherited from the foliation by
horizontal lines in the annulus. The lamination $\lambda$ is then the ``straightening'' of
$F$ with respect to any hyperbolic structure on $S$, and a circle $\alpha = \{t\}\times S^1$
represents the cohomology class $\varphi$ associated to the map $S\to S^1$ obtained from
projection to the $[0,1]$ factor in the annulus (see Figure \ref{IET-construction}).

        \begin{figure}[ht]
		\includegraphics[scale=0.42]{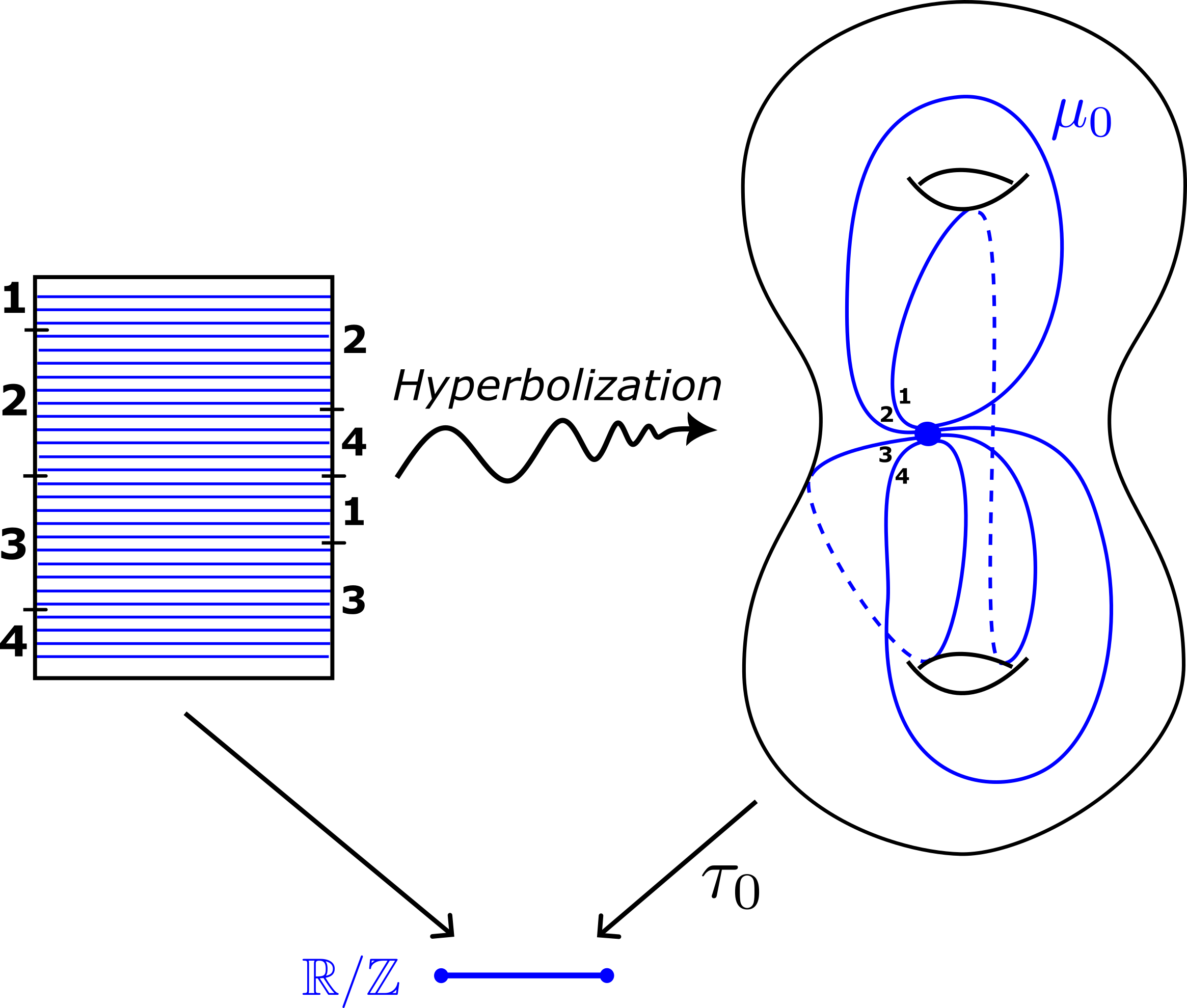}
                \vspace{0.2cm}
                \caption{Surface constructed from an IET with permutation $ \sigma=(1342) $. The corresponding distance minimizing lamination $ \mu_0 $ lies
                  within the depicted train track.}
                \label{IET-construction}
        \end{figure}

Borrowing from ideas of Mirzakhani \cite{Mirzakhani:EQ}, we use recent work of
Calderon-Farre \cite{CF:SHSH} to ``hyperbolize'' this construction.  Namely, we obtain a
\emph{different} hyperbolic structure $\Sigmazero$,  a measured geodesic lamination
$\mu_0$ which is measure-equivalent to $\lambda$, 
and a tight map $\tau_0: \Sigmazero\to \R/\Z$ taking the leaves of $\mu_0$ locally
isometrically to the circle. Moreover, the vertical foliation of the annulus by
$\alpha$-parallel curves is converted in $\Sigmazero$ to the \emph{orthogeodesic
foliation} of $\mu_0$, whose leaves are collapsed by $\tau_0$. See Figure
\ref{hyperbolization-foliation} for a local picture of this. 

\begin{figure}[ht]
  \includegraphics[width=4in]{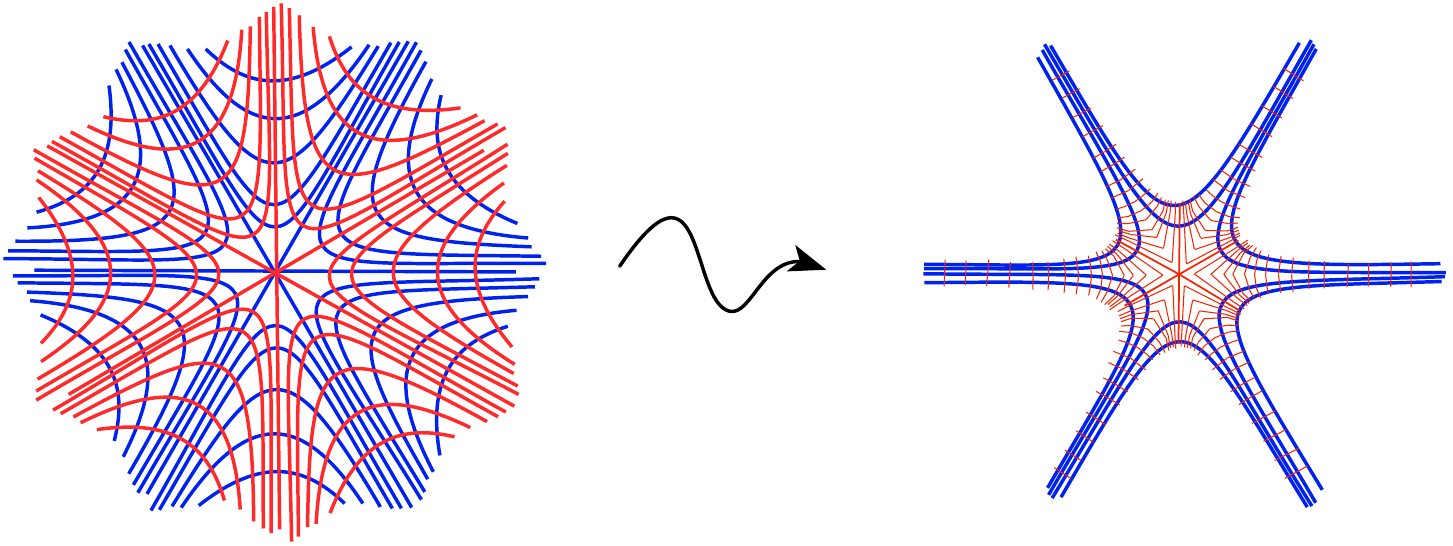}
  \caption{Horizontal and vertical singular foliations correspond to
                  geodesic lamination and orthogeodesic singular foliation, respectively.}
                 \label{hyperbolization-foliation}
\end{figure}

    \subsection*{Uniform Busemann-type functions}	
    Fix  $ \tau_0: \Sigma_0 \to \R/\Z $ a tight Lipschitz map. 
    Lifting $ \tau_0 $ to $ \Sigma $ and rescaling by $ \kappa^{-1} $ one obtains a $ 1 $-Lipschitz $ \kappa^{-1}\Z $-equivariant map $ \tau: \Sigma \to \R $.
	
	Denoting $ \mu $ the lift of $ \mu_0 $ to $ \Sigma $, we see that $ \mu $ is contained in the $ 1 $-Lipschitz locus of the map $ \tau $. In particular, all geodesic lines in $ \mu $ are isometrically embedded copies of $ \R $ in $ \Sigma $. The geodesic lamination $ \mu \subset \Sigma $ contains the ``fastest routes'' traversing along the $ \Z $-cover and the map $ \tau $ indicates how to ``collapse'' $ \Sigma $ onto these routes.

 The surface $ \Sigma $ has two infinite ends, one corresponding to unbounded positive values of $ \tau $ and the other to the negative. Abusing notation, we write $\tau(x)$ to mean $\tau(p)$, where $p\in \Sigma$ is the basepoint of $x\in \GmodGamma$.  We define a sort of ``uniform Busemann function" $ \beta_+: \GmodGamma \to [-\infty,\infty) $ with respect to the positive end, by
	\[ \beta_+(x) = \lim_{t \to\infty} \tau(a_tx)-t. \]
	This function is $ N $-invariant and upper semicontinuous. Furthermore, a point $ x \in \GmodGamma $ is quasi-minimizing and facing the positive end if and only if $ \beta_+(x) > -\infty $. The closed $ N $-invariant set
	\[ \mathcal{H}_+(x) :=\beta_+^{-1}\left([\beta_+(x),\infty)\right) \]
	can be thought of as a uniform horoball based at the positive end and passing through $ x $. We thus have the following:

    \begin{theorem}\label{Thm intro:Nx bar in horoball}
		Let $ \Sigma $ be any $ \Z $-cover of a compact hyperbolic surface together with a tight Lipschitz map $ \tau:\Sigma \to \R $. All quasi-minimizing points $ x \in T^1\Sigma $ facing the positive end of $ \Sigma $ satisfy
		\[ \overline{Nx} \subseteq \mathcal{H}_+(x). \]
	\end{theorem}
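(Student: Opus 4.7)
The plan is to derive the containment directly from the two properties of $\beta_+$ stated just before the theorem: $N$-invariance and upper semicontinuity. Since by definition $\mathcal{H}_+(x) = \beta_+^{-1}\bigl([\beta_+(x),\infty)\bigr)$, it suffices to show $\beta_+(y) \geq \beta_+(x)$ for every $y \in \overline{Nx}$.

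The argument is then essentially two lines. For $y = nx \in Nx$, $N$-invariance gives $\beta_+(y) = \beta_+(x)$, so certainly $y \in \mathcal{H}_+(x)$. For an arbitrary $y \in \overline{Nx}$, pick a sequence $n_k \in N$ with $n_k x \to y$; by $N$-invariance, $\beta_+(n_k x) = \beta_+(x)$ for all $k$, and by upper semicontinuity
\[ \beta_+(y) \;\geq\; \limsup_{k \to \infty} \beta_+(n_k x) \;=\; \beta_+(x), \]
whence $y \in \mathcal{H}_+(x)$.

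The real content, which deserves verification if it is not already accepted as part of the setup, is in establishing the two properties of $\beta_+$. For upper semicontinuity, I would observe that the $1$-Lipschitz property of $\tau$ together with the unit-speed motion of basepoints under $a_t$ forces $t \mapsto \tau(a_t y) - t$ to be monotone non-increasing, so
\[ \beta_+(y) \;=\; \inf_{t \geq 0} \bigl(\tau(a_t y) - t\bigr), \]
an infimum of continuous functions, hence upper semicontinuous. For $N$-invariance, I would decompose $a_t n = m_t a_t$ with $m_t := a_t n a_{-t}$; since $N$ is the stable horospherical subgroup with respect to $A$, $m_t \to e$ as $t \to \infty$, and the $1$-Lipschitz property of $\tau$ gives $\tau(a_t n x) - \tau(a_t x) \to 0$, so the two limits defining $\beta_+(nx)$ and $\beta_+(x)$ coincide (including the case where this common value is $-\infty$).

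The main obstacle at this stage is essentially nonexistent: the theorem is a short formal consequence of the tight-map machinery. The genuine work lives upstream, in constructing the tight Lipschitz map $\tau$ with its equivariance and in the Eberlein--Dal'bo--Maucourant--Schapira identification of $\beta_+(x) > -\infty$ with $x$ being quasi-minimizing facing the positive end.
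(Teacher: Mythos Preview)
Your proof is correct and matches the paper's own argument essentially line for line: the paper proves this as a short corollary (Corollary~\ref{cor:Nx contained in beta+(x) to infinity}) using exactly $N$-invariance and upper semicontinuity of $\beta_+$ from Lemma~\ref{lemma_properties_beta}, and your verification of those two properties is the same as the paper's.
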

    \noindent An analogous statement holds for the negative end of $ \Sigma $. In fact, this theorem holds in arbitrary dimension, see \Cref{cor:Nx contained in beta+(x) to infinity}.

    \subsection*{Structure of horocycle orbit closures} All horocycle orbit closures satisfy the following two structural properties:
	\begin{theorem}\label{Thm intro: structure of Nx bar}
		Let $ \Sigma $ be any $ \Z $-cover of a compact hyperbolic surface and let $x\in T^1\Sigma$ be any quasi-minimizing point. Then
		\begin{enumerate}
            \item There exists a non-trivial, non-discrete closed subsemigroup $ \Delta_x $ of $ A_{\geq 0} = \{a_t : t \geq 0\} $ under which $ \overline{Nx} $ is strictly sub-invariant, that is, 
            \[ a \overline{Nx}\subsetneq \overline{Nx} \quad \text{for all }  a \in \Delta_x \smallsetminus \{e\}.\]
            \item $\overline{Nx}$ intersects all quasi-minimizing rays escaping through the same end as $x$. That is, if $y \in T^1\Sigma$ is quasi-minimizing and facing the same end as $x$ then $a_t y \in \overline{Nx}$ for some $t\ge0$.
        \end{enumerate}
	\end{theorem}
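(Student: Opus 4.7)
The plan is to prove part (2) by a Baire category argument and then to deduce non-triviality in part (1) from (2), handling strict sub-invariance directly via the uniform Busemann function from \Cref{Thm intro:Nx bar in horoball}.

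For part (2), fix $y \in T^1\Sigma$ quasi-minimizing and facing the positive end. For each $m \in \Z$, the set
\[
T_m := \bigl\{ t \ge 0 : \sigma^{-m} a_t y \in \overline{Nx} \bigr\}
\]
is closed in $[0,\infty)$. By Hedlund's theorem on the compact quotient $\GmodGammazero$, $\pi(\overline{Nx}) = \GmodGammazero$, so every $\pi$-fiber meets $\overline{Nx}$, hence $\bigcup_{m\in\Z} T_m = [0,\infty)$. Baire's theorem then produces some $m_0$ such that $T_{m_0}$ contains an open interval $(t_1,t_2)$, yielding a geodesic arc $\{a_t \sigma^{-m_0} y : t \in (t_1,t_2)\}$ inside $\overline{Nx}$. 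The principal obstacle is to eliminate the deck shift $\sigma^{-m_0}$: using the horoball inequality $\beta_+(\sigma^{-m_0}y) + t \ge \beta_+(x)$ along the arc together with the $N$-invariance of $\overline{Nx}$ and the commutation of $\sigma$ with the $N$-action, the plan is to conclude $\sigma^{m_0}x \in \overline{Nx}$, which gives $\overline{N\sigma^{m_0}x} = \sigma^{m_0}\overline{Nx} \subseteq \overline{Nx}$, and hence $a_t y \in \sigma^{m_0}\overline{Nx} \subseteq \overline{Nx}$ for $t \in (t_1,t_2)$.

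For part (1), set $\Delta_x := \{s \ge 0 : a_s x \in \overline{Nx}\}$. Since $A$ normalizes $N$, one has $a_s \overline{Nx} = \overline{N a_s x}$, so $a_s \overline{Nx} \subseteq \overline{Nx}$ is equivalent to $a_s x \in \overline{Nx}$; continuity and closure under addition then show $\Delta_x$ is a closed subsemigroup of $[0,\infty)$ containing $0$. Strict sub-invariance for $s > 0$ in $\Delta_x$ is immediate from \Cref{Thm intro:Nx bar in horoball}: every $z \in a_s\overline{Nx} = \overline{N a_s x}$ satisfies $\beta_+(z) \ge \beta_+(a_s x) = \beta_+(x) + s > \beta_+(x)$, whereas $x \in \overline{Nx}$ has value $\beta_+(x)$, so $x \notin a_s\overline{Nx}$. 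Non-triviality of $\Delta_x$ follows directly from (2) applied to $y = a_\epsilon x$ for any $\epsilon > 0$: this produces $t \ge 0$ with $a_{t+\epsilon}x \in \overline{Nx}$, placing $t+\epsilon \in \Delta_x \cap [\epsilon,\infty)$.

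Non-discreteness is the most delicate point. The plan is to apply (2) to a continuous family of perturbations $y_{\bar u} := \bar u x$ with $\bar u$ ranging in the unstable horospherical subgroup $\bar N$ near the identity: each $y_{\bar u}$ remains quasi-minimizing and facing the positive end, so (2) provides return times $t(\bar u) \ge 0$ with $a_{t(\bar u)}\bar u x \in \overline{Nx}$. Using the commutation $a_t \bar u = \bar u^{(t)} a_t$ where $\bar u^{(t)} := a_t \bar u a_{-t} \in \bar N$, together with the $N$-invariance of $\overline{Nx}$ and the semigroup closure of $\Delta_x$, one should extract from this family a continuum of positive elements in $\Delta_x$, ruling out discreteness. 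The main difficulty is ensuring that varying $\bar u$ genuinely populates $\Delta_x$ with accumulation points rather than always landing on a single discrete progression; this is where the fine structure of the maximal stretch lamination $\mu$ (and its role as an attractor for quasi-minimizing rays through \Cref{Thm intro: Q_omega is L^CR}) is expected to enter the argument.
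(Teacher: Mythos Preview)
Your argument for part (2) breaks at the very first step. Hedlund's theorem gives $\overline{N\pi(x)}=\GmodGammazero$, but you assert $\pi(\overline{Nx})=\GmodGammazero$, which is a different claim: the covering map $\pi=p_\Z$ is not proper, so it need not take closed sets to closed sets. In fact $\overline{Nx}\subseteq\Qm_+$, and the forward endpoints of points in $\Qm_+$ form a set of Hausdorff dimension zero in $S^1$ (\Cref{Thm intro: Hausdorff dim=0}); thus $p_\Z(\overline{Nx})$ lies in the set of frames whose forward endpoint is non-horospherical for $\Gamma$, a nowhere-dense subset of $\GmodGammazero$. Consequently $\bigcup_m T_m=[0,\infty)$ is false in general, and Baire cannot be invoked. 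Even granting an interval in some $T_{m_0}$, the step ``conclude $\sigma^{m_0}x\in\overline{Nx}$'' has no justification: nothing you have written forces $\overline{Nx}$ to be sub-invariant under a nontrivial deck transformation.

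The paper obtains (2) by an entirely different mechanism. Using the tight map $\tau$ one shows that any two rays in $\Qm_+$ stay a bounded distance apart (\Cref{lemma:u_is_a_1C-qi}), and then a Bruhat-decomposition argument (\Cref{prop:fellow travelers in horo closure}) converts bounded fellow-travelling into $\ell y\in\overline{Nx}$ for some $\ell\in MA$; finally sub-invariance under the non-compact semigroup $\Delta_x$ upgrades this to $a_t y\in\overline{Nx}$ for some $t\ge 0$. For non-discreteness in (1), your plan via unstable perturbations $y_{\bar u}=\bar u x$ does not work: part (2) gives only the \emph{existence} of some $t(\bar u)$ with $a_{t(\bar u)}\bar u x\in\overline{Nx}$, with no continuity in $\bar u$, and moreover $a_{t}\bar u x=(a_t\bar u a_{-t})a_t x$ lies on the \emph{unstable} horocycle through $a_t x$, so membership in $\overline{Nx}$ does not place $t$ in $\Delta_x$. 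The paper instead works with geometric limits of $a_t g\Gamma g^{-1}a_{-t}$ along the ray and an explicit matrix computation in $\SO^+(d,1)$ (\Cref{prop:Delta_x/M non discrete}) to produce accumulation of $\delta$-values. Your treatment of strict sub-invariance via $\beta_+$ is correct and matches the paper.
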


    \begin{remark}
        Theorem \ref{Thm intro: structure of Nx bar} holds for $ \Z $-covers of higher dimensional compact hyperbolic manifolds as well, with suitable adjustments addressing the group of frame rotations $M$ commuting with $A$ in $\SO^+(d,1)$, see \Cref{Thm:Recurrence semigroup in arbitrary d,Cor:Horospherical orbit closure intersect all Q of the same end}. In the case of $ d=2 $, partial results, applicable also to this setting, were obtained in \cite{Bellis}.
    \end{remark}

  While the full nature of the semigroup $ \Delta_x $ is a bit mysterious, we develop both geometric and algebraic tools to study it, most of which generalize to other homogeneous spaces; see \Cref{sec:semigroup}.   We show there are examples where $e \in \Delta_x$ is an isolated point and examples where $\Delta_x=A_{\geq 0}$ (\Cref{Theorem:semigroup is everything} and \Cref{cor:isolated leaf implies isolated e in Delta}).

     An immediate corollary of Theorems \ref{Thm intro:Nx bar in horoball} and \ref{Thm intro: structure of Nx bar}(1) is that: 
	\begin{cor}
		Every $ \Z $-cover of a compact surface contains uncountably many distinct non-maximal horocycle orbit closures, all of which are not closed $ N $-orbits. Moreover, on such surfaces no horocycle orbit closure is minimal.
	\end{cor}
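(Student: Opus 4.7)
\emph{Proof plan.} The strategy is to use the uniform Busemann function $\beta_+$ as an invariant distinguishing orbit closures, combined with the strict sub-invariance of \Cref{Thm intro: structure of Nx bar}(1). First, observe that $\beta_+$ is $N$-invariant, upper semicontinuous, and satisfies $\beta_+(a_s x)=\beta_+(x)+s$ for all $s\in\R$ (a direct reindexing in the defining limit). Upper semicontinuity gives $\beta_+(y)\ge \beta_+(x)$ for every $y\in\overline{Nx}$, so $\overline{Nx}=\overline{Ny}$ forces $\beta_+(x)=\beta_+(y)$. Thus $\beta_+$ descends to a well-defined invariant on the set of horocycle orbit closures through quasi-minimizing points facing the positive end.

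Next, fix a quasi-minimizing $x_0$ facing the positive end; such a point exists because the stretch lamination $\mu_0$ is nonempty, and any frame based on a leaf of the lift $\mu\subset\Sigma$ oriented in the direction of increasing $\tau$ satisfies $\tau(a_t x_0)-\tau(x_0)=t$, which by the $1$-Lipschitz property of $\tau$ forces $d_{\GmodGamma}(a_tx_0,x_0)\ge t$ and hence gives quasi-minimality with constant $0$. A short triangle-inequality check shows that $a_t x_0$ remains quasi-minimizing (with constant growing linearly in $|t|$) for every $t\in\R$, so each $\overline{N a_t x_0}$ is non-maximal by the theorem of Eberlein, Dal'bo and Maucourant--Schapira. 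Since $\beta_+(a_t x_0)=\beta_+(x_0)+t$ takes every real value, the family $\{\overline{N a_t x_0}\}_{t\in\R}$ is uncountable and pairwise distinct.

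Finally, to exclude closed $N$-orbits and minimality, apply \Cref{Thm intro: structure of Nx bar}(1) to pick $a\in \Delta_x\setminus\{e\}$ with $a\overline{Nx}\subsetneq\overline{Nx}$. Since $A$ normalizes $N$, the set $a\overline{Nx}$ is closed and $N$-invariant, hence is a proper closed $N$-invariant subset of $\overline{Nx}$; this rules out minimality immediately. If in addition $\overline{Nx}=Nx$ were a closed orbit, then $a\overline{Nx}=aNx=N(ax)$ would itself be a single $N$-orbit contained in $Nx$, forcing $N(ax)=Nx$ and contradicting strict inclusion. For non-quasi-minimizing $x$, $\overline{Nx}=\GmodGamma$ strictly contains the non-maximal closures constructed above, so it too fails minimality. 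There is no serious obstacle here, since the two preceding theorems do the substantial work; the only small care is in verifying that $a_tx_0$ remains quasi-minimizing for $t<0$ and that the USC argument indeed makes $\beta_+$ a valid separator of orbit closures.
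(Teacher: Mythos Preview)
Your proposal is correct and follows essentially the same route the paper has in mind: the corollary is stated as an immediate consequence of \Cref{Thm intro:Nx bar in horoball} and \Cref{Thm intro: structure of Nx bar}(1), and you unpack exactly these ingredients---the $N$-invariance, upper semicontinuity, and $A$-equivariance of $\beta_+$ (which is the content behind \Cref{Thm intro:Nx bar in horoball}; see \Cref{lemma_properties_beta} and \Cref{cor:Nx contained in beta+(x) to infinity}) to separate the orbit closures $\overline{N a_t x_0}$, and the strict sub-invariance under $\Delta_x$ to rule out closed orbits and minimality. One minor remark: since your $x_0$ lies on a leaf of the lift $\mu$, every $a_t x_0$ is again on that leaf and hence quasi-minimizing with constant $0$, so there is no need for the triangle-inequality check or a constant growing in $|t|$.
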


    We construct a particular class of surfaces having favorable dynamical properties under which a full orbit-closure classification is given:
	\begin{theorem}\label{weak-mixing-case}
		If $ \Sigma $ is a $ \Z $-cover surface constructed by Theorem \ref{Thm intro: tighten-lamination} from a weakly-mixing and minimal IET then all non-maximal horocycle orbit closures in $ \Sigma $ are uniform horoballs. That is, for all $ x \in T^1\Sigma $, either $ Nx $ is dense in $T^1\Sigma$ or 
		\[ \overline{Nx}=\mathcal{H}_\pm(x). \]
	\end{theorem}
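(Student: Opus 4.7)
\medskip\noindent\textbf{Proof plan.} Fix a non-maximal horocycle orbit closure $\overline{Nx}$ in $T^1\Sigma$ and assume, without loss of generality, that $x$ faces the positive end of $\Sigma$; the other case is symmetric. Theorem~\ref{Thm intro:Nx bar in horoball} already provides $\overline{Nx}\subseteq \mathcal{H}_+(x)=\beta_+^{-1}\bigl([\beta_+(x),\infty)\bigr)$, so only the reverse inclusion needs proof.

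I would begin by showing that the sub-invariance semigroup $\Delta_x$ of Theorem~\ref{Thm intro: structure of Nx bar}(1) equals the full $A_{\geq 0}$ for every quasi-minimizing $x$. A proper closed sub-semigroup of $A_{\geq 0}$ has a positive infimum and is determined by discrete arithmetic return-time data; via the tightening of Theorem~\ref{Thm intro: tighten-lamination}, such data translates into a non-trivial eigenvalue of the geodesic flow restricted to $\mu_0$, equivalently of the suspension flow of the IET defining $\Sigma$, which is precisely what weak mixing forbids. Granting $\Delta_x=A_{\geq 0}$, the strict sub-invariance $a_c\overline{Nx}\subseteq\overline{Nx}$ for all $c\geq 0$ together with $\beta_+(a_c z)=\beta_+(z)+c$ reduces the equality $\overline{Nx}=\mathcal{H}_+(x)$ to the claim that $\overline{Nx}$ exhausts its bottom level $\beta_+^{-1}(\beta_+(x))$.

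For the bottom-level identification, let $y$ be any quasi-minimizing point facing the positive end with $\beta_+(y)=\beta_+(x)$. Theorem~\ref{Thm intro: structure of Nx bar}(2) applied symmetrically to $(x,y)$ and $(y,x)$ gives $t,s\geq 0$ with $a_t y\in\overline{Nx}$ and $a_s x\in\overline{Ny}$; since $N(a_r z)=a_r(Nz)$ as subsets of $T^1\Sigma$, this upgrades to $a_t\overline{Ny}\subseteq\overline{Nx}$ and $a_s\overline{Nx}\subseteq\overline{Ny}$, so in particular $s+t\in\Delta_x$. I would then argue that weak mixing forces $s=t=0$: a positive shift $t>0$ would produce two distinct, strictly nested $N$-invariant closed subsets of $\mathcal{H}_+(x)$ sharing the same $\beta_+$-infimum, and the associated measurable $N$-invariant partition of the bottom level would descend through the orthogeodesic foliation of $\mu_0$ and the $\Z$-cover to a non-trivial eigenfunction of the geodesic flow on $\mu_0$, contradicting weak mixing. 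Hence $y\in\overline{Nx}$, completing the proof.

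The hard part is the equal-level step: converting topological coexistence of two distinct horocycle orbit closures at the same $\beta_+$-value into a measurable invariant structure that weak mixing can kill. Both this step and the identification $\Delta_x=A_{\geq 0}$ are manifestations of the same rigidity, namely that weak mixing leaves no $N$-invariant measurable refinement of the pair $(\beta_+,\beta_-)$ on $T^1\Sigma$, thereby reducing the orbit-closure classification to a one-parameter family of uniform horoballs.
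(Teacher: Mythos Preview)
Your overall architecture is right --- prove $\Delta_x=A_{\ge 0}$ and prove $\overline{Nx}$ contains the full level $\beta_+^{-1}(\beta_+(x))$ --- but both arguments you offer are heuristics rather than proofs, and the actual mechanisms are different from what you suggest.

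For $\Delta_x=A_{\ge 0}$: you correctly observe that a proper closed sub-semigroup of $A_{\ge 0}$ has $e$ isolated, but the claim that this gap ``translates into a non-trivial eigenvalue of the suspension flow'' has no supplied mechanism. The semigroup $\Delta_x$ records how $Nx$ accumulates on $Ax$ in $G/\Gamma$, which is several steps removed from spectral data of the IET, and you give no construction linking the two. The paper bridges this gap with a concrete geometric quantity $\Delta(p,q)\ge 0$ defined via the Bruhat decomposition on nearby pairs in the transversal $W_0=\tau^{-1}(0)\cap\mathcal L_+$, shows $\Delta$ is continuous and not identically zero (using that the weakly-mixing lamination has no isolated leaves), and then uses ergodicity of $P\times P$ on $W_0\times W_0$ --- the defining property of weak mixing --- to produce a point $x_0$ and partners $y_j\to x_0$ whose product orbits are dense. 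Continuity of $\Delta$ along these orbits then manufactures arbitrarily small positive elements of $\Delta_{x_0}$.

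For the bottom level: your plan to show the shifts $s,t$ from Theorem~\ref{Thm intro: structure of Nx bar}(2) vanish by ``descending to an eigenfunction'' is not fleshed out, and it is unclear what $P$-invariant function on $W_0$ you would extract from two nested $N$-invariant closed sets in $G/\Gamma$ sharing a $\beta_+$-infimum; $N$-invariance upstairs does not translate in any evident way to $P$-invariance on the transversal. The paper's route is purely topological and avoids this entirely: in any minimal, topologically weakly mixing system, every proximality cell is dense (a theorem of Glasner). Since $P$-proximality in $W_0$ implies $A$-proximality in $G/\Gamma$, and $A$-proximal points have identical $N$-orbit closures (Corollary~\ref{cor:proximality implies identical closures}), one obtains $\overline{Nx_0}=\overline{Ny_0}$ for all $x_0,y_0\in W_0$ directly. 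A general $x\in\mathcal Q_+$ is then reduced to $W_0$ by finding an $A$-proximal representative in $\mathcal Q_\omega^+$ (Proposition~\ref{Prop:existence of proximal point in L}).
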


    It is worth remarking that in light of Avila-Forni \cite{Avila-Forni}, our construction in Theorem \ref{Thm intro: tighten-lamination} ensures an abundance of such examples.

    \subsection*{Non-rigidity of orbit closures}    
	It is intuitively clear that changing the geometry of $ \Sigma_0 $ could change the maximal stretch locus of $ \tau_0 $ and hence $ \mu_0 $. In light of the orbit closure rigidity in the finite volume and geometrically finite settings, and in light of the measure rigidity in the $ \Z $-cover setting, it was quite surprising to discover that slight changes to the geometry could dramatically change the topology of non-maximal horocycle orbit closures. To that effect we show:
	\begin{theorem}\label{Theorem:non-rigidity}
		Let $ S $ be any $ \Z $-cover of an orientable closed surface $ S_0 $ of genus $ \geq 2 $. There exist two $\Z$-invariant hyperbolic metrics on $ S $ corresponding to discrete groups $ \Gamma_1 $ and $ \Gamma_2 $ for which any two non-maximal orbit closures
		\[ \overline{N x_1} \subsetneq \GmodGamma_1 \quad \text{and}\quad \overline{N x_2} \subsetneq \GmodGamma_2 \]
		are non-homeomorphic. Moreover, these two metrics may be taken to be arbitrarily small deformations of one another.
	\end{theorem}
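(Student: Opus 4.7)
The plan has two components: (i) construct two arbitrarily small deformations of a hyperbolic metric on $S$ whose associated stretch laminations $\mu_0^{(1)}, \mu_0^{(2)}$ are topologically distinct, and (ii) extract from every non-maximal orbit closure a purely topological invariant that recovers enough of the lamination structure to distinguish the two covers.

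For the metrics, fix the cohomology class $\varphi \in H^1(S_0,\Z)$ defining the $\Z$-cover. Using Theorem~\ref{Thm intro: tighten-lamination} together with the IET construction and Avila--Forni's genericity of weakly-mixing minimal IETs, produce a hyperbolic metric $g_2$ on $S_0$ whose stretch lamination $\mu_0^{(2)}$ is the support of a weakly-mixing minimal IET-lamination. By Theorem~\ref{weak-mixing-case}, every non-maximal orbit closure in $G/\Gamma_2$ is then a uniform horoball $\mathcal{H}_\pm(x)$. To obtain $g_1$, select a simple closed curve $\gamma_\ast \subset S_0$ closely shadowing a leaf of $\mu_0^{(2)}$ so that $|\varphi(\gamma_\ast)|/\ell_{g_2}(\gamma_\ast)$ is near $\kappa(g_2)$, and perform an arbitrarily small Fenchel--Nielsen deformation shrinking $\ell(\gamma_\ast)$ enough to make $\gamma_\ast$ the unique Lipschitz-ratio maximizer. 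The resulting $g_1$ is arbitrarily close to $g_2$ and has $\mu_0^{(1)} = \gamma_\ast$, a single simple closed geodesic.

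To distinguish the orbit closures, use the space of ends of $C = \overline{Nx}$, a purely topological invariant. In case $g_2$, $C = \mathcal{H}_\pm(x)$ and its ``infinity end'' accumulates on the lifted minimal lamination $\mu \subset \Sigma$, which carries a Cantor transversal, producing a Cantor subset in the space of ends of $C$. In case $g_1$, $C$ accumulates only on the unique lifted bi-infinite line $\tilde\gamma_\ast \subset \Sigma$; combining Theorem~\ref{Thm intro:Nx bar in horoball} with the $A_{\geq 0}$-sub-invariance from Theorem~\ref{Thm intro: structure of Nx bar}(1) restricts the space of ends to be countable. Since a Cantor space and a countable space are not homeomorphic, no $C_1$ in $G/\Gamma_1$ is homeomorphic to any $C_2$ in $G/\Gamma_2$.

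The main obstacle is the end-space computations. On the $g_2$ side, one needs the transverse Cantor structure of $\mu_0^{(2)}$ to translate faithfully into a Cantor set of ends of $C$, controlling how the dense $N$-orbit spreads among distinct Busemann-function levels parameterized by leaves of the lamination, via Theorem~\ref{Thm intro: Q_omega is L^CR} and the structure of $\beta_\pm$. On the $g_1$ side, the matching upper bound forces $C$ into a narrow topological tube about $\tilde\gamma_\ast$, using the same $\omega$-limit theorem to show only countably many accumulation directions can occur. Combining these computations with the homeomorphism-invariance of the space of ends completes the argument.
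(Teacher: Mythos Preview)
Your outline diverges from the paper in both halves, and in each half there is a genuine gap.

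\textbf{Construction of the second metric.} You propose to obtain $g_1$ by a small Fenchel--Nielsen deformation shrinking a closed curve $\gamma_\ast$ until it becomes the unique ratio maximizer. This step is not justified: decreasing $\ell(\gamma_\ast)$ changes the lengths of many other curves, including the long curves that nearly carry $\mu_0^{(2)}$ and whose ratios are already within $\epsilon$ of $\kappa(g_2)$. There is no reason the supremum over those competitors drops enough (or that $\gamma_\ast$'s ratio rises enough) under an \emph{arbitrarily small} deformation for $\gamma_\ast$ to strictly dominate. The paper avoids this problem entirely: it approximates $\mu_0^{(2)}$ in measure \emph{and} Hausdorff topology by rational weight systems on a train track (hence by weighted multi-curves $\lambda_{0,n}$), and then invokes a continuity theorem for the map $\mathcal{O}_{\lambda}$ to produce metrics $\Sigma_{0,n}\to\Sigma_0$ whose minimizing laminations are exactly those multi-curves (\S5.5, Theorem~5.7). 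This gives both the smallness of the deformation and the isolated-leaf structure of $\lambda'$ for free.

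\textbf{The distinguishing invariant.} You choose the space of ends of $\overline{Nx}$ and then concede that computing it ``is the main obstacle.'' The sketch you give (Cantor transversal $\Rightarrow$ Cantor set of ends; single leaf $\Rightarrow$ countable ends) is not an argument: $\overline{Nx}$ is a $2$-dimensional subset of a $3$-manifold with complicated accumulation behavior, and nothing you cite pins down its end space. The paper's invariant is far more elementary and follows immediately from results already in hand. On the weakly-mixing side, Theorem~9.1 together with Theorem~9.3 give $\Delta_x=A_{\ge0}$, so $\overline{Nx}$ is $NA_{\ge0}$-invariant and therefore \emph{no} small ball meets $\overline{Nx}$ in a single arc. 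On the multi-curve side, every quasi-minimizing ray is asymptotic to an isolated leaf of $\lambda'$, and Corollary~7.15 says $p_K(Nx)$ does not accumulate on $p_K(x)$; hence \emph{some} small ball meets $\overline{Nx}$ in a single horocyclic segment. This local property (``there exists a ball in which the closure is a $1$-manifold'') is manifestly topological and separates the two families without any end-space computation.
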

	
	We remark that the topological obstruction described in this theorem does not arise from the fiber, that is, the orbit closures $ \overline{N x_i} $ are non-homeomorphic even after projecting onto the respective surfaces $ \mathbb{H}^2/\Gamma_i $.

    \subsection{Questions and context}
    Many questions remain open.  The techniques we develop are applicable to studying non-maximal horospherical orbit closures in other geometrically infinite hyperbolic manifolds, but the input to our analysis requires some dynamical, geometrical, and topological information about the quasi-minimizing locus.

    There is a natural class of examples in dimension $3$ consisting of $\Z$-covers of closed manifolds that fiber over the circle.  These examples are geometrically infinite but have finitely generated fundamental group.
    Although they have been well studied, e.g., \cite{Thurston:bulletin,Thurston:fibered,CT:peano,ELTI,ELTII}, it seems quite difficult to identify $\mathcal Q_\omega$  in them.

A natural lamination to consider in such a manifold is the geodesic tightening of the orbits of its pseudo-Anosov suspension flow. One of the main results claimed in \cite{LM} seems to imply that this lamination is in fact $\mathcal Q_\omega$; unfortunately, there are errors in the proof that are not easily fixable.  Moreover, Cameron Rudd has shown us an interesting construction that indicates there are examples where $\mathcal Q_\omega$ is a finite union of closed geodesics. 

Even for $\Z$-covers in dimension 2 we do not yet have a complete  classification of horocycle orbit closures, and our results so far suggest that the possibilities are rich. 

It would also be interesting to understand quasi-periodic surfaces that are not covers of compact surfaces, as well as regular covers with other deck groups.
	
	\subsection{Organization of paper} In section 2 we present our notation and a short background on horospherical orbit closures and geodesic laminations. Section 3 addresses tight Lipschitz maps and their maximal stretch loci, containing a proof of Theorem \ref{Thm intro: Q_omega is L^CR}. A proof of \Cref{Thm intro: Hausdorff dim=0} is given in section 4. In section 5 we give our construction of $ \Z $-covers from IETs and prove \Cref{Thm intro: tighten-lamination}. Section 6 discusses ``uniform Busemann functions'' and their connection to horospherical orbit closures. In sections 7 and 8 we study the structure of horospherical orbit closures, concluding in particular the content of \Cref{Thm intro: structure of Nx bar}. In the final section, 9, we fully describe the horocycle orbit closures in $ \Z $-covers constructed by weakly-mixing and minimal IETs, showing \Cref{Thm intro:Nx bar in horoball} and concluding \Cref{Theorem:non-rigidity}.

    \setcounter{tocdepth}{1}
	\tableofcontents 
 
	\subsection{Acknowledgments} The first named author would like to thank Gerhard Knieper for bringing \cite{DU:circle} to his attention. The second named author would like to thank Subhadip Dey, Giuseppe Martone, Ilya Khayutin and Elon Lindenstrauss for helpful conversations. The third named author would like to thank the first two authors for a very enjoyable collaboration. Special thanks to Ido Grayevsky and Hee Oh for useful comments on an earlier version of this manuscript and to Cameron Rudd for  an interesting construction in dimension $3$.

 	\section{Preliminaries}
 	
	\subsection{Setting and Notation}\label{Subsection:Preliminaries - notations} We fix the following notations throughout this paper. Set $ G_d = \SO^+(d,1)$ the group of orientation preserving isometries of real hyperbolic $ d $-space, $ \Hd $. In the case of $ d=2 $ we will omit the subscript and denote $ G=G_2 $. Equip $ G_d $ with a right $ G_d $-invariant Riemannian metric.
	
	The group $ G_d $ acts freely and transitively on $ F\Hd $, the frame bundle of $ \Hd $, and hence we can identify $ G_d \cong F\Hd $. Consider the following subgroups with their corresponding left-multiplication action on $ G_d $:
	\begin{itemize}[leftmargin=*]
		\item  $ K \cong \SO(d) $ --- stabilizer subgroup of a point $ o \in \Hd $, inducing $ K \backslash G_d \cong \Hd $.
		\item $ A=\{a_t : t \in \R \} $ --- one-parameter subgroup of diagonal elements corresponding to the geodesic frame flow on $ F\Hd $.
		\item $ M=Z_K(A) \cong \SO(d-1) $ --- the centralizer of $ A $ in $ K $, corresponding to rotations of frames in $ F\Hd $ about the geodesic flow direction. We may hence identify $ M \backslash G_d $ with $ T^1\Hd $, the unit tangent bundle of $ \Hd $.
		\item $ N=\{ n \in G_d : a_t n a_{-t} \to e \text{ as } t \to +\infty \} \cong \R^{d-1} $ --- the contracted horospherical subgroup corresponding to a flow along the stable foliation for the geodesic flow.
		\item $ U=\{ u \in G_d : a_{-t} n a_{t} \to e \text{ as } t \to +\infty \} \cong \R^{d-1} $ --- the (opposite) expanded horospherical subgroup.
	\end{itemize}
	We may assume the right-$ G_d $-invariant metric is also left-$ K $-invariant. Given a discrete subgroup $ \Gamma' < G_d $, we denote by $ d_{\GdmodGamma'} $ the induced metric on $ \GdmodGamma' $ (we will typically abbreviate the metric as just $d$).
 
	\smallskip
	
	Let $ \Gammazero < G_d $ be a uniform torsion-free lattice and let $ \Gamma \lhd \Gammazero $ be a normal subgroup with $ \Gammazero / \Gamma \cong \Z $. We denote $ \Sigma_0 = K \backslash \GdmodGamma_0 $, a compact hyperbolic $ d $-manifold and $ \Sigma = K \backslash G_d / \Gamma $ its respective $ \Z $-cover. Their unit tangent bundles are
	\[ T^1\Sigma = M \backslash \GdmodGamma \quad \text{and}\quad T^1\Sigma_0 = M \backslash \GdmodGamma_0 \]

	We have the following commuting diagram of projections:
	
	\vspace{0.1cm}
	\noindent \begin{minipage}{.58\textwidth}
	\begin{equation}
		\begin{tikzcd}
			\GdmodGamma \arrow{r}{p_M} \arrow[swap]{d}{p_\Z} \arrow[bend left]{rr}{p_K} & T^1 \Sigma \arrow{r}{p_K} \arrow[swap]{d}{p_\Z} & \Sigma \arrow{d}{p_\Z} \\%
			\GdmodGammazero \arrow{r}{p_M} \arrow[bend right]{rr}{p_K}& T^1 \Sigma_0 \arrow{r}{p_K} & \Sigmazero
		\end{tikzcd}
	\end{equation}
\end{minipage}
\hfill
\begin{minipage}{.4\textwidth}
	\centering
	\includegraphics[width=0.86\textwidth]{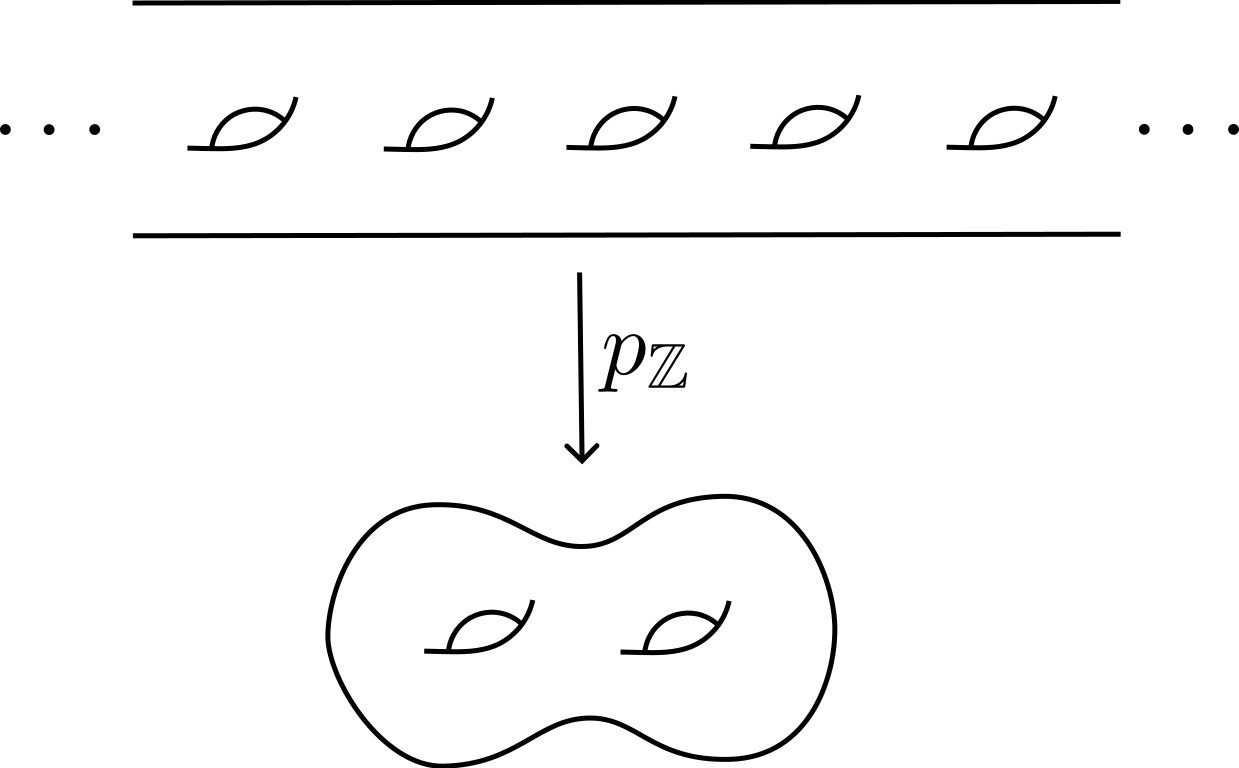}
\end{minipage}
\vspace{0.26cm}

	\noindent where $ \pZ{\cdot\;\Gamma}=\cdot\;\Gamma_0 $ are the cover maps, and where $ p_M(x)=Mx $ and $ p_K(x)=Kx $ are the bundle projections.

	\subsection{Horospherical orbit closures}\label{subsec:horospherical orbit closures}
	As described in the introduction, whenever $ \Gamma' < G_d $ is a lattice the horospherical flow on $ \GdmodGamma' $ is extremely rigid. In this setting all horospheres are either dense in $ \GdmodGamma' $ or periodic (bounding a cusp). This extreme rigidity is understood today as part of a much broader phenomenon of unipotent group action rigidity on finite volume homogeneous spaces, as follows from the works of Hedlund, Furstenberg, Veech, Dani, Margulis, and ultimately Ratner \cite{Ratner}.  
	
	Consider $ \Gamma' < G_d $ a general non-elementary discrete subgroup, not necessarily a lattice. Let $ \Lambda_{\Gamma'} \subseteq S^{d-1}=\partial\Hd $ be the limit set of $ \Gamma' $, that is, the unique closed $ \Gamma' $-minimal subset of $ S^{d-1} $. Consider the subspace
	\[ \mathcal{E}_{\Gamma'} = \{ g\Gamma' : g^+ \in \Lambda_{\Gamma'} \} \subset \GdmodGamma', \]
	where $ g^+ \in S^{d-1} $ is the forward endpoint of the geodesic in $ F\mathbb{H}^d $ emanating from the frame $ g \in G_d $. Algebraically, $ S^{d-1} $ is identified with the Furstenberg boundary of $ G_d $, i.e.~$ S^{d-1}\cong P \backslash G_d $ where $ P=MAN $, in which case $ g^+=Pg $. The set $ \mathcal{E}_{\Gamma'} $ is the non-wandering set for the $ N $-action, see e.g.~\cite{DalBo_Book}.
	
	\begin{definition}
		A limit point $ \xi \in S^{d-1} $ is called horospherical w.r.t.~$ \Gamma' $ if $ o\Gamma' $  intersects all horoballs based at $ \xi $ for some (and any) point $ o \in \Hd $.
	\end{definition}
	
	Recall the notion of a quasi-minimizing point from \cref{def:quasi-minimizing}. We have the following:
	\begin{lemma}\label{lemma:nonhoro-quasimini}
		The point $ g\Gamma' $ is quasi-minimizing if and only if $ g^+ $ is non horospherical.
	\end{lemma}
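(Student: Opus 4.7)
The plan is to translate both sides of the equivalence into statements about distances in $\mathbb{H}^d/\Gamma'$ and then to characterize them via the Busemann function at $\xi := g^+$. First I would pass from $G_d/\Gamma'$ to $\mathbb{H}^d/\Gamma'$: since $K$ is compact and the projection $G_d/\Gamma' \to K\backslash G_d/\Gamma' = \mathbb{H}^d/\Gamma'$ is a Riemannian submersion, the two distances agree up to an additive constant $\operatorname{diam}(K)$. Moreover, left multiplication by $a_t$ in $G_d$ descends to the geodesic flow on $\mathbb{H}^d$, so if $p := p_K(g\Gamma')$ and $p_t := p_K(a_tg\Gamma')$, then $p_t$ lies at distance $t$ along the geodesic ray in $\mathbb{H}^d/\Gamma'$ issuing from $p$ in the direction of $\xi$. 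Thus quasi-minimizing of $g\Gamma'$ is equivalent (up to changing the constant) to $d_{\mathbb{H}^d/\Gamma'}(p_t, p) \ge t - c'$ for all $t \geq 0$.

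The key analytic input is the Busemann function $\beta^p_\xi(q) := \lim_{t\to\infty}\bigl(d_{\mathbb{H}^d}(q, \tilde p_t) - t\bigr)$, where $\tilde p_t$ is the lifted ray in $\mathbb{H}^d$. I would use two standard facts from hyperbolic geometry:
\begin{enumerate}
\item (Monotonicity) $t \mapsto d(q,\tilde p_t) - t$ is non-increasing, and hence $d(q, \tilde p_t) \ge t + \beta^p_\xi(q)$ for every $t \ge 0$ and every $q \in \mathbb{H}^d$.
\item The definition of horospherical applied with basepoint $p$ translates to: $\xi$ is horospherical for $\Gamma'$ if and only if $\inf_{\eta \in \Gamma'} \beta^p_\xi(p\cdot\eta) = -\infty$, where $\cdot$ denotes the isometric right $\Gamma'$-action on $\mathbb{H}^d$.
\end{enumerate}
Using the isometric $\Gamma'$-action, I rewrite
\[
d_{\mathbb{H}^d/\Gamma'}(p_t,p) = \inf_{\eta \in \Gamma'} d_{\mathbb{H}^d}(p\cdot\eta,\, \tilde p_t).
\]

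For the forward direction, if $\xi$ is non-horospherical, choose $c \ge 0$ with $\beta^p_\xi(p\cdot \eta) \ge -c$ for every $\eta \in \Gamma'$. Applying the monotonicity estimate uniformly in $\eta$ yields $d(p\cdot\eta, \tilde p_t) \ge t - c$ for all $\eta$ and $t$, and taking the infimum gives the quasi-minimizing bound. For the reverse direction, assume $\xi$ is horospherical and choose $\eta_n \in \Gamma'$ with $\beta^p_\xi(p\cdot\eta_n) \to -\infty$. For each $n$, since $d(p\cdot\eta_n, \tilde p_t) - t \to \beta^p_\xi(p\cdot\eta_n)$ as $t\to\infty$, pick $t_n$ large enough that $d(p\cdot\eta_n, \tilde p_{t_n}) - t_n < \beta^p_\xi(p\cdot\eta_n) + 1$. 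Then $d_{\mathbb{H}^d/\Gamma'}(p_{t_n}, p) - t_n \to -\infty$, which violates any quasi-minimizing bound.

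There is no real conceptual obstacle; the argument is essentially folklore (going back to work of Eberlein and Sullivan), and the main thing to be careful about is keeping the conventions for the right $\Gamma'$-action and the $K$-reduction straight, and verifying that ``horospherical'' can equivalently be checked using basepoint $p$ rather than the fixed $o$ (which is immediate since changing basepoint changes $\beta^p_\xi$ by a bounded additive constant).
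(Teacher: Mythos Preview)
Your proof is correct and follows essentially the same approach as the paper's: the paper observes that the balls $B(p_K(a_tg), t-C)$ increase to a horoball based at $g^+$ and that this horoball avoids the orbit $\Gamma'.p_K(g)$ if and only if $g\Gamma'$ is quasi-minimizing with constant $C$, which is exactly your Busemann-function argument phrased geometrically rather than analytically. Your version is slightly more explicit (handling the reduction from $G_d/\Gamma'$ to $\mathbb{H}^d/\Gamma'$ and the basepoint change), but the underlying idea is identical.
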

	
	\begin{proof}
		For all $ C\geq 0 $ the family of balls $ B(p_K(a_tg),t-C) $ in $ \Hd $ of radius $ t-C $ around $ p_K(a_tg) $ converges to a horoball tangent to $ g^+ $ as $ t \to \infty $. This horoball is disjoint from $ \Gamma'.p_K(g) $ if and only if $ g $ is quasi-minimizing with corresponding constant $ C $.
	\end{proof}
	
	A limit point $ g^+ $ is called conical if the geodesic ray $ (a_t g \Gamma')_{t \geq 0} $ emanating from $ g \Gamma' $ recurs infinitely often to a compact set in $ \GdmodGamma' $. The lemma above immediately implies that all conical limit points are horospherical (with the converse clearly false in general). In dimensions $ d=2,3 $ all parabolic fixed points are non-horospherical. Surprisingly, this is not true for $ d \geq 4 $ as demonstrated in \cite{Apanasov,Starkov}.
	\medskip
	
	The following theorem gives a necessary and sufficient condition for denseness of a horosphere in $ \mathcal{E}_{\Gamma'} $:
	\begin{theorem*}[\cite{Eberlein},\cite{Dal'bo},{\cite[Corollary 3.2]{MS}}]
		If $ \Gamma' < G_d $ is Zariski-dense then $ \overline{Ng\Gamma'}=\mathcal{E}_{\Gamma'} $ if and only if $ g^+ $ is horospherical.
	\end{theorem*}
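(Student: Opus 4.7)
The plan is to prove both directions using the dictionary between horosphericity at infinity and $A$-dynamics in $\GdmodGamma'$ provided by Lemma \ref{lemma:nonhoro-quasimini}: $g^+$ is non-horospherical if and only if $g\Gamma'$ is quasi-minimizing, i.e.\ the forward $A$-orbit escapes linearly.

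For the forward implication I would argue by contrapositive. Suppose $g^+$ is non-horospherical; then $g\Gamma'$ is quasi-minimizing with some constant $c$. Using the identity $a_t(ng\Gamma') = (a_t n a_{-t})\,a_t g\Gamma'$ and the fact that $\{a_t n a_{-t}: t\ge 0\}$ stays bounded (since $N$ is the stable horospherical subgroup), every translate $ng\Gamma'$ is also quasi-minimizing with a constant depending only on a bound for $n$. Passing to the closure preserves this property locally uniformly, so every point of $\overline{Ng\Gamma'}$ is quasi-minimizing. On the other hand, Zariski density (in particular non-elementarity) produces a conical limit point $\xi \in \Lambda_{\Gamma'}$ via Patterson--Sullivan theory or a direct ping-pong argument; a frame pointing at such a $\xi$ lies in $\mathcal{E}_{\Gamma'}$ but is visibly not quasi-minimizing, yielding the strict inclusion $\overline{Ng\Gamma'}\subsetneq \mathcal{E}_{\Gamma'}$.

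For the backward implication, suppose $g^+$ is horospherical, so by Lemma \ref{lemma:nonhoro-quasimini} the geodesic orbit returns to a compact set along some $t_n \to +\infty$, and we may extract $a_{t_n}g\Gamma' \to y_0$. Because $a_{t_n}$ contracts $N$ as $n\to\infty$, for every fixed $n \in N$,
\[ a_{t_n}(ng\Gamma') \,=\, (a_{t_n} n a_{-t_n})\,a_{t_n}g\Gamma' \;\longrightarrow\; e\cdot y_0 \,=\, y_0, \]
so $y_0$ lies in the $\omega$-limit set under $a_t$ of every point of $Ng\Gamma'$. Combining this with $N$-invariance of $\overline{Ng\Gamma'}$, we obtain the seed inclusion $N y_0 \subseteq \overline{Ng\Gamma'}$. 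To upgrade this to density in $\mathcal{E}_{\Gamma'}$, I would invoke topological minimality of the $\Gamma'$-action on $\Lambda_{\Gamma'}$ at the Furstenberg boundary, a standard consequence of Zariski-density, which allows any target frame in $\mathcal{E}_{\Gamma'}$ to be approximated by $\Gamma'$-translates of $y_0$; unwinding the quotient these approximations appear as $N$-translates of $g\Gamma'$, possibly after a small $A$-adjustment exploiting that $y_0$ itself is $A$-recurrent.

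The main obstacle lies in the last step: transferring $A$-recurrence into genuine $N$-orbit density on all of $\mathcal{E}_{\Gamma'}$, as opposed to density inside $\overline{NA\cdot g\Gamma'}$. This is really where Zariski density enters essentially; without it, even on a closed invariant subset one cannot ensure that Furstenberg-boundary minimality propagates down to frame bundle level. The cleanest resolution I would aim for is to prove directly that $(N, \mathcal{E}_{\Gamma'})$ is topologically transitive on the non-quasi-minimizing locus, then use the $N$-invariant seed $N y_0$ to conclude $\overline{Ng\Gamma'}$ contains a transitive orbit and hence equals $\mathcal{E}_{\Gamma'}$.
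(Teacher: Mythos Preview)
Your backward direction contains a genuine error: you claim that ``$g^+$ horospherical'' implies, via Lemma~\ref{lemma:nonhoro-quasimini}, that the geodesic orbit $a_t g\Gamma'$ returns to a compact set along a subsequence. But that lemma only says that $g^+$ horospherical is equivalent to $g\Gamma'$ \emph{not} being quasi-minimizing, i.e.\ for every $c$ there is some $t$ with $d(a_t g\Gamma', g\Gamma') < t - c$; this is strictly weaker than recurrence. Horospherical limit points need not be conical (think of parabolic fixed points in $d\ge 4$, or exotic limit points in geometrically infinite groups), so there is no reason $a_{t_n}g\Gamma'$ should converge anywhere. Your entire seed-point mechanism rests on this recurrence, so the gap is fatal. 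The paper's sketch proceeds very differently: horosphericity says $\Gamma'$-translates of the horosphere $p_K(Ng)$ go arbitrarily deep into the horoball at $g^+$, hence converge uniformly to $\partial\bH^d$, and therefore meet the axis of every loxodromic $\gamma\in\Gamma'$ at angle tending to $\pi/2$. This places a point of $\overline{Ng\Gamma'}$ on every closed geodesic; Zariski density then enters through non-arithmeticity of the length spectrum to bootstrap to all of $\mathcal E_{\Gamma'}$.

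Your forward direction is closer in spirit but still incomplete. The step ``passing to the closure preserves quasi-minimizing'' is not justified: the quasi-minimizing constant for $ng\Gamma'$ grows with $\|n\|$, so a limit $y=\lim n_jg\Gamma'$ with $\|n_j\|\to\infty$ could a priori have horospherical forward endpoint. In fact, the assertion that every point of $\overline{Ng\Gamma'}$ is quasi-minimizing is, granting the other direction, \emph{equivalent} to $\overline{Ng\Gamma'}\ne\mathcal E_{\Gamma'}$, so the argument is circular. The direct route avoids this: one shows concretely that $a_T g\Gamma'\notin\overline{Ng\Gamma'}$ for $T>0$ via the horoball mechanism of Lemma~\ref{lemma_positive_recurrence_semigp} --- if $a_T g\Gamma'$ were in the closure, iterating would produce $\Gamma'$-orbit points arbitrarily deep in the horoball at $g^+$, contradicting non-horosphericity.
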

	
	\noindent As this result is a fundamental ingredient in our analysis, we provide a short sketch of its proof. For technical simplicity we will restrict our discussion to the unit tangent bundle $ M \backslash \GdmodGamma' $. We also refer the reader to \cite[Theorem 3.1]{DalBo_Book} for a different flavored elementary proof.
	\medskip
	
	\noindent\emph{Sketch of proof.}\;
		Assume $ g^+ $ is horospherical, therefore $ p_K(g)\Gamma' $ enters arbitrarily deep inside the horoball in $ \Hd $ based at $ g^+ $ and passing through $ o=p_K(g) $. Equivalently, there is a sequence of $ \Gamma' $-copies $ H_j=p_K(Ng)\gamma_j $ of the horosphere emanating from $ g $ whose distance from $ p_K(g) $ tends to infinity. This implies that $ H_j $ converges uniformly to $ \partial \Hd $, e.g.~in the ball model of hyperbolic $ d $-space.
		
		\begin{wrapfigure}{r}{0.3\textwidth}
			\vspace{-0.7cm}
			\begin{center}
				\includegraphics[width=0.26\textwidth]{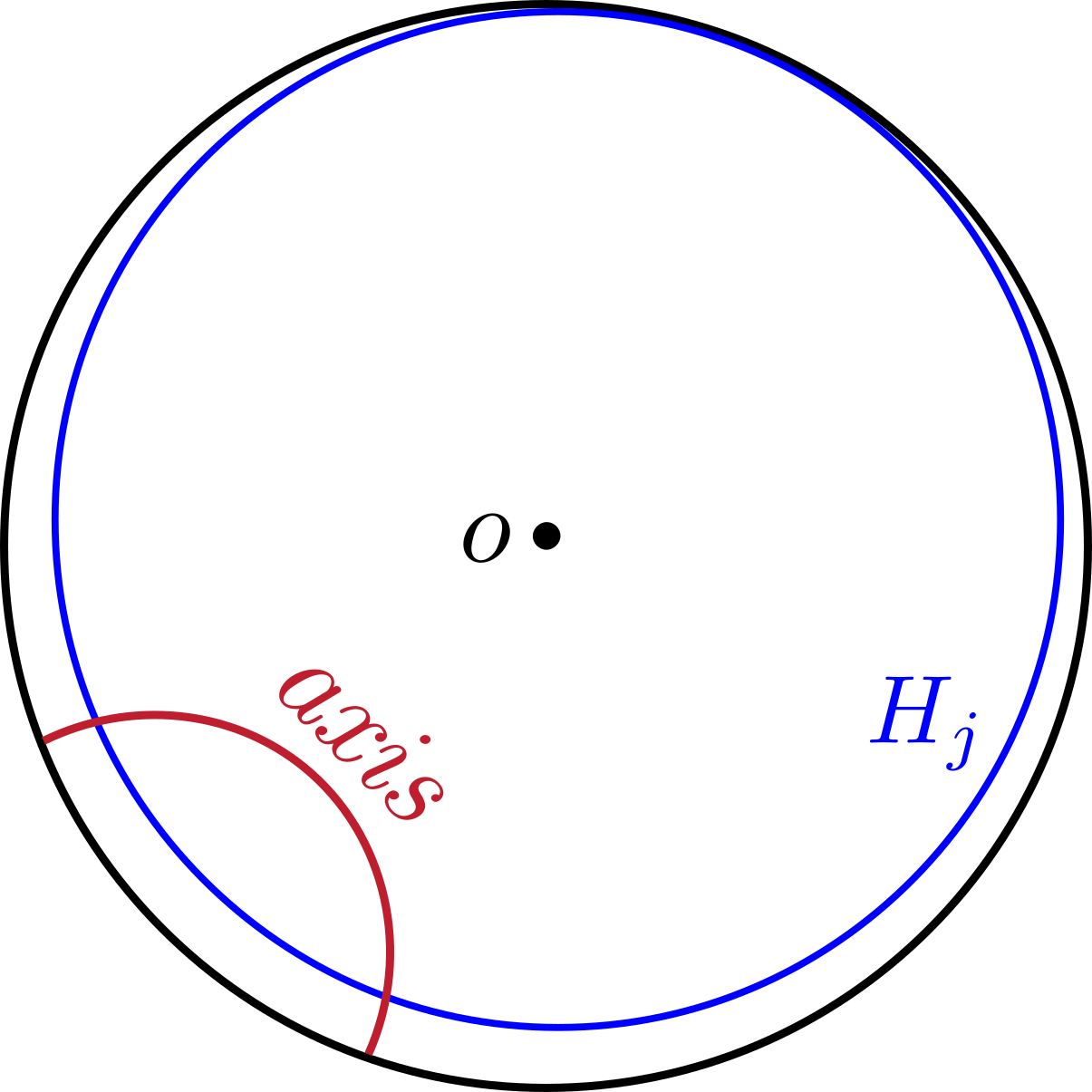}
			\end{center}
		\end{wrapfigure}
		
		The axis of any loxodromic element in $ \Gamma' $ intersects $ \partial \Hd $ at a right angle, implying the horospheres $ H_j $ tend to do the same. All such axes are compact in $ \Hd / \Gamma' $ hence the sequence of intersection points of any such axis with the horosphere $ p_K(Ng\Gamma') $ has an accumulation point with angle tending to $ \pi/2 $. In other words, $ \overline{Ng\Gamma'} $ contains frames on any closed geodesic in $ \GdmodGamma' $\footnote{This argument goes back to Hedlund {\cite[Theorem 2.3]{Hedlund}}.}.
		
		The length spectrum of any Zariski-dense discrete group is ``non-arithmetic'' \cite{Dal'bo_Spectrum,Kim}, that is, the group generated by the set of lengths of all closed geodesics in $ \GdmodGamma' $ is dense in $ \R $. Consider some closed geodesic $ L $ of length $ l_1 $ in $ \GdmodGamma' $, and let $ \varepsilon > 0 $. By non-arithmeticity, there exists a closed geodesic in $ \GdmodGamma' $ of length $ l_2 $ satisfying that $ \langle l_1,l_2 \rangle $ has covolume$ \leq \varepsilon $ in $ \R $. Let $ y \in \overline{Ng\Gamma'} $ be some point on this second closed geodesic of length $ \tau_2 $.
		
		\begin{wrapfigure}{r}{0.4\textwidth}
			%\vspace{0.6cm}
			\begin{center}
				\includegraphics[width=0.4\textwidth]{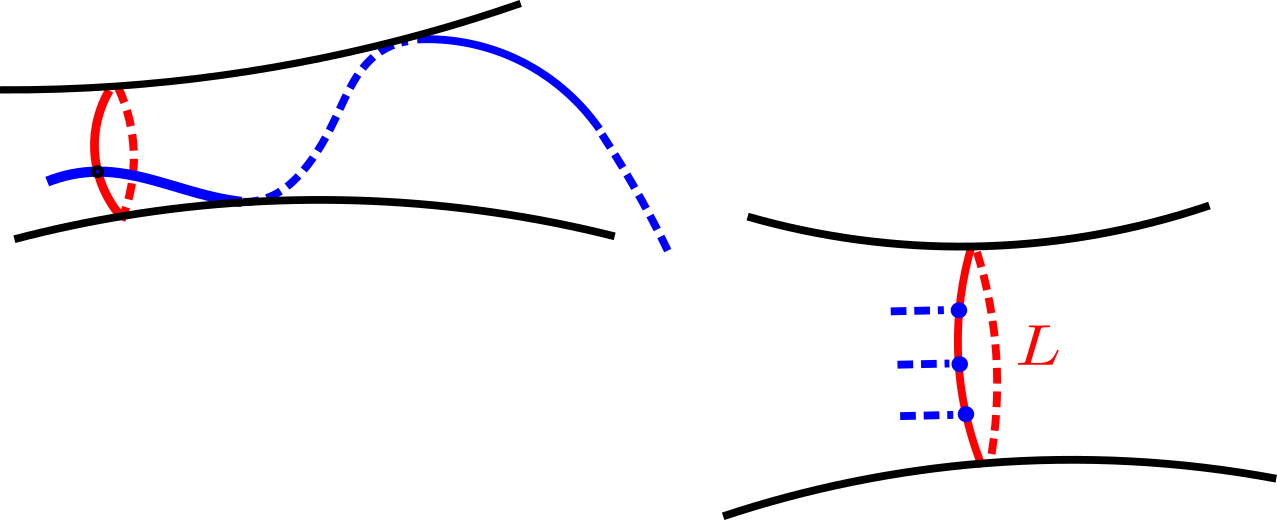}
			\end{center}
		\end{wrapfigure}
		
		The limit point $ y^+ $ is clearly conical and hence horospherical, implying that there exists a point $ z \in \overline{Ny} \subseteq \overline{Ng\Gamma'} $ contained in $ L $. The $ N $-orbit closure of $ y $ is $ a_{\tau_2} $-invariant up to a rotation in $ M $, therefore the set of intersection points of $ \overline{Ny} $ with $ L $ is $ \varepsilon $-dense in $ L $ (again up to rotation). As $ L $ and $ \varepsilon $ were arbitrary, we conclude that $ \overline{MNg\Gamma'} $ contains all closed geodesics and hence $ \mathcal{E}_{\Gamma'} \subset \overline{MNg\Gamma'} $.
		We leave the remaining details to the reader.\hfill $\square$

	\begin{remark}\;
		\begin{enumerate}%[leftmargin=*]
			\item To show denseness in the frame bundle one uses a stronger form of non-arithmeticity, taking into account the $ M $ components of the loxodromic elements in $ \Gamma' $, see \cite{Guivarch-Raugi} and \cite{MS}.
			\item For a higher rank generalization of the notion of a horospherical limit point and the above theorem regarding orbit closures, see \cite{LO}.
		\end{enumerate}
	\end{remark}
	
	A group $ \Gamma' $ is called geometrically finite if all of its limit points in $ \Lambda_{\Gamma'} $ are either conical (and hence horospherical) or bounded parabolic, see e.g.~\cite[Theorem 12.4.5]{Ratcliffe} or \cite{Bowditch} for other quivalent definitions and a more complete discussion. In such a case all horospheres are either dense in $ \mathcal{E}_{\Gamma'} $ or closed.
	
	It is worth noting that a discrete group $ \Gamma' < G_d $ always has non-horospherical limit points unless $ \Gamma' $ is convex cocompact (or cocompact). Indeed, if $ \mathcal{D}_z(\Gamma') $ denotes the Dirichlet domain of $ \Gamma' $ in $ \Hd $ based at $ z $, then the set $ \overline{\mathcal{D}_z(\Gamma')} \cap \Lambda_{\Gamma'} $ consists of non-horospherical limit points, whenever non-empty (this set is empty only when $ \Gamma' $ is convex cocompact), see e.g.~\cite[Corollary 4.10]{DalBo_Book}.
	\medskip
	
	In this paper we consider normal subgroups of lattices $ \Gamma \lhd \Gamma_0 $, which satisfy $ \Lambda_\Gamma=\Lambda_{\Gamma_0}=S^{d-1} $ (see e.g.~\cite[Theorem 12.2.14]{Ratcliffe}) and consequently $ \mathcal{E}_\Gamma = \GdmodGamma $. Therefore the theorem above may be applied, as done in the introduction, to show that a point has a non-dense horospherical orbit in $ \GdmodGamma $ if and only if it is quasi-minimizing.
	
	We note that thus far, apart from $ \mathcal{E}_\Gamma $ and closed $ MN $-orbits, no other horospherical orbit closures have been fully described in the literature, see e.g.~\cite{Bellis,Matsumoto,Kulikov,Dalbo-Starkov,Apanasov,Starkov,CM,Ledrappier}\footnote{Note that Prop.~3 in \cite{Ledrappier,Ledrappier_Err} is false, see \cite{Bellis} and \S\ref{Sec: Weakly mixing} for counterexamples.} for relevant constructions and partial results in this direction.
	
    \subsection{Bruhat decomposition for geometers}\label{subsec:bruhat}

     Let $g, h\in G_d$ be two points which we think of as frames in $\bH^d$. 
     \Cref{figure_bruhat} indicates a way of moving from $g$ to $h$ using horospherical and diagonal
     flows:

     \begin{figure}[ht]
       \includegraphics{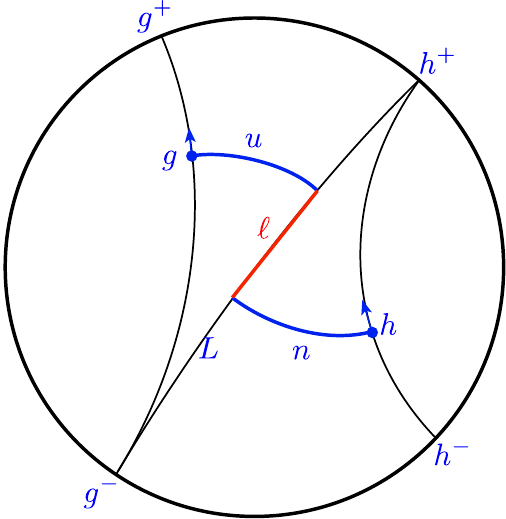}
       \caption{We slide $g$ to $h$ by applying an unstable horospherical element $u\in U$, a
         slide $\ell\in MA$ along $L$, and a stable horospherical element $n\in N$.}
       \label{figure_bruhat}
     \end{figure}

     Recall that $N$ preserves the forward-endpoint $g^+$ of a frame $g$, and $U$
     preserves the backward endpoint $g^-$.
     
     Supposing that $ g^-\ne h^+$,  let $L$ denote the directed geodesic
     joining $g^-$ to $h^+$. There is a 
     unique element $u\in U$ that takes the $A$-orbit of $g$
     to $L$, and then a unique element $n$ taking $L$ to the $A$-orbit of $h$. The
     discrepancy between $u g$ and $n^{-1} h$ is given by an element $\ell$ of $MA$, describing
     translation along and rotation around $L$. This gives us a decomposition
     $$ h = n \ell u g$$
     or in other words we express $hg^{-1}$ in the product $N M A\,U$.

     To deal with the case $g^- = h^+$, we fix  $\omega$
     an involution that reverses the first vector of the baseframe, or equivalently
     satisfies $a_{-t} = \omega a_t \omega$ (a representative of the generator of the Weyl group). Thus $(\omega g)^+ = g^-=h^+$, which means
     $h$ can be written as $n\ell \omega g$, where $n\ell\in NMA$.  What we have seen is
     how to write $hg^{-1}$, or any element of $G_d$, in one of two forms. This gives the
     {\em Bruhat decomposition}\footnote{The classical form of the Bruhat decomposition is $ G_d=P\omega P \cup P $ with $ P=MAN $. Equation \eqref{eq:Bruhat decomposition of G_d} is given by multiplication by $ \omega $.},
     	\begin{equation}\label{eq:Bruhat decomposition of G_d}
	G_d = NMA\,U \cup NMA\omega.
	\end{equation}
	We further record the fact that the set $ NMA\,U \subset G_d $ is open, and Zariski-open. The multiplication map $ N \times MA \times U \to NMA\,U $ is a diffeomorphism (see e.g.~\cite[Lemma 6.44]{Knapp}) and an isomorphism of varieties (see e.g.~\cite[Lemma 8.3.6(ii) and its proof]{Springer}). 
      
        \medskip

        We will use this decomposition to study $N$ orbit closures using the following
        observations: Suppose we are in a situation where $hg^{-1}$ lies in some compact set
        in $NMA\,U$, and hence $n,\ell$ and $u$ are controlled as well. If we push back
        via geodesic flow to $a_{-T}g$ and $a_{-T}h$ we can write
        $$
        a_{-T} h =  (a_{-T}na_T)\ \ell \ (a_{-T} u a_T ) \ (a_{-T} g).
        $$
        Now since $u$ is in the unstable horospherical group, $a_{-T} u a_T \approx e$
        for large $T$, while $\hat  n  = a_{-T} n a_T$ is quite large. This is illustrated
        in \Cref{fig:bruhat-trick}, where the stable horocycle is expanded by $a_{-T}$ and the unstable
        horocycle is contracted. Rewriting slightly,
        $$
        \hat n^{-1} (a_{-T}h) =  \ell u_T (a_{-T} g)
        $$
        where $u_T \approx e$. In other words, the horospherical orbit of $a_{-T} h$ comes
        very close to $\ell a_{-T}g$.

     \begin{figure}[ht]
       \includegraphics{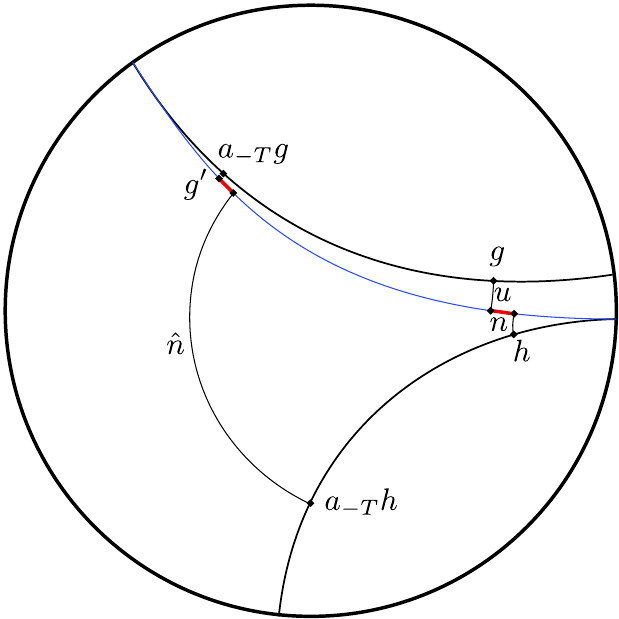}
       \caption{Sliding back the Bruhat decomposition by $a_{-T}$. The red segments
         designate $\ell\in MA$. The stable horocycle ($n$) is expanded and the unstable
         contracted. $g'$ is $\ell u_T a_{-T}g$, where $u_T$ is the contracted unstable
         horocycle.}
       \label{fig:bruhat-trick}
     \end{figure}

        We will use this phenomenon several times in this paper, usually in a setting
        where, fixing $x,y\in \GdmodGamma$, we consider large times $T$ where $a_T x$ and
        $a_T y$ are close to each other. We lift $a_T x$ and $a_T y$ to $h$ and $g$,
        respectively, and use this argument to control recurrences of $Nx$ to $MAy$.
        See  \Cref{lemma_delta of geometric limit in Delta}, \Cref{prop:fellow travelers
          in horo closure}, and \Cref{Theorem:semigroup is everything}.

	\subsection{Geodesic laminations}
	Let $\Sigma$ be a hyperbolic $d$-manifold.
    
    \begin{definition}\label{def_geodesic lamination}
    A \emph{geodesic lamination} $\lambda \subset \Sigma$ is a non-empty closed set that is a disjoint union of complete simple geodesics called its \emph{leaves}.
    Additionally, we require that $\lambda$ is equipped with a continuous local product structure, i.e., we have the data of a covering $\{U_i\}$ of a neighborhood of $\lambda$ in $\Sigma$ and charts $\phi_i: U_i \to \R\times \R^{d-1}$ where $\phi_i(\lambda\cap U_i) = \R \times B$ and $B\subset \R^{d-1}$.  
    Moreover, the transitions are required to be of the form $\phi_{ij}(x,y) = (f_{ij}(x,y), g_{ij}(y))$, for $y\in B$.
    \end{definition}

    In the language of homogeneous spaces $ \lambda \subset K \backslash \GdmodGamma $ is a geodesic lamination if it is a projection of a closed $ A $-invariant set in $ M \backslash \GdmodGamma \cong T^1\Sigma $ where each fiber contains exactly two points.

    Although we will be working with geometrically infinite manifolds, in this paper, they typically cover a closed (or finite volume) manifold $\Sigmazero$, and our geodesic laminations are invariant under the deck group of the cover, hence define a geodesic lamination $\lambda_0 \subset \Sigmazero$.
    As such, we now specialize to the case that $\Sigmazero$ is closed.

    A lamination is \emph{connected} if it is connected as a topological space and \emph{minimal} if it has no non-trivial sublaminations, i.e. proper, non-empty subsets which are themselves laminations. 
    Alternatively, a lamination is minimal if each of its leaves is dense.
    A lamination is \emph{orientable} if it admits a continuous orientation. 
    \medskip

    When $d=2$, there is a good deal of additional structure for geodesic laminations.
    Let $S_0$ be a closed oriented surface of genus $g\ge 2$ and $\Sigmazero$ be a hyperbolic structure on $S_0$.
    The structure theory for geodesic laminations tells us that the local product structure is unique, and any geodesic lamination $\mu\subset \Sigmazero$ can be decomposed uniquely as a union of minimal sublaminations $\mu_1, ..., \mu_m$ and isolated leaves $\ell_1, ..., \ell_k$ that accumulate (or spiral) onto the minimal components with bounds $0\le k\le 6g-6$ and $1\le m\le 3g-3$.

    An argument using the Poincaré-Hopf index formula for a line field on $S_0$ constructed from a geodesic lamination proves that the area of $\mu$ is always $0$, hence the geodesic completion of the complement of $\mu$ is a (possibly non-compact, disconnected) negatively curved surface with totally geodesic boundary and area equal to the area of $\Sigmazero$.
    We sometimes refer to the surface obtained by geodesic completion of the complement of $\mu$ with respect to some negatively curved metric as being obtained by \emph{cutting open along $\mu$}.
    A lamination is \emph{filling} if the surface obtained by cutting open along $\mu$ is a union of ideal polygons.
    A lamination is \emph{maximal} if it is not a proper sublamination of any other geodesic lamination; equivalently, the cut surface is a union of $4g-4$ ideal triangles.

    Geodesic laminations were introduced by Bill Thurston in \cite{Thurston:notes, Thurston:bulletin} and have become an important tool in various problems in Teichm\"uller theory, low dimensional geometry topology and dynamics. A comprehensive introduction to the structure theory for geodesic laminations in dimension $2$ can be found in \cite{CB}; see also \cite[Chapter 4]{CEG}.
\medskip

We will later make use of the notion of chain recurrence in a geodesic lamination which goes back to Conley for general topological dynamical systems \cite{Conley:chain_recurrent}:
		\begin{definition}[see \cite{Thurston:stretch}]\label{def:chain_recurrent}
			A point $ p $ in a geodesic lamination $ \lambda $ is called \emph{chain recurrent} if for any $ \varepsilon $ there exists an $ \varepsilon $-trajectory of $ \lambda $ through $ p $. That is, there exists a closed unit speed path through $ p $ such that for any interval of length 1 on the path there is an interval of length 1 on some leaf of $ \lambda $ such that the two paths remain within $ \varepsilon $ distance from one another in the $ C^1 $ sense.
		\end{definition}

		If one point on a leaf $ \ell \subset \lambda$ is chain recurrent, then every point of $\ell$ is chain recurrent, and chain recurrence is a clearly a closed condition.
		Thus the subset $\lambda^{\CR} \subset \lambda$ of chain recurrent points is a sublamination, called the \emph{chain recurrent part} of $\lambda$.  
  
		\section{Tight Lipschitz functions and maximal stretch laminations}\label{Section_Tight functions}

    In this section, $\Sigmazero$ is a closed hyperbolic $d$-manifold, with $d\ge 2$.
    Recall that a Lipschitz map $f:\Sigmazero\to \R/c'\Z$ representing a cohomology class $\varphi \in H^1(\Sigmazero,c'\Z)$ is called \emph{tight} if the Lipschitz constant of $f$ is equal to 
    \[\kappa = \sup_{\gamma} \frac{|\varphi(\gamma)|}{\ell(\gamma)},\]
    where the supremum is taken over geodesic curves $\gamma$ in $\Sigmazero$ and we think of $|\varphi(\gamma)|$ as the geodesic length of $f(\gamma)$ in the circle $\R/c'\Z$.
    Using either the results of Daskalopoulos-Uhlenbeck \cite{DU:circle} or Gu\'eritaud-Kassel \cite{GK:stretch}, we know that given a cohomology class $\varphi \in H^1(\Sigmazero,c'\Z)$, there is a tight Lipschitz map $f:\Sigmazero\to \R/c'\Z$ inducing $\varphi$ on $\pi_1$.
    
    We will be especially interested in $1$-Lipschitz tight maps.
    Let $c = \kappa\inverse c'$ so that post-composition of $f$ with an affine map of the circle $\R/c'\Z \to \R/ c\Z$ yields a new  map \[\tau_0:\Sigmazero\to \R / c \Z\]
    representing the cohomology class $\kappa\inverse \varphi$ that is $1$-Lipschitz and tight.
    In this section, we establish some properties for arbitrary tight $1$-Lipschitz maps (not just those coming from any particular construction).

    The projection $p_K : \GmodGammazero\to \Sigmazero$ pulls $\tau_0$ back to a function that is constant on frames in the same fiber.  Abusing notation, we write $\tau_0: \GmodGammazero\to \R / c\Z$ for $\tau_0\circ p_K$.
	
    Fix some positive $\delta < c/2$ and define a function 
        \begin{align*}
            L: \GdmodGammazero &\to \R \\ 
            x & \mapsto \frac{1}{\delta}|\tau_0(a_{\delta/2}x) - \tau_0(a_{-\delta/2}x)|
        \end{align*}
        that records the efficiency of  $\tau_0$ along geodesic trajectories of length $\delta$.\footnote{Note that the absolute value in the definition of $L$ is unambiguous, because $\delta <c/2$ and $\tau_0$ is $1$-Lipschitz.}
        Clearly, $L$ is continuous,  $M$-invariant, and takes the same value on antipodal frames, i.e., if $x$ and $y$ are related by $p_M(x) = \pm p_M(y)\in M \backslash \GdmodGammazero\cong T^1\Sigmazero$, then $L(x) = L(y)$.
        Moreover, the value of $L$ does not exceed the Lipschitz constant of $\tau_0$.

	\begin{proposition}\label{proposition:definition of lamination L}
	    Let $\tau_0: \Sigmazero \to \R/c\Z$ be $1$-Lipschitz and tight, and let $\L_0\subset \GdmodGammazero$ be the $A$-invariant part of $L\inverse (1)$.  Then $p_K(\L_0)$ is a non-empty geodesic lamination on $\Sigmazero$.
	\end{proposition}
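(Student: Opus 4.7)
The plan is to verify three properties of $\mathcal L_0$: it is non-empty, each point $x\in\mathcal L_0$ corresponds to a complete geodesic on which the lift $\tilde\tau_0$ is affine of slope $\pm 1$, and these geodesics in $\Sigmazero$ assemble into an honest lamination.  The starting point is to show $\sup L = 1$.  The bound $L\le 1$ is immediate from $\tau_0$ being $1$-Lipschitz and $\delta<c/2$.  For the reverse inequality, fix a closed geodesic $\gamma$ of length $\ell$, set $f_\gamma(t) := \tilde\tau_0(p_K(a_t\tilde x_\gamma))$ for a lift of a frame on the $A$-orbit of $\gamma$, and let $\nu_\gamma$ be the normalized arclength measure on that orbit in $\GdmodGammazero$.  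A Fubini-type change of variables together with the telescoping identity $f_\gamma(t+\ell) - f_\gamma(t) = c\,\varphi(\gamma)$ gives
\[ \int L\,d\nu_\gamma \;\ge\; \frac{1}{\ell\delta}\left|\int_0^\ell \bigl(f_\gamma(t+\tfrac{\delta}{2}) - f_\gamma(t-\tfrac{\delta}{2})\bigr)\,dt\right| = \frac{c|\varphi(\gamma)|}{\ell}. \]
Since $\tau_0$ is tight with Lipschitz constant $1$, $\sup_\gamma c|\varphi(\gamma)|/\ell(\gamma) = 1$, whence $\sup L \ge 1$.

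I would then pick closed geodesics $\gamma_n$ with $c|\varphi(\gamma_n)|/\ell(\gamma_n)\to 1$ and extract a weak-$*$ subsequential limit $\nu$ of $\nu_{\gamma_n}$ in the compact space of probability measures on $\GdmodGammazero$.  The limit $\nu$ is $A$-invariant, and continuity of $L$ combined with $L\le 1$ force $\nu$ to be supported on $L^{-1}(1)$; hence $\op{supp}\nu \subset \mathcal L_0$, so $\mathcal L_0 \ne \emptyset$.  Next, to characterize $\mathcal L_0$, I observe that $x\in \mathcal L_0$ if and only if $f(t) := \tilde\tau_0(p_K(a_t\tilde x))$ is affine of slope $\pm 1$ on all of $\R$ for any lift $\tilde x$.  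Indeed, $L(a_t x)=1$ for every $t$ says $f(t+\delta/2) - f(t-\delta/2) = \varepsilon(t)\delta$ for some $\varepsilon(t)\in\{\pm 1\}$ that is continuous in $t$ and hence constant; the $1$-Lipschitz property of $f$ then forces isometry on each window $[t-\delta/2,t+\delta/2]$, and the overlapping windows cover $\R$.

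For the lamination structure, the key step is to rule out transverse intersections of two distinct lifted leaves $\tilde\ell_1 \ne \tilde\ell_2$ in $\bH^d$.  Suppose $\tilde\ell_1$ and $\tilde\ell_2$ cross at $\tilde p$ at angle $\theta \in (0,\pi)$.  Choose points $q_i$ at distance $t>0$ along $\tilde\ell_i$ from $\tilde p$, with the direction along each leaf picked so that $|\tilde\tau_0(q_1) - \tilde\tau_0(q_2)| = 2t$; the angle $\angle q_1 \tilde p q_2$ is then either $\theta$ or $\pi-\theta$, and in either case the hyperbolic cosine law yields
\[ \cosh d(q_1,q_2) = \cosh^2 t \pm \sinh^2 t\,\cos\theta < \cosh^2 t + \sinh^2 t = \cosh 2t, \]
so $d(q_1,q_2) < 2t$, contradicting the $1$-Lipschitz property of $\tilde\tau_0$.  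By $\Gamma_0$-equivariance, the same argument prevents transverse self-intersections of a single projected leaf.  Thus $p_K(\mathcal L_0)$ is a closed disjoint union of simple complete geodesics, where closedness follows from compactness of $K$.  The local product structure is inherited from $\mathcal L_0 \subset \GdmodGammazero$: in a chart transverse to the $A$-flow, $\mathcal L_0$ is locally a product of its transversal slice with an $A$-arc, and the $M$-invariance of $L$ (hence of $\mathcal L_0$) ensures that exactly two antipodal frames sit above each lamination point, producing the lamination chart on $\Sigmazero$.

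The main obstacle I expect is the sign bookkeeping in the disjointness step: slopes on each leaf are independently $\pm 1$, and one must pick forward/backward directions along each leaf to realize the full displacement $2t$.  In every case, however, the decisive angle differs from $\pi$ — the only angle at which the two lifted leaves would coincide — yielding a uniform strict inequality and completing the argument.
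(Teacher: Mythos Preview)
Your proof is correct and follows essentially the same approach as the paper's: both establish non-emptiness by pushing arclength measures on closed geodesics to a weak-$*$ limit supported in $L^{-1}(1)$, and both rule out transverse crossings via the $1$-Lipschitz contradiction (the paper simply says the geodesic segment between the two far endpoints is strictly shorter than $2s$, while you make this explicit with the hyperbolic cosine law). Your intermediate characterization of $\mathcal L_0$ as the locus where the lifted $\tau_0$ is affine of slope $\pm 1$ along the $A$-orbit is implicit in the paper's argument and nice to have stated; the minor sign and $\pm$ bookkeeping you flag at the end is handled correctly, since the decisive point is only that $\angle q_1\tilde p q_2\in(0,\pi)$.
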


	\begin{proof}
        First we show that the $A$-invariant part of $L\inverse (1)$ is non-empty, and then we show that its projection to $\Sigmazero$ is a geodesic lamination.  Note that the $A$-invariant part of $L\inverse(1)$ is automatically $MA$-invariant (by $M$-invariance of $L$).
       
        We consider a sequence of closed geodesic curves $\gamma_n\subset \Sigmazero$ such that 
        \[\lim_{n\to \infty}\frac{|\varphi(\gamma_n)|}{\ell(\gamma_n)} =1. \]
		Now take the $MA$-invariant probability measure $\mu_n$ on $\GdmodGammazero$ supported on all frames $x$ for which $p_M(x)$ is tangent to $\gamma_n$ and symmetric under taking antipodes.
        
        Since $\GdmodGammazero$ is compact, we can extract an $MA$-invariant probability measure $\mu_0$ that is a weak-$*$ limit point of $\{\mu_n\}$.
        Up to passing to a subsequence with the same name, we can assume that $\mu_n \to \mu_0$.
        Our goal is to show that $L(x)=1$ for every $x$ in the support of $\mu_0$.  In particular, this implies that $\tau_0|_{Ax}$ is locally isometric.

        Observe that for any fixed $x\in \GdmodGammazero$, the function $t\mapsto \tau_0(a_tx)$ is locally $1$-Lipschitz, and is therefore differentiable Lebesgue almost everywhere.  
        For any $x_n$ in the support of $\mu_n$, using Fubini we have
        \begin{align*}
            \int L~ d\mu_n &=  \frac{1}{\ell(\gamma_n) } \int_{0}^{\ell(\gamma_n)}\frac{1}{\delta}|\tau_0(a_{\delta/2}a_tx_n) - \tau_0(a_{-\delta/2}a_tx_n)| ~dt  \\
            & \ge \frac{1}{\ell(\gamma_n) }\left| \frac{1}{\delta}\int_{-\delta/2}^{\delta/2}\int_0^{\ell(\gamma_n)}  \left.\frac{d}{du}\right|_{u = s}\tau_0(a_{u+t}x_n)~dt ~ds\right| \\
            & = \frac{1}{\ell(\gamma_n) }\left| \frac{1}{\delta}\int_{-\delta/2}^{\delta/2}\varphi(\gamma_n) ~ds\right| 
             = \frac{|\varphi(\gamma_n)|}{\ell(\gamma_n)},
        \end{align*} 
      
       and this ratio tends to $1$ as $n\to \infty$.
       Since $\mu_n \to \mu_0$ weak-* and $L$ is continuous, we obtain
       \[\int L ~d\mu_0 = 1.\]
       On the other hand, $L\le 1$, implying $L(x)=1$ for every $x$ in the topological support of $\mu_0$, which itself is an $MA$-invariant set.
       This proves that $\mathcal L_0 \subset L\inverse (1)$ is non-empty. 
       
        \medskip

        Our next goal is to show that $\L_0$'s projection to $\Sigmazero$ is in fact a geodesic lamination. 
        By continuity,  $L\inverse(1)$ is closed, and hence also $\L_0$.  
        It therefore suffices to prove that if $x, y\in \L_0$ satisfy $p_K(x) = p_K(y)$, then $p_K(Ax) = p_K(Ay)$. 
        
        It will be convenient to work with a lift $\tau: \GdmodGamma\to \R$ of $\tau_0$.  Let $\L = p_\Z\inverse (\L_0)$.
        For sake of contradiction, suppose there are $x, y\in \L$ where $p_K(x)=p_K(y)$ but $p_K(Ax)$ meets $p_K(Ay)$ at a definite angle (different from $0$ or $\pi$).

        Without loss of generality, we assume that for $s>0$, $\tau$ is increasing along both $p_K(a_sx)$ and $p_K(a_sy)$, so  $\tau(p_K(a_{s}y))-\tau(p_K(a_{-s}x))=2s$.
        Since $p_K(Ax)$ and $p_K(Ay)$ meet at a definite angle, the length of the shortest geodesic segment connecting $p_K(a_{-s}y)$ to $p_K(a_sx)$ is strictly smaller than $2s$.  
        This contradicts the fact that $\tau$ is $1$-Lipschitz on $\Sigma$ and completes the proof that $p_K(\L_0)$ is a non-empty geodesic lamination.
	\end{proof}

    	We draw the following useful corollary:
    	\begin{lemma}\label{lemma:u_is_a_1C-qi}
        Let $\tau_0: \Sigmazero\to \R/c\Z$ be $1$-Lipschitz and tight, and let $\tau: \Sigma \to \R$ be any lift.
		There exists a constant $ C_\tau>0 $ satisfying for all $ y,z \in \GdmodGamma $
		\begin{equation}\label{eq:u_is_a_1C-qi}
			d(y,z)-C_\tau \leq |\tau(y)-\tau(z)| \leq d(y,z).
		\end{equation}
	\end{lemma}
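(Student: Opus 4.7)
The upper bound is immediate: the lift $\tau$ is $1$-Lipschitz on $\Sigma$, hence also on $G_d/\Gamma$ after composition with $p_K$, yielding $|\tau(y) - \tau(z)| \leq d_{G_d/\Gamma}(y, z)$. For the lower bound, the plan is to use the maximal-stretch lamination $\mathcal{L} = p_\Z^{-1}(\mathcal{L}_0)$ from \Cref{proposition:definition of lamination L} as a system of geodesic ``highways'' in $\Sigma$ along which $\tau$ grows at unit rate. I would decompose a walk from $y$ to $z$, assuming without loss of generality $\tau(z) \geq \tau(y)$, into three segments: (i) a bounded-length hop from $y$ to a nearby frame $\hat{y} \in \mathcal{L}$; (ii) a ride along the $A$-orbit of $\hat{y}$ to a point $y''$ with $\tau(y'') = \tau(z)$; (iii) a bounded-length closing hop from $y''$ to $z$ within the same $\tau$-level.

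Segments (i) and (ii) are straightforward. Compactness of $\Sigma_0$ bounds the distance from any point to the projected lamination $p_K(\mathcal{L}_0) \subset \Sigma_0$ by some $D_1$, and a further hop across the compact $K$-fiber yields a frame $\hat{y} \in \mathcal{L}$ within distance $D_1 + \mathrm{diam}(K)$ of $y$. Since $\hat{y} \in \mathcal{L}$, \Cref{proposition:definition of lamination L} guarantees $\tau(a_t \hat{y}) = \tau(\hat{y}) \pm t$, so choosing the orientation in which $\tau$ increases, we can ride a distance $|\tau(z) - \tau(\hat{y})| \leq |\tau(y) - \tau(z)| + D_1$ along the leaf to reach the desired $y''$.

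The main obstacle is segment (iii): $y''$ and $z$ share a $\tau$-value but can project to distinct points of $\Sigma_0$ and in principle lie in different sheets of the cover. The key estimate is that the displacement function $w \mapsto d(w, g_1 w)$ for a generator $g_1 \in \Gamma_0$ of the $\Z$-deck group is $g_1$-invariant (as $g_1$ acts by isometries), hence descends to a continuous function on the compact base $\Sigma_0$ and so is bounded by some $M < \infty$. To close the walk, one lifts a geodesic of length at most $\mathrm{diam}(\Sigma_0)$ from the image of $y''$ to that of $z$ in $\Sigma_0$, starting at $y''$; the endpoint is some deck translate $g_1^m z$ with $|m| \leq \mathrm{diam}(\Sigma_0)/c$, since $\tau$ is $1$-Lipschitz and $\tau(y'') = \tau(z)$. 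Therefore $d(y'', z) \leq \mathrm{diam}(\Sigma_0) + |m| M$ is universally bounded. Applying the triangle inequality across the three segments yields $d(y, z) \leq |\tau(y) - \tau(z)| + C_\tau$ for a suitable constant $C_\tau$ absorbing $2D_1$, $\mathrm{diam}(K)$, and the segment-(iii) bound.
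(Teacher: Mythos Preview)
Your argument is correct and rests on the same two ingredients as the paper's: a leaf of $\L$ along which $\tau$ is an isometry, and the fact that any two points with the same $\tau$-value lie a bounded distance apart. The organization, however, differs. The paper fixes a \emph{single} point $x\in\L$ once and for all and observes that $F=\tau^{-1}([0,c))$ is a precompact fundamental domain for the $\Z$-action on $\GdmodGamma$; hence any two points sharing a $\tau$-value lie in a common $\Z$-translate of $F$ and are at distance at most $\mathrm{diam}(F)$. The triangle inequality through $a_{t_y}x$ and $a_{t_z}x$ (where $\tau(a_{t_y}x)=\tau(y)$ and $\tau(a_{t_z}x)=\tau(z)$) then gives the bound immediately with $C_\tau=2\,\mathrm{diam}(F)$. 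This makes your segment (i) unnecessary --- there is no need to pick a leaf near $y$, since a fixed ruler suffices --- and collapses your segment-(iii) closing argument (path-lifting plus the bounded displacement of the deck generator) into the single sentence ``same $\tau$-value $\Rightarrow$ same translate of $F$''. Your route reaches the same conclusion but with more moving parts; one small point is that in segment (iii) you silently pass between $\GdmodGamma$ and $\Sigma$, which costs another $\mathrm{diam}(K)$ that should be absorbed into $C_\tau$.
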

	
	\begin{proof}
		The upper bound in \eqref{eq:u_is_a_1C-qi} follows from $ \tau $ being 1-Lipschitz on $\Sigma$. Let us prove the lower bound. 		
		Given $ k \in \Z \cong \Gammazero/\Gamma $ denote by $ k.\square $ the isometric action of the deck transformation on $ \Sigma $. For any $ k $ and $ x \in \GdmodGamma $ we have $ \tau(k.x)=\tau(x) + ck $, and $ F=\tau^{-1}([0,c)) $ is a precompact fundamental domain for the $ \Z $-action on $ \GdmodGamma $. By Proposition \ref{proposition:definition of lamination L}, there exists $ x \in \L \subset \GdmodGamma $ satisfying $ \tau(a_t x) = \tau(x) + t $ for all $ t $. We will use the values of $ \tau $ along $ Ax $ as a kind of ruler for coarsely measuring distances in $ \Sigma $.
		
		Now, given $ y,z \in \GdmodGamma $ there are $ t_y$ and $t_z $ satisfying $ \tau(a_{t_y}x)=\tau(y) $ and $ \tau(a_{t_z}x)=\tau(z) $. Since $a_{t_y}x$ and $y$ lie in the same $\tau$ level set, they lie in the same $ \Z $-translate of $F$, and so their distance in $\GdmodGamma$ is bounded by $\mathrm{diam}(F)$, and similarly for $a_{t_z}x$ and $z$.
        Thus
		\begin{align*}
			d(y,z) &\leq d(y,a_{t_y}x)+d(a_{t_y}x,a_{t_z}x)+d(a_{t_z}x,z) \leq \\
			&\leq |t_y-t_z|+2\mathrm{diam}(F)  \\
			&= |\tau(y)-\tau(z)| + 2\mathrm{diam}(F).
		\end{align*}
		Set $ C_\tau=2\mathrm{diam}(F) $ and the claim follows.
	\end{proof}

    Recall our notation of
	\[ \Qm = \left\{ x \in \GdmodGamma : (a_t x)_{t \geq 0} \text{ is quasi-minimizing} \right\} \]
	and
	\[ \Qm_\pm = \{ x\in \Qm : \lim_{t \to \infty} \tau(a_t x)=\pm\infty \}. \]
	That is, $ \Qm_+ $ is the set of all points in $ \GdmodGamma $ whose geodesic trajectory as $ t \to +\infty $ is quasi-minimizing and escaping through the positive end of $ \Sigma $.
	\begin{definition}
		We define the \emph{$ \omega $-limit set mod $ \Z $} of an $ A $-invariant set $ F \subset \GdmodGamma $ by
		\[ \olim(F) = p_\Z^{-1}\left( \left\{ y \in \GdmodGammazero : \exists x \in p_\Z(F) \text{ and } t_j \to \infty \text{ s.t. } a_{t_j}x \to y \right\} \right). \]
		That is, $ \olim(F) $ contains the lifts of accumulation points in $ \GdmodGammazero $ of geodesic trajectories in $ p_\Z(F) $.
	\end{definition}

	Denote $ \Qm_\omega = \olim(\Qm) $, the $\omega$-limit set of quasi-minimizing rays in $ \GdmodGamma $.
    Let $\L_0^{\CR}$ denote the chain recurrent part of $\L_0$.
    Then $\Qm_\omega$ is related to $\L_0^{\CR}$ as follows.

    \begin{theorem}[Definition of $\lambda_0$]\label{Theorem:omega limit of Qm is a lamination in max stretch locus of u}
    The projection $p_K(p_\Z(\Qm_\omega))$, denoted by $\lambda_0$, is a chain recurrent geodesic lamination on $\Sigmazero$ contained in $p_K(\L_0^{\CR})$.
    
    Furthermore, every $ A $-orbit in $ \Qm_\omega $ is mapped isometrically to $\R$ by $\tau$ and is bi-minimizing, i.e., isometrically embedded in $\GdmodGamma$.
    \end{theorem}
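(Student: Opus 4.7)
The plan is to reduce the entire theorem to the single identity
\[ \phi_z(s) := \tau(a_s z) - \tau(z) - s \equiv 0 \qquad \text{for every } z \in \Qm_\omega. \]
The key mechanism making this work is the $\Z$-equivariance $\tau(k \cdot y) = \tau(y) + ck$, which forces $\phi_{k \cdot y} = \phi_y$, so every point shares its $\phi$-function with all of its $\Z$-translates. This is what distinguishes the $\omega$-limit from a generic sequence of $\Qm$ points.

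First I would fix $z \in \Qm_\omega$ facing (say) the positive end (the negative case is symmetric) and unfold the definition to obtain $x \in \Qm_+$, $t_j \to \infty$, and $\Z$-translates $x_j \in \Z \cdot x$ with $a_{t_j} x_j \to z$ in $\GdmodGamma$. By the $1$-Lipschitz property of $\tau$ the function $\phi_x$ is non-increasing on $[0, \infty)$, and the quasi-minimizing estimate combined with \Cref{lemma:u_is_a_1C-qi} yields $\phi_x \ge -(c + C_\tau)$; hence $P := \lim_{t \to \infty} \phi_x(t)$ exists. The $\Z$-invariance then gives $\phi_{x_j} = \phi_x$ for every $j$, so for each fixed $s \in \R$ and $j$ large,
\[ \phi_{a_{t_j} x_j}(s) \;=\; \phi_x(t_j + s) - \phi_x(t_j) \;\longrightarrow\; P - P = 0, \]
while continuity of $\tau$ gives $\phi_{a_{t_j} x_j}(s) \to \phi_z(s)$. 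Therefore $\phi_z \equiv 0$.

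From here everything falls out. The identity $\tau(a_s z) = \tau(z) + s$ makes $\tau|_{Az}$ an isometry onto $\R$; combining the lower bound $d(a_{s_1} z, a_{s_2} z) \ge |\tau(a_{s_2} z) - \tau(a_{s_1} z)| = |s_2 - s_1|$ with the obvious upper bound shows $Az$ is isometrically embedded in $\GdmodGamma$, i.e., bi-minimizing. For the lamination assertion, $\tau(a_{\delta/2} z) - \tau(a_{-\delta/2} z) = \delta$ together with $\delta < c/2$ yields $L(p_\Z(a_s z)) = 1$ for all $s$, so $p_\Z(Az) \subseteq L^{-1}(1)$ and $p_\Z(z) \in \L_0$. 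Thus $\Qm_\omega \subseteq \L$, and $\lambda_0 \subseteq p_K(\L_0)$, a geodesic lamination by \Cref{proposition:definition of lamination L}; being a union of complete geodesics inside that lamination, $\lambda_0$ (after closing up if necessary) is itself a geodesic lamination. Chain recurrence follows from the fact that each $z \in \Qm_\omega$ lies in an $\omega$-limit set of the geodesic flow on the compact space $\GdmodGammazero$, and such points are chain recurrent for the flow; this translates directly to the lamination-theoretic sense of \Cref{def:chain_recurrent} via the local product structure of $\L_0$, giving $\lambda_0 \subseteq p_K(\L_0^{\CR})$.

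The main obstacle is the identity $\phi_z \equiv 0$. From \Cref{lemma:u_is_a_1C-qi} and the quasi-minimizing definition one only obtains the coarse bound $|\phi_z| \le c + C_\tau$ — enough to show that $z$ itself is quasi-minimizing, but too weak for exact isometric embedding. The resolution hinges on the fact that $x$ is a \emph{single} point in the definition of $\Qm_\omega$ (rather than a varying sequence), so that all $\Z$-lifts $x_j$ share one and the same monotone function $\phi_x$, and the telescoping $\phi_x(t_j + s) - \phi_x(t_j)$ cancels perfectly against the limit $P$ at infinity. A naive variation in which the $x_j$ were allowed to come from distinct points of $\Qm$ with only uniformly bounded constants would yield mere boundedness, not vanishing.
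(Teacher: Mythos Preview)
Your core argument is correct and takes a somewhat different route from the paper. The paper argues by contradiction: if $L(p_\Z(y)) < 1-\varepsilon$, then at each near-return of the quasi-minimizing ray to $y$ the function $t \mapsto \tau(a_t x) - t$ drops by at least $\delta\varepsilon$; telescoping over infinitely many returns drives it to $-\infty$, contradicting the lower bound coming from quasi-minimality and \Cref{lemma:u_is_a_1C-qi}. You instead observe directly that $\phi_x$ is monotone and bounded below, hence convergent to some $P$, and then $\phi_{a_{t_j}x_j}(s) = \phi_x(t_j+s)-\phi_x(t_j) \to P-P = 0$ together with continuity of $\tau$ gives $\phi_z \equiv 0$ in one stroke. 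The two arguments use identical ingredients; yours is more economical and delivers the ``furthermore'' clause (isometry of $\tau|_{Az}$, bi-minimality) immediately rather than via membership in $\L_0$, while the paper's contradiction makes the geometric mechanism of repeated $\tau$-loss (\Cref{figure_omegalimits}) more visible.

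Your chain-recurrence step has a gap as written. Chain recurrence for the geodesic flow on all of $\GdmodGammazero$ is vacuous --- the flow is transitive on a closed hyperbolic manifold, so every point is flow-chain-recurrent --- and this does not produce an $\varepsilon$-trajectory that follows leaves of $\L_0$, which is what \Cref{def:chain_recurrent} demands. What you actually need is chain recurrence of $p_\Z(z)$ \emph{within} $\omega(p_\Z(x)) \subset p_\Z(\Qm_\omega)$, which follows from the standard fact that the $\omega$-limit set of a single orbit in a compact system is internally chain transitive. The paper instead constructs the $\varepsilon$-trajectory by hand: a long enough segment of the quasi-minimizing ray between two $\varepsilon$-near returns to $y$ stays $\varepsilon$-close to $\Qm_\omega$, and one closes it up near $y$. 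Either repair is short; replace the appeal to ambient chain recurrence with one of them.
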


	\begin{proof}
    	First we show that $p_\Z(\Qm_\omega)\subset L\inverse(1)$. The basic idea is that if a point in the the $ \omega $-limit of a quasi-minimizing ray is not itself bi-minimizing, then the quasi-minimizing ray would be ``losing'' a definite amount of $ \tau $-value with each close encounter to that limit point, which would contradict the qausi-minimizing property, see \cref{figure_omegalimits}.
    
		Fix $ y \in \Qm_\omega $ and let $ x $ be a point in $ \Qm $ with $ a_{t_j}\pZ{x} \to \pZ{y} $ for some $ t_j \to \infty $. Assume for sake of contradiction that $ p_\Z (y) \notin L\inverse(1) $.  In other words, we have
		\[ L(p_\Z(y))<1-\varepsilon \]
		for some $ \varepsilon > 0 $. The function $ L $ is continuous and therefore $ L^{-1}\left([0,1-\varepsilon)\right) $ is an open set in $ \GdmodGammazero $ containing $ p_\Z(y) $. In particular, any point $z\in\GdmodGamma$ close enough to any $\Z$ translate of $y$ has $L(z)<1-\epsilon$.
 
		Since $ a_{t_j}\pZ{x} \to \pZ{y} $, for all large $ j $ we have
		$ L(p_\Z(a_{t_j}x)) < 1-\epsilon $, or in other words
		\begin{equation}\label{eq:delta epsilon}
|\tau(a_{t_j+\delta/2}x)- \tau(a_{t_j-\delta/2}x)| < \delta(1-\varepsilon). 
\end{equation}
		We may assume the above holds for all $ j\geq 1 $ and that $ a_{t_{j+1}} > a_{t_j}+\delta $. 
  Rename the $t_j\pm \delta/2$ as an increasing sequence $0=s_0<s_1<s_2<\cdots$. Then we have for any $k>1$ the telescoping sum
  \[
      \tau(a_{s_{2k}}x) - \tau(x)  = \sum_{i=1}^{2k} \tau(a_{s_i}x)-\tau(a_{s_{i-1}}x).
  \]
  Each term is bounded above by $s_{i}-s_{i-1}$ because $\tau$ is 1-Lipschitz.  Moreover, for each even $i$ we have $s_{i}-s_{i-1} = \delta$, which together with  \Cref{eq:delta epsilon} gives
  \[\tau(a_{s_i}x)-\tau(a_{s_{i-1}}x) < s_i-s_{i-1} - \delta\epsilon.\]
  We conclude that 
  \[\tau(a_{s_{2k}}x) -\tau(x) < s_{2k} - k\delta\epsilon.\]

		On the other hand, \Cref{lemma:u_is_a_1C-qi} implies
		\[ d(a_{s_{2k}}x,x)-C_\tau \leq  \tau(a_{s_{2k}}x)- \tau(x)\leq s_{2k} - k \delta \varepsilon \]
		for some $ C_\tau > 0 $ and all $ k \geq 1 $, which contradicts our assumption that $ x $ is quasi-minimizing.
		Hence we conclude that $ y \in L\inverse(1)$. Since $ \Qm_\omega $ is $ A $-invariant, we deduce that $p_\Z(\Qm_\omega)\subset \L_0$.

		\begin{figure}[h]
			\includegraphics[scale=0.7]{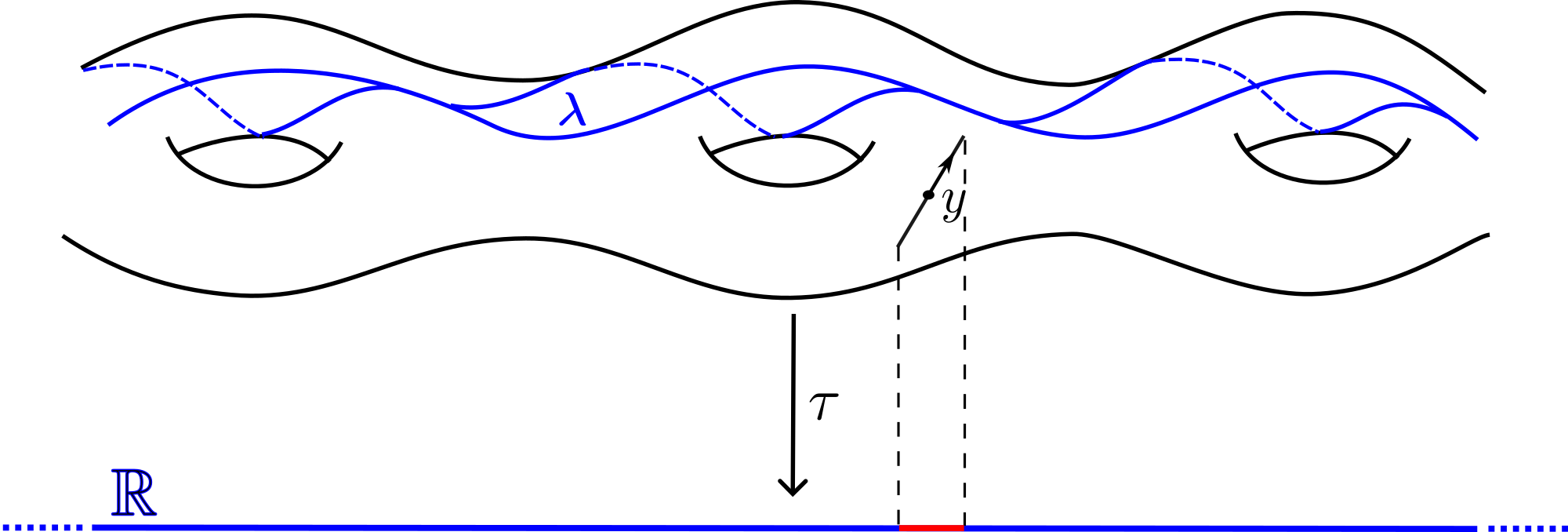}
			\caption{The $\omega$-limit set of $ \Qm $ is Lipschitz maximizing.}
			\label{figure_omegalimits}
		\end{figure}	

		Chain recurrence follows from the fact that for any $ y \in \Qm_{\omega} $ and any $\epsilon > 0$, we can take a segment of a quasi-minimizing ray whose end points are $\epsilon$-close to $ y $ and which stays $\epsilon$-close to $ \Qm_{\omega} $. Projecting this segment to $ \Sigma_{0} $ and adjusting in a suitable neighborhood of the endpoints gives a closed loop which is an $ \varepsilon $-trajectory of the lamination $p_K(\Qm)$ passing through $ p_K(p_\Z(y)) $.
	\end{proof}

	In dimension $ d=2 $ we can say more, completing the proof of \Cref{Thm intro: Q_omega is L^CR}:

\begin{prop}\label{prop:equality of omega limits}
	If $d=2$, then $p_\Z(\cal Q_\omega) = \L_0^{\CR}$.
\end{prop}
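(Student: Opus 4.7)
One inclusion, $p_\Z(\mathcal Q_\omega) \subseteq \mathcal L_0^{\CR}$, is already contained in \Cref{Theorem:omega limit of Qm is a lamination in max stretch locus of u}: the projection $\lambda_0 = p_K(p_\Z(\mathcal Q_\omega))$ is a chain recurrent sublamination of $p_K(\mathcal L_0^{\CR})$, and since $p_\Z(\mathcal Q_\omega)$ is $A$-invariant it lifts to $\mathcal L_0^{\CR}$. The content of the proposition is the reverse inclusion, for which it suffices to show that each $y_0 \in \mathcal L_0^{\CR}$ lies in the $\omega$-limit mod $\Z$ of some $x \in \mathcal Q$. Lift $y_0$ to $y\in\mathcal L$ and orient so that $\tau$ is locally isometric and increasing along $Ay$; by \Cref{lemma:u_is_a_1C-qi} one has $d(a_ty,y)\ge t - C_\tau$, so $y$ is itself quasi-minimizing, and the question reduces to forcing $y_0$ into the $\omega$-limit mod $\Z$ of some quasi-minimizing ray.

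If $p_K(y_0)$ lies on a minimal sublamination of $p_K(\mathcal L_0)$, the structure theory of geodesic laminations in dimension $2$ says every leaf of that sublamination is dense in it, so $p_\Z(a_{t_j}y)\to y_0$ for suitable $t_j\to\infty$ and $x = y$ suffices.

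The main case is when $p_K(y_0)$ lies on an isolated chain recurrent leaf of $p_K(\mathcal L_0)$. The plan is to turn chain recurrence into a loxodromic deck transformation and then upgrade its action to a quasi-minimizing geodesic ray via the Morse lemma. Using \Cref{def:chain_recurrent}, for each $n$ I would produce a closed piecewise-geodesic $\varepsilon_n$-trajectory $\alpha_n$ of $p_K(\mathcal L_0)$ through $p_K(y_0)$, of total length $T_n$ (obtained by iterating the defining short $\varepsilon_n$-trajectory if necessary), with $\varepsilon_n\to 0$ and $T_n\to\infty$. The homotopy class of $\alpha_n$ based at $p_K(y_0)$ is represented by a loxodromic $\gamma_n \in \Gamma_0$; for a lift $\tilde y \in \bH^2$ of $p_K(y_0)$ the orbit $\{\gamma_n^k\tilde y\}_{k\ge 0}$ is a quasi-geodesic with attracting fixed point $\xi_n^+\in\partial\bH^2$, and by the Morse lemma the geodesic ray $\tilde\beta_n$ from $\tilde y$ to $\xi_n^+$ stays within uniformly bounded distance of this orbit. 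Its projection $\beta_n$ to $\Sigma$, starting from a frame $x_n$ close to $y$, therefore shadows a lift of $\alpha_n$; in particular $\beta_n$ passes within $O(\varepsilon_n)$ of a $\Z$-translate of $y$ once per $\gamma_n$-lap. Quasi-minimality of $x_n$ with a uniform constant follows again from \Cref{lemma:u_is_a_1C-qi}, because the $\tau$-winding of $\gamma_n$'s axis per lap is nearly equal to its translation length: $\alpha_n$ follows leaves of $\mathcal L$, on which $\tau$ is locally isometric, up to jumps of total length $O(\varepsilon_n T_n)$ which can be absorbed into the additive constant.

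A diagonal extraction over $n$ then produces $x_\infty\in\mathcal Q$ (compactness in $\GdmodGamma$) and times $s_n\to\infty$ with $p_\Z(a_{s_n}x_\infty)\to y_0$. The principal obstacle is guaranteeing that the limit recurs to $y_0$ itself rather than to a $\Z$-translate that drifts to infinity under the extraction; this is managed by anchoring all approximations at the fixed lift $\tilde y$ and, for each $n$, selecting the first close return time of $\beta_n$ to $p_\Z(y)$, which is comparable to the translation length of $\gamma_n$ and hence stays in a controlled window as one passes to the limit.
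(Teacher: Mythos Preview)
Your treatment of minimal components is fine and matches the paper's. The gap is in the isolated-leaf case: the rays $\beta_n$ along the axis of $\gamma_n$ are \emph{not} quasi-minimizing. That axis is a closed geodesic in $\Sigma_0$ which is not a leaf of $p_K(\L_0)$, so its $\tau$-winding $w_n$ per lap is strictly less than its translation length $\ell_n^*$; along the lift one has $\tau(a_t x_n)-\tau(x_n)=(w_n/\ell_n^*)\,t+O(1)$ with $w_n/\ell_n^*<1$, hence $x_n\notin\Qm$. Your per-lap deficit bound $O(\varepsilon_n T_n)$ is correct but does not help: over $k$ laps it becomes $k\cdot O(\varepsilon_n T_n)\to\infty$ and cannot be ``absorbed into the additive constant.'' The diagonal extraction does not repair this either. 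A limit $x_\infty$ of the $x_n$ may well lie in $\Qm$ (since $w_n/\ell_n^*\to 1$), but the geodesic rays through $x_n$ and $x_\infty$ diverge exponentially, and nothing forces $y_0$ into the $\omega$-limit of $x_\infty$; the natural limit is $y$ itself, whose forward orbit in $\Sigma_0$ accumulates only on the minimal component the isolated leaf spirals onto, not on the isolated leaf.

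The paper's remedy is to concatenate rather than iterate. Using the structure theory it builds, for each $n$, an $\varepsilon_n$-trajectory $\gamma_n$ through $p=p_K(y_0)$ consisting of a \emph{fixed} number of leaf-segments joined by jumps of size at most $\varepsilon_n$, chooses $\sum\varepsilon_n<\infty$, and straightens the single infinite concatenation $r=\gamma_1\cdot\gamma_2\cdots$ to a geodesic ray $r^*$. The total $\tau$-deficit along $r$ is then bounded by $\sum O(\varepsilon_n)<\infty$, so $r^*$ is genuinely quasi-minimizing, and by construction it passes near $p$ once for each $\gamma_n$. The essential difference from your approach is summable error across distinct loops versus linear error growth from iterating one loop.
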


\begin{proof}
    When $d=2$, we can use the structure theory for geodesic laminations to construct a quasi-minimizing ray that accumulates onto any point of $\L_0^{\CR}$.  From this we will conclude that $\L_0^{\CR}\subset p_\Z(\Qm_{\omega})$.

    Let $\mu = p_K(\L_0^{\CR})$ and $\mu_\omega$ be the $\omega$-limit set (of $\mu$).   The classification theory tells us that $\mu_\omega$ consists of finitely many minimal components, and $\mu\setminus \mu_\omega$ consists of finitely many isolated leaves; each spirals onto a minimal component in its future and onto another in its past.

    Clearly, for any $x\in \GdmodGamma$ with $p_\Z(x)\in \L_0^{\CR}$ and $p_K(p_\Z(x)) \in \mu_\omega$, then $Ax$ is bi-minimizing and $p_\Z(Ax)$ accumulates onto $p_\Z(x)$.  This shows that the $\omega$-limit set of $\L_0$ is contained in $p_\Z(\Qm_\omega)$.
    
    If $p \in \mu\setminus\mu_\omega$ lies on an isolated leaf, then chain recurrence of $\mu$ tells us we can find a closed loop $\gamma_0$ through $p$ of the form \[\gamma_0 = g_1\cdot j_1 \cdot g_2 \cdot j_2\cdot ... \cdot j_{k-1} \cdot g_k,\] where $g_i$ are segments of isolated leaves $\ell_i$ of $\mu$ that spiral onto minimal components $\mu_i$; the $j_i$ are small segments that that join $g_{i}$ to $g_{i+1}$ that are $C^1$-close to a leaf of $\mu_i$. 
    
    Let $\epsilon_n$ be a summable sequence of small positive numbers.
    We can create an $\epsilon_n$-trajectory $\gamma_n$ of $\lambda_0$ through $p$ by modifying $\gamma_0$ as follows.
    Since $g_i$ accumulates onto $\mu_i$ in its forward direction and $g_{i+1}$ accumulates onto $\mu_i$ in its backward direction, we can extend the future of the former and the past of the later so that their endpoints are joined by segment of size at most $\epsilon_n$ that is $\epsilon_n$ $C^1$-close to a leaf of $\mu_i$. 
    We can concatenate the $\gamma_n$'s to form an infinite path $r=\gamma_1 \cdot \gamma_2 \cdot ...$.  
    Let $\tilde r$ be a lift of $r$ to $\bH^2$ and let $\tilde r^*$ be the geodesic ray with the same initial point that is asymptotic to $\tilde r$.  
    By stability of geodesics in hyperbolic space, there is a constant $c$ such that  \[d_{\bH^2}(\tilde r(t), \tilde r^*(t))\le c \text{ for all $t$.}\]
    Moreover, if $r(t_n)$ belongs to $\gamma_n$, then there is a $t_n^* \in [t_n - c, t_n +c]$ such that 
    \[d_{\bH^2}(\tilde r^*(t_n^*),\tilde r(t_n)) \le c \epsilon_n.\]
    In particular, the projection $r^*$ of $\tilde r^*$ accumulates onto $p$ in $\Sigmazero$.

    Abusing notation, we let $r$ and $r^*$ also denote lifts to $\Sigma$. Then since $\tau$ is isometric along the leaves of $\mu$  and $1$-Lipschitz, we have
    \[d_\Sigma(r(t), r(0))\ge \tau(r(t)) - \tau (r(0)) \ge t - \sum _{j\le n} \epsilon_j, ~\text{when $r(t)$ belongs to $\gamma_n \subset r$}.\]
    The triangle inequality then provides 
    \[d_\Sigma(r^*(0),r^*(t))\ge \tau(r^*(t)) - \tau(r^*(0)) \ge t - \sum_{j\le n} \epsilon_j - c.\]
    Since $\sum_{j = 1}^\infty \epsilon_j$ is finite, we conclude that $r^*$ is quasi-minimizing.
    Together with \cref{Theorem:omega limit of Qm is a lamination in max stretch locus of u} the proposition follows.
\end{proof}

\begin{remark}
    Although we do not have a structure theorem for geodesic laminations in dimensions $d\ge 3$, it is not difficult to see that our argument  can be adapted to prove $\L_0^{\CR} = p_\Z(\Qm_\omega)$ whenever the complement of the set of minimal components of $p_K(\L_0^{\CR})$  consists of at most countably many geodesics.
\end{remark}

 Thurston studied the set of points realizing the maximum local Lipschitz constant for tight maps between finite area complete hyperbolic surfaces.
 He proved that the \emph{maximal stretch locus} comprises a geodesic lamination and that the chain recurrent part is contained in the maximum stretch locus of any tight map between the same surfaces \cite[Theorem 8.2]{Thurston:stretch}.  
 As a corollary of the Proposition \ref{prop:equality of omega limits}, we deduce the following analogue of Thurston's result. 

\begin{cor}\label{cor:chain_recurrent_canonical}
    To a given cohomology class $\varphi\in H^1(S;c\Z)$ and hyperbolic structure $\Sigmazero$, the geodesic lamination $p_K(\L_0^{\CR})$ constructed from any tight-Lipschitz map $f: \Sigmazero\to \R/c\Z$ inducing $\varphi$ on $\pi_1$, depends only on $\varphi$.
\end{cor}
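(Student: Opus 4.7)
The plan is to observe that Proposition \ref{prop:equality of omega limits} already carries almost all the weight: it identifies $\L_0^{\CR}$ with $p_\Z(\Qm_\omega)$, and the latter is an intrinsic invariant of the $\Z$-cover $\Sigma$, hence depends only on the pair $(\varphi, \Sigmazero)$.

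First I would unpack the data. The cohomology class $\varphi\in H^1(\Sigmazero; c\Z)$ corresponds to a surjective homomorphism $\pi_1(\Sigmazero) \to c\Z \cong \Z$, whose kernel is the normal subgroup $\Gamma \lhd \Gammazero$ defining the $\Z$-cover $\Sigma = \bH^2/\Gamma$. The quasi-minimizing set $\Qm \subset \GmodGamma$, and hence its $\omega$-limit set $\Qm_\omega$, is defined purely in terms of geodesic rays in $\Sigma$ via \Cref{def:quasi-minimizing}, with no reference to any Lipschitz map. Thus $p_\Z(\Qm_\omega) \subset \GmodGammazero$ is determined by $\varphi$ and $\Sigmazero$ alone.

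Next, given any tight Lipschitz map $f:\Sigmazero\to \R/c\Z$ in the homotopy class $\varphi$, with Lipschitz constant $\kappa$, I would post-compose with the affine rescaling $\R/c\Z \to \R/\kappa^{-1}c\Z$ to produce a tight $1$-Lipschitz map $\tau_0$ representing the class $\kappa^{-1}\varphi$. This rescaling multiplies the local Lipschitz ratio everywhere by the constant factor $\kappa^{-1}$, so the maximal stretch locus, and in particular $\L_0^{\CR}$ and its projection $p_K(\L_0^{\CR})$, are identical whether computed from $f$ or from $\tau_0$. The construction of $L$ and $\L_0$ from \Cref{proposition:definition of lamination L} then applies to $\tau_0$.

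Finally, \Cref{prop:equality of omega limits} gives the equality $p_\Z(\Qm_\omega) = \L_0^{\CR}$ in $\GmodGammazero$; projecting by $p_K$ yields $p_K(\L_0^{\CR}) = \lambda_0$, which depends only on $(\varphi, \Sigmazero)$. No substantial obstacle is expected: the corollary is essentially a repackaging of \Cref{prop:equality of omega limits}, with attention drawn to the fact that the right-hand side is intrinsic to the cover and requires no choice of tight map.
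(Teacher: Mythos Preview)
Your proposal is correct and follows essentially the same approach as the paper: invoke \Cref{prop:equality of omega limits} to identify $\L_0^{\CR}$ with $p_\Z(\Qm_\omega)$, and observe that the latter is defined intrinsically from the $\Z$-cover determined by $\varphi$, independently of any choice of tight map. Your extra paragraph on affine rescaling is not wrong, but the paper has already established the $1$-Lipschitz normalization earlier in the section and simply takes it as given here.
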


\begin{proof}
By Proposition \ref{prop:equality of omega limits}, $\mathcal L_0^{\CR} = p_\Z(\mathcal Q_\omega)$, which only depends on the $\Z$-cover $\Sigma$ of $\Sigmazero$ corresponding to $\varphi$ and not the tight Lipschitz function $f: \Sigmazero \to \R/c\Z$.
\end{proof}

\subsection{Overview}\label{Subsec:Tight map summary of notations} We end this section with a brief summary of notations and inclusions for future reference. Let $ \Sigma \to \Sigma_0 $ be a $ \Z $-cover of a compact hyperbolic manifold to which we associated the following:
\begin{center}
	\begin{tabular}{|c|p{9.2cm}|}
		\hline
		Notation & Definition \\ [0.5ex] 
		\hline\hline
		$ \tau_0 : \Sigma_0 \to \R/c\Z $ & 1-Lipschitz tight map corresponding to $\Sigma\to \Sigmazero$ 
		\\ \hline
		$ \tau : \Sigma \to \R $ & 1-Lipschitz $ \Z $-equivariant lift of $ \tau_0 $ \\ \hline
		$ \cal{L}_0 $ & maximal $ A $-invariant subset of $ L^{-1}(1) \subset \GdmodGammazero$
		\\ \hline
		$ \cal{L} $ & lift of $ \mathcal{L}_0 $ to $ \GdmodGamma $, i.e.~$ p_\Z\inverse({\mathcal{L}_0}) $ \\ \hline
         $ \cal L_0^{\CR} $, $\cal L^{\CR}$ & chain recurrent part of $ \L_0 $ and its lift to $\GdmodGamma$ \\ \hline
	\end{tabular}
\end{center}

All the sets above implicitly depend on the choice of a tight map $ u_0 $. We further consider the following canonical loci, which are independent of the tight map:

\begin{center}
	\begin{tabular}{|c|p{9.2cm}|}
		\hline
		Notation & Definition \\ [0.5ex] 
		\hline\hline
		$ \Qm $ & set of quasi-minimizing points in $ \GdmodGamma $ \\ \hline
		
		$ \Qm_{\omega} $ & $ \omega $-limit set mod $ \Z $ of $ \Qm $ in $ \GdmodGamma $ \\
		\hline
		$ \lambda_0 $ & geodesic lamination in $ \Sigma_0 $ equal to $ p_K (p_\Z(\Qm_\omega)) $\\ \hline
		$ \lambda $ & lift of $ \lambda_0 $ to $ \Sigma $, i.e.~$ p_\Z\inverse({\lambda_0}) $ (also $p_K(\Qm_\omega)$) \\ \hline
	\end{tabular}
\end{center}
In the sequel, the canonical laminations $\lambda_0$ and $\lambda$ will be referred to as the \emph{minimizing} laminations.
Note the following relationship between the canonical objects. 
\begin{equation}
	\begin{tikzcd}
		\Qm_\omega \arrow{r}{p_K} & \lambda \arrow{r}{p_\Z} & \lambda_0	 
	\end{tikzcd}.
\end{equation}

Recall that $x\in \GdmodGamma$ is \emph{bi-minimizing} if $t\mapsto a_tx$ is an isometric embedding. 
There is a sequence of inclusions:
\begin{equation}\label{eq:Q_L_inclusions}
	\Qm_{\omega} \subset \L^{\CR} \subset \mathcal{L} \subset \{\text{bi-minimizing points}\} \subset \Qm \subset \GdmodGamma.
\end{equation}
In the case of $ d=2 $, the first inclusion is an equality, that is $ \Qm_{\omega}=\mathcal{L}^{\CR} $ which is independent of the choice of tight function. The authors suspect that the remaining inclusions might otherwise be strict.

\section{The non-horospherical limit set in \textit{d} = 2}

In this section we show that the subset $ \Lambda_{\mathrm{nh}} \subset \partial \bH^2 $ of non-horospherical limit points is of Hausdorff dimension zero. Since the $ \omega $-limit set of $ \Qm $ is the lift of a geodesic lamination in $ \Sigma_0 $, the statement is a consequence of the following theorem:
\begin{theorem}\label{Theorem:Hausdorff 0}
Let $S=\bH^2/\Gamma$ be a finite area hyperbolic surface and $\lambda$ a geodesic lamination.  Let
$\omega(\lambda)$ be the set of endpoints in the circle at infinity of rays whose
$\omega$-limit sets are in $\lambda$. Then the Hausdorff dimension of $\omega(\lambda)$ is
0.
\end{theorem}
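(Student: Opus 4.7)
My plan is to use a train-track coding of the lamination $\lambda$, combined with a subexponential counting argument in the spirit of Birman--Series. The idea is that every ray with $\omega$-limit in $\lambda$ is, asymptotically, a simple geodesic in disguise, and the set of endpoints of simple geodesics in $S$ is known to have Hausdorff dimension zero.

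First, I would fix a train track $\tau$ carrying $\lambda$ together with a thin tie-neighborhood. Any geodesic ray $\gamma$ in $\bH^2$ whose projection has $\omega$-limit in $\lambda$ must eventually enter and remain in a lift of this neighborhood, so the tail of $\gamma$ is carried by $\tilde\tau$ and corresponds to an infinite edge-path in $\tilde\tau$. In the special case that $\gamma$ itself is a leaf of $\tilde\lambda$, the endpoint $\gamma(+\infty)$ is automatically an endpoint of a leaf; in the generic case, $\gamma$ may cross leaves of $\tilde\lambda$ infinitely often, but the $\omega$-limit condition forces the crossing angles to tend to zero, so the tangent directions of $\gamma$ align asymptotically with the leaves.

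Second, I would verify that the set of such edge-paths has subexponential growth, following Birman--Series: leaves of $\tilde\lambda$ project to complete simple geodesics in $S$, and rays asymptotic to $\lambda$ inherit a simplicity constraint that bounds the number of admissible edge-paths of combinatorial length $n$ by a subexponential function of $n$. Because the endpoint map from infinite edge-paths to $S^1$ is Lipschitz with respect to the natural metric (thanks to the exponential expansion of the geodesic flow on $\bH^2$), a subexponential symbolic count translates directly into a Hausdorff dimension zero bound for the full closed image, namely $\omega(\lambda)$.

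The main obstacle will be step 2: rays that only accumulate on $\lambda$ without being leaves may a priori produce more complex edge-path codings than genuine simple geodesics, so one cannot simply cite Birman--Series as a black box. The resolution I have in mind is that the non-lamination part of such a ray occupies only a bounded initial segment (by the definition of $\omega$-limit), after which the ray's coding is forced to be simple-like; this reduces the count of tails of codings to precisely the Birman--Series regime. A secondary technical point is handling finite-area cusps, which is routine: a geodesic lamination on a finite-area hyperbolic surface lies in the thick part, and any cusp-bound rays contribute at most countably many endpoints (the parabolic fixed points), which do not affect the Hausdorff dimension.
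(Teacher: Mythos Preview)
Your overall architecture---code rays by train routes in a small track carrying $\lambda$, show subexponential growth of such routes, and use exponential contraction of the endpoint map---is exactly the one the paper uses. The decomposition $\omega(\lambda) = \bigcup_n \omega_n$ according to the time after which the ray is carried is also in the paper, and your remark on cusps is fine.

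The gap is in your Step~2. You assert the subexponential bound by saying the tails are ``simple-like'' and invoking Birman--Series, but this does not go through as stated. The rays in question need not be simple, and a train route in a track for $\lambda$ is not a priori the route of a simple geodesic; it is just a path in the track transverse to the ties. Birman--Series applied as a black box gives nothing here. What actually constrains the routes is the structure of $\lambda$, not simplicity of the ray, and extracting the bound takes real work that your proposal skips.

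Concretely, the paper proves the following chain of lemmas. First, for any fixed $K$, if the track is a thin enough $\ep$-neighborhood of $\lambda$, then every \emph{cyclic} train route of length $\le K$ is homotopic to one of the finitely many closed leaves of $\lambda$ (a compactness argument: as $\ep\to 0$ such cycles converge to closed geodesics in $\lambda$). Second, any train route of length $\le K$ then decomposes as an almost-embedded route with some powers of the closed-leaf cycles spliced in; since there are only boundedly many almost-embedded routes (depending only on the topology of $S$) and the splicing is polynomially constrained by the length, the number of routes of length $\le K$ is $\le c_0 K^d$. Finally, chopping a length-$L$ route into $\lceil L/K\rceil$ pieces gives $N(L)\le (c_0 K^d)^{\lceil L/K\rceil}$, and taking $K$ large makes the growth rate $e^{sL}$ with $s$ as small as you like. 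This last step is the Birman--Series flavor, but the cycle lemma and the closed-leaf splicing are the substance you are missing.
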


Note that the dimension of the set of endpoints of $\lambda$ itself is 0 by Birman-Series \cite{BS} (and
similarly Bonahon-Zhu \cite{BonZhu:HD}). This is an
extension of those ideas, though we note that $\omega(\lambda)$ is a larger set.

Before we start, recall some structural facts about laminations and train tracks:

For $\ep$ smaller than the injectivity radius of $S$, an {\em $\ep$-track} for a lamination
$\lambda$ is an $\ep$-neighborhood of $\lambda$, divided into a finite number of
rectangles or ``branches'',  whose ``long'' edges run along $\lambda$ and whose short edges have length
$O(\ep)$. Branches are foliated by arcs (``ties'') parallel to the short edges and transverse to $\lambda$, and are attached to
each other along the short edges. Collapsing the ties we get a 1-complex whose vertices are images of connected unions of short edges. A train route is a path running through the track
transverse to the  ties. Each train route is $O(\ep)$-close to a
geodesic which is also transverse to the ties. We think of a train route as combinatorial -- two routes are the same if they traverse the same branches in the same order. In particular we say a train route is {\em embedded} if it passes through each vertex at most once.  We say a train route is {\em almost embedded} if it passes through each branch at most twice, with opposite orientations.

A train route is a cycle if it begins and ends in the same branch. When $\ep$ is sufficiently small, no two cycles in an $\ep$-track can be homotopic. 

A geodesic lamination $\lambda$ in a finite area hyperbolic surface can have a finite number of
closed leaves (possibly none). Let $\lambda_c$ denote  the union of these leaves, 
and let $r$ denote the number of components of $\lambda_c$. 

\begin{lemma}\label{embedded routes}
There is a bound $m_0$ such that, for any train track in $S$, there are at most $m_0$
almost embedded train routes. 
\end{lemma}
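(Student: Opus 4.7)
My plan is to prove the bound via a two-step argument: first a uniform topological bound on the combinatorial complexity of any train track $\tau \subset S$, then a combinatorial bound on the number of almost embedded walks in a graph of bounded complexity.

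\emph{Step one: complexity bound.} For any train track $\tau$ in $S$, I would show the number of branches $B$ is bounded by a constant $B_0$ depending only on $\chi(S)$. The standard way to see this is that every admissible train track is a sub-track of a \emph{maximal} one, i.e., a track all of whose complementary regions are smooth trigons. For a maximal track in a closed surface of genus $g$, letting $T$ count triangular complementary regions and $V$ count switches, the relations $\sum_R c(R) = 3T = 2B - 2V$ and $\sum_R \chi(R) = T = \chi(S) + B - V$, together with the trivalence condition $2B = 3V$, force $B = 18g - 18 = 9|\chi(S)|$. Hence every admissible track has $B \leq B_0 := 9|\chi(S)|$. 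In the cusped setting, ideal punctures in complementary regions contribute to $\chi$ and an analogous count goes through; bivalent ``switches'' may be smoothed out beforehand without changing the set of train routes.

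\emph{Step two: bounding walks.} With $B(\tau) \leq B_0$ in hand, I would form the directed graph $\Gamma(\tau)$ whose vertices are the $\leq 2B_0$ oriented branches and whose directed edges record smooth transitions allowed at the switches. An almost embedded route of $\tau$ corresponds exactly to a directed walk in $\Gamma(\tau)$ that visits each vertex at most once: since each unoriented branch is used at most twice and only in opposite orientations, each oriented branch is used at most once. Such a walk has length at most $2B_0$, and at each step there are at most $\max_s v_s - 1 \leq 2B_0$ smooth continuations. Hence the number of almost embedded routes is at most $(2B_0)^{2B_0+1}$, and one may set $m_0 := (2B_0)^{2B_0+1}$, a constant depending only on $\chi(S)$.

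\emph{Main obstacle.} Both steps are essentially combinatorial, and the real content lies in Step one. The most delicate point is ensuring the sub-track-of-maximal reasoning applies uniformly to \emph{all} train tracks on $S$, particularly in the cusped case, where the Euler characteristic contribution of ideal punctures lying inside complementary regions must be accounted for correctly. The reduction from arbitrary train tracks (possibly having bivalent switches) to generic trivalent-or-higher ones is a minor point, since smoothing a bivalent switch merges two branches into one and preserves the combinatorics of routes.
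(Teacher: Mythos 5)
Your argument is correct, but it is organized differently from the paper's, which disposes of the lemma in one line by citing the finiteness of homeomorphism types of train tracks on $S$: since an almost embedded route uses each oriented branch at most once, each combinatorial type carries only finitely many such routes, and $m_0$ is the maximum over the finitely many types. Your two steps essentially prove the quantitative fact hiding behind that one line — a uniform bound $B\le 9|\chi(S)|$ on the number of branches — and then count walks explicitly, which buys a concrete value of $m_0$ at the cost of the Euler-characteristic bookkeeping; the paper's appeal to homeomorphism types buys brevity but conceals exactly the complexity bound you establish. Two points deserve care on your side. First, the detour through maximal tracks can be avoided: the index identity $\sum_R\bigl(\chi(R)-\tfrac{1}{2}c(R)\bigr)=\chi(S)$, the fact that every complementary region of a train track has index at most $-\tfrac{1}{2}$, and valence $\ge 3$ at switches already give $B\le 9|\chi(S)|$ directly, with no extension argument needed (and this adapts at once to the finite-area case). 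Second, your claim that smoothing bivalent switches leaves the set of train routes unchanged is too strong as literally stated: routes are finite branch sequences and may terminate at a bivalent switch, so the set of combinatorial routes does change, and if arbitrarily fine subdivisions were admitted the count would be unbounded (a circle cut into $k$ branches carries on the order of $k^2$ almost embedded routes). This is harmless here because the lemma — and the paper's own proof — tacitly uses the standard convention that switches have valence at least $3$ (equivalently, one counts routes with respect to the reduced branch structure of the $\epsilon$-track), but the reduction should be phrased as passing to that reduced structure rather than as a route-preserving operation.
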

\begin{proof}
There is a finite number of homeomorphism types
of train tracks on a given surface. 
\end{proof}

\begin{lemma}\label{track cycles}
Given $\lambda$ and  $K>0$ there exists $\ep$ so that for any $\ep$-track for $\lambda$, any cyclic train route of
length bounded by $K$ is homotopic to a component of $\lambda_c$. 
\end{lemma}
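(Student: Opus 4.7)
The plan is to prove this by contradiction, producing a subsequential limit that yields a closed leaf of $\lambda$. Suppose the conclusion fails: there exist $\ep_n\to 0$, $\ep_n$-tracks $\tau_n$ for $\lambda$, and cyclic train routes $c_n$ in $\tau_n$ of length $\le K$, none of which is freely homotopic to a component of $\lambda_c$.

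First I realize each $c_n$ as a smooth loop in $S$ by following the spine of each traversed branch; since each branch of $\tau_n$ is an $O(\ep_n)$-neighborhood of an arc of a leaf of $\lambda$ and train routes are smooth at switches (the incoming and outgoing branches have tangent directions that agree up to $O(\ep_n)$ along the common tie), the resulting curve has geodesic curvature $O(\ep_n)$ and lies within $O(\ep_n)$ of $\lambda$. In particular, $c_n$ represents a nontrivial free homotopy class for all large $n$, since otherwise Gauss--Bonnet applied to a bounded disk $D_n$ would give $2\pi = -\mathrm{Area}(D_n) + \int_{c_n}\kappa_g\,ds \le K \cdot O(\ep_n)$, which is impossible once $\ep_n$ is small.

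Let $\gamma_n$ be the closed geodesic freely homotopic to $c_n$. A lift of $c_n$ to $\bH^2$ is a $(1, O(\ep_n))$-quasigeodesic, so by standard stability of quasigeodesics in hyperbolic space it lies uniformly close to its geodesic axis, whose projection is $\gamma_n$. This yields $\ell(\gamma_n) \le K + O(\ep_n)$ and Hausdorff distance $d_H(\gamma_n, c_n) = O(\ep_n)$; since $c_n \subset \tau_n$, we conclude that $\gamma_n$ lies within $O(\ep_n)$ of $\lambda$.

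Finally, the length spectrum of the finite area surface $S$ is discrete, so only finitely many closed geodesics have length at most $K+1$. After passing to a subsequence, $\gamma_n = \gamma$ is a single closed geodesic. Sending $n \to \infty$, the distance from $\gamma$ to $\lambda$ is $\le O(\ep_n) \to 0$, hence $\gamma \subset \lambda$, since $\lambda$ is closed. Thus $\gamma$ is a closed leaf of $\lambda$, i.e., a component of $\lambda_c$, and $c_n$ is freely homotopic to $\gamma_n = \gamma$, contradicting our assumption. The main technical step I anticipate is the $(1, O(\ep_n))$-quasigeodesic estimate, which reduces to a uniform bound on the turning angle where consecutive branches meet along a tie; this bound follows from smoothness of train routes together with the uniform $C^1$ closeness of branches to leaf arcs of the compact lamination $\lambda$.
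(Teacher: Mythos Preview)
Your argument is correct and follows the same compactness-by-contradiction approach as the paper: assume the lemma fails along a sequence $\ep_n\to 0$, extract a limiting closed geodesic, and observe it must lie in $\lambda$. The paper's proof is much terser (it simply asserts that a subsequence of the routes converges to a closed geodesic contained in $\lambda$), while you have filled in the quasigeodesic and length-spectrum details; one small remark is that your Gauss--Bonnet step tacitly assumes $c_n$ bounds an embedded disk, but nontriviality already follows from your quasigeodesic estimate, since a closed $(1,O(\ep_n))$-quasigeodesic cannot exist in $\bH^2$.
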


\begin{proof}
Fix $K$ and consider a sequence $\ep_i\to 0$, and $\ep_i$-tracks $\sigma_i$. Suppose for each $i$ we have a train route
$\beta_i$ in $\sigma_i$ of length at most $K$. After taking a subsequence, $\beta_i$ must converge to
a closed geodesic, and on the other hand the $\ep_i$ neighborhoods converge to
$\lambda$. Hence eventually $\beta_i$ is homotopic to one of the components of
$\lambda_c$. 
\end{proof}

\begin{lemma}\label{track routes}
There exist $c_0, d$ such that, for any $K>0$, if $\ep$ is small enough then any $\ep$-track for
$\lambda$  has at most $c_0 K^d$ train routes of length bounded by $K$.
\end{lemma}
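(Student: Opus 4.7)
The strategy is to decompose every train route of length $\le K$ canonically into an \emph{almost embedded skeleton} together with a list of \emph{wrappings} around components of $\lambda_c$. Since almost embedded routes are uniformly bounded in number by Lemma~\ref{embedded routes} and wrappings are encoded by bounded tuples of non-negative integers, the count of train routes will be polynomial in $K$.

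For the setup, choose $\epsilon$ small enough that Lemma~\ref{track cycles} guarantees every cyclic train route of length $\le K$ in any $\epsilon$-track for $\lambda$ is homotopic to a component of $\lambda_c$. When $\lambda_c = \emptyset$ the conclusion is immediate from Lemma~\ref{embedded routes}, since then every train route of length $\le K$ is forced to be almost embedded; so assume $\lambda_c \ne \emptyset$, with shortest component length $\ell_{\min}$. Given a train route $\beta$ of length $\le K$, iteratively excise cyclic sub-routes: whenever $\beta$ traverses some branch in the same direction twice, the intermediate sub-route is a cyclic train route of length $\le K$ and hence, by our choice of $\epsilon$, is homotopic to some $\gamma_j \subset \lambda_c$. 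Excising this cycle preserves smoothness and reduces the length of $\beta$ by at least $\ell_{\min}$, so the process terminates in at most $K/\ell_{\min}$ steps at an almost embedded skeleton $\beta^*$ decorated by a record specifying, for each excision, the component $\gamma_j$ and the position along $\beta^*$ at which the wrapping was removed.

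To count, note that there are at most $m_0$ skeletons by Lemma~\ref{embedded routes}. For any given skeleton, the admissible \emph{insertion sites} (positions on $\beta^*$ at which a wrapping around some $\gamma_j$ may be smoothly inserted) number at most a constant $B$ depending only on the combinatorial type of the track, since $\beta^*$ traverses each branch at most twice and every wrapping must use a branch of $\lambda_c$. The wrapping multiplicities at these sites then form a tuple of non-negative integers whose weighted sum is bounded by $K$, and there are at most $(K/\ell_{\min}+1)^B$ such tuples. Multiplying yields the desired bound $c_0 K^d$ with $d = B$.

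The main obstacle is verifying that this decomposition is well-defined and injective, so that distinct train routes of length $\le K$ produce distinct (skeleton, wrapping) pairs. This requires checking that the outcome of the iterative excision does not depend on the order in which repetitions are detected, and that the combinatorial position of each insertion site along $\beta^*$ uniquely determines how to reconstruct the wrapping; careful bookkeeping at each switch of the track, using the cyclic order of branches, should handle this. A secondary issue is bounding $B$ uniformly in $\epsilon$ for all small $\epsilon$, which follows from the fact that the combinatorial type of an $\epsilon$-neighborhood of $\lambda$ stabilizes as $\epsilon \to 0$, so that the number of branches, and hence of insertion sites, is controlled by $\lambda$ alone.
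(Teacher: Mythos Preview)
Your approach is the same as the paper's: excise closed sub-routes to obtain an almost-embedded skeleton, then count ways to re-insert. But you leave the key step unresolved --- you identify ``well-definedness and injectivity'' as the main obstacle and defer it to unspecified ``careful bookkeeping at each switch.'' The paper handles this with a clean structural observation that replaces your iterative excision. After shrinking $\epsilon$ further, the unique closed route $\hat\gamma_i$ carried by the track and homotopic to $\gamma_i$ is embedded, and the $\hat\gamma_i$ are pairwise disjoint. The point is then: if a route of length $\le K$ visits an edge of some $\hat\gamma_i$ twice with the same orientation, the sub-route between those visits must remain entirely in $\hat\gamma_i$, for otherwise one obtains a cycle of length $\le K$ not homotopic to any $\gamma_j$, contradicting Lemma~\ref{track cycles}. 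Hence every route decomposes canonically as an almost-embedded skeleton with integer powers of the $\hat\gamma_i$ spliced in at the edges of $\hat\gamma_i$ that the skeleton traverses; no order-independence argument is needed.

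Two smaller remarks. First, for counting you only need a surjection from (skeleton, splice data) onto routes, not an injection, so the concern about injectivity is in any case misdirected. Second, your ``secondary issue'' about $B$ stabilizing as $\epsilon \to 0$ is unnecessary: the number of splice sites is at most twice the number of branches of the track (since the skeleton is almost embedded), and the number of branches in any train track on a fixed closed surface is bounded by a constant depending only on the topology of $S$.
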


\begin{proof}
Recall that $r$ is the number of components of $\lambda_c$.
As a warm-up consider the case that $\lambda_c$ is empty ($r=0$). Hence fixing $K$,
 for sufficiently small $\ep$ every train route of length $K$ is
almost embedded -- if it were not then a sub-route would give a cycle, contradicting Lemma \ref{track cycles}. Thus, there are at most $m_0$ of them by Lemma \ref{embedded routes} and we are
done with $d=0$.

For $r>0$, let $\gamma_1,\ldots,\gamma_r$ be the components of $\lambda_c$.
There is exactly one closed train route $\hat\gamma_i$ homotopic to each $\gamma_i$, since no
two closed train routes are homotopic.

Fixing $K$, choose $\ep$ as given by Lemma \ref{track cycles}. Possibly choosing $\ep$ even smaller we can arrange that the $\hat\gamma_i$ are embedded and disjoint. If a train route $\alpha$ of length at most $K$ traverses an edge of $\hat\gamma_i$ twice with the same orientation, then between those two traversals it must remain in $\hat\gamma_i$, for otherwise there would be a cycle of length at most $K$ which is not homotopic to any of the
$\gamma_j$.

It follows that we can remove an integer number of traversals of each $\hat\gamma_i$ from
$\alpha$, leaving an almost embedded train route $\alpha_0$. The number of such $\alpha_0$, then,
is bounded by $m_0$. For each one of them we can ``splice'' back in a power
of each $\hat\gamma_i$ at a suitable place, but their lengths can add to at most $K$. 
The number $d$ of places to splice is bounded by twice the maximal number of edges in any track on $S$ (because each edge can appear at most once with each orientation) and a power $p$ of $\hat\gamma_i$ spliced in contributes at least $p$ times the length of $\gamma_i$. Hence
there is a bound of the form $c(K/b)^{d}$ on the number of ways to do this, where $b$ is the
minimal length of a component of $\lambda_c$.

Adding over all $m_0$ possible $\alpha_0$'s we obtain a bound of the desired form on the
number of all train routes of length bounded by $K$. 
\end{proof}

\begin{lemma}\label{route growth}
Given $s>0$ there exist $\ep>0$ and $c_1$ such that, for any $\ep$-track of $\lambda$,
the number $N(L)$ of train routes of length at most $L$ is bounded by
$$N(L) \le c_1 e^{sL}.$$
\end{lemma}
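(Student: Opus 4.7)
The plan is to use the polynomial bound from Lemma \ref{track routes} on blocks of fixed length $K$ and then concatenate, trading the polynomial factor for a small exponential rate by choosing $K$ large.

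First I would choose the block length. Fix $s>0$. Since $(\log c_0 + d\log K)/K\to 0$ as $K\to\infty$, I can pick $K=K(s)$ large enough that
\[
\frac{\log(c_0 K^d)}{K}\le s.
\]
Equivalently, $(c_0 K^d)^{1/K}\le e^{s}$. With this $K$ fixed, apply Lemma \ref{track routes} to obtain $\ep=\ep(s)$ such that every $\ep$-track for $\lambda$ has at most $c_0 K^d$ train routes of length $\le K$.

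Next I would subdivide. Given any train route $\alpha$ of length $L$ in the $\ep$-track, cut it at times $K, 2K,\ldots, \lfloor L/K\rfloor K$ to write it as a concatenation $\alpha=\alpha_1\cdot\alpha_2\cdots \alpha_m$ of subroutes of length $\le K$, where $m=\lceil L/K\rceil$. Each $\alpha_i$ is itself a train route of length at most $K$, so the number of choices for each $\alpha_i$ is at most $c_0K^d$ (even ignoring the constraint that $\alpha_i$ must begin where $\alpha_{i-1}$ ended, which only makes the count smaller). Therefore
\[
N(L)\le (c_0K^d)^{\lceil L/K\rceil} \le (c_0K^d)\cdot(c_0K^d)^{L/K} \le (c_0K^d)\cdot e^{sL}.
\]
Setting $c_1:=c_0K^d$ gives the desired bound $N(L)\le c_1e^{sL}$.

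There is no real obstacle here; the argument is a standard submultiplicative trick. The one mild subtlety is ensuring that the constant $\ep$ depends only on $s$ (through the choice of $K$), which is exactly what Lemma \ref{track routes} provides: once $K$ is fixed as a function of $s$, the corresponding $\ep$ is fixed, and the bound holds uniformly in $L$.
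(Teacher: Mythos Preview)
Your proof is correct and follows essentially the same approach as the paper: both fix a block length $K$, invoke Lemma~\ref{track routes} to bound the number of routes of length $\le K$ by $c_0K^d$, subdivide an arbitrary route into $\lceil L/K\rceil$ such blocks, and then choose $K$ large so that $(c_0K^d)^{1/K}\le e^s$. If anything, your version is slightly more explicit about extracting the constant $c_1=c_0K^d$ from the ceiling estimate $\lceil L/K\rceil\le L/K+1$.
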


\begin{proof}
  Fix $K>0$ and let $\ep=\ep(K,\lambda)$ and $ d \geq 0 $ be given by Lemma \ref{track routes}. For $L>>K$, any
  train route can be divided into $\lceil L/K \rceil$ segments of length at most
  $K$. Since there are at most $c_0 K^d$ possibilities for each of these, the total number
  of routes of length at most $L$ is at most
  $$
  (c_0 K^d)^{\lceil L/K \rceil}.
  $$
Since $K^{1/K}\to 1$ as $K\to\infty$, for sufficiently large $K$ this is bounded by an
expression of the form $c_1 b^L$, for $b$ as close as we like to $1$. This gives the
desired bound.   
\end{proof}
  
\begin{proof}[Proof of Theorem \ref{Theorem:Hausdorff 0}]
  Fixing a basepoint $x_0$ of $\bH^2$, the set $\omega=\omega(\lambda)$ determines a set
  $R\omega$ of 
rays emanating from $x_0$ and landing at the points of $\omega$. 

Fix $\delta>0$ and let $\cal H_\delta$ denote $\delta$-dimensional Hausdorff
content.  We must prove $\cal H_\delta(\omega) = 0$. 

Let $\ep$ be the value given by  Lemma \ref{route growth} for $s = \delta/2$, and let
$\tau$ be an $\ep$-track for $\lambda$. Every ray in $R\omega$ is eventually carried in the
preimage track $\tilde\tau$. Let $\omega_n\subset\omega$ denote the subset whose rays are carried in
$\tilde \tau$ after time $n$. Thus $\omega = \cup_n \omega_n$ and it suffices to prove
$\cal H_\delta(\omega_n) = 0$ for each $n$.

Any ray in $R\omega_n$ passes, at time $n$, through some branch in
$\tilde\tau$. There are finitely many such branches meeting the circle of radius $n$; let
$m_1=m_1(n)$ denote their number. Fixing such a branch $e$, all the rays of $R\omega_n$
passing through $e$ continue further along $\tilde\tau$ in an infinite train route. For
$L>0$, consider the initial train routes of length $L$ beginning in $e$.

Any such route projects to a train route of $\tau$ beginning at the image of $e$. Two
different train routes upstairs project to different routes downstairs (all the routes
begin together, and as soon as two diverge their images do as well). Thus by lemma
\ref{route growth} there are at most $c_1 e^{sL}$ such routes.

These routes provide a covering of $\omega_n(e)$, the subset corresponding to rays that
pass through $e$ at time $n$. Two rays with the same initial length-$L$ route stay at
distance $\ep$ apart for at least length $L$ (in fact $L+n$), which means their endpoints are at most
$O(e^{-L})$ apart on the circle. Thus, we have a covering of $\omega_n(e)$ by $c_1 e^{sL}$
intervals of length  $O(e^{-L})$. Summing over all the $m_1$ branches, we find
$$
\cal H_{\delta}(\omega_n) < m_1 c_1 e^{sL} e^{-\delta L}.
$$
Since $s<\delta$, this goes to $0$ as $L\to\infty$. This completes the proof that $\cal
H_\delta(\omega_n) = 0$, so that $\cal H_\delta(\omega)=0$ as well for all $\delta>0$  and so $\dim_{\mathrm{H}}(\omega) = 0$.
\end{proof}
  
\begin{remark}\label{Remark:packing dimension}
	In fact a stronger claim holds, that is, \emph{the upper packing dimension of $ \omega(\lambda) $ is zero}. Recall the definition of the upper packing dimension of a set $ E $ in a metric space is
	\[ \overline{\dim}_{\mathrm{p}} E = \inf \left\{ \sup_{j\geq 1} \overline{\dim}_{\mathrm{box}}(E_j) : E \subseteq \bigcup_{j=1}^\infty E_j  \right\}, \]
	where $ \overline{\dim}_{\mathrm{box}} $ denotes the upper box (or Minkowski) dimension and the infimum is defined over all countable covers of $ E $ by bounded sets, see e.g. \cite[\S 5.9]{Mattila_Book}.
	
	Indeed, using the notations in the proof of \cref{Theorem:Hausdorff 0}, we have for all $ \delta > 0 $ a decomposition of the set $ \omega(\lambda) $ into countably many subsets $ \omega_n $. For any $ L>0 $, let $ N(\omega_n,e^{-L}) $ denote the minimal number of sets of diameter at most $ e^{-L} $ needed to cover $ \omega_n $. Then we have shown
	\[ N(\omega_n,e^{-L}) \leq m_1 c_1 e^{\delta L},  \]
	and hence
	\[ \overline{\dim}_{\mathrm{box}} (\omega_n) = \limsup_{L \to \infty} \frac{\log N(\omega_n,e^{-L})}{-\log (e^{-L})} \leq \delta. \]
	Therefore, by definition
	\[ \overline{\dim}_{\mathrm{p}} \omega(\lambda) \leq \sup_{n\geq 1} \overline{\dim}_{\mathrm{box}}(\omega_n) \leq \delta \]
	for all $ \delta > 0 $, implying $ \overline{\dim}_{\mathrm{p}} \omega(\lambda) = 0 $.
\end{remark}

	\begin{cor}
		For $ d=2 $, the set of quasi-minimizing points $ \Qm $ in $ \GmodGamma $ has Hausdorff dimension 2. In particular, for any $ x \in \Qm $
		\[ 1 \leq \dim_{\mathrm{H}}\overline{Nx} \leq 2. \]
	\end{cor}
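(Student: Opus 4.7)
The plan is to establish $\dim_H \Qm = 2$ by separate upper and lower bounds, and then deduce the bounds on $\dim_H \overline{Nx}$ as a simple consequence. For the lower bound, I will observe that $\Qm$ is invariant under the action of $NA$: the Eberlein--Dal'bo--Maucourant--Schapira theorem identifies $\Qm$ with $\{g\Gamma : g^+ \in \Lambda_{\mathrm{nh}}\}$ and $N$ (being part of the parabolic $P = MAN$) fixes $g^+$; $A$-invariance in both directions follows from a direct triangle-inequality computation showing that the quasi-minimizing constant at $a_s g$ is at most $c + 2|s|$ whenever $g$ is quasi-minimizing with constant $c$. Since $\Gamma$ is discrete, for any $x \in \Qm$ the orbit $NAx$ is a smoothly immersed $2$-dimensional submanifold of $\GmodGamma$, so $\dim_H \Qm \geq \dim_H NAx = 2$.

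For the upper bound I will exploit the smooth $G$-equivariant endpoint map $\pi^+ : G \to \partial \bH^2 \cong S^1$, whose fibers are the $2$-dimensional $P$-orbits. Cover $\GmodGamma$ by countably many bounded open sets $B$ that lift isometrically to $\tilde B \subset G$; it suffices to bound $\dim_H(\Qm \cap B)$, whose lift is $(\pi^+)^{-1}(\Lambda_{\mathrm{nh}}) \cap \tilde B$. By \Cref{Theorem:omega limit of Qm is a lamination in max stretch locus of u} together with \Cref{prop:equality of omega limits}, the endpoint of every quasi-minimizing ray is contained in the set $\omega(\lambda_0)$ appearing in \Cref{Theorem:Hausdorff 0}, so $\Lambda_{\mathrm{nh}}$ has Hausdorff dimension zero. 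A standard Marstrand-type slicing argument then finishes the bound: given $s > 2$ and $\epsilon > 0$, cover $\pi^+(\tilde B) \cap \Lambda_{\mathrm{nh}}$ by arcs of radii $r_i$ with $\sum r_i^{s-2} < \epsilon$, and cover each slab $(\pi^+)^{-1}(\text{arc}_i) \cap \tilde B$ by $O(r_i^{-2})$ balls of radius $r_i$ (the transverse direction has width $O(r_i)$ and the bounded $2$-dimensional fiber needs $O(r_i^{-2})$ such balls). The total contribution to the Hausdorff $s$-content is $\sum O(r_i^{-2})\cdot r_i^s = \sum O(r_i^{s-2}) = O(\epsilon)$, giving $\dim_H \Qm \leq 2$.

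For the bounds on $\overline{Nx}$: given $x \in \Qm$, for every $y \in \overline{Nx}$ the orbit closure $\overline{Ny} \subseteq \overline{Nx}$ is again non-maximal, so by Eberlein--Dal'bo--Maucourant--Schapira $y$ is itself quasi-minimizing. Thus $\overline{Nx} \subseteq \Qm$ and $\dim_H \overline{Nx} \leq 2$, while the lower bound $\dim_H \overline{Nx} \geq 1$ is immediate from $Nx \subseteq \overline{Nx}$ being a smoothly immersed $1$-dimensional submanifold. The only real technical point is the slicing estimate of the upper bound, but this step is routine once \Cref{Theorem:Hausdorff 0} has identified $\Lambda_{\mathrm{nh}}$ as having Hausdorff dimension zero.
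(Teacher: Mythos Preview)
Your proof is correct and reaches the conclusion via a somewhat different packaging of the upper bound. The paper lifts $\Qm$ to $\tilde\Qm\subset G$, notes it is $AN$-invariant, and uses the global Iwasawa diffeomorphism $N\times A\times K\to G$ to write $\tilde\Qm = NA\times K_{\mathrm{nh}}$ with $K_{\mathrm{nh}}$ bi-Lipschitz to $\Lambda_{\mathrm{nh}}$; it then invokes the product formula $\dim_H(NA\times K_{\mathrm{nh}}) = 2 + \dim_H\Lambda_{\mathrm{nh}} = 2$. That product formula is not automatic and requires the additional fact (proved separately in \Cref{Remark:packing dimension}) that $\Lambda_{\mathrm{nh}}$ has \emph{packing} dimension zero, not merely Hausdorff dimension zero. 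Your direct Marstrand-type slicing argument sidesteps this subtlety: it only uses $\dim_H\Lambda_{\mathrm{nh}}=0$ and smoothness of the endpoint map on a bounded chart, at the cost of writing out an explicit covering estimate. The lower bound via an immersed $NA$-orbit and the inclusions $Nx\subset\overline{Nx}\subset\Qm$ are handled the same way in both arguments. One minor remark: to conclude $\Lambda_{\mathrm{nh}}\subset\omega(\lambda_0)$ you only need \Cref{Theorem:omega limit of Qm is a lamination in max stretch locus of u}; \Cref{prop:equality of omega limits} is not needed here.
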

	
	\begin{proof}
		Let $ \tilde{\Qm} $ denote the lift of $ \Qm $ to $ G $. The smooth covering map ensures that $ \dim_{\mathrm{H}} \Qm = \dim_{\mathrm{H}} \tilde{\Qm} $ hence it suffices to consider the latter. Since $ \tilde{\Qm} $ is $ AN $-invariant we may present it under the Iwasawa decomposition as a product set $ \tilde{\Qm}=NAK_{\mathrm{nh}} $ where $ K_{\mathrm{nh}} \subset K $ (recall that the multiplication map $ N \times A \times K \to G $ is a diffeomorphism, see e.g.~\cite[Theorem 6.46]{Knapp}). Furthermore, $ K_{\mathrm{nh}} $ is bi-Lipschitz equivalent to the non-horospherical limit set $ \Lambda_{\mathrm{nh}} \subset \partial \Hplane $. Therefore
		\[ \dim_{\mathrm{H}} \tilde{\Qm} = \dim_{\mathrm{H}} NA + \dim_{\mathrm{H}} \Lambda_{\mathrm{nh}} = 2+0,\footnote{Note that both factors in the product above have their Hausdorff dimension equal to their packing dimension, in particular implying the product formula.} \]
		by \Cref{Theorem:Hausdorff 0}.
		
		The second claim follows directly from the inclusions
		\[ Nx \subset \overline{Nx} \subset \Qm \]
		for all $ x \in \Qm $. For the last inclusion see the remark after the Eberlein-Dal'bo theorem in \Cref{sec:intro}.
	\end{proof}
	
\section{Constructing minimizing laminations in \textit{d} = 2}\label{Sec_construction of minimizing laminations}
	
	Let $S_0$ be an oriented closed (topological)  surface of genus $g\ge 2$.
	In this section, we use methods specific to dimension $d=2$ to construct minimizing laminations for $\mathbb Z$-covers of closed hyperbolic surfaces with prescribed geometric and dynamical properties.
	
	\subsection{Background on surface theory}
	We rely on a dictionary between certain kinds of geometric, dynamical, and topological objects on surfaces;  we require some background.
	
	\para{Measured laminations}
    Endow $S_0$ with an auxiliary negatively curved metric, let $\lambda_0\subset S_0$ be a minimal geodesic lamination and let $k$ be a transversal, i.e., a $C^1$ arc meeting $\lambda_0$ transversely.  
    Giving $k$ an orientation gives a local orientation to the leaves of $\lambda_0$, and following the leaves of $\lambda_0$ induces a continuous Poincar\'e first return map \[P : k\cap \lambda_0 \to k\cap \lambda_0.\]
    If $k'$ is another transversal and $k$ is isotopic to $k'$ preserving the transverse intersections with $\lambda_0$, then the corresponding dynamical systems are conjugated by the isotopy, and the invariant ergodic measures are in bijective correspondence.

    A \emph{transverse measure} $\mu_0$ supported on $\lambda_0$ is, to each transversal $k$, the assignment of a finite positive Borel measure  $\mu_0(k)$ supported on $k\cap \lambda_0$ that is invariant holonomy and natural with respect to inclusion.   
    A geodesic lamination $\lambda_0$ equipped with a transverse measure $\mu_0$ is a \emph{measured lamination}.
    
    A measured lamination can also be thought of as a flip and $A$-invariant finite Borel measure $\mu_0$ on $T^1S_0$ whose support projects to a  geodesic lamination on $S_0$. 
    Since the geodesic foliation of $T^1S_0$ does not depend on a choice of negatively curved metric, the space of measured laminations with its weak-$*$ topology only depends on the topology of $S_0$.  Denote this space by $\ML(S_0)$.
       See \cite{Thurston:notes, Thurston:bulletin, PennerHarer, CB, FLP} for a development of the theory of measured laminations.

	\para{Measured foliations}
	A (singular) measured foliation on a surface $S_0$ is a $C^1$ foliation $\mathcal F_0$ of $S_0\setminus Z$, where $Z$ is a finite set (called singular points) equipped with a transverse measure $\nu_0$ on arcs transverse to $\mathcal F_0$.  The transverse measure is required to be invariant under holonomy and every singularity is modeled on a standard $k$-pronged singularity; see  \cite{Thurston:bulletin, FLP} for details and further development. 
	Isotopic measured foliations are viewed as identical.
        A measured foliation is orientable if there is a continuous orientation of the non-singular leaves.
	
	The space  $\MF(S_0)$ of \emph{Whitehead equivalence classes} of singular measured foliations (see Figure \ref{fig:whitehead}) is equipped with a topology coming from the geometric intersection number with homotopy classes of simple closed curves on $S_0$.
	A theorem of Thurston asserts that $\MF(S_0)$ is a $6g-g$ dimensional manifold. 
	We remark that Whitehead equivalence does not in general preserve orientability of a foliation.

 \begin{figure}
    \centering
    \includegraphics[scale = .3]{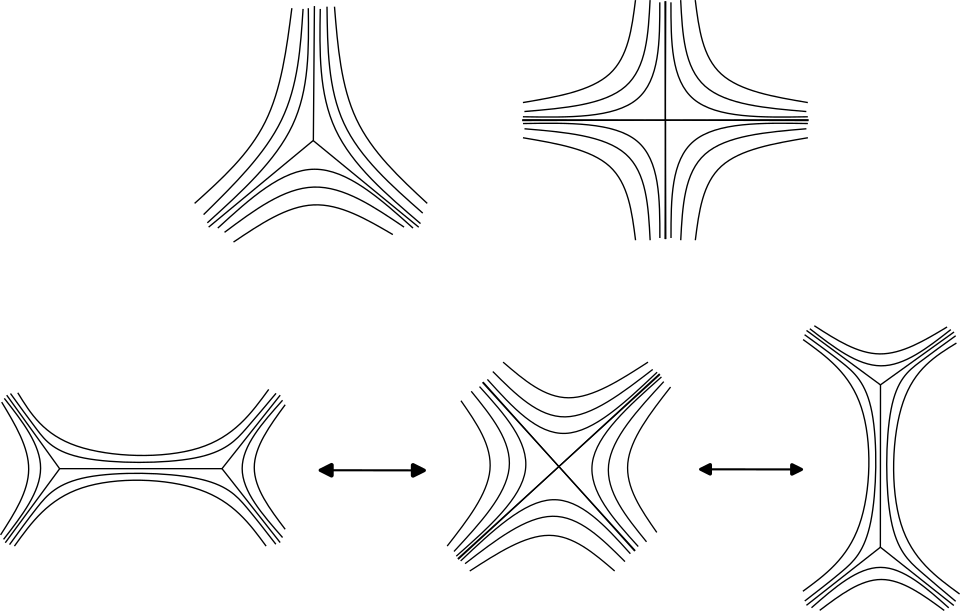}
    \caption{$3$- and $4$-pronged singularities and Whitehead moves. }
    \label{fig:whitehead}
 \end{figure}

 \para{Equivalence of measured foliations and laminations}
	With respect to our auxiliary negatively curved metric on $S_0$, we can pull each non-singular leaf of a measured foliation $(\mathcal F_0, \nu_0)$ tight in the universal cover to obtain a geodesic.  The closure  is a geodesic lamination invariant under the group of covering transformations that projects to a geodesic lamination $\lambda_0$ on $S_0$, and $\lambda_0$ carries a measure of full support $\mu_0$ obtained in a natural way from $\nu_0$.

This procedure defines a natural homeomorphism $\MF(S_0)\to \ML(S_0)$ \cite{Levitt:MFML}, so that we may pass between measure equivalence classes of measured foliations and the corresponding measured lamination at will.
	In the sequel, we will often abuse notation and write $\lambda_0\in \ML(S_0)$ or $\lambda_0 \in \MF(S_0)$ to refer to both the underlying geodesic lamination/equivalence class of foliation and to the transverse measure.

    \subsection{The orthogeodesic foliation}
    Let $\cT(S_0)$ denote the Teichm\"uller space of marked hyperbolic structures on $S_0$. Fix $\Sigmazero\in \cT(S_0)$ and let $\lambda_0$ be a
    geodesic lamination.

    The complement of $\lambda_0$ in $\Sigmazero$ is a (disconnected) hyperbolic surface whose metric completion has totally geodesic (non-compact) boundary.  For each such complementary component $Y$, away from a piecewise geodesic $1$-complex $\Sp$ called the \emph{spine}, there is a nearest point in $\partial Y$. 
    The fibers of the projection map $Y\setminus \Sp \to \partial Y$ form a foliation of $Y\setminus \Sp$ whose leaves extend continuously across $\Sp$ to a piecewise geodesic singular foliation $\cO(Y)$ called the \emph{orthogeodesic foliation} with $n$-prong singularities at the vertices of the spine of valence $n$.
    Every endpoint of every leaf of $\cO(Y)$ meets $\partial Y$ orthogonally; see Figure \ref{fig:orthofoliation}.
    \begin{figure}
        \centering
        \includegraphics[scale = .36]{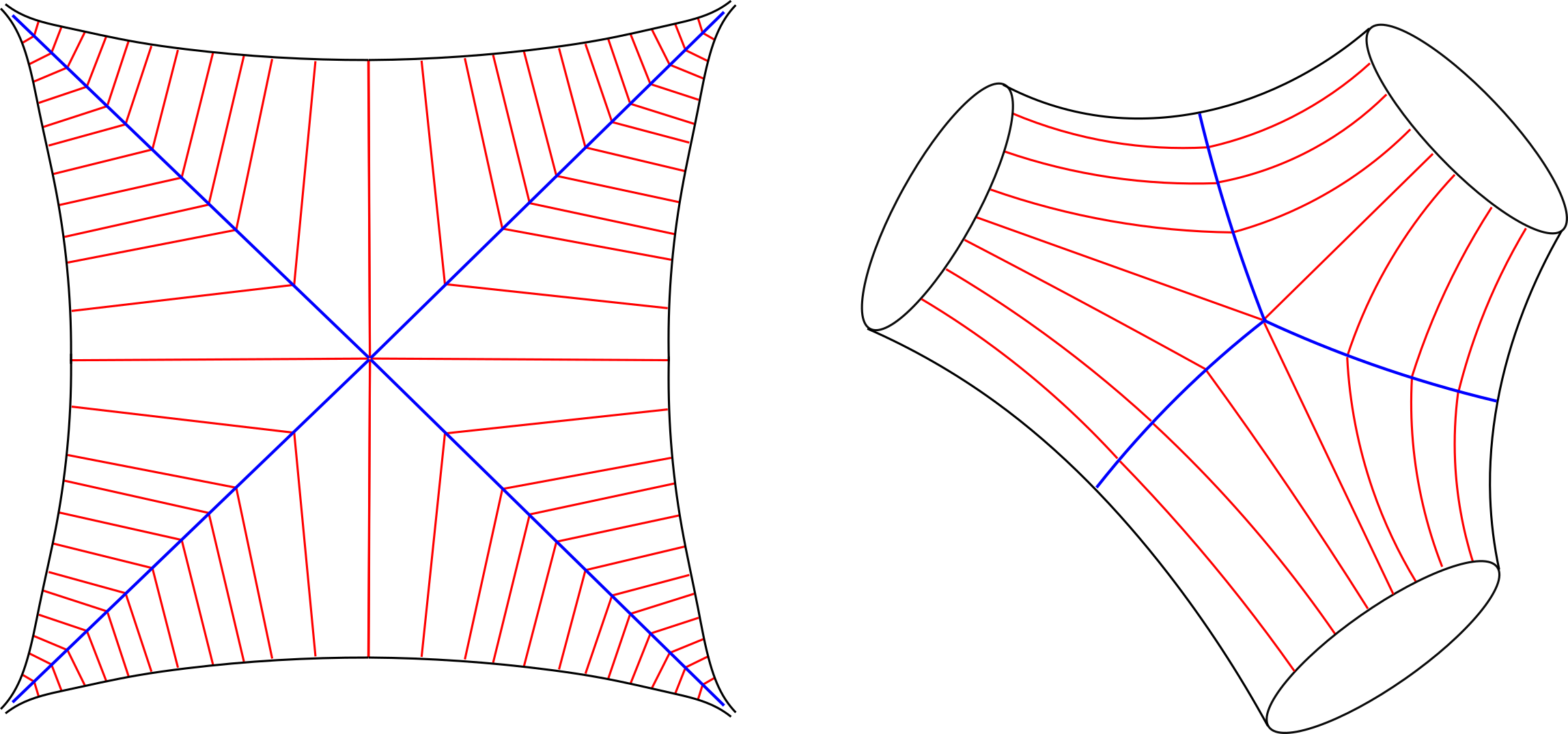}
        \caption{Leaves of the orthogeodesic foliation in red with critical graph in blue.  }
        \label{fig:orthofoliation}
    \end{figure}
    
    The orthogeodesic foliation $\cO(Y)$ is equipped with a transverse measure; the measure assigned to  a small enough transversal is Lebesgue after projection to $\partial Y$, and 
    this assignment is invariant by isotopy transverse to $\cO(Y)$. 
    This construction produces a foliation on  $\Sigmazero\setminus \lambda_0$, which extends continuously across the leaves of $\lambda_0$ and defines a   singular measured foliation $\cO_{\lambda_0}(\Sigma_0)$ on $S_0$; see \cite[Section 5]{CF:SHSH} for more details.
 
    To a geodesic lamination $\lambda_0$, we have produced a map \[\cO_{\lambda_0} : \cT(S_0) \to  \MF(S_0),\]
    corresponding to the  Whitehead equivalence class of $\cO_{\lambda_0}(\Sigmazero)$.
    The image of $\cO_{\lambda_0}$ lands in the set $\MF(\lambda_0)$ of foliations that \emph{bind} together with $\lambda_0$, i.e. $\MF(\lambda_0)$ consists of those $\eta \in \MF(S_0)$ for which there is an $\epsilon>0$ satisfying 
    \begin{equation}\label{eqn:binding}
        \int_\gamma ~d\lambda_0 + \int_\gamma ~d\eta >\epsilon,
    \end{equation}
    for all simple closed curves $\gamma\subset S_0$ meeting both $\lambda_0$ and $\eta$ transversely. 

	The following theorem was proved by Calderon-Farre \cite{CF:SHSH} building on work of Mirzakhani \cite{Mirzakhani:EQ}, Thurston \cite{Thurston:stretch}, and Bonahon \cite{Bon:SPB}. 
    \begin{theorem}\label{thm:hyperbolize}
        For each measured geodesic lamination $\lambda_0 \in \ML(S_0)$,
        \[\cO_{\lambda_0} : \cT(S_0)\to \MF(\lambda_0)\]
        is a homeomorphism.
    \end{theorem}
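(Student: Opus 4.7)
The plan is to show that $\cO_{\lambda_0}$ is a continuous proper injection into the open set $\MF(\lambda_0)\subset \MF(S_0)$, and then invoke invariance of domain. Both $\cT(S_0)$ and $\MF(S_0)$ are real manifolds of dimension $6g-6$; the binding condition \eqref{eqn:binding} is an open condition on $\MF(S_0)$, so $\MF(\lambda_0)$ is an open $(6g-6)$-manifold. One first checks that the image does lie in $\MF(\lambda_0)$: any simple closed geodesic $\gamma$ on $X\in \cT(S_0)$ either meets $\lambda_0$ transversely or is contained in a complementary component $Y$ and crosses its spine $\Sp$, producing positive transverse measure against $\cO_{\lambda_0}(X)$. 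A uniform binding constant then comes from compactness of the set of unit-length closed geodesics modulo isotopy together with continuity of the construction.

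Continuity of $\cO_{\lambda_0}$ follows from unpacking the construction: as $X$ varies in $\cT(S_0)$, the geodesic realization of $\lambda_0$, the spine of each complementary piece, and the nearest-point retraction $Y\setminus \Sp \to \partial Y$ all vary continuously, so the induced transverse measures converge in the weak-$*$ topology when tested against homotopy classes of simple closed curves. For injectivity, the restriction of $\cO_{\lambda_0}(X)$ to a boundary leaf of a complementary piece $Y$ records, via the spine retraction, combinatorial and boundary-length data determining $Y$ up to isometry as a (possibly crowned) hyperbolic surface; and the transverse measure of $\cO_{\lambda_0}(X)$ along leaves of $\lambda_0$ encodes a Bonahon-Thurston shear cocycle describing how adjacent complementary pieces are glued across $\lambda_0$. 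Together these recover $X$. When $\lambda_0$ is maximal, so that each $Y$ is an ideal triangle, this reduces to Bonahon's shear coordinates.

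The main obstacle is surjectivity. Given $\eta\in \MF(\lambda_0)$, the plan is to reconstruct a preimage by: (i) cutting $S_0$ along $\lambda_0$; (ii) on each complementary component $Y$, reading off from $\eta|_Y$ the boundary-cusp-spine combinatorics and internal transverse weights; (iii) realizing each $Y$ as a hyperbolic surface with totally geodesic (possibly non-compact) boundary matching the prescribed boundary data, using existence and uniqueness for crowned hyperbolic surfaces; (iv) gluing the resulting pieces along corresponding leaves of $\lambda_0$ using a shear cocycle read off from $\eta$. The binding hypothesis on $\eta$ is exactly what guarantees the assembled surface has finite total area and so defines a point of $\cT(S_0)$. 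Verifying that steps (ii)--(iv) produce a well-defined hyperbolic surface from an arbitrary binding $\eta$, and that this assignment is continuous in $\eta$, is the technical heart of the argument; this is where the Mirzakhani-Thurston-Bonahon machinery is deployed most heavily.

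Finally, properness closes the argument: if a sequence $X_n$ leaves every compact set of $\cT(S_0)$, then some closed geodesic length either collapses to $0$ or diverges, and in either case the uniform binding constant of $(\lambda_0, \cO_{\lambda_0}(X_n))$ in \eqref{eqn:binding} tends to $0$, so $\cO_{\lambda_0}(X_n)$ escapes every compact subset of $\MF(\lambda_0)$. A continuous proper injection between connected manifolds of the same dimension is a homeomorphism onto its image by invariance of domain, and surjectivity upgrades this to a homeomorphism onto $\MF(\lambda_0)$.
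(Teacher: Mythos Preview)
The paper does not prove this theorem. It is stated with attribution---``The following theorem was proved by Calderon--Farre \cite{CF:SHSH} building on work of Mirzakhani \cite{Mirzakhani:EQ}, Thurston \cite{Thurston:stretch}, and Bonahon \cite{Bon:SPB}''---and then used as a black box to deduce Corollary~\ref{cor:find a hyperbolic metric} and the constructions in \S\ref{Sec_construction of minimizing laminations}. There is no argument in the paper to compare your proposal against.

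That said, your outline is a reasonable high-level roadmap of the Calderon--Farre approach, and you correctly identify the shear/shape cocycle machinery as the technical core. A few remarks on the internal logic of your sketch: the properness argument and the explicit surjectivity construction (your steps (i)--(iv)) are doing overlapping work. A continuous proper injection between connected boundaryless manifolds of the same dimension is already a homeomorphism onto a connected component by invariance of domain, so if you establish properness and know that $\MF(\lambda_0)$ is connected, surjectivity follows and steps (i)--(iv) become unnecessary. Conversely, if you carry out the explicit inverse construction in (i)--(iv), you have surjectivity directly and properness is not needed. Your final paragraph conflates these two routes. Also, your properness claim (``the uniform binding constant tends to $0$'') handles the case where $\cO_{\lambda_0}(X_n)$ approaches the frontier of $\MF(\lambda_0)$ inside $\MF(S_0)$, but you should also address the case where $\cO_{\lambda_0}(X_n)$ escapes to infinity in $\MF(S_0)$ itself (e.g.\ when lengths along $\lambda_0$ blow up). None of this is fatal to the strategy, but a genuine proof would need to commit to one route and execute it; as written this is a plan, not a proof.
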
    

    The following  is one of the main ingredients in the proof of Theorem \ref{Thm intro: tighten-lamination}.
    \begin{corollary}\label{cor:find a hyperbolic metric}
        Let $\lambda_0\in \ML(S_0)$ be oriented and suppose $\alpha$ is an oriented multi-curve with positive $c\Z$-weights such that every intersection of $\lambda_0$ with $\alpha$ is positive and such that $\lambda_0$ and $\alpha$ satisfies $S_0\setminus (\lambda_0\cup\alpha)$ consists of compact disks.
        Then there is a unique hyperbolic metric $\Sigmazero\in \cT(S_0)$ such that $\cO_{\lambda_0}(\Sigmazero)$ is equivalent to $\alpha$ in $\ML(S_0)$.  
        Moreover,  $\cO_{\lambda_0}(\Sigmazero)$ is oriented and every intersection with $\lambda_0$ is positive.
    \end{corollary}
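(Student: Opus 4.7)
The plan is to deduce this from \Cref{thm:hyperbolize}, with the bulk of the work going into verifying that $\alpha\in\MF(\lambda_0)$ and then promoting the resulting measure equivalence $\cO_{\lambda_0}(\Sigmazero)\sim\alpha$ to a statement about orientations. First I would verify the binding inequality \eqref{eqn:binding} for $(\lambda_0,\alpha)$. The hypothesis that $S_0\setminus(\lambda_0\cup\alpha)$ consists of compact disks implies that every essential simple closed curve has positive geometric intersection with the 1-complex $\lambda_0\cup\alpha$, and a standard compactness argument over the space of projective measured laminations $P\ML(S_0)$ upgrades this to a uniform lower bound $\epsilon>0$. Thus $\alpha\in\MF(\lambda_0)$, and \Cref{thm:hyperbolize} produces a unique $\Sigmazero\in\cT(S_0)$ with $\cO_{\lambda_0}(\Sigmazero)=\alpha$ in $\MF(S_0)$, equivalently in $\ML(S_0)$.

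For the orientation claim I would exploit the fact that $\alpha$ is a weighted multi-curve. Its Whitehead equivalence class consists of singular measured foliations whose non-singular leaves are all closed, each freely homotopic to some component $\gamma_i$ of $\alpha$, with the singular set forming a finite union of trees of saddle connections that collapse to the smooth multi-curve foliation of a tubular neighborhood of $\alpha$. The orientation of each $\gamma_i$ lifts to the family of parallel closed leaves and extends continuously across each Whitehead tree, yielding a global orientation of $\cO_{\lambda_0}(\Sigmazero)$. For positivity of intersections, every non-singular leaf of $\cO_{\lambda_0}(\Sigmazero)$ meets $\lambda_0$ transversely (orthogonally, by construction) and is freely homotopic to some $\gamma_i$. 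By hypothesis, the geometric and algebraic intersection numbers of $\gamma_i$ with $\lambda_0$ coincide; this homotopy-invariant equality passes to the oriented leaves of $\cO_{\lambda_0}(\Sigmazero)$, since any cancelling pair of intersections would force the geometric intersection to strictly exceed the algebraic one.

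The step I expect to be most delicate is the orientation transfer: one must check that orientability is preserved throughout the Whitehead equivalence class of a multi-curve, which reduces to a local check at each singular vertex of $\cO_{\lambda_0}(\Sigmazero)$. Such a vertex lies on the spine of a complementary region of $\lambda_0$, and the required compatibility is ensured by the cyclic ordering of prongs together with the ambient orientation of $S_0$. Granted this, the rest follows as a routine application of \Cref{thm:hyperbolize}.
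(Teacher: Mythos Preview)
Your proposal is correct and follows essentially the same route as the paper. Both arguments verify $\alpha\in\MF(\lambda_0)$ via a compactness/limit argument in $\ML(S_0)$ (the paper phrases it as a contradiction using a weak-$*$ limit of $\gamma_n/\ell(\gamma_n)$, which is your $P\ML$ compactness argument unpacked), then invoke \Cref{thm:hyperbolize}, and finally use that the non-singular leaves of $\cO_{\lambda_0}(\Sigmazero)$ are closed and isotopic to components of $\alpha$ to transfer the orientation; the paper treats this last step in one sentence, while you are more explicit about the local check at singularities, but the underlying idea is the same.
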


    \begin{proof}
    We will prove that the measured foliation equivalent to $\alpha$ lies in $\MF(\lambda_0)$.
    Suppose not.  Then we can find a sequence of simple closed curves $\gamma_n$ such that 
    \[\int_{\gamma_n} |d\alpha| + \int_{\gamma_n}|d\lambda_0| \to 0.\]
    Since $|d\alpha|$ consists of atomic measures supported on its components, $\gamma_n$ is eventually disjoint from $\alpha$.
    Let $\mu_0$ denote any weak-* accumulation point of  $\gamma_n/\ell_{\Sigmazero}(\gamma_n)\in \ML(S_0)$.
    Continuity of the intersection number gives that the intersection of $\mu_0$ with $\lambda_0$ is zero.  We conclude that $\mu_0$ either has support contained in $\lambda_0$ or is disjoint from $\lambda_0$. Both possibilities are impossible, since then $\mu_0$ would cross $\alpha$ essentially.
    This proves that $\lambda_0$ and $\alpha$ bind in the sense of \cref{eqn:binding}.

    We can apply Theorem \ref{thm:hyperbolize} to deduce that there is a unique hyperbolic metric $\Sigmazero$ on $S_0$ so that $\cO_{\lambda_0}(\Sigmazero) = \alpha$.  
    It follows that every non-singular leaf of $\cO_{\lambda_0}(\Sigma_0)$ is closed and isotopic to a component of $\alpha$.  The orientation of $\alpha$ is compatible with one of the orientations of $\cO_{\lambda_0}(\Sigma_0)$, which then meet  $\lambda_0$ positively. 
    \end{proof}

     \subsection{Tight maps with prescribed stretch locus}\label{sec:lip_graph}
     In this section, we will prove Theorem \ref{Thm intro: tighten-lamination} from the introduction, which we restate here in a slightly more general form.
    \begin{theorem}\label{thm:tighten lamination}
        Let $\varphi \in H^1(S_0,c\Z)$ and let $\lambda_0$ be the support of an oriented measured geodesic lamination on $S_0$.  Suppose that $\varphi$ is Poincar\'e dual to a homology class represented by an oriented multi-curve $\alpha$ with positive $c\Z$ weights  that meets $\lambda_0$ transversely and positively and such that $\alpha\cup \lambda_0$ binds $S_0$.

        Then there is a hyperbolic metric $\Sigma_0 \in \cT(S_0)$ and a $1$-Lipschitz tight map $\tau_0: \Sigma_0\to \R/c\Z$ inducing $\varphi$ on homology such that the locus of points whose local Lipschitz constant is $1$ is equal to $\lambda_0$ .
    \end{theorem}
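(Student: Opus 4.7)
The strategy is to use \Cref{cor:find a hyperbolic metric} to select the hyperbolic metric $\Sigma_0 \in \cT(S_0)$ for which the orthogeodesic foliation $\cO_{\lambda_0}(\Sigma_0)$ is measure-equivalent to $\alpha$ in $\MF(S_0)$, with the compatible orientation that makes every intersection with $\lambda_0$ positive. The desired $\tau_0$ will then be built by integrating the signed transverse measure $\nu$ of this oriented foliation.

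More precisely, lift to the universal cover $\widetilde{\Sigma_0}$, fix a basepoint, and set $\tilde\tau_0(\tilde p) = \int_\gamma \nu$ along any transverse path from the basepoint to $\tilde p$. Away from the spine of $\cO_{\lambda_0}(\Sigma_0)$, $\nu$ is represented by a continuous closed $1$-form, so path-independence in $\widetilde{\Sigma_0}$ follows from simple connectivity (with a small perturbation argument near singularities). The period of $\nu$ around a loop $\gamma_0 \subset \Sigma_0$ equals the signed intersection $[\cO_{\lambda_0}(\Sigma_0)] \cdot [\gamma_0] = \alpha \cdot [\gamma_0] = \varphi([\gamma_0]) \in c\Z$, by the measure equivalence with $\alpha$ and Poincar\'e duality. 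Hence $\tilde\tau_0$ descends to $\tau_0 : \Sigma_0 \to \R/c\Z$ inducing $\varphi$ on $\pi_1$.

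To check the 1-Lipschitz property and identify the max stretch locus, I work in Fermi coordinates $(s,d)$ based at a leaf of $\lambda_0$ (with $s$ arc length along the leaf and $d$ perpendicular distance), in which the hyperbolic metric reads $\cosh^2(d)\, ds^2 + dd^2$ and the leaves of $\cO_{\lambda_0}(\Sigma_0)$ near $\lambda_0$ are the perpendiculars $\{s = \text{const}\}$. The defining normalization that $\nu$ restricts to arc length on $\lambda_0$ gives $d\tau_0 = \pm ds$, so for a unit tangent $v = (v_s, v_d)$,
\[
\frac{|d\tau_0(v)|}{|v|} = \frac{|v_s|}{\sqrt{\cosh^2(d)v_s^2 + v_d^2}} \leq \frac{1}{\cosh(d)},
\]
with equality iff $v_d = 0$. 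On $\lambda_0$ this gives local Lipschitz constant $1$, attained exactly in the $\lambda_0$-tangent direction, and off $\lambda_0$ the local Lipschitz constant is $1/\cosh(d(\cdot,\lambda_0)) < 1$. At points of the spine $\Sp$ (which lie at positive distance from $\lambda_0$ inside each complementary region), the same strict inequality persists by upper semi-continuity of the local Lipschitz constant.

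Since $\tau_0$ realizes ratio $1$ on arbitrarily long arcs of leaves of $\lambda_0$, we get $\kappa = 1$, so $\tau_0$ is tight with maximal stretch locus exactly $\lambda_0$. The main obstacle is a careful treatment of the transverse measure $\nu$ across different complementary components and in a neighborhood of the spine of $\cO_{\lambda_0}(\Sigma_0)$; pinning down the local expression $d\tau_0 = ds$ in Fermi coordinates, as well as continuity of $\tau_0$ across leaves of $\lambda_0$, amounts to a careful reading of the orthogeodesic foliation theory of \cite{CF:SHSH}.
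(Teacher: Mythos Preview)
Your approach is correct and essentially the same as the paper's. Both begin with \Cref{cor:find a hyperbolic metric} to pin down $\Sigma_0$, and both define $\tau_0$ by ``integrating'' the transverse measure of $\cO_{\lambda_0}(\Sigma_0)$; your explicit Fermi-coordinate computation $|d\tau_0(v)|/|v|\le 1/\cosh(d)$ is exactly the quantitative form of the paper's one-line appeal to the fact that nearest-point projection onto a geodesic in $\bH^2$ is a strict contraction off the geodesic.

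The one packaging difference worth noting: rather than integrating $\nu$ directly and checking periods, the paper factors $\tau_0$ as $\Sigma_0\xrightarrow{\pi}\mathcal G\to\R/c\Z$, where $\mathcal G$ is the metric leaf space of $\cO_{\lambda_0}(\Sigma_0)$ (a finite directed graph with one edge per component of $\alpha$) and $\pi$ is the leaf-collapse map. This absorbs precisely the issues you flag as ``the main obstacle''---well-definedness across the spine and across leaves of $\lambda_0$---into the statement that the leaf space exists and carries a canonical metric, and makes the identification of $\varphi$ immediate. Your integration argument reaches the same destination but has to do that bookkeeping by hand.
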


    \begin{proof}
        We apply \Cref{cor:find a hyperbolic metric} to find a unique hyperbolic structure $\Sigma_0\in \cT(S_0)$ such that $\cO_{\lambda_0}(\Sigma_0)= \alpha$ and such that $\cO_{\lambda_0}(\Sigma_0)$ and  $\lambda_0$ are positively oriented.
        The leaf space $\mathcal G$ of $\cO_{\lambda_0}(\Sigma_0)$ is a directed  graph with a metric $d$ induced by integrating the transverse measure; there is an oriented edge for every component of $\alpha$.

        The leaf space $\cal G$ is obtained by collapsing the leaves of $\cO_{\lambda_0}(\Sigmazero)$ in each complementary component $Y$ of $\Sigmazero\setminus {\lambda_0}$.
        In other words, a point $p\in Y$ is identified with its nearest point on ${\lambda_0}$.  Let $\pi: \Sigma_0\to \mathcal G$ denote the quotient projection. 
        Then $\pi$ maps the leaves of $\lambda_0$ locally isometrically preserving orientation to $\mathcal G$ by construction of the transverse measure on $\cO_{\lambda_0}(\Sigmazero)$.
        
        The nearest point projection map onto a geodesic in hyperbolic space is a strict contraction away from the geodesic, so the quotient map $\pi: \Sigma_0 \to \mathcal G$ is $1$-Lipschitz, and the $1$-Lipschitz locus is exactly $\lambda_0$.
        There is a canonical map $\mathcal G \to \R/c\Z$ that is orientating preserving, locally isometric along the edges of $\mathcal G$ and such that the composition with $\pi$ induces $\varphi$ on $\pi_1$.
        Let $\tau_0: \Sigma_0\to \cal G \to \R/c\Z$ denote this composition.

        Tightness of $\tau_0$ is an immediate consequence of the construction: any sequence of geodesic curves $\gamma_n\subset \Sigma_0$ that converge in the Hausdorff topology to $\lambda_0$ will satisfy \[\lim_{n\to \infty} \frac{|\varphi(\gamma_n)|}{\ell(\gamma)} =1,\]
        giving a lower bound for the supremum of this ratio over all curves $\gamma$, while the Lipschitz constant of $\tau_0$ gives an upper bound.  This completes the proof of the theorem.   
    \end{proof}

    \subsection{Minimizing laminations in $\mathbb Z$-covers with prescribed dynamics}
    Now we use Theorem \ref{thm:tighten lamination} to construction tight $1$-Lipschitz maps to the circle whose maximum stretch locus has prescribed dynamical properties.  More precisely, given a $1$-Lipschitz tight map $\tau_0: \Sigma_0\to \R/c\Z$ obtained by collapsing leaves of $\cO_{\lambda_0}(\Sigmazero)$ as in the previous section, let $\gamma$ be the preimage of a point, which is generically a simple multi-curve.  We consider the first return  map \[P : \gamma \cap \lambda_0 \to \gamma \cap \lambda_0 \]
    obtained via the flow tangent to $\lambda_0$ in the positive direction for time $c$.

    In this section, we prove \Cref{Thm:Construction of surface with weak mixing IET}, in which we assume that $\gamma$ is connected and show that the dynamics of $P$ can be prescribed by the data of an interval exchange transformation $T: I \to I$.

        \para{Interval Exchange transformations}
	 An \emph{interval exchange transformation} (or \emph{IET}) $T: I \to I$  is a peicewise isometric orientation preserving bijection of an interval $I$ to itself determined by a finite partition of $I$ into sub-intervals $I_1, ..., I_p$ and a permutation $\pi$ that reorders them. 
  Thus $T$ clearly preserves the Lebesgue measure.
    
    An IET is \emph{minimal} if every orbit is dense.
	A sufficient condition for minimality is that the lengths $\{|I_j|\}_{j = 1}^p$ and $|I|$ are rationally independent and $\pi$ is \emph{irreducible}, i.e., $\pi$ does not map $\{1, ..., j\}$ to itself for $j<p$.
	Thus, almost every IET with a given permutation is minimal with respect to the Lebesgue measure on the $p$-dimensional simplex parameterizing the lengths of the subintervals.
 
 	Recall the following definitions:
 	\begin{definition}\;\label{Def:weakly mixing}
 		\begin{enumerate}
 			\item A transformation $T: I\to I$ preserving a probability measure $ \nu $ is called \emph{weakly mixing} if $T\times T: I\times I\to I \times I$ is ergodic with respect to $ \nu \times \nu $. 
 			\item Given two probability measure preserving systems $ (Y,\nu_Y,T) $ and $ (Z,\nu_Z,S) $, we say that $ Z $ is a \emph{factor} of $ Y $ if there exist co-null subsets $ Y' \subseteq Y $ and $ Z' \subseteq Z $ with $ TY' \subseteq Y' $ and $ SZ' \subseteq Z' $, and there exists a measurable map $ \psi: Y' \to Z' $ satisfying
 			\[ \nu_Z=\psi_* \nu_Y \text{ and }\quad \psi \circ T = S \circ \psi \;\;\text{ on }Y'. \]
 			We say the systems are \emph{isomorphic} if $ \psi $ is additionally assumed to be invertible.
 			\item A continuous transformation $S: Z\to Z$ is called \emph{topologically weakly mixing} if $ S \times S $ is topologically transitive in $ Z \times Z $, or equivalently if all continuous and $S\times S$-invariant functions $f: Z\times Z \to \R$ are constant.
 	\end{enumerate} 	\end{definition}
 	\noindent It is  known that almost every IET with irreducible permutation $\pi$ is weakly mixing \cite{Avila-Forni}; see also  \cite{NR:top_weak_mixing}.
 	\medskip

  We show the following:
    \begin{theorem}\label{Thm:Construction of surface with weak mixing IET}
    Given an irreducible IET $T: I \to I $ not equivalent to a circle rotation, there is a closed oriented surface $S_0$ such that the following holds.
    For any primitive  cohomology class $\varphi \in H^1(S_0;\Z)$ and any positive $c>0$, there is a marked hyperbolic structure $\Sigmazero\in \cT(S_0)$ and a tight Lipschitz map \[\tau_0 : \Sigmazero\to \R/c\Z\] inducing $c\cdot\varphi$ on cohomology; the minimizing lamination $\lambda_0$ is equipped with a transverse measure $\mu_0$ and the first return system  $( \tau_0^{-1}(x)\cap \lambda_0, \mu_0, P )$ is a factor of $(I, Leb,T)$.  
    
    If $\mu_0$ has no atoms (equivalently, $T$ has no periodic orbits or $\lambda_0$ has no closed leaves), then the two systems are isomorphic.
    \end{theorem}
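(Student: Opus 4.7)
The plan is to construct $S_0$ and the auxiliary geometric data directly from the IET via suspension, verify the hypotheses of \Cref{thm:tighten lamination} (using the mapping class group to accommodate any primitive cohomology class), apply the theorem, and finally identify the first return map with $T$.

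\textbf{Step 1 (suspension).} Given $T : S^1 \to S^1$, form the closed oriented surface $S_0 = [0,1] \times S^1 / \sim$ where $(1,x) \sim (0, T(x))$, completing the neighborhoods of discontinuity points so that they become cone points. The horizontal foliation by $[0,1]\times\{y\}$-leaves descends to an oriented singular measured foliation $\mathcal F$ on $S_0$ with transverse measure given by Lebesgue on vertical arcs. The vertical curve $\alpha_0 = \{1/2\} \times S^1$ is a simple closed curve, and its Poincar\'e dual $\varphi_0 \in H^1(S_0;\Z)$ is primitive, coinciding with the cohomology class of the projection $S_0 \to S^1 = \R/\Z$. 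Irreducibility of $T$ together with the hypothesis that $T$ is not a rotation ensure that $S_0$ has genus at least $2$.

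\textbf{Step 2 (hypotheses; arbitrary $\varphi$).} The pair $(\mathcal F, \alpha_0)$ satisfies the hypotheses of \Cref{cor:find a hyperbolic metric}: $\mathcal F$ is oriented, each intersection with $\alpha_0$ is transverse and positive (both inherit canonical orientations from the product structure), and $\mathcal F \cup \alpha_0$ binds $S_0$ (any essential simple closed curve disjoint from $\alpha_0$ lies in the cut surface $S_0 \setminus \alpha_0$ and, by minimality of $T$, crosses $\mathcal F$ with positive mass). To handle an arbitrary primitive $\varphi \in H^1(S_0;\Z)$ and $c>0$, I would use that the mapping class group of $S_0$ acts as $\mathrm{Sp}(2g,\Z)$ on $H^1$ and transitively on primitive classes: pick a self-homeomorphism $h : S_0 \to S_0$ with $h_*\varphi_0 = c\varphi$, and replace $(\mathcal F, \alpha_0)$ by $(h(\mathcal F), c\cdot h(\alpha_0))$. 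These hypotheses are topological and so are preserved.

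\textbf{Step 3 (apply Theorem \ref{thm:tighten lamination}).} Apply \Cref{thm:tighten lamination} to $(h(\mathcal F), c\cdot h(\alpha_0), c\varphi)$ to obtain a hyperbolic structure $\Sigmazero \in \cT(S_0)$ and a $1$-Lipschitz tight map $\tau_0 : \Sigmazero \to \R/c\Z$ inducing $c\varphi$. The minimizing lamination $\lambda_0$ is the geodesic straightening of $h(\mathcal F)$ in $\Sigmazero$, carrying the transverse measure $\mu_0$ inherited from Lebesgue on $\mathcal F$. By the construction of $\tau_0$ in the proof of \Cref{thm:tighten lamination} (via the leaf-space projection to $\mathcal G = \R/c\Z$), for a regular $x \in \R/c\Z$ the fiber $\tau_0^{-1}(x)$ is a simple closed leaf of $\cO_{\lambda_0}(\Sigmazero)$ freely homotopic to $h(\alpha_0)$, so it corresponds under the straightening to a transversal of $\mathcal F$ isotopic to $h(\alpha_0)$. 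Tracing the straightening correspondence $\mathcal F \to \lambda_0$ produces a measurable map $q : S^1 \to \tau_0^{-1}(x) \cap \lambda_0$ with $q_* \operatorname{Leb} = \mu_0$ and $q \circ T = P \circ q$, exhibiting the factor structure.

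\textbf{Step 4 (isomorphism and main obstacle).} The map $q$ fails to be injective precisely when $\mathcal F$ contains cylinders of closed leaves: such cylinders straighten to single closed leaves of $\lambda_0$, collapsing intervals in $S^1$ to points on $\tau_0^{-1}(x)\cap\lambda_0$ and producing atoms of $\mu_0$. Conversely every atom arises this way, so $q$ is injective modulo null sets precisely when $\mu_0$ is atom-free, equivalently $T$ has no periodic orbits, giving the desired measure-theoretic isomorphism. The main technical difficulty lies in Step 3: one must carefully articulate the correspondence between the annular suspension model $(\mathcal F, \alpha_0)$ and the hyperbolic model $(\lambda_0, \cO_{\lambda_0}(\Sigmazero))$ produced via \Cref{thm:hyperbolize}, ensuring that Whitehead/straightening identifications which collapse $T$ to $P$ arise only from periodic cylinders and not from any other feature (e.g. saddle connections contribute no atomic mass).
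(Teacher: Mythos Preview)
Your approach is essentially the same as the paper's: build the suspension translation surface from $T$, extract the horizontal lamination $\lambda_0$ and the vertical curve $\alpha$, invoke mapping class group transitivity on primitive classes to realize any $\varphi$, apply \Cref{thm:tighten lamination} to produce $\Sigmazero$ and $\tau_0$, and then identify the first return system on a fiber with $T$. Two small points worth tightening: (i) the mapping class group sends primitive integral classes to primitive integral classes, so you want $h_*\varphi_0=\varphi$ and then scale $\alpha$ by $c$, not $h_*\varphi_0=c\varphi$; (ii) your binding verification appeals to minimality of $T$, which is not assumed---binding holds automatically because in the translation model the horizontal and vertical foliations are everywhere transverse. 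For the identification in Step~3, the paper makes your ``straightening correspondence'' concrete by integrating the transverse measure $\mu_0$ along the fiber $\tau_0^{-1}(x)$ to produce a map to $I$, citing \cite[Proposition 5.10]{CF:SHSH} for the fact that this is dynamics-preserving and that $\cO_{\lambda_0}(\Sigmazero)$ is isotopic to the vertical foliation; this is exactly the articulation you flag as the main technical difficulty.
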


    The proof factors through the construction of a \emph{translation structure} encoding the dynamics of $T$.
    Briefly, a translation structure $\omega$ on a closed surface $S_0$ is the data of a finite set $Z\subset S_0$ and an atlas of charts to $\mathbb C$ on $S_0\setminus Z$, such that the transition maps are translations.  
    The transitions preserve the Euclidean metric as well as the horizontal and vertical directions, i.e., the $1$-forms defined locally in charts by $dy$ and $dx$.
    The horizontal and vertical (kernel) oriented measured foliations on $S_0\setminus Z$ extend to singular measured foliations on $S_0$, which are binding in the sense of \eqref{eqn:binding}. For details see \cite{GM}, which in particular states that the space of (half) translation structures is parameterized by pairs of binding measured foliations on $S_0$.
    
    \begin{proof}
    Our plan is to build a translation structure on a closed surface $S_0$ as a suitable suspension of $T: I \to I$ with constant roof function $c>0$.  The leaf space of the vertical measured foliation is the circle $\R/c\Z$ and the quotient map represents $\varphi$.
    This flat structure will provide us with a pair of binding measured foliations.  We then use Theorem \ref{thm:tighten lamination} to hyperbolize this example and conclude the theorem.

    To build the flat structure, endow the rectangle $[0,c] \times I\subset \mathbb C$ with its Euclidean structure,  identify its horizontal sides isometrically by translation and its vertical sides by $(c,p)\sim(0, T(p))$.
    See \Cref{fig:IET}.

    \begin{figure}[ht]
        \centering
        \includegraphics[scale = .44]{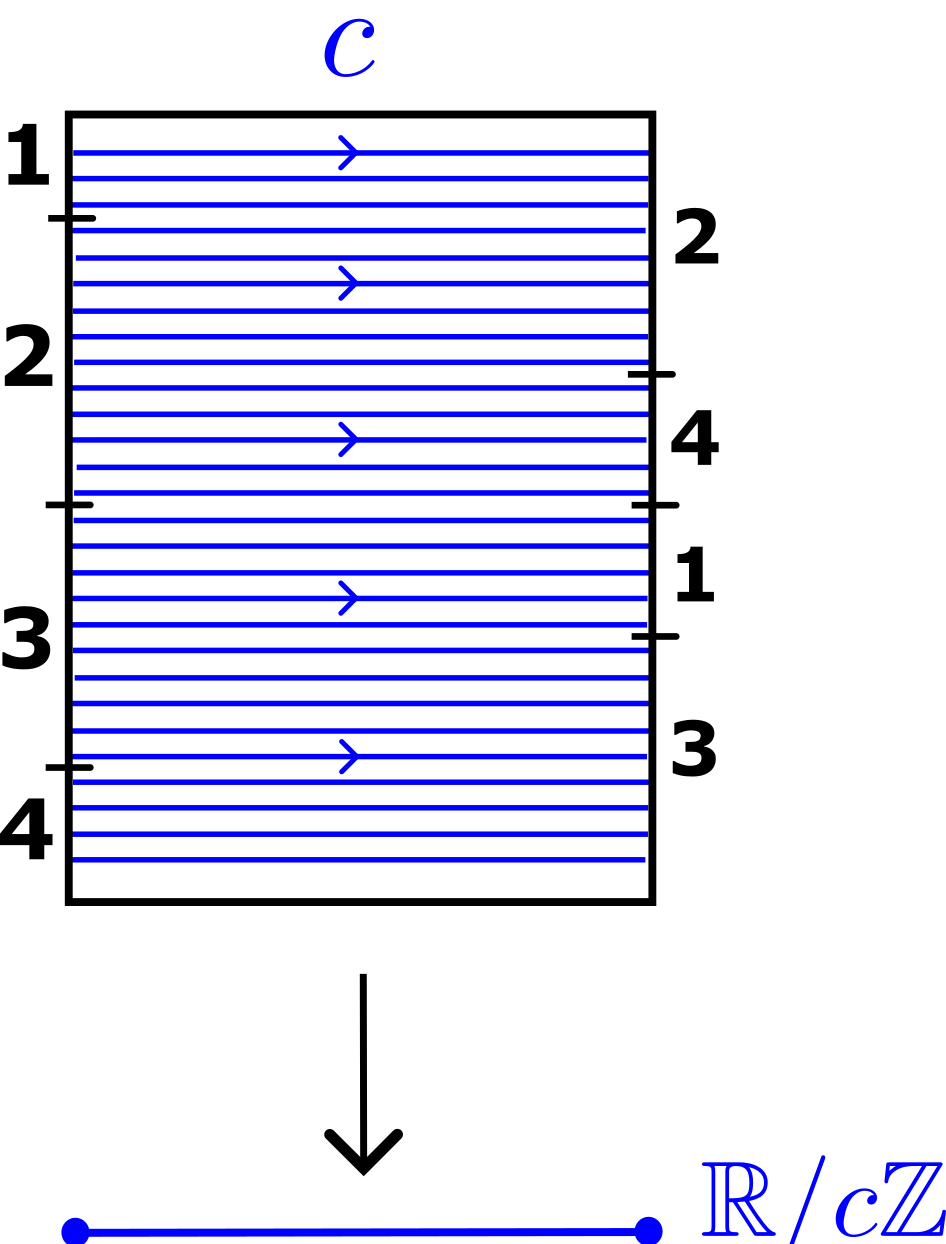}
        \caption{The suspension of an IET with constant roof function.  Horizontal blue lines each have length $c$, wrap once around the circle, and becoming the maximal stretch lamination. Vertical lines are pairwise isotopic simple closed curves, except for the critical graph of vertical saddle connections.}
        \label{fig:IET}
    \end{figure}
    
    The resulting object $\omega$ is homeomorphic to a closed orientable surface $S_0$.
    Since the action of the mapping class group of $S_0$ on $H^1(S_0,c\Z)$ is transitive on primitive $c\Z$-valued cohomology classes, we may assume that  the quotient map $q\to \R/c\Z$ collapsing leaves of the vertical foliation of $\omega$ represents the class of $c\cdot \varphi$.

    We observe that the non-singular leaves of the vertical foliation are isotopic simple closed curves.  Let $\alpha$ denote this isotopy class, oriented suitably. Then $c\cdot \alpha$ represents the Poincar\'e dual of $c\cdot \varphi$, and $\alpha$ meets positively the horizontal oriented measured foliation of $\omega$.  Call the corresponding lamination $\lambda_0$ and transverse measure $\mu_0$.
    
    We apply \Cref{thm:tighten lamination} which gives us a hyperbolic metric $\Sigma_0\in \cT(S_0)$ such that $\cO_{\lambda_0}(\Sigma_0)=\alpha$ and a $1$-Lipschitz tight map $\tau_0: \Sigma_0\to \R/c\Z$ inducing $\varphi$ on $\pi_1$, with maximal stretch locus $\lambda_0$.
    Moreover, $\cO_{\lambda_0}(\Sigma_0)$ is isotopic to the vertical foliation of $\omega$ (c.f. \cite[Proposition 5.10]{CF:SHSH}).
    
    After postcomposing $\tau_0$ with a rotation of the circle, we may assume that the critical graph of $\cO_{\lambda_0}(\Sigma_0)$ lies entirely in the fiber over $c\Z$.
    Then the following follow directly from the construction:
    \begin{itemize}
            \item For any $x,y \in \R/c\Z \setminus \{c\Z\}$, the pre-images $\tau_0^{-1}(x)$ and $\tau_0\inverse(y)$ are isotopic simple closed curves.
            \item The Poincar\'e first return map $P:\tau_0^{-1}(x)\cap {\lambda_0} \to \tau_0^{-1}(x)\cap {\lambda_0}$ is induced by the geodesic flow tangent to ${\lambda_0}$ at time $c$ in the forward direction.
    \end{itemize}
    
    We can define a map $\tau_0^{-1}(x)\cap {\lambda_0} \to I $ by choosing a point $p\in \tau_0^{-1}(x)$ and integrating 
    \[\pi: q\mapsto \int_p^q ~d\mu_0\in I,\]
    where we use the positively oriented arc from $p$ to $q$.
    For a suitable choice of $p$, this map is dynamics preserving, i.e., $P^n(q) = T^n(\pi(q))$ for all $q$ and $n$; see \cite[Proposition 5.10]{CF:SHSH}.
    If $\mu_0$ has no atoms, then $\pi$ is also continuous, measure preserving factor map and injective outside of a countable set, hence invertible Lebesgue almost everywhere.
    When $\mu_0$ has atoms, our factor map goes in the other direction, namely, we can collapse the holes made by the finitely many atoms in the image of $\pi$ to define a measure preserving map  $I\setminus \{\text{singular orbits}\} \to \tau_0^{-1}(x)\cap \lambda_0$ that inverts $\pi$.  
    There are only countably many singular orbits, so this mapping is defined almost everywhere and gives the desired factorization.
    \end{proof}

    \subsection{Geometric convergence}\label{subsec:geometric convergence}
    In this section we construct examples of $\Z$-covers that are geometrically close, but where the dynamics of the corresponding bi-minimizing loci are very different.
    We will use this example to prove \Cref{Theorem:non-rigidity} that the topology of $N$-orbit closures can change in the limit along a geometrically convergent sequence of regular $\mathbb Z$-covers.

    \begin{theorem}\label{Theorem:Epsilon deformation of weak mixing}
        Let $S_0$ be a closed orientable surface of genus $\geq 2$ and $S\to S_0$ be a $\Z$-cover.  Let $S \to\Sigma$ be a marked hyperbolic metric constructed from a weak-mixing IET as in \Cref{Thm:Construction of surface with weak mixing IET} with minimizing lamination $\lambda \subset \Sigma$.
        There exist marked  hyperbolic metrics $S\to \Sigma_n$ satisfying:
        \begin{enumerate}
            \item The minimizing lamination $\lambda_n\subset \Sigma_n$ consists of finitely many (uniformly) isolated geodesics and $\lambda_n\to \lambda$ in the Hausdorff topology.
            \item  There are $(1+\epsilon_n)$-bi-Lipschitz maps $f_n:\Sigma\to \Sigma_n$ in the homotopy classes of the markings, where $\epsilon_n\to 0$.
            \item Denote by $d$ the metric on $\Sigma$ and $d_n$ the  metric on $\Sigma_n$.  There is a constant $C$ such that \[|d(p,q) - d_n (f_n(p), f_n(q))|<C\]
            for all $p, q\in \Sigma$ and $n$.
        \end{enumerate}
    \end{theorem}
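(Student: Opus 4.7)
The plan is to perturb the weak-mixing IET $T$ underlying $\Sigma$ to a sequence $T_n$ of \emph{rational} IETs (all orbits periodic), apply the suspension-and-hyperbolize construction of \Cref{Thm:Construction of surface with weak mixing IET} to each $T_n$, and lift to the fixed $\Z$-cover. Since each $T_n$ has only periodic orbits, the resulting minimizing lamination $\lambda_{0,n}$ is a finite disjoint union of simple closed geodesics, one per periodic orbit class of $T_n$; lifting to the cover gives $\lambda_n$ a $\Z$-invariant family of isolated bi-infinite geodesics, as required by~(1).

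Concretely I would pick $T_n\to T$ in the same combinatorial stratum as $T$ (same permutation), with rationally commensurable interval lengths. The translation structures $\omega_n$ then converge to $\omega$ in their stratum, so the binding foliation data $(\lambda_{0,n},\alpha)$ converges in $\MF(S_0)^2$ to $(\lambda_0,\alpha)$. Continuity of the Calderon--Farre homeomorphism inverse to $\cO_{\lambda_0}$ (\Cref{thm:hyperbolize}) gives $\Sigma_{0,n}\to\Sigma_0$ in Teichm\"uller space; straightening the foliations in the convergent hyperbolic metrics yields Hausdorff convergence $\lambda_{0,n}\to\lambda_0$. Both convergences lift to the cover. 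Teichm\"uller convergence on the compact base produces $(1+\epsilon_n)$-bi-Lipschitz maps $\bar f_n:\Sigma_0\to\Sigma_{0,n}$ in the identity homotopy class, which lift $\Z$-equivariantly to maps $f_n:\Sigma\to\Sigma_n$ with the same Lipschitz constants, giving~(2).

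The main obstacle is~(3). The bi-Lipschitz estimate alone only gives $|d(p,q)-d_n(f_np,f_nq)|\le \epsilon_n\, d(p,q)$, which is not uniformly bounded as $d(p,q)\to\infty$. To extract a uniform additive bound I would refine the choice of $f_n$ so that it approximately intertwines the tight Lipschitz lifts $\tau$ and $\tau_n$, i.e.\ $\tau_n\circ f_n = \tau$ up to a bounded error. This is plausible because $\tau$ and $\tau_n$ represent the same cohomology class, have the same tight Lipschitz constant $\kappa$ after normalizing (which depends only on combinatorial/measure-class data of the IET), and have isotopic generic level sets (all isotopic to $\alpha$). With this intertwining, \Cref{lemma:u_is_a_1C-qi} applied on both sides yields
\[
  |\tau(p)-\tau(q)| \le d(p,q) \le |\tau(p)-\tau(q)| + C_\tau,
\]
and the same estimate for $d_n(f_np,f_nq)$ with constant $C_{\tau_n}$. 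Subtracting gives $|d(p,q)-d_n(f_np,f_nq)| \le \max(C_\tau, C_{\tau_n}) + O(1)$, and $C_{\tau_n}$ stays bounded along the convergent sequence because it is controlled by the diameter of a fundamental domain for the $\Z$-action on $\tau_n^{-1}([0,c))$, which varies continuously with the metric. This delivers the uniform $C$ claimed in~(3). Verifying that the $f_n$ can actually be chosen to intertwine $\tau$ and $\tau_n$ to the required precision---rather than only up to a multiplicative error---is the delicate point; I would do this by interpolating between the canonical collapse maps $\pi:\Sigma_0\to\cal G$ and $\pi_n:\Sigma_{0,n}\to\cal G_n$ of \Cref{thm:tighten lamination}, using Hausdorff convergence of the laminations together with the uniform isolation of the leaves of $\lambda_n$.
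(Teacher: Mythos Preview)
Your overall strategy matches the paper's: approximate the weak-mixing lamination $\lambda_0$ by multi-curve laminations $\lambda_{0,n}$ (rational IETs, equivalently rational train-track weights), hyperbolize each via the Calderon--Farre machinery with the same vertical foliation $\alpha$, and lift. But two steps need correction.

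First, and most importantly, the continuity you invoke for $\Sigma_{0,n}\to\Sigma_0$ is not what \Cref{thm:hyperbolize} provides. That theorem says $\cO_{\mu_0}:\cT(S_0)\to\MF(\mu_0)$ is a homeomorphism for \emph{fixed} $\mu_0$; you need continuity of $\mu_0\mapsto \cO_{\mu_0}^{-1}(\eta)$ as the lamination varies. The paper notes explicitly that the family $\{\cO_{\mu_0}\}_{\mu_0}$ does \emph{not} vary continuously in general, and invokes a separate result (\Cref{thm:O_continuous}, from \cite{CF:SHSHII}) whose hypothesis requires both measure convergence \emph{and} Hausdorff convergence of the supports $\lambda_{0,n}\to\lambda_0$. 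This is why the paper first establishes Hausdorff convergence directly (via a train-track splitting argument) and only then deduces $\Sigma_{0,n}\to\Sigma_0$. Your ordering is reversed: you deduce Hausdorff convergence \emph{from} metric convergence, which is circular here.

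Second, your treatment of~(3) is more elaborate than necessary. You propose to refine $f_n$ so that it approximately intertwines $\tau$ and $\tau_n$, and flag this as the delicate step. The paper avoids this entirely: any $\Z$-equivariant bi-Lipschitz lift $f_n$ already works. The point is that for $p,q\in\Sigma$ one can choose $k\in\Z$ with $|\tau(p)-\tau(k.q)|\le c$, whence $d(p,k.q)$ is bounded by $c$ plus the diameter of $\Sigma_0$; the bi-Lipschitz bound transports this to $d_n(f_n(p),f_n(k.q))$, and then $1$-Lipschitzness of $\tau_n$ together with $\Z$-equivariance of both $f_n$ and $\tau_n$ pins $|\tau_n(f_n(p))-\tau_n(f_n(q))|$ within a uniform additive error of $ck$, hence of $|\tau(p)-\tau(q)|$. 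Combined with \Cref{lemma:u_is_a_1C-qi} on both sides (with $C_{\tau_n}$ uniformly bounded, as you correctly observe), this yields~(3) with no special choice of $f_n$.
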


    The proof of this theorem relies on a continuity property of the maps $\{\cO_{\mu_0}\}_{\mu_0\in \ML(S_0)}$, which do not in general vary continuously.  However, with respect to a finer topology on measured lamination space, we have:

    \begin{theorem}[\cite{CF:SHSHII}]\label{thm:O_continuous}
        Suppose $\mu_{0,n}\in \ML(S_0)$ is a sequence of measured geodesic laminations converging in measure to $\mu_0$ such that the supports of $\mu_{0,n}$ converge in the Hausdorff topology to the support of $\mu_0$. Then for any $\eta \in \MF(\mu_0)$, the sequence $\Sigma_{0,n} = \cO_{\mu_{0,n}}\inverse (\eta) $ converges to $\Sigma_0 = \cO_{\mu_0}\inverse (\eta)$ as $n\to \infty$.
    \end{theorem}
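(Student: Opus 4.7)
The plan is to approximate the weak-mixing IET $T:I\to I$ underlying $\Sigma$ by a sequence of \emph{periodic} IETs $T_n:I\to I$ (partition lengths rational multiples of a common unit, same irreducible permutation $\pi$) with $T_n\to T$. Because every orbit of each $T_n$ is periodic, the horizontal measured foliation of its suspension translation surface is an oriented weighted multi-curve, and the corresponding measured laminations $\mu_{0,n}\in \ML(S_0)$ converge to $\mu_0$ in measure with supports converging to $\mathrm{supp}(\mu_0)$ in the Hausdorff topology.  The vertical foliation $\alpha$, Poincar\'e dual to the fixed cohomology class $\varphi$, is unchanged and lies in $\MF(\mu_0)\cap \bigcap_n \MF(\mu_{0,n})$, so Theorem~\ref{thm:O_continuous} delivers the Teichm\"uller convergence
\[
\Sigma_{0,n} := \cO_{\mu_{0,n}}\inverse(\alpha) \longrightarrow \cO_{\mu_0}\inverse(\alpha) = \Sigma_0.
\]

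Next, applying (the proof of) Theorem~\ref{thm:tighten lamination} to the triple $(\Sigma_{0,n},\mu_{0,n},\alpha)$ produces $1$-Lipschitz tight maps $\tau_{0,n}:\Sigma_{0,n}\to \R/c_n\Z$ (with $c_n\to c$) whose maximal stretch loci are exactly the multi-curves $\mu_{0,n}$.  Passing to the $\Z$-covers associated to $\varphi$ yields marked hyperbolic surfaces $\Sigma_n$ whose minimizing laminations $\lambda_n=p_\Z\inverse(\mu_{0,n})$ consist, modulo the $\Z$-action, of finitely many isolated bi-infinite geodesics (isolation coming from the positive collar width of each component of $\mu_{0,n}$ in $\Sigma_{0,n}$); moreover $\lambda_n\to \lambda$ in the Hausdorff topology by the analogous convergence downstairs.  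This is item~(1).  Teichm\"uller convergence $\Sigma_{0,n}\to \Sigma_0$ provides marking-preserving $(1+\epsilon_n)$-bi-Lipschitz maps $\bar f_n:\Sigma_0\to \Sigma_{0,n}$ with $\epsilon_n\to 0$, which lift equivariantly to $(1+\epsilon_n)$-bi-Lipschitz maps $f_n:\Sigma\to \Sigma_n$ in the marking homotopy class; this is item~(2).

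The delicate point is the uniform additive bound~(3).  Lemma~\ref{lemma:u_is_a_1C-qi} applied to the $\Z$-equivariant lifts $\tau:\Sigma\to \R$ and $\tau_n:\Sigma_n\to \R$ of the tight maps gives
\[
\bigl|d(p,q)-|\tau(p)-\tau(q)|\bigr|\le C_\tau, \qquad \bigl|d_n(y,z)-|\tau_n(y)-\tau_n(z)|\bigr|\le C_{\tau_n},
\]
where $C_{\tau_n}$ is controlled by the diameter of a fundamental domain for the $\Z$-action on $\Sigma_n$.  These descend to fundamental domains in $\Sigma_{0,n}$, whose diameters stay uniformly bounded because $\Sigma_{0,n}$ remains in a compact set of $\cT(S_0)$.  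It then suffices to compare $\tau(p)$ with $\tau_n(f_n(p))$: both are Lipschitz $\Z$-equivariant functions on $\Sigma$ representing (after rescaling by the factor $c/c_n\to 1$) the same cohomology class $\varphi$, so their difference is $\Z$-invariant up to a vanishing correction and descends to a continuous function on the compact base $\Sigma_0$ with uniformly bounded sup-norm by the Teichm\"uller convergence.  Assembling these estimates via the triangle inequality yields $|d(p,q)-d_n(f_n(p),f_n(q))|<C$.

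The main obstacle I anticipate is verifying Hausdorff support convergence $\mathrm{supp}(\mu_{0,n})\to \mathrm{supp}(\mu_0)$ (not merely convergence in measure), which is the hypothesis required to invoke Theorem~\ref{thm:O_continuous}.  This should follow from minimality of $T$ together with continuity of the straightening procedure for horizontal foliations of converging translation surfaces, but requires some care near the singular points of the horizontal foliation and for cylinders of large period, where the flat picture does not immediately translate to the hyperbolic one.
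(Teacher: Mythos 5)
Your proposal does not prove the statement in question. The statement to be established is Theorem \ref{thm:O_continuous} itself: that if $\mu_{0,n}\to\mu_0$ in measure with Hausdorff convergence of supports, then $\cO_{\mu_{0,n}}\inverse(\eta)\to\cO_{\mu_0}\inverse(\eta)$ in $\cT(S_0)$. But in your second display you write ``so Theorem~\ref{thm:O_continuous} delivers the Teichm\"uller convergence'' --- you are invoking exactly the result you were asked to prove. What you have actually sketched is the paper's proof of \Cref{Theorem:Epsilon deformation of weak mixing} (approximate the weak-mixing IET by multicurve weight systems on the train track, apply \Cref{thm:tighten lamination} to each, lift to the $\Z$-covers, and control the additive error via \Cref{lemma:u_is_a_1C-qi} and a claim comparing $\tau$ with $\tau_n\circ f_n$). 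That is a downstream application of Theorem \ref{thm:O_continuous}, not a proof of it, so the argument is circular with respect to the assigned statement.

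To be clear about what a genuine proof would require: Theorem \ref{thm:O_continuous} is a continuity statement for the family of homeomorphisms $\cO_{\lambda_0}:\cT(S_0)\to\MF(\lambda_0)$ of \Cref{thm:hyperbolize} as the measured lamination $\lambda_0$ varies, and the paper itself does not prove it --- it is quoted from the forthcoming work \cite{CF:SHSHII} of Calderon--Farre. A proof has to engage with the orthogeodesic foliation construction directly (shear-shape coordinates, train-track neighborhoods of the laminations, and the behavior of the spine and critical graph as the lamination varies), showing in particular why plain convergence in $\ML(S_0)$ is insufficient and why the additional Hausdorff convergence of supports restores continuity of $\lambda\mapsto\cO_\lambda\inverse(\eta)$. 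None of this machinery appears in your proposal; the ``main obstacle'' you flag (verifying Hausdorff convergence of supports for your IET approximations) is a hypothesis-checking issue for the application, not the analytic core of the theorem itself.
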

    \begin{proof}[Proof of  \Cref{Theorem:Epsilon deformation of weak mixing}]
    Let $T: I\to I$ be a weak-mixing IET, and let $\omega$ be the translation structure on $S_0$ constructed in the proof of  \cref{Thm:Construction of surface with weak mixing IET}.
    Let $\lambda_0 \in \ML(S_0)$ be equivalent to the horizontal measured foliation of $\omega$, and let $\eta$ denote the vertical measured foliation.

    We can approximate $\lambda_{0}$ in measure by $\lambda_{0,n}\in \ML$ whose supports are multi-curves.  
    Indeed, there is a natural oriented train track $\sigma$ with one switch and $p$ branches constructed from the permutation $\pi$ defining $T$; $\lambda_0$ defines a positive weight system $w$ on $\sigma$ giving positive weight to each branch; see \cref{IET-construction}.  
    
    \begin{lemma}
        Any sequence $w_n$ of rational weight systems on $\sigma$ satisfying the switch conditions and converging to $w$ represent oriented weighted multi-curves $\lambda_{0,n}\in \ML(S)$ converging to $\lambda_0$ both in measure and in the Hausdorff topology.
    \end{lemma}

    \begin{proof} 
    There is a splitting sequence $\sigma = \sigma_1 \succ \sigma_2 \succ ... $ such that the intersection of the positive cones of measures carried by $\sigma_i$ is the set of measures with support equal to $\lambda_0$. 
    Since the $\lambda_{0,n}$ are converging to the horizontal foliation of $q$ in measure, for every $K$, there is an $N$ such that for $n\ge N$, $\sigma_K$ carries $\lambda_{0,n}$.
    It follows, using e.g. \cite{BonZhu:HD}, that $\lambda_{0,n} \to \lambda_0$ in the Hausdorff topology. 
    \end{proof}

    Since $\lambda_{0,n}$ converge to $\lambda_0$ in measure, we have $\eta\in \MF(\lambda_{0,n})$ as long as $n$ is large enough.  Using Theorem \ref{thm:hyperbolize}, define $\Sigma_{0,n}= \cO_{\lambda_{0,n}}\inverse (\eta)$;
     Theorem \ref{thm:O_continuous} gives $\Sigma_{0,n} \to \Sigma_0 \in \cT(S_0)$.
    This implies that there are $(1+\epsilon_n)$-bi-Lipschitz maps $f_{0,n}: \Sigma_{0,n} \to \Sigma_{0}$ in the homotopy class compatible with the markings, where $\epsilon_n\to 0$.
    
    Let $\tau_{0,n}: \Sigma_{0,n}\to \R/c\Z$ (also $\tau_0: \Sigma_0\to \R/c\Z$) be the $1$-Lipschitz tight maps with minimizing laminations $\lambda_{0,n}$ (or $\lambda_0$)  obtained from either Theorem \ref{thm:tighten lamination} or Theorem \ref{Thm:Construction of surface with weak mixing IET}.
    Lift $f_{0,n}$ to $\Z$-covers $f_n: \Sigma\to \Sigma_n$ and  let $\tau_n:\Sigma_n\to \R$ and $\tau: \Sigma\to \R$ be lifts. %satisfying $u_n(f_{0,n}(p)) = u(p)$ for every $p\in \Sigma$ projecting to $p_0$.
    Then $f_n: \Sigma\to \Sigma_n$ are $(1+\epsilon_n)$-bi-Lipschitz.
    We still need to show that the metrics $d$ and $f_n^*d_n$ only differ by an additive error, which follows from the following claim together with Lemma \ref{lemma:u_is_a_1C-qi}.

    \begin{claim*}
        For all $p,q\in \Sigma$, there is a $C'$ such that
        \[||\tau(p) - \tau(q)| -|\tau_n(f_n(p))-\tau_n(f_n(q))||\le C' \]
        for all $n$.
    \end{claim*}
    \begin{proof}[Proof of the Claim] 
        Let $k\in \Z$ be such that $|\tau(p) - \tau(k.q)|\le c$ (there are at most two possibilities).  
        Then $\Z$-equivariance of $\tau$ gives 
        \[c(k-2)\le |\tau(p) - \tau(q)|\le c(k+2).\]
        
        Let $D$ be the diameter of $\Sigma_0$. 
        Then \[d(p,k.q)\le (c+D).\]  Since $f_n$ is $(1+\epsilon)$-bilipschitz, we have \[d_n(f_n(p), f_n(k.q)) \le (1+\epsilon_n) (c+D).\] 
        Since $\tau_n$ is $1$-Lipschitz, we obtain \[|\tau_n(f_n(p)) - \tau_n(f_n(q))|\le (1+\epsilon_n) (c+D).\]  
        Now $\Z$-equivariance of both $f_n$ and $\tau_n$ give us 
        \[ck - (1+\epsilon_n) (c+D) \le |\tau_n(f_n(p)) - \tau_n(f_n(q))| \le ck+(1+\epsilon_n) (c+D). \]

        Taking $C' = \max_n\{2c + (1+\epsilon)(c+D)\}$ proves the claim.
    \end{proof}
    This concludes the proof of the theorem.
    \end{proof}

    \subsection{An example with a single bi-minimizing ray}\label{Subsection:single bi-minimizing ray}
    Here, we construct a $\Z$-cover of a closed surface (of any genus) with a single bi-minimizing ray.  This construction is elementary and ad hoc, as opposed to being an application of either Theorem \ref{thm:tighten lamination} or Theorem \ref{Thm:Construction of surface with weak mixing IET}.  

    \begin{lemma}\label{lem: single bi-minimizing}
        For any closed oriented surface $S_0$ of genus at least $2$ and primitive class $\varphi\in H^1(S_0,c\Z)$, there is a hyperbolic structure $\Sigma_0\in \cT(S_0)$  such that the $\Z$-cover $\Sigma$ corresponding to $\varphi$ has a single bi-minimizing geodesic so that $\Qm_\omega$ consists of a single geodesic line.
    \end{lemma}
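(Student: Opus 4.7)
\medskip

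\noindent\textbf{Proof plan.} By primitivity of $\varphi$ there exists a simple closed curve $\gamma\subset S_0$ with $\varphi(\gamma)=1$; such a $\gamma$ is automatically non-separating. The plan is to construct a hyperbolic metric $\Sigmazero\in\cT(S_0)$ in which $\gamma$ is so short that it is the unique minimal geodesic lamination realizing the tight Lipschitz ratio. Then the minimizing lamination $\lambda_0$ must equal $\gamma$; and since $\varphi(\gamma)=1$, the preimage of $\gamma$ in the $\Z$-cover $\Sigma$ is a single bi-infinite geodesic line (its connected components are in bijection with $\Z/\varphi(\gamma)\Z$), which is exactly what $p_K(\Qm_\omega)$ is by \Cref{prop:equality of omega limits}.

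First I would extend $\gamma=\gamma_1$ to a pants decomposition $\gamma_1,\ldots,\gamma_{3g-3}$ and let $\Sigmazero=\Sigmazero(\ell)$ be the Fenchel-Nielsen point with $\ell_{\Sigmazero}(\gamma_1)=\ell$ tunable, all other lengths fixed at $1$, and all twists zero. Next, fix once and for all a multi-curve $\alpha\subset S_0$ representing the Poincar\'e dual of $\varphi$, chosen disjoint from $\gamma$, so that $|\varphi(\gamma')|\le i(\alpha,\gamma')$ for every closed geodesic $\gamma'$. The central estimate is that for every simple closed geodesic $\gamma'\neq\gamma$ the ratio $|\varphi(\gamma')|/\ell_{\Sigmazero}(\gamma')$ stays bounded by a constant $M$ independent of $\ell$. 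Indeed, if $\gamma'$ is disjoint from $\gamma$ then both $|\varphi(\gamma')|$ and $\ell_{\Sigmazero}(\gamma')$ are computed in the thick complement, whose geometry stabilizes as $\ell\to 0$, and a standard stable-norm bound gives the estimate. If instead $\gamma'$ crosses $\gamma$ with geometric intersection $i=i(\gamma,\gamma')$, the collar lemma gives $\ell_{\Sigmazero}(\gamma')\ge 2i\cdot w(\ell)$ with $w(\ell)\to\infty$, while each geodesic arc of $\gamma'$ in the thick part outside the collar has at most linearly many $\alpha$-intersections per unit length, yielding $|\varphi(\gamma')|\le i(\alpha,\gamma')\le C\,\ell_{\Sigmazero}(\gamma')$ for an $\ell$-independent $C$. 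Since $|\varphi(\gamma)|/\ell_{\Sigmazero}(\gamma)=1/\ell\to\infty$, choosing $\ell<1/M$ makes $\gamma$ the unique maximizer among closed geodesics.

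To identify $\lambda_0$ with $\gamma$, note that by \Cref{proposition:definition of lamination L} any $1$-Lipschitz tight map for $\varphi$ has a maximal stretch locus supporting an $MA$-invariant probability measure whose local Lipschitz constant is identically $1$; the ergodic decomposition expresses this measure as a convex combination of measures supported on closed geodesics and on irrational minimal laminations, each of which must realize the maximum ratio. The closed-geodesic case is handled by the uniqueness above, and by weak-$*$ density of weighted simple closed curves in $\ML(S_0)$ together with continuity of $\ell$ and $i(\alpha,\cdot)$, the inequality extends to arbitrary measured laminations. Hence the only minimal component of $\mathcal L_0^{\CR}$ is $\gamma$, so $\lambda_0=\gamma$. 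Applying \Cref{prop:equality of omega limits} and the fiber computation above completes the proof. The main obstacle is the uniform per-unit-length bound on $\alpha$-intersections in the thick complement of the collar; since this complement converges as $\ell\to 0$ to a hyperbolic surface with two cusps and $\alpha$ lies in its thick part, the estimate reduces to a standard thick-part bound on a surface of fixed topology.
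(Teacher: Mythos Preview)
There is a genuine gap. You claim to choose a multicurve $\alpha$ Poincar\'e dual to $\varphi$ and \emph{disjoint from} $\gamma$, but this is impossible: the algebraic intersection $[\alpha]\cdot[\gamma]=\varphi(\gamma)$ is nonzero by your very choice of $\gamma$, so every representative of $PD(\varphi)$ must cross $\gamma$. This error is not cosmetic; it is what makes your uniform bound collapse. Once $\alpha$ is forced to cross $\gamma$, curves obtained by Dehn twisting a transversal about $\gamma$ destroy the estimate: if $\gamma'_k=T_\gamma^k(\gamma'_0)$ with $i(\gamma,\gamma'_0)=1$, then $\varphi(\gamma'_k)=\varphi(\gamma'_0)+k\varphi(\gamma)$ while $\ell_{\Sigmazero}(\gamma'_k)\sim k\,\ell$, so the ratio tends to $1/\ell=\kappa$ and no $\ell$-independent bound $M$ exists. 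Thus your ``central estimate'' fails, and with it the claimed uniqueness of $\gamma$ as the maximizer via a strict gap. (You also do not rule out isolated spiraling leaves in $\lambda_0$, nor do you establish that $\tilde\gamma$ is the \emph{only} bi-minimizing geodesic---knowing $\Qm_\omega$ alone does not give this without an additional strict-contraction argument.)

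By contrast, the paper does not try to make the target curve short. It builds a hyperbolic one-holed torus $\Sigma_0(s,a)$ containing an orthogonal pair $\alpha,\beta$ (with $\beta$ playing the role of your $\gamma$), and makes the \emph{boundary} curve short so its long collar traps all bi-minimizing geodesics inside the torus piece. The key tool is a concrete $1$-Lipschitz retraction $\pi_0:\Sigma_0(s,a)\to\beta$, built by cutting into right-angled hexagons and using nearest-point projection, which is strictly contracting off $\beta$. Lifted to the $\Z$-cover, this directly shows $\tilde\beta$ is isometrically embedded and rules out every competing leaf spiraling onto it, giving both $\lambda_0=\beta$ and uniqueness of the bi-minimizing geodesic in one stroke. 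Your pinching idea might be made to work, but it would need a completely different argument for the ratio bound (one that allows the supremum over $\gamma'\neq\gamma$ to approach $\kappa$ without attaining it, and then passes carefully to measured laminations) together with a separate strict-contraction step to exclude non-$\Qm_\omega$ bi-minimizers.
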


    The construction occupies  the rest of the section.  The idea is to build a metric structure on a torus with one short boundary component, which we can then attach to any higher genus surface with one boundary component mapping trivially to the circle $\R/c\Z$.  The long skinny neck of the torus acts as a barrier: no bi-minimizing geodesic wants to take an excursion into this region.

    \begin{proof}
    Let $\Sigma_0(s,a)$ be a (finite area) hyperbolic metric on a one holed torus with totally geodesic boundary curve $\gamma$  of length $s$ such that there is a pair of simple closed geodesics $\alpha$ and $\beta$ intersecting once orthogonally where the length of $\alpha$ is $a$.  The length of $\beta$ is $c$, see \cref{fig:unique_bi_minimizer}. 
    These conditions determine $\Sigma_0(s,a)$ uniquely (up to marking).
    We let $a$  be any positive number (allowing $c$ to take on any positive value), and take $s$ such that the collar neighborhood about $\gamma$ has length larger than the  length of $\beta$ plus the length of $\alpha$.
    For example $s = e^{-10}$ and $a = 1$ would work.

        Let $\Sigmazero$ be a hyperbolic metric on a surface of genus at least $2$ where $\Sigma_0(s,a)$ embeds isometrically.
    We form a regular $\Z$-cover $\Sigma$ of $\Sigmazero$ by cutting open $\Sigmazero$ along $\alpha$ and placing $\Z$ many copies of this building block together, one after the next; see \Cref{fig:unique_bi_minimizer}.
    This is the cover corresponding to intersection with $\alpha$.
    Then the preimage $\tilde\beta$ of $\beta$ consists of a single bi-infinite geodesic exiting both ends of $\Sigma$.

    \begin{claim*}
        The canonical orientable chain recurrent geodesic lamination $\lambda_0 = p_K(p_\Z(\Qm_\omega))$ is contained in $\Sigmazero(s,a)$. 
    \end{claim*}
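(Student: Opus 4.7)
The plan is to construct a specific $1$-Lipschitz tight map $\tau_0 : \Sigma_0 \to \R/c\Z$ representing the cohomology class $\varphi$ dual to $\alpha$ whose maximal stretch locus $p_K(\L_0)$ is contained in $\Sigma_0(s,a)$. Once this is accomplished, the canonical identifications supplied by \Cref{prop:equality of omega limits} and \Cref{cor:chain_recurrent_canonical} yield
\[
\lambda_0 \;=\; p_K(\L_0^{\CR}) \;\subset\; p_K(\L_0) \;\subset\; \Sigma_0(s,a),
\]
as claimed.

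The construction of $\tau_0$ exploits two features of the setup. First, since $\alpha \subset \Sigma_0(s,a)$, the class $\varphi$ restricts trivially to $\Sigma_0 \setminus \Sigma_0(s,a)$; in particular $\varphi(\gamma) = 0$, consistent with $\gamma$ bounding $\Sigma_0(s,a)$. This allows us to set $\tau_0 \equiv 0$ on $\Sigma_0 \setminus \Sigma_0(s,a)$ and extend continuously across $\gamma$. Second, the standard collar $C(\gamma, w)$ of $\gamma$ inside $\Sigma_0(s,a)$ has width $w > c + a$, providing ample room to interpolate. Concretely, extend $\tau_0 \equiv 0$ across the outer half $C(\gamma, w/2)$ of the collar, then on the interior region $\Sigma_0(s,a) \setminus C(\gamma, w/2)$ (which still contains $\beta$) define $\tau_0$ as a $1$-Lipschitz map realizing $\varphi$ whose maximum local stretch is attained precisely along $\beta$ --- for instance via nearest-point projection onto $\beta$ composed with the canonical identification $\beta \cong \R/c\Z$. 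A $1$-Lipschitz radial cutoff inside the middle annulus of the collar smoothly joins the two pieces.

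The main obstacle is to verify that the global Lipschitz constant of $\tau_0$ equals $1$, i.e., that no geodesic loop $\delta \subset \Sigma_0$ has ratio $|\varphi(\delta)|/\ell(\delta) > 1$. Decompose such a $\delta$ into arcs inside and outside $\Sigma_0(s,a)$: arcs in the complement contribute positive length but zero winding, while the portion inside $\Sigma_0(s,a)$ has algebraic intersection $|\delta \cdot \alpha| = |\varphi(\delta)|/c$ with $\alpha$ and, by our choice of geometry, length at least $c \cdot |\delta \cdot \alpha|$, with the minimum realized by multiples of $\beta$. Combining these bounds yields $|\varphi(\delta)|/\ell(\delta) \le 1$, with equality precisely when $\delta$ lies inside $\Sigma_0(s,a)$ and is a multiple of $\beta$. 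This confirms that $\tau_0$ is tight, and since the local Lipschitz constant is strictly less than $1$ outside the interior region $\Sigma_0(s,a) \setminus C(\gamma, w/2)$, the maximum stretch locus $p_K(\L_0)$ is contained in $\Sigma_0(s,a)$, completing the argument.
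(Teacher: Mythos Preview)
Your approach is genuinely different from the paper's, and more elaborate than necessary.  The paper argues directly in the cover $\Sigma$: by \Cref{Theorem:omega limit of Qm is a lamination in max stretch locus of u}, every $A$-orbit in $\Qm_\omega$ is bi-minimizing.  Any bi-minimizing geodesic must cross every lift of $\alpha$, and the distance between neighboring lifts of $\alpha$ is at most $a+c$ (go along $\alpha$, then along $\beta$).  Since the collar of $\gamma$ was chosen with width exceeding $a+c$, a geodesic that crosses a lift of $\gamma$ must spend more than $a+c$ entering and more than $a+c$ leaving the collar while making no net progress toward the next lift of $\alpha$; this contradicts bi-minimality.  No tight map is ever constructed for this step.

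Your route---manufacture a tight $1$-Lipschitz $\tau_0$ with $p_K(\L_0)\subset\Sigma_0(s,a)$ and then invoke $\lambda_0\subset p_K(\L_0)$---could be made to work, but your execution has real gaps.  First, the sentence ``the global Lipschitz constant of $\tau_0$ equals $1$, i.e., that no geodesic loop $\delta$ has ratio $|\varphi(\delta)|/\ell(\delta)>1$'' conflates two different conditions: the loop bound is $\kappa\le 1$, which is a \emph{consequence} of $\tau_0$ being $1$-Lipschitz, not equivalent to it.  The actual obstacle is verifying that your piecewise-defined map is globally $1$-Lipschitz, and your loop argument does not address this---indeed the assertion ``by our choice of geometry, length at least $c\cdot|\delta\cdot\alpha|$'' presupposes exactly the $1$-Lipschitz projection you are trying to build, so it is circular.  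Second, neither the $1$-Lipschitz property of the nearest-point retraction onto $\beta$ (which the paper proves later via a right-angled hexagon decomposition using the zero-twist hypothesis) nor the $1$-Lipschitz interpolation across the ``middle annulus'' is justified.  For the latter, $\pi_0$ restricted to the inner boundary of your transition zone is not constant, and a naive linear interpolation to $0$ has gradient exceeding $1$; one needs a quantitative bound on the oscillation of $\pi_0$ near $\gamma$ relative to the available width, which you do not supply.
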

    \begin{proof}[Proof of the claim]
    Any bi-minimizing geodesic in $\Sigma$ must exit the ends of $\Sigma$ and pass through every component of the preimage of $\alpha$.
    The distance in $\Sigma$ between any pair of points in two neighboring  translates of $\alpha$ is at most the length of $\beta$ plus that of $\alpha$. 
    We chose $\gamma$ with a long collar neighborhood, so a bi-minimizing geodesic will never cross a lift of $\gamma$ and is therefore completely contained in the subsurface $\Sigma(s,a) \subset \Sigma$ covering $\Sigma_0(s,a)$.
    By \cref{Theorem:omega limit of Qm is a lamination in max stretch locus of u},  $\Qm_\omega$ consists only of bi-minimizing points, and so must be contained in the frame bundle over $\Sigma(s,a)$, proving the claim. 
    \end{proof}

    Now we prove that $\beta \subset \lambda_0$.
    There is a $1$-Lipschitz retraction  \[\pi_0:\Sigmazero(s,a) \to \beta\] defined as follows.  Cut $\Sigmazero(s,a)$ open along $\alpha$ to obtain a pair of pants, and then cut this pants open along the three orthogeodesics joining different boundary components.  
    We have a pair of isometric right angled hexagons.
    Now, the nearest point projection on each hexagon to the side corresponding to $\beta$ is $1$-Lipschitz and a strict contraction away from $\beta$.  Since $\Sigmazero(s,a)$ is obtained by gluing together these hexagons without any twist, i.e., so that the endpoints of $\beta$ match up and meet $\alpha$ orthogonally, the retractions on hexagons glue together to the desired retraction $\pi_0$.

    Then $\pi_0$
    lifts to a $1$-Lipschitz retraction $\pi:\Sigma(s,a)\to \tilde \beta$ that is strictly contracting away from $\tilde \beta$.
    This proves that $\tilde\beta$ is isometrically embedded in $\Sigma(s,a)$, hence also in $\Sigma$.
    
    For a geodesic $g$ in a hyperbolic surface, let $T^1g$ denote the set of unit vectors tangent to $g$.  Any $x\in T^1\tilde \beta$ is (quasi-)minimizing, and satisfies that $p_\Z (x)$ is an accumulation point of $p_\Z(A_{>0}x)$.  Thus $T^1\tilde \beta \subset \Qm_\omega$ and so $\beta \subset \lambda_0$.

    Now we prove that $\lambda_0\subset \beta$.   There are no non-peripheral closed curves in $\Sigma_0(s,a)\setminus\beta$ and any leaf of $\lambda_0$ must intersect $\alpha$.  This implies that any leaf of $\lambda_0$ must accumulate onto $\beta$.  Up to finite symmetry, either $\lambda_0 = \beta$ or $\lambda_0 = \beta \cup \delta$, where the geodesic $\delta$ is pictured in \Cref{fig:unique_bi_minimizer}.

    Consider any lift $\tilde\delta \subset \Sigma(s,a)$ of $\delta$. It spends some definite amount of time in the locus of points where the local Lipschitz constant of $\pi$ is strictly smaller than $1$.  Far  out in the ends of $\Sigma(s,a)$,  $\tilde \delta$ is arbitrarily close to $\tilde\beta$.  Thus the distance traveled along $\tilde \delta$ between a pair of far away points very close to $\tilde\beta$ is some definite amount larger than the distance traveled along $\tilde\beta$, which implies that $\tilde \delta$ cannot be bi-minimizing.
    We conclude that $\lambda_0 = \beta$.
    
    Finally, we would like to say that there is no other bi-minimizing geodesic in $\Sigma$ besides $\tilde\beta$ (even outside of $\lambda_0$).  Suppose $\delta'$ is bi-minimizing.  Then again any $x\in T^1\delta'$ is (quasi-)minimizing, so its projection to $T^1\Sigmazero$ accumulates onto $T^1\lambda_0$.  Thus $\delta'$ must be asymptotic to $\tilde\beta$ in both directions.  
    Applying an identical argument as in the previous paragraph shows that $\delta'$ cannot be different from $\tilde\beta$, which is what we wanted to show.

    \begin{figure}
        \centering
        \includegraphics[scale=.45]{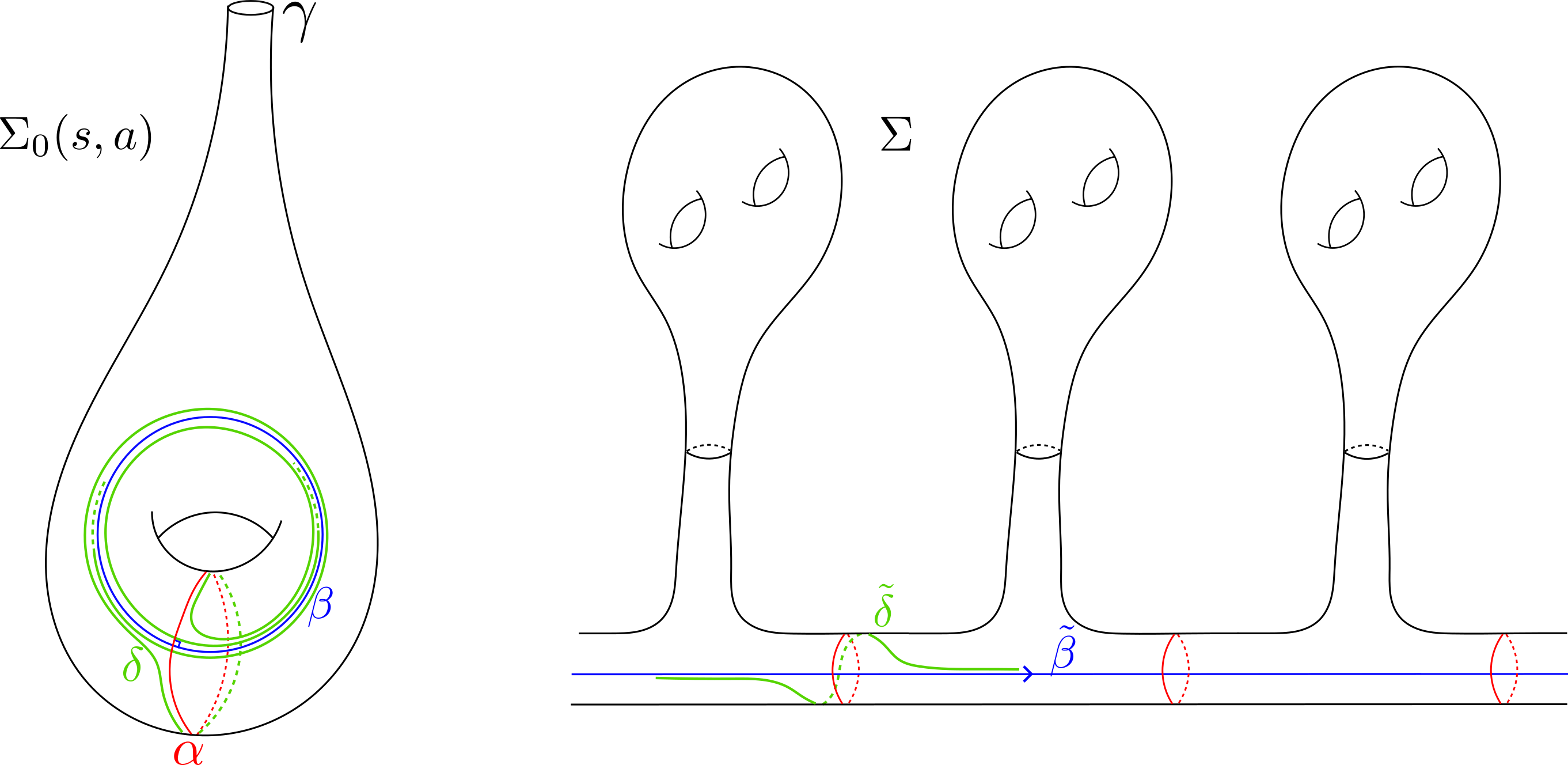}
        \caption{}
        \label{fig:unique_bi_minimizer}
    \end{figure}    

    Using again transitivity of the mapping class group of $S_0$ on primitive classes in $H^1(S_0,c\Z)$ allows us to embed this example in any cohomology class.
    \end{proof}

	\begin{remark}
		Note that whenever $ \Sigma $ contains a single bi-minimizing line, then the set $ \Lambda_{\mathrm{nh}} $ of all endpoints in $ S^1 $ of quasi-minimizing rays contains exactly two $ \Gamma $-orbits and is in particular countable.
	\end{remark}
	
	\section{A uniform Busemann function}\label{Sec:Uniform Busemann function}
	
	Fix a $ 1 $-Lipschitz tight function $ \tau: \Sigma \to \R $ as discussed in \cref{Section_Tight functions}. Let $ B \subset \Sigma $ be a sufficiently large ball satisfying $ \pZ{B}=\Sigma_0 $. Notice that the complement of $ B $ in $ \Sigma $ has two connected components, one corresponding to unbounded positive values of $ \tau $ and the other to unbounded negative values. We shall respectively refer to these two components as the positive and negative ends of $ \Sigma $.
	\medskip
	
	Abusing notation, we shall refer to $ \tau $ as a function defined over the frame bundle via $ \tau \circ p_K $. Consider the function $ \beta_+: \GdmodGamma \to [-\infty,\infty) $ defined by
	\[ \beta_+(x) = \lim_{t \to \infty} \tau(a_t x)-t. \]
	Notice that since $ \tau $ is 1-Lipschitz the function $ \tau(a_t x)-t $ is monotonically decreasing in $ t $. That is,
	\[ \beta_+(x)=\inf_{t \geq 0} (\tau(a_t x)-t) \]
	and in particular $ \beta_+ \leq \tau $.

	\begin{lemma}\label{lemma_properties_beta}
		The function $ \beta_+: \GdmodGamma \to [-\infty,\infty) $ is
		\begin{enumerate}
			\item upper semicontinuous;
			\item $ N $-invariant; and
			\item satisfies for any $ x \in \GdmodGamma $ and $ s \in \R $ 
			\[ \beta_+(a_s x) = \beta_+(x)+s. \]
		\end{enumerate}
	\end{lemma}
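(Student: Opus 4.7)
The plan is to verify the three properties in increasing order of subtlety, using only that $\tau$ is $1$-Lipschitz on $\Sigma$ (hence continuous when pulled back to $G_d/\Gamma$), that $N$ is the stable horospherical subgroup for $A$, and the defining formula $\beta_+(x) = \inf_{t\ge 0}(\tau(a_tx) - t)$.

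For property (3), the argument is a direct change of variables. First, I would observe that for any $s \in \R$,
$$\beta_+(a_s x) = \lim_{t\to\infty}\bigl(\tau(a_{t+s}x)-t\bigr) = \lim_{t\to\infty}\bigl(\tau(a_{t+s}x)-(t+s)\bigr) + s = \beta_+(x) + s,$$
where the limit is reindexed by $t' = t+s$ and exists (possibly $-\infty$) by monotonicity of $t\mapsto \tau(a_tx)-t$.

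For property (1), I would use that the map $(t,x) \mapsto \tau(a_tx) - t$ is jointly continuous (since $\tau$ is Lipschitz and $A$ acts continuously on $G_d/\Gamma$), so for each fixed $t\ge 0$ the function $x \mapsto \tau(a_tx)-t$ is continuous. Since $\beta_+$ is the pointwise infimum of this family of continuous functions indexed by $t\ge 0$, it is upper semicontinuous on $G_d/\Gamma$.

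For property (2), I would use the contracting property of $N$: for any $n \in N$, $a_t n a_{-t} \to e$ as $t\to\infty$, so $a_t n x = (a_tna_{-t})(a_tx)$ stays within a distance $\eta(t) \to 0$ of $a_t x$ in $G_d/\Gamma$ (for $d$ the quotient metric, using left $K$-invariance to control how the small element near $e$ acts). Since $\tau$ is $1$-Lipschitz on $\Sigma$, we conclude $|\tau(a_tnx) - \tau(a_tx)| \le \eta(t) \to 0$. Passing to the limit,
$$\beta_+(nx) = \lim_{t\to\infty}\bigl(\tau(a_tnx)-t\bigr) = \lim_{t\to\infty}\bigl(\tau(a_tx)-t\bigr) = \beta_+(x),$$
with both limits finite or both $-\infty$ simultaneously.

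The only place where a little care is needed is property (2), where one must ensure that the contraction of $N$-orbits under $a_t$ yields uniform (in $x$ on compact sets of $n$) convergence of $d(a_tnx, a_tx)$ to $0$; this is standard from the definition of $N$ together with right-$G_d$-invariance of the metric on $G_d$ descending to the quotient. The remaining steps are formal.
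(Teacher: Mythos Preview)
Your proof is correct and essentially identical to the paper's: the paper proves (1) by noting $\beta_+$ is a decreasing limit (equivalently, infimum) of continuous functions, proves (2) by observing $d(a_tnx,a_tx)\to 0$ together with the $1$-Lipschitz property of $\tau$, and proves (3) by the same change of variables you give. The only cosmetic difference is that the paper phrases (1) as a decreasing limit rather than an infimum over $t\ge 0$.
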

	
	\begin{proof}
		The function $ \beta_+ $ is defined as a limit of a decreasing sequence of continuous functions, thus implying (1).
		For (2), note that $ d(a_t nx,a_t x) \to 0 $ as $ t \to \infty $ for any $ x \in \GdmodGamma $ and $ n \in N $, and recall that $ \tau $ is 1-Lipschitz.
		
		Changing variables gives
		\begin{align*}
			\beta_+(a_s x)&= \lim_{t \to \infty} [\tau(a_{t+s} x)-t] = s+ \lim_{t \to \infty} [\tau(a_{t+s} x)-(t+s)] = \\
			&= s+ \beta_+(x),
		\end{align*}
		implying (3).
	\end{proof}
	
	Recall the notations from \cref{Subsec:Tight map summary of notations}. Define
	\[ \Qm_\pm =\{ x \in \Qm : \lim_{t \to \infty} \tau(a_t x) = \pm \infty \}, \]
	and respectively $ \L_\pm=\L \cap \Qm_\pm $.	
	Let $ x $ be in $ \L_+ $, then by definition the geodesic line $ Ax $ is in the maximal stretch locus for $ \tau $ (and is hence bi-minimizing) and facing the positive end. In particular, $ \tau(a_t x)=\tau(x)+t $ for all $ t \in \R $ and thus
	\[ \beta_+(x)= \lim_{t \to \infty} [\tau(x)+t-t] = \tau(x). \]
	More generally, we have the following characterization of $ \cal Q_+ $:
	\begin{lemma}\label{lemma:Qm=beta(R)}
		$ \Qm_+ = \beta_+^{-1}(\R) $.
	\end{lemma}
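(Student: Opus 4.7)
The plan is to prove both inclusions directly, using \Cref{lemma:u_is_a_1C-qi} as the key bridge between $\tau$-differences and distances in $\Sigma$.

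For the inclusion $\mathcal{Q}_+ \subseteq \beta_+^{-1}(\mathbb{R})$, suppose $x \in \mathcal{Q}_+$, so there is $c \geq 0$ with $d(a_tx, x) \geq t - c$ for all $t \geq 0$, and $\tau(a_tx) \to +\infty$. For all sufficiently large $t$ we have $\tau(a_tx) \geq \tau(x)$, so by \Cref{lemma:u_is_a_1C-qi}
\[
\tau(a_tx) - \tau(x) = |\tau(a_tx) - \tau(x)| \geq d(a_tx, x) - C_\tau \geq t - c - C_\tau.
\]
Rearranging, $\tau(a_tx) - t \geq \tau(x) - c - C_\tau$ for all large $t$, so the monotone decreasing limit defining $\beta_+(x)$ is bounded below, hence finite.

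For the reverse inclusion $\beta_+^{-1}(\mathbb{R}) \subseteq \mathcal{Q}_+$, suppose $\beta_+(x) > -\infty$. Since $\tau(a_tx) - t$ is monotonically decreasing to $\beta_+(x)$, we have $\tau(a_tx) \geq t + \beta_+(x)$ for all $t \geq 0$. In particular $\tau(a_tx) \to +\infty$, so $x$ faces the positive end. Using the 1-Lipschitz upper bound in \Cref{lemma:u_is_a_1C-qi},
\[
d(a_tx, x) \geq |\tau(a_tx) - \tau(x)| \geq \tau(a_tx) - \tau(x) \geq t - (\tau(x) - \beta_+(x)),
\]
where the middle inequality uses that $\tau(a_tx) \geq \tau(x)$ eventually (and for small $t$ we may absorb any deficit into the additive constant, since $d(a_tx,x) \leq t$ trivially). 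Setting $c = \tau(x) - \beta_+(x) \geq 0$ shows $x$ is quasi-minimizing, so $x \in \mathcal{Q}_+$.

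No step looks genuinely obstructive: the whole argument is a two-line manipulation of the defining inequalities once \Cref{lemma:u_is_a_1C-qi} is available. The only small care needed is handling the sign of $\tau(a_tx) - \tau(x)$ (which becomes positive once $\tau(a_tx)$ escapes past $\tau(x)$) and noting that the quasi-minimizing additive constant $c$ can absorb the bounded initial transient $t \in [0, t_0]$.
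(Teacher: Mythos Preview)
Your proof is correct and follows essentially the same approach as the paper: both reduce the equivalence to the two-sided estimate of \Cref{lemma:u_is_a_1C-qi}, translating between the quasi-minimizing inequality $d(a_tx,x)\ge t-C$ and the $\beta_+$-finiteness inequality $\tau(a_tx)\ge t-c$. Your version is simply more explicit about signs and the ``facing the positive end'' condition, while the paper compresses these into a single displayed chain and the phrase ``the claim follows.''
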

	
	\begin{proof}
		
		Recall that $ x \in \cal Q $ if and only if there exists $ C>0 $ for which
		\[ d(x,a_t x) \geq t-C \quad\text{for all } t. \]
		On the other hand, $ \beta_+(x) \in \R $ if and only if there exists $ c > 0 $ for which
		\[ \tau(a_t x) \geq t-c \quad\text{for all } t. \]
		By \cref{lemma:u_is_a_1C-qi} we have
		\[ d(x,a_t x)-C_\tau \leq \tau(a_t x)-\tau(x) \leq d(x,a_t x), \]
		and the claim follows.
	\end{proof}
	
	The function $ \beta_+ $ as described above serves as a sort of ``uniform Busemann function'' for all the non-horospherical limit points of $ \Gamma $ associated to the positive end of $ \Sigma $. 
 Denote by
	\begin{equation}\label{eq:definition of betaplus horoball}
	\Horobetaplus{x}:=\beta_+^{-1}([\beta_+(x),\infty))
	\end{equation}
	the \emph{$ \beta_+ $-horoball through $x$}. 
	An immediate consequence of the above lemmata is the following:
	\begin{cor}\label{cor:Nx contained in beta+(x) to infinity}
		For any $ x \in \mathcal{Q}_+ $
		\[ \overline{Nx} \subseteq \Horobetaplus{x}. \]
	\end{cor}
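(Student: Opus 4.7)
The plan is to observe that $\mathcal{H}_+(x)$ is already a closed $N$-invariant set that contains $x$, so the containment $\overline{Nx}\subseteq \mathcal{H}_+(x)$ is automatic. All of the work has already been done in Lemma \ref{lemma_properties_beta} and Lemma \ref{lemma:Qm=beta(R)}.

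First I would verify that $x\in\mathcal{H}_+(x)$, which is immediate from the definition \eqref{eq:definition of betaplus horoball}, since trivially $\beta_+(x)\ge \beta_+(x)$. Next, because $x\in\mathcal{Q}_+$, Lemma \ref{lemma:Qm=beta(R)} guarantees $\beta_+(x)\in\R$, so the sublevel threshold is finite. Upper semicontinuity of $\beta_+$, established in Lemma \ref{lemma_properties_beta}(1), then gives that the superlevel set $\beta_+^{-1}([\beta_+(x),\infty))=\mathcal{H}_+(x)$ is closed in $\GdmodGamma$. Finally, $N$-invariance of $\beta_+$, from Lemma \ref{lemma_properties_beta}(2), shows that $\mathcal{H}_+(x)$ is $N$-invariant.

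Combining these, $Nx\subseteq \mathcal{H}_+(x)$ and, since $\mathcal{H}_+(x)$ is closed, $\overline{Nx}\subseteq \mathcal{H}_+(x)$. There is no real obstacle here; this corollary is essentially a formal consequence of the fact that $\beta_+$ is an $N$-invariant, upper semicontinuous function taking a finite value at $x$, which is why it is stated as a corollary rather than proved separately. The substantive content of $\beta_+$ being a kind of ``uniform Busemann function'' is packaged into Lemma \ref{lemma_properties_beta}, which this corollary merely exploits.
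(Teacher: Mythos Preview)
Your proof is correct and follows essentially the same approach as the paper: use $N$-invariance of $\beta_+$ to get $Nx\subseteq\beta_+^{-1}(\beta_+(x))\subseteq\mathcal{H}_+(x)$, then use upper semicontinuity to conclude that $\mathcal{H}_+(x)$ is closed. Your explicit invocation of Lemma~\ref{lemma:Qm=beta(R)} to ensure $\beta_+(x)\in\R$ is a minor but welcome clarification that the paper leaves implicit.
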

	
	\begin{proof}
		Since $ \beta_+ $ is $ N $-invariant we have $ Nx \subseteq \beta_+^{-1}(\beta_+(x)) $. Upper semicontinuity ensures the set $ \beta_+^{-1}([\beta_+(x),\infty)) $ is closed, implying the claim. 
	\end{proof}
	
	One can similarly define the function $ \beta_-: \GdmodGamma \to (-\infty,\infty] $ by
	\[ \beta_-(x) = \lim_{t \to \infty} \tau(a_t x)+t = \sup_{t \geq 0} (\tau(a_t x)+t) \geq \tau(x), \]
	which would serve as a uniform Busemann function for the negative end of $ \Sigma $. The function $ \beta_- $ is lower semicontinuous with $ \cal Q_-=\beta_-^{-1}(\R) $. Similarly
	\[ \overline{Nx} \subseteq \Horobetaminus{x} \]
	for all $ x \in \cal Q_- $, where $ \Horobetaminus{x}:=\beta_-^{-1}((-\infty,\beta_-(x)]) $.

	\section{The recurrence semigroup}\label{sec:semigroup}
	This section is devoted to studying the sub-invariance properties of non-maximal horospherical orbit closures, the main result of which is:
	\begin{theorem}\label{Thm:Recurrence semigroup in arbitrary d}
		Let $ \Hd / \Gamma $ be any $ \Z $-cover of a compact hyperbolic $ d $-manifold and let $x \in \GdmodGamma $ be any quasi-minimizing point. Then there exists a non-compact, non-discrete closed subsemigroup $ \Delta_x $ of $ MA_{\geq 0} $ under which $ \overline{Nx} $ is strictly sub-invariant, that is, 
		\[ \ell \overline{Nx}\subsetneq \overline{Nx} \quad \text{for all }  \ell \in \Delta_x \smallsetminus \{e\}.\]
	\end{theorem}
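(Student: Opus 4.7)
The plan is to define $\Delta_x$ directly from the sub-invariance property and verify each claim using the uniform Busemann function of \Cref{Sec:Uniform Busemann function} and the Bruhat pushback of \Cref{subsec:bruhat}. Assume without loss of generality $x\in\Qm_+$ (else replace $\beta_+$ by $\beta_-$). Since $MA$ normalizes $N$, the condition $\ell x\in\overline{Nx}$ is equivalent to $\ell\overline{Nx}\subseteq\overline{Nx}$, so setting $\Delta_x$ to be $\{e\}$ together with those $\ell = ma_t \in MA$ with $t>0$ and $\ell x \in \overline{Nx}$ yields a closed subsemigroup of $MA$: closure follows from continuity of the $MA$-action together with closedness of $\overline{Nx}$, the semigroup property follows from $\ell_1\ell_2 x \in \ell_1\overline{Nx}\subseteq\overline{Nx}$, and strict positivity of the $A$-component is preserved under multiplication. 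The restriction $t>0$ sidesteps compact subgroups of $M$ for which strict sub-invariance would automatically fail. By \Cref{lemma_properties_beta,cor:Nx contained in beta+(x) to infinity}, $\beta_+(\ell x) = \beta_+(x) + t$ for $\ell = ma_t$, and $\overline{Nx}\subseteq \Horobetaplus{x}$ forces $t\geq 0$, giving $\Delta_x \subseteq MA_{\geq 0}$. Strict sub-invariance for $\ell\in\Delta_x\setminus\{e\}$ then follows, since $\ell^{-1}x\in\overline{Nx}$ would imply $\beta_+(\ell^{-1}x) = \beta_+(x) - t < \beta_+(x)$, contradicting the same Busemann containment.

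For non-triviality and non-discreteness, compactness of $\GdmodGammazero$ gives a non-empty $\omega$-limit set $\Omega$ of $(p_\Z(a_tx))_{t\geq 0}$. Pick $y_0\in\Omega$, a lift $y\in\Qm_\omega\subset\GdmodGamma$ (bi-minimizing by \Cref{Theorem:omega limit of Qm is a lamination in max stretch locus of u}), and sequences $t_n\to\infty$, $\eta_n\in\Z$ with $\eta_n a_{t_n}x\to y$. For any pair of large indices $n_1<n_2$, the points $p_i := \eta_{n_i}a_{t_{n_i}}x$ lie near $y$ in $\GdmodGamma$, so their lifts $W_i \in G_d$ satisfy that $W_1 W_2^{-1}$ is close to $e$ and Bruhat-decomposes as $n\mu u$ in the open cell $NMAU$ with each factor near $e$. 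Pushing back by $a_{-t_{n_1}}$, which contracts $U$ and expands $N$, and translating by the deck element $\eta_{n_1}^{-1}$, which commutes with both $MA$ and $N$, yields the exact identity
\[
\hat n^{-1}x \;=\; \mu\,\hat u\,\zeta\, a_s x
\]
in $\GdmodGamma$, where $\hat n \in N$, $\hat u\in U$ is small, $s = t_{n_2}-t_{n_1}>0$, and $\zeta = \eta_{n_2}-\eta_{n_1}\in\Z$. Taking a subsequential limit in which $\mu, \hat u\to e$ and $(s,\zeta)\to(s^*,\zeta^*)$, the right-hand side converges to $\zeta^* a_{s^*}x$ while the left-hand side lies in $Nx$, giving $\zeta^* a_{s^*}\overline{Nx}\subseteq\overline{Nx}$.

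The principal obstacle is that the deck factor $\zeta^*$ lies outside $MA$, so this raw relation is not an element of $\Delta_x$. The asymptotic expansion $c\eta_n = \tau(y) - \beta_+(x) - t_n + o(1)$, coming from $\Z$-equivariance of $\tau$ and the fact that $\tau(a_tx) = t + \beta_+(x) + o(1)$ along quasi-minimizing rays (\Cref{lemma:u_is_a_1C-qi,lemma:Qm=beta(R)}), forces $c\zeta = -s + o(1)$ along our recurrence pairs. Restricting to pairs within a single $\epsilon$-sojourn of $(a_tx)$ near $y_0$ gives $|s| = O(\epsilon)$, so for $\epsilon$ small the integer $\zeta$ is pinned to $0$, and the limit is an honest element $a_{s^*}\in\Delta_x$ with $0<s^*\leq O(\epsilon)$; letting $\epsilon\to 0$ produces an accumulation of $\Delta_x$ at $e$, yielding non-discreteness. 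Alternatively, composing two relations with cancelling deck components via the semigroup structure extracts pure $A$-elements. Finally, non-compactness is immediate: iterates $\ell^n\in\Delta_x$ of any $\ell = ma_t\in\Delta_x\setminus\{e\}$ have $A$-component $nt\to\infty$.
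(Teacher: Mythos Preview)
Your argument for non-discreteness has a genuine gap. The ``single $\epsilon$-sojourn'' step is where it breaks: if $\zeta=\eta_{n_2}-\eta_{n_1}=0$, then $p_2=a_sp_1$ in $\GdmodGamma$, and the only lifts with $W_1W_2^{-1}$ close to $e$ (for $s<2\epsilon$ smaller than the injectivity radius) are $W_2=a_sW_1$. The Bruhat decomposition is then exactly $n=e$, $\mu=a_{-s}$, $u=e$, and your pushed-back identity $\hat n^{-1}x=\mu\hat u\,\zeta\,a_sx$ reads $x=a_{-s}a_sx=x$. No nontrivial element of $\Delta_x$ is produced; your claimed $0<s^*$ is incompatible with $\mu\to e$. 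Within a single sojourn the two frames are genuinely related by pure geodesic flow, so there is no ``twist'' for the Bruhat trick to detect.

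Your alternative of composing two relations with cancelling deck components is not worked out and in fact can fail. Concretely, if $p_\Z(x)$ lies on a closed geodesic of length $c$ (the case of \Cref{Subsection:single bi-minimizing ray}), then the recurrence data satisfy $c\eta_n+t_n=\text{const}$ exactly, so every composed pair with $\zeta_1+\zeta_2=0$ yields $s_1+s_2=0$ identically, again trivial. This is precisely why the paper takes a different route: it shows (\Cref{lemma:stabilizer of accum point downstairs is a geom limit} and \Cref{lemma_delta of geometric limit in Delta}) that $\delta(h_0\Gamma h_0^{-1})\subset\Delta_x$ for any accumulation point $h_0\Gammazero$ of $(a_tp_\Z(x))$, and then proves in \Cref{prop:Delta_x/M non discrete} that the $A$-projection of $\delta(h_0\Gamma h_0^{-1})$ is non-discrete via an explicit matrix computation exploiting the recurrence of $h_0\Gammazero$ to itself. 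That computation, which analyzes $\delta(\gamma_j\gamma_0)$ for carefully chosen $\gamma_0,\gamma_j\in h_0\Gamma h_0^{-1}$, is the missing ingredient; non-compactness then follows from Zariski density (\Cref{prop:covers_have_Delta_non-trivial}). A minor separate issue: your modified $\Delta_x$ (excluding $M\smallsetminus\{e\}$) need not be closed in $MA$ for $d\ge3$, since a sequence $m_na_{t_n}$ with $t_n\searrow0$ could converge to a nontrivial $m\in M$.
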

	\noindent A special case of this theorem, in dimension $ d=2 $, was presented in the introduction as \Cref{Thm intro: structure of Nx bar}(i).
	
	While our focus is on $ \Z $-covers, much of the tools developed in this section are applicable in greater generality and we encourage the reader to consider arbitrary discrete subgroups as they read through the text. 
	\medskip
	
	Given $ x \in \GdmodGamma $ consider the following subset of $ MA $:
	\[ \Delta_x = \{ \ell \in MA : \ell x \in \overline{Nx} \}. \]
	Note that if $ \ell_1,\ell_2 \in \Delta_x $ then
	\[ \ell_2\overline{Nx} = \overline{N \ell_2 x} \subseteq \overline{Nx}, \]
	where the equality follows from the fact that $ \ell_2 $ normalizes $ N $ and the inclusion follows from $ \overline{Nx} $ being closed and $ N $-invariant. Therefore
	\[ \ell_2\ell_1 x \in \ell_2\overline{Nx} \subseteq \overline{Nx}, \]
	implying:
	\begin{lemma}
		$ \Delta_x $ is a closed subsemigroup of $ MA $.
	\end{lemma}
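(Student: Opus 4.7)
The proof plan splits cleanly into two pieces: the semigroup property and closedness. The semigroup property is already essentially spelled out in the text preceding the lemma statement, so I would only formalize it briefly: given $\ell_1,\ell_2\in\Delta_x$, use the fact that $MA$ normalizes $N$ to write $\ell_2 N = N\ell_2$, hence
\[
 \ell_2 \overline{Nx} \;=\; \overline{\ell_2 Nx} \;=\; \overline{N\ell_2 x}.
\]
Since $\ell_2 x \in \overline{Nx}$ and $\overline{Nx}$ is closed and $N$-invariant, it follows that $\overline{N\ell_2 x} \subseteq \overline{Nx}$, and in particular $\ell_2\ell_1 x \in \overline{Nx}$, i.e.\ $\ell_2\ell_1 \in \Delta_x$.

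For closedness, suppose $(\ell_n) \subset \Delta_x$ converges to some $\ell\in MA$. By continuity of the left multiplication action of $G_d$ on $\GdmodGamma$, we have $\ell_n x \to \ell x$ in $\GdmodGamma$. Each $\ell_n x$ lies in $\overline{Nx}$, which is a closed subset of $\GdmodGamma$, so the limit $\ell x$ also lies in $\overline{Nx}$. Therefore $\ell\in\Delta_x$, and $\Delta_x$ is closed in $MA$.

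Both parts are essentially formal from the definitions; the only ``content'' is the observation that $MA$ normalizes $N$ (which yields the semigroup structure) and the elementary fact that the orbit map $g\mapsto gx$ on $\GdmodGamma$ is continuous. I don't anticipate any obstacles here; the real work of the section lies in the subsequent result (Theorem~\ref{Thm:Recurrence semigroup in arbitrary d}) that $\Delta_x$ is non-discrete and non-compact for quasi-minimizing $x$, which is where the geometry of the $\Z$-cover, the Bruhat decomposition tools of \S\ref{subsec:bruhat}, and the uniform Busemann function $\beta_+$ of \S\ref{Sec:Uniform Busemann function} will be needed.
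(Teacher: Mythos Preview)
Your proposal is correct and matches the paper's approach exactly: the semigroup argument via $MA$ normalizing $N$ is precisely what the paper writes before the lemma statement, and closedness (which the paper leaves implicit) follows, as you note, from $\Delta_x$ being the preimage of the closed set $\overline{Nx}$ under the continuous orbit map $\ell\mapsto \ell x$.
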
 
	We call $ \Delta_x $ the \emph{recurrence semigroup} of the $ N $-orbit to the ``line'' $ MAx $ (in dimension 2 it is the line $Ax$). Clearly when $ x $ is not quasi-minimizing then $ \Delta_x=MA $, as $ \overline{Nx} = \mathcal{E}_\Gamma $. 	
 	In contrast we have:
	
	\begin{lemma}\label{lemma_positive_recurrence_semigp}
		If $ x \in \GdmodGamma $ is quasi-minimizing then
		\begin{equation}\label{eq_Delta_MA+}
			\Delta_x \subseteq MA_{\geq 0}
		\end{equation}
		where $ A_{\geq 0} = \{a_t : t \geq 0 \} $.
	\end{lemma}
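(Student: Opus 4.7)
The plan is to reduce everything to the uniform Busemann functions $\beta_\pm$ developed in Section~6. A preliminary reduction: a quasi-minimizing $x$ must lie in exactly one of $\Qm_+$ or $\Qm_-$. Indeed, combining Lemma~3.3 with the quasi-minimizing inequality gives $|\tau(a_tx) - \tau(x)| \ge t - c - C_\tau$, so for $t$ large the left-hand side is strictly positive; since $t\mapsto \tau(a_tx)$ is continuous it cannot change sign relative to $\tau(x)$ for such $t$, forcing $\tau(a_tx)\to+\infty$ or $\tau(a_tx)\to-\infty$. By Lemma~6.2 this is equivalent to $x\in \Qm_+$ or $x\in\Qm_-$. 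I will handle $x\in\Qm_+$ in detail; the case $x\in\Qm_-$ is entirely symmetric, replacing $\beta_+$ by $\beta_-$, upper by lower semicontinuity, and flipping the relevant inequality.

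Now fix $\ell=ma_s\in \Delta_x$ and choose a sequence $n_j\in N$ with $n_jx\to \ell x$. Beyond the three properties of $\beta_+$ listed in Lemma~6.1 (upper semicontinuity, $N$-invariance, and $\beta_+(a_sy)=\beta_+(y)+s$), I need one further observation: because $\tau$ is pulled back from $\Sigma=K\backslash \GdmodGamma$ via $p_K$ and $M\subset K$, the lifted function $\tau$ is left-$M$-invariant, and hence so is $\beta_+$. Combining these four facts,
\[
\beta_+(x) \;=\; \limsup_{j\to\infty}\beta_+(n_jx)\;\le\; \beta_+(\ell x)\;=\;\beta_+(ma_sx)\;=\;\beta_+(a_sx)\;=\;\beta_+(x)+s,
\]
where the first equality uses $N$-invariance, the inequality uses upper semicontinuity, the next equality is $M$-invariance, and the last is Lemma~6.1(3). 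Since $x\in\Qm_+$, the value $\beta_+(x)$ is finite and can be cancelled, yielding $s\ge 0$, i.e.\ $\ell\in MA_{\ge 0}$.

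There is no real obstacle here: the whole argument is packaged into the uniform Busemann formalism of Section~6, and the only genuinely new input is the (immediate) $M$-invariance of $\beta_\pm$ coming from $\tau$ being defined on the base $\Sigma$ rather than on the frame bundle. The one point worth flagging is that the dichotomy $\Qm=\Qm_+\sqcup\Qm_-$ must be established before one commits to which of the two Busemann functions to use, but this is a routine consequence of the bounds recalled above.
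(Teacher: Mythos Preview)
Your argument is correct, but it takes a genuinely different route from the paper's own proof, and the paper explicitly acknowledges this alternative in the remark immediately following the lemma.

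The paper gives a direct geometric argument that works for any discrete subgroup $\Gamma<G_d$, not only for $\Z$-covers: assume for contradiction that $ma_{-t}\in\Delta_x$ with $t>0$; the semigroup property then gives $m^ka_{-kt}x\in\overline{Nx}$ for every $k\ge 1$, which unwinds to producing elements of $g\Gamma$ lying roughly $kt$ deep inside the horoball bounded by the horosphere $Ng$ in $\bH^d$. Since $k$ is arbitrary this forces $g^+$ to be a horospherical limit point, contradicting Lemma~\ref{lemma:nonhoro-quasimini}. Your approach instead packages everything into the uniform Busemann function $\beta_\pm$ of Section~\ref{Sec:Uniform Busemann function}: upper semicontinuity plus $N$- and $M$-invariance plus the cocycle identity $\beta_+(a_sy)=\beta_+(y)+s$ immediately force $s\ge 0$. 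This is cleaner and shorter, but it relies on the existence of the tight map $\tau$ and hence on the $\Z$-cover structure, whereas the paper's proof is deliberately written to apply in the broader generality announced at the beginning of Section~\ref{sec:semigroup}. One small stylistic point: your preliminary reduction establishing $\Qm=\Qm_+\sqcup\Qm_-$ is fine, though in the paper this dichotomy is implicit in Lemma~\ref{lemma:Qm=beta(R)} and its $\beta_-$ analogue.
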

	
	Roughly speaking, the lemma above shows that horospherical orbits at quasi-minimizing points can only accumulate `forward', deeper into their associated end.

	\begin{proof}
		Let $ x=g\Gamma $ and assume in contradiction that $ ma_{-t}x \in \overline{Nx} $ for some $ t>0 $. Since $ \Delta_x $ is a semigroup we have $ m^ka_{-kt}x \in \overline{Nx} $ for all $ k\geq 1 $. This implies in particular that for any $ k $ and any $ \varepsilon > 0 $ there exists $ n \in N $ and $ \gamma \in \Gamma $ satisfying
		\[ h_\varepsilon m^k a_{-kt} g \gamma = ng \]
		for some $ h_\varepsilon \in G_d $ with $ \|h_\varepsilon\|_{G_d}<\varepsilon $. Having $ k $ fixed and choosing $ \varepsilon $ arbitrarily small we may ensure
		\[ h_\varepsilon' g \gamma \in NMa_{kt}g \]
		where $ \|h_\varepsilon'\|_{G_d}<1 $. That is, there exists an element of
                $ g\Gamma $ roughly $ kt>0 $ `deep' inside the horoball bounded by $ Ng $. As
                $ k\geq 1 $ was arbitrary this is in contradiction with the assumption
                that $ g\Gamma $ is quasi-minimizing and hence $ g^+ $ is a
                non-horospherical limit point, see
                \Cref{figure_positive_recurrence_semigp} and recall
                \Cref{lemma:nonhoro-quasimini}.
	\end{proof}

	\begin{figure}[ht]
		\includegraphics[scale=1.4]{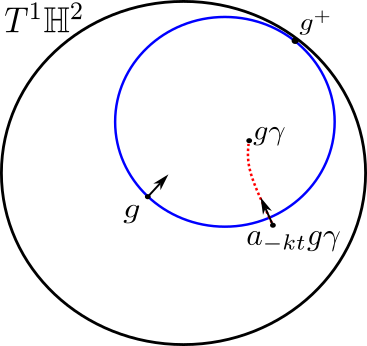}
		\caption{}
		\label{figure_positive_recurrence_semigp}
	\end{figure}

	\begin{remark}
		In the case of a $ \Z $-cover one can deduce \Cref{lemma_positive_recurrence_semigp} directly from \Cref{cor:Nx contained in beta+(x) to infinity} and property (3) of \Cref{lemma_properties_beta}.
	\end{remark}

	A natural question at this point is --- \textit{What is $ \Delta_x $ for $ x $ quasi-minimizing?}
	
	In what follows we provide tools to study this question and examples in which a complete answer can be given. See also Bellis \cite{Bellis} which has studied this question in dimension $ d=2 $.
	\medskip

	A closed subgroup $ \Theta \leq G_d $ is called a \emph{geometric limit} of a sequence of closed subgroups $ \Gamma_j \leq G_d $ if there exists a subsequence $ \Gamma_{j_k} $ which converges to $ \Theta $ in the Hasudorff metric when restricted to any compact subset of $ G_d $ (this is referred to as convergence in the Chabauty topology on the space of closed subgroups of $ G_d $).

        Recalling the Bruhat decomposition in \Cref{eq:Bruhat decomposition of G_d}, 
	we define the following projection $ \delta: G_d \to MA $ by
	\[ \delta(n \ell u)=\ell \]
	for all $ u \in U, \ell \in MA $ and $ n \in N $, and set $ \delta(g)=e $ whenever $ g \notin NMAU $. We have the following:
	
	\begin{lemma}\label{lemma_delta of geometric limit in Delta}
		Given $ x=g\Gamma $, let $ \Theta $ be any geometric limit of the family 
		\[ \{a_{t}g\Gamma g^{-1}a_{-t} \}_{t\geq 0}, \]
		and let $ e \neq \theta \in \Theta \cap NMAU $. Then $ \delta(\theta)x $ is an accumulation point of $ Nx $.
		In particular, $ \delta(\Theta) \subset \Delta_x $. 
	\end{lemma}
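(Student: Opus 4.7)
The plan is to apply the Bruhat-decomposition trick developed in Section 2.3 essentially verbatim. First, I would unpack the Chabauty convergence: by definition of geometric limit there is a subsequence $t_j\to\infty$ along which $a_{t_j}g\Gamma g^{-1}a_{-t_j}\to\Theta$, and since $\theta\in\Theta$ is nontrivial one can choose $\gamma_j\in\Gamma$ (automatically nontrivial for large $j$) so that
$$\theta_j := a_{t_j}\,g\gamma_j g^{-1}\,a_{-t_j}\ \longrightarrow\ \theta \quad \text{in } G_d.$$

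Next, I would use that $NMAU\subset G_d$ is open and that the multiplication map $N\times MA\times U\to NMAU$ is a diffeomorphism: writing $\theta = n\ell u$ with $\ell=\delta(\theta)$, for all large $j$ we have unique factorizations $\theta_j=n_j\ell_j u_j$ with $n_j\to n$, $\ell_j\to\ell$, $u_j\to u$. Conjugating back by $a_{-t_j}$ and setting $\hat n_j := a_{-t_j}n_j a_{t_j}\in N$ and $\hat u_j := a_{-t_j}u_j a_{t_j}\in U$ gives
$$g\gamma_j g^{-1} \,=\, \hat n_j\, \ell_j\, \hat u_j.$$
The key geometric input is that $\mathrm{Ad}(a_{-t})$ contracts the Lie algebra of $U$ as $t\to+\infty$, so since the $u_j$ stay bounded, $\hat u_j\to e$. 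Meanwhile $\hat n_j$ typically diverges in $N$, but that is irrelevant for what follows. Rearranging gives $\hat n_j^{-1}\,g\gamma_j = \ell_j\,\hat u_j\,g$, and projecting to $\GdmodGamma$,
$$\hat n_j^{-1}\,x \,=\, \ell_j\,\hat u_j\, x \,\longrightarrow\, \ell x \,=\, \delta(\theta)\,x.$$
Since $\hat n_j^{-1}\in N$, this shows $\delta(\theta)\,x\in\overline{Nx}$, i.e., $\delta(\theta)\in\Delta_x$. The ``in particular'' statement $\delta(\Theta)\subset\Delta_x$ then follows since for $\theta\notin NMAU$ the convention $\delta(\theta)=e$ lies in $\Delta_x$ trivially.

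I do not foresee a real obstacle; the only delicate point is bookkeeping which Bruhat factor shrinks upon pulling back by $a_{-t_j}$, and by the very definition of $U$ this is precisely the unstable factor, which is exactly the mechanism illustrated in Figure 4. The argument crucially uses only that $\hat u_j\to e$ and that $\hat n_j$ is \emph{some} sequence in $N$ — one neither needs nor obtains control on the size of $\hat n_j$.
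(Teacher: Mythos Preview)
Your proposal is correct and follows essentially the same argument as the paper's proof: both pick $\gamma_j\in\Gamma$ with $a_{t_j}g\gamma_j g^{-1}a_{-t_j}\to\theta$, factor via the Bruhat decomposition, conjugate back by $a_{-t_j}$ so that the $U$-component dies, and read off that a sequence in $Nx$ converges to $\delta(\theta)x$. The only differences are notational.
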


	Note that the conjugate $ a_{t}g\Gamma g^{-1}a_{-t} $ is the stabilizer of the point $ a_t x=a_tg\Gamma  $ in $ \GmodGamma $. Therefore, in a sense, geometric limits $ \Theta $ as above capture the asymptotic geometry ``as seen'' along the geodesic ray $ (a_t x)_{t \geq 0} $. 
	One can consider the following closed subset of $ G_d $ associated with the point $ x=g\Gamma $:
	\begin{equation}\label{eq_def_Xix}
	\Xi_x = \bigcap_{k\geq 1} \overline{\bigcup_{t \geq k} a_{t}g\Gamma g^{-1}a_{-t}}.
	\end{equation}
	This is the union of all geometric limits $ \Theta $ as in \cref{lemma_delta of geometric limit in Delta}, hence as a direct consequence we have:
	\begin{cor}\label{lemma_delta_Xi_in_Delta}
		For all $ x \in \GdmodGamma $
		\[ \delta\left( \Xi_x \right) \subseteq \Delta_x. \]
	\end{cor}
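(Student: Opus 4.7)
The plan is to observe that $\Xi_x$ is, essentially by construction, the union of all Chabauty limits of the conjugated stabilizer family $\{a_t g \Gamma g^{-1} a_{-t}\}_{t\geq 0}$, so that Lemma~\ref{lemma_delta of geometric limit in Delta} applies pointwise to each $\xi \in \Xi_x$ after a standard diagonalization.

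First I would fix $\xi \in \Xi_x$ and unpack the nested intersection. For each integer $k \geq 1$, the inclusion $\xi \in \overline{\bigcup_{t\geq k} a_t g \Gamma g^{-1} a_{-t}}$ furnishes $t_k \geq k$ and $\gamma_k \in \Gamma$ with $a_{t_k} g \gamma_k g^{-1} a_{-t_k}$ within distance $1/k$ of $\xi$ in $G_d$. This produces $t_k \to \infty$ together with a sequence of elements of the conjugated stabilizer subgroups converging to $\xi$.

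Next I would invoke Chabauty compactness of the space of closed subgroups of $G_d$ to pass, along a subsequence, to a geometric limit $\Theta = \lim_k a_{t_k} g \Gamma g^{-1} a_{-t_k}$. Since the approximating elements $a_{t_k} g \gamma_k g^{-1} a_{-t_k}$ lie in the respective subgroups and converge to $\xi$, we automatically get $\xi \in \Theta$. This places $\Theta$ squarely in the hypotheses of Lemma~\ref{lemma_delta of geometric limit in Delta}.

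Finally I would split into cases on the location of $\xi$ relative to the big Bruhat cell. If $\xi \neq e$ and $\xi \in NMAU$, then Lemma~\ref{lemma_delta of geometric limit in Delta} directly yields $\delta(\xi) x \in \overline{Nx}$, i.e.\ $\delta(\xi) \in \Delta_x$. If instead $\xi = e$ or $\xi \notin NMAU$, then by the definition of the Bruhat projection $\delta(\xi) = e$, and trivially $e \in \Delta_x$ since $e x = x \in \overline{Nx}$. As $\xi \in \Xi_x$ was arbitrary, the inclusion $\delta(\Xi_x) \subseteq \Delta_x$ follows. There is no serious obstacle here: the statement is almost tautological given the preceding lemma, with only mild care required in the diagonal extraction of the Chabauty limit and in handling the degenerate case where $\xi$ falls outside the open Bruhat cell.
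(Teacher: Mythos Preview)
Your proposal is correct and follows exactly the paper's approach. The paper simply asserts in one line that $\Xi_x$ is the union of all geometric limits $\Theta$ of the family $\{a_t g\Gamma g^{-1}a_{-t}\}_{t\ge 0}$ and then invokes Lemma~\ref{lemma_delta of geometric limit in Delta}; you have supplied the (standard) details of that assertion, namely the diagonal extraction via Chabauty compactness and the degenerate case $\xi\notin NMAU$ or $\xi=e$.
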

		
	\begin{remark}
		See also \cite[Theorem 1.3]{LL} which is somewhat similar in form to the lemma above and deals with horospherical measure rigidity.
	\end{remark}

	\begin{proof}[Proof of \cref{lemma_delta of geometric limit in Delta}]
		Let $ \theta_0=n_0 \ell_0 u_0 $ be an element in $ \Theta \cap NMAU $. Hence there exist sequences $ \gamma_j \in \Gamma, u_j \in U, \ell_j \in MA $ and $ n_j \in N $ satisfying
		\[  a_{t_j}g \gamma_j g^{-1} a_{-t_j} = n_j \ell_j u_j \]
		where $ u_j \to u_0, \ell_j \to \ell_0 $ and $ n_j \to n_0 $. The latter
                convergence relations follow from $ NMAU $ being open in $ G_d $ and the
                fact that the multiplication map $ N \times MA \times U \to NMAU $ is
                a diffeomorphism (see \Cref{subsec:bruhat}).  Therefore, for all $ j $
		\[ g\gamma_j = a_{-t_j} n_j \ell_j u_j a_{t_j} g. \]
		This in turn implies
		\begin{align*}
			g\Gamma &= a_{-t_j}n_j \ell_j u_j a_{t_j}g \Gamma = \\
			&= (a_{-t_j} n_ja_{t_j}) \ell_j (a_{-t_j}u_j a_{t_j})g \Gamma.
		\end{align*}
                This is an instance of the phenomenon described in  \Cref{fig:bruhat-trick}. 
		Since $ t_j \to +\infty $ we are assured that $ a_{-t_j} u_j a_{t_j} \to e $. Denoting 
		\[ \tilde{n}_j= a_{-t_j}n_j^{-1} a_{t_j} \in N \]
		we conclude
		\[ \tilde{n}_j g \Gamma = \ell_j (a_{t_j} u_ja_{-t_j}) g \Gamma \to \ell_0 g\Gamma . \]
		In other words, the $ N $-orbit of $ x $ accumulates onto $ \ell_0 x $ and in particular $  \ell_0 x \in \overline{Nx} $ and $ \ell_0 \in \Delta_x $.
	\end{proof}

	From this simple lemma we draw several useful conclusions:
	\begin{prop}\label{prop:Z-dense geometric limit implies Delta non-compact}
		Let $ \Theta $ be a geometric limit as above and assume that $ \Theta $ is Zariski dense in $ G_d $. Then $ Nx $ has an accumulation point on $ MAx \smallsetminus Mx $ and in particular $ \Delta_x $ is non-compact.
	\end{prop}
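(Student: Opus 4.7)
The plan is to combine \Cref{lemma_delta of geometric limit in Delta}, which embeds the Bruhat projection $\delta(\Theta \cap NMAU)$ into $\Delta_x$, with the Zariski density hypothesis on $\Theta$ and the semigroup structure of $\Delta_x$. The key step is to produce a single $\theta \in \Theta \cap NMAU$ with $\delta(\theta) \notin M$, and then iterate.

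First I would locate such a $\theta$ using Zariski density. The locus inside the big cell where $\delta$ takes values in $M$ is $NMU$, a proper real algebraic subvariety of $NMAU$ of codimension one (the $A$-coordinate in the Bruhat decomposition vanishes). Together with the small Bruhat cell $NMA\omega$ from \eqref{eq:Bruhat decomposition of G_d}, their Zariski closure is a proper Zariski closed subset of $G_d$ of codimension one. Zariski density of $\Theta$ then prevents $\Theta$ from being contained in this subset, yielding $\theta \in \Theta \cap NMAU$ with $\delta(\theta) = m a_t$ and $t \neq 0$. By \Cref{lemma_delta of geometric limit in Delta}, $\delta(\theta) \in \Delta_x$ and $\delta(\theta) x$ is an accumulation point of $Nx$ inside $MAx$.

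If $x$ is not quasi-minimizing the proposition is vacuous, since $\overline{Nx} = \mathcal{E}_\Gamma$ and $\Delta_x = MA$; so assume $x$ is quasi-minimizing. Then \Cref{lemma_positive_recurrence_semigp} forces $t > 0$. Because $M$ and $A$ commute and $\Delta_x$ is closed under multiplication, every power $(\delta(\theta))^n = m^n a_{nt}$ lies in $\Delta_x$, and the divergence $nt \to \infty$ in $A$ exhibits $\Delta_x$ as non-compact.

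The accumulation point $\delta(\theta) x$ lies in $MAx \smallsetminus Mx$ exactly when $\delta(\theta) \notin M \cdot (g\Gamma g^{-1} \cap MA)$. The intersection $g\Gamma g^{-1} \cap MA$ is the stabilizer of the oriented line $MAx$ and is therefore either trivial or infinite cyclic. In the trivial case $\delta(\theta) \notin M$ immediately suffices. The main obstacle is the cyclic case, generated by some $m_0 a_{L_0}$: then the bad set $M \cdot \langle m_0 a_{L_0} \rangle$ is a countable discrete union of translates of $M$ which, although not Zariski closed, is Zariski dense in $MA$, so pure Zariski density of $\delta(\Theta \cap NMAU)$ does not rule out containment in it. To resolve this I would refine the choice of $\theta$ by exploiting that $\Theta$ is Zariski dense as a \emph{subgroup} of the simple Lie group $G_d$, ensuring enough generic elements of $\Theta$ to find one whose $A$-coordinate $t$ is incommensurable with $L_0$; powers of such an element stay out of the bad set and deliver the desired accumulation point in $MAx \smallsetminus Mx$.
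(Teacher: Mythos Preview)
Your approach is the paper's: use Zariski openness of $NMAU \smallsetminus NMU$ to find $\theta \in \Theta$ with $\delta(\theta)\notin M$, invoke \Cref{lemma_delta of geometric limit in Delta}, and get non-compactness from the semigroup structure of $\Delta_x$. The paper's proof stops exactly where your second paragraph ends.

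Your final paragraph manufactures an obstacle that does not exist. You have already reduced to the quasi-minimizing case, and there the cyclic stabilizer case is empty. Indeed, if $g\Gamma g^{-1}\cap MA$ contained some $m_0 a_{L_0}$ with $L_0\neq 0$, then $a_{nL_0}x = m_0^{-n}x$ for every $n$, so $d(a_{nL_0}x,x)$ stays bounded by the diameter of an $M$-orbit; this contradicts $d(a_t x,x)\ge t-c$. (If $L_0=0$ and $m_0\neq e$ you get torsion in $\Gamma$.) Hence for quasi-minimizing $x$ the map $MA\to MAx$ is injective on the $A$-factor, and $\delta(\theta)\notin M$ already gives $\delta(\theta)x\in MAx\smallsetminus Mx$.

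Your proposed rescue via ``incommensurability of the $A$-coordinate with $L_0$'' is both unnecessary and unjustified: Zariski density of $\Theta$ gives no control over rational relations among real translation lengths, so that sentence is not a proof. Drop the last paragraph, replace it with the one-line observation that quasi-minimizing forces trivial $MA$-stabilizer, and your argument is complete and coincides with the paper's.
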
	
	
	\begin{proof}
	The set $ NMAU \smallsetminus NMU $ is Zariski open. Indeed (see discussion
        in \Cref{subsec:bruhat}), since the multiplication map $ N \times MA \times U \to NMAU $ is an isomorphism of varieties then $ NMU $ is Zariski closed in $ NMAU $.
	Therefore if $ \Theta $ is Zariski dense there must exist an element $ \theta \in \Theta \cap NMAU \smallsetminus NMU $. This directly gives $ \delta(\theta)\notin M $, implying the claim.
	\end{proof}

	More generally, denote by $ I(x) $ the limit inferior of the injectivity radius in $ \GdmodGamma $ along the geodesic ray $ (a_t x)_{t \geq 0} $, that is,
	\[ I(x)=\liminf_{t \to +\infty} \;\mathrm{Inj}_{\GdmodGamma}(a_t x). \] 
	
	\begin{cor}
		Assume $ \Gamma $ is torsion-free. Then if $ I(x)<\infty $ then $ Nx $ has an accumulation point on $ MAx $. Moreover, if $ I(x)=0 $ then $ x $ itself is an accumulation point of $ Nx $.
	\end{cor}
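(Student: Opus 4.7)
Write $x=g\Gamma$. My plan is to re‑run the geometric‑limit mechanism from the proof of \Cref{lemma_delta of geometric limit in Delta}, applied to a sequence of ``short loops'' produced by the injectivity–radius hypothesis. First I would pick $R<\infty$ and $t_j\to\infty$ with $\mathrm{Inj}_{G_d/\Gamma}(a_{t_j}x)<R$, and for each $j$ select $\gamma_j\in\Gamma\smallsetminus\{e\}$ (torsion‑freeness is used precisely here to ensure this choice is genuinely nontrivial) so that
\[
\eta_j \;:=\; a_{t_j}\,g\gamma_j g^{-1}\,a_{-t_j}
\]
satisfies $d_{G_d}(e,\eta_j)<2R$; in the case $I(x)=0$ I would additionally arrange $d_{G_d}(e,\eta_j)\to 0$. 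By compactness, after passing to a subsequence $\eta_j\to\theta\in\Xi_x$, where $\theta\neq e$ when $I(x)>0$ and $\theta=e$ when $I(x)=0$.

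Granted that $\theta$ lies in the big Bruhat cell $NMAU$ (see the next paragraph), I would decompose $\eta_j=n_j\ell_j u_j$ with $n_j\to n_0$, $\ell_j\to \ell_0=\delta(\theta)$, $u_j\to u_0$, and imitate the key computation inside the proof of \Cref{lemma_delta of geometric limit in Delta}. Setting $\tilde n_j:=a_{-t_j}n_j^{-1}a_{t_j}\in N$, the identity
\[
\tilde n_j\, x \;=\; \ell_j\,(a_{-t_j}u_ja_{t_j})\,x \;\longrightarrow\; \ell_0\,x \;\in\; MAx
\]
holds because $a_{-t_j}u_ja_{t_j}\to e$ as $t_j\to\infty$. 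This furnishes an accumulation point of $Nx$ on $MAx$, giving the first claim. For the second claim, when $I(x)=0$ we have $\ell_0=\delta(e)=e$, so the accumulation point is $x$ itself; to see this is a \emph{genuine} accumulation of distinct horocycle points I would observe that $\eta_j\neq e$ together with $\ell_j,u_j\to e$ forces $n_j\neq e$ along a subsequence, whence $a_{-t_j}$‑conjugation expands $n_j^{-1}$ and $\tilde n_j$ leaves every compact set in $N$ while $\tilde n_j x\to x$.

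The main obstacle I anticipate is the reduction to $\theta\in NMAU$ in the case $0<I(x)<\infty$ (for $I(x)=0$ it is automatic since $e\in NMAU$). The exceptional Bruhat cell $NMA\omega$ is a proper subvariety of $G_d$ by \eqref{eq:Bruhat decomposition of G_d}, so generically short loops produce $\eta_j\in NMAU$, but a formal reduction is needed. I expect this to go through either by swapping $\gamma_j$ for another element of the finite set $\{\gamma\in\Gamma\smallsetminus\{e\}:d_{G_d}(e,a_{t_j}g\gamma g^{-1}a_{-t_j})<2R\}$, or by passing to a genuine Chabauty limit $\Theta$ of a subsequence of $\{a_{t_j}g\Gamma g^{-1}a_{-t_j}\}$---a closed subgroup of $G_d$ containing $\theta$---and replacing $\theta$ by any element of $\Theta\cap NMAU\smallsetminus\{e\}$. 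Such an element exists outside the degenerate case that $\Theta$ reduces to the two–element involutive group $\{e,\theta\}$ with $\theta\in NMA\omega$, which (being  highly symmetric with respect to the frame $x$) one can then rule out by choosing a slightly different recurrence time $t_j$.
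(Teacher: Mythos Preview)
Your overall strategy matches the paper's, but there are two genuine gaps.

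\textbf{The $I(x)=0$ case.} Your claim that ``$\eta_j\neq e$ together with $\ell_j,u_j\to e$ forces $n_j\neq e$ along a subsequence'' is false: take $\eta_j=a_{1/j}$, so $n_j=e$, $\ell_j=a_{1/j}$, $u_j=e$. Even when $n_j\neq e$, nothing prevents $\|n_j\|$ from being of order $e^{-2t_j}$, in which case $\tilde n_j=a_{-t_j}n_j^{-1}a_{t_j}$ still tends to $e$, so you have not produced a genuine accumulation of $Nx$ on $x$. The paper fixes this with a \emph{power trick}: since $\Gamma$ is torsion-free, each $\eta_j$ generates a non-compact cyclic group, so for any small $\varepsilon>0$ one may replace $\eta_j$ by a power $\eta_j^{m_j}$ whose norm lies in the annulus $[\varepsilon/2,\varepsilon)$. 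Passing to a limit yields a nontrivial element of $\Xi_x\cap NMAU$ of norm at most $\varepsilon$; applying \Cref{lemma_delta of geometric limit in Delta} for each $\varepsilon$ shows $Nx$ accumulates on points $\ell_\varepsilon x$ with $\ell_\varepsilon\to e$, hence on $x$.

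\textbf{The $\theta\in NMA\omega$ case.} Your proposed reductions (swapping $\gamma_j$, passing to a Chabauty limit $\Theta$ and picking another element) are vague, and the ``degenerate two-element group'' scenario is not actually ruled out by perturbing $t_j$. The paper's remedy is one line: since $\omega N\omega^{-1}=U$ and $\omega$ normalizes $MA$, one has $NMA\omega\cdot NMA\omega\subseteq NMAU$, so $\theta^2\in NMAU$. Moreover $\eta_j^2\in a_{t_j}g\Gamma g^{-1}a_{-t_j}$ and $\eta_j^2\to\theta^2$, so $\theta^2\in\Xi_x$. If $\theta^2\neq e$, apply \Cref{lemma_delta of geometric limit in Delta}; if $\theta^2=e$, then $\eta_j^2\neq e$ (torsion-freeness) with $\eta_j^2\to e$, and the power trick above applies.
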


	\begin{remark}
	See \cite[Theorem 1.1]{Bellis} for a different proof of this statement in the case of $ d=2 $. See also \cite{CM}.
	\end{remark}
	
	\begin{proof}                
		The assumption that $ I(g\Gamma) < \infty $ implies there exist $ R>0 $ and $ t_j \to \infty $ for which
		\[ a_{t_j}g \Gamma g^{-1}a_{-t_j} \cap B_R^{G_d} \neq \{e\} \quad \text{for all }j, \]
		where $ B_R^{G_d} $ is a ball of radius $ R $ around the identity in $ G_d $. In particular, there exists an accumulation point $ g_j \to g_0 $ where  $ e \neq g_j \in a_{t_j}g \Gamma g^{-1}a_{-t_j} \cap B_R^{G_d} $.
		Therefore, we conclude $  g_0 \in \Xi_{g \Gamma} $.
		
		If $ e \neq g_0 \in NMAU $ then by \cref{lemma_delta of geometric limit in Delta} we conclude $ Nx $ accumulates onto $ \delta(g_0)x $. If $ g_j \to e $ then for any $ \varepsilon > 0 $ we may replace $ g_j $ by some power $ g^m_j \in a_{t_j}g \Gamma g^{-1}a_{-t_j} \cap B_R^{G_d} $ satisfying $ \frac{1}{2}\varepsilon \leq \|g_j^m\|_{G_d} < \varepsilon $ (recall that since $ \Gamma $ is torsion-free, all $ g_j $ generate a non-compact subgroup in $ G_d $). Hence $ \Xi_{g \Gamma} \cap B_\varepsilon^{G_d} \smallsetminus B_{\frac{1}{2}\varepsilon}^{G_d} \neq \emptyset $ for all small enough $ \varepsilon $, implying $ Nx $ accumulates onto $ x $.
		Notice that if $ I(g\Gamma) = 0 $ then there exist $ g_j \to e $ as above.
		
		In the remaining case where $ g_0 \in NMA\omega $ we notice that
		\[ g_0^2 \in NMA\omega NMA \omega = NMAU \]
		and so the above argument shows $ Nx $ accumulates onto $ \delta(g_0^2)x $. 
	\end{proof}

	\begin{cor}\label{cor_positivity of geometric limits}
		If $ x \in \GdmodGamma $ is quasi-minimizing and $ \Theta $ is a geometric limit of $ \{a_{t}g\Gamma g^{-1}a_{-t} \}_{t\geq 0} $, then
		\[ \Theta \cap NMA_{< 0} U = \emptyset \]
		where $ A_{< 0} = \{ a_t : t < 0 \} $.
	\end{cor}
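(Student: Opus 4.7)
The plan is to derive this as an immediate contradiction between the two previous results in the section. Concretely, \Cref{lemma_delta of geometric limit in Delta} tells us that the $MA$-component of any nontrivial element of $\Theta \cap NMAU$ lies in the recurrence semigroup $\Delta_x$, while \Cref{lemma_positive_recurrence_semigp} constrains $\Delta_x$ to the positive half $MA_{\geq 0}$ whenever $x$ is quasi-minimizing. Since $MA_{<0}$ is disjoint from $MA_{\geq 0}$, intersecting $\Theta$ with $NMA_{<0}U$ is incompatible with quasi-minimality.

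In detail, suppose for contradiction that there is some $\theta \in \Theta \cap NMA_{<0}U$. Using the uniqueness of the Bruhat product decomposition $N \times MA \times U \to NMAU$ (cf.~\Cref{subsec:bruhat}), write $\theta = n \ell u$ with $n \in N$, $u \in U$, and $\ell \in MA_{<0}$. In particular $\ell \neq e$, so $\theta \neq e$ and \Cref{lemma_delta of geometric limit in Delta} applies to give
\[
\delta(\theta) = \ell \in \Delta_x.
\]
On the other hand, \Cref{lemma_positive_recurrence_semigp} asserts that $\Delta_x \subseteq MA_{\geq 0}$, so we would have $\ell \in MA_{\geq 0} \cap MA_{<0} = \emptyset$, a contradiction.

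There is no real obstacle: the only thing to verify is that the $MA$ component extracted from a limit in $NMA_{<0}U$ is genuinely nontrivial, which is immediate from uniqueness of Bruhat factors. Equivalently, in the $\Z$-cover setting one could argue directly from \Cref{cor:Nx contained in beta+(x) to infinity} and property (3) of \Cref{lemma_properties_beta} as in the remark following \Cref{lemma_positive_recurrence_semigp}, but the argument above works in the generality in which \Cref{lemma_positive_recurrence_semigp} is stated.
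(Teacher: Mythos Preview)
Your proof is correct and follows essentially the same approach as the paper: combine \Cref{lemma_positive_recurrence_semigp} (which gives $\Delta_x \cap MA_{<0}=\emptyset$) with \Cref{lemma_delta of geometric limit in Delta} (which gives $\delta(\theta)\in\Delta_x$) to reach a contradiction. Your explicit check that $\theta\neq e$ before invoking \Cref{lemma_delta of geometric limit in Delta} is a nice point of care that the paper leaves implicit.
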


	\begin{proof}
		By \cref{lemma_positive_recurrence_semigp} we know that $ \Delta_x \cap MA_{<0} = \emptyset $. \Cref{lemma_delta of geometric limit in Delta} concludes the proof.
	\end{proof}

	Recall the definition of $ \Xi_x $ in \eqref{eq_def_Xix}, for $ x=g\Gamma $. In
        some cases the $ \delta $ projection of $ \Xi_x $ completely determines $ \Delta_x
        $. One such immediate case is whenever $ \delta(\Xi_x) $ generates, as a closed
        semigroup, all of $ MA_{\geq 0} $. For instance for $ d=2 $, whenever $ e $ is not an isolated point in $ \delta(\Xi_x) $ one can deduce $ \Delta_x=A_{\geq 0} $ for $ x $ quasi-minimizing.
	
	Another such case is the following:
	\begin{lemma}\label{lemma_basepoint_conj_in_Xi_determines_Delta}
		Let $ x=g\Gamma $ and assume that $ g\Gamma g^{-1} \subset \Xi_x $. Then 
		\[ \overline{\delta(g\Gamma g^{-1})} = \Delta_x \]
		where the closure is taken in $ MA $.
	\end{lemma}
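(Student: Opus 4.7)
The plan is to prove the two inclusions of $\overline{\delta(g\Gamma g^{-1})} = \Delta_x$ separately, both using the Bruhat apparatus developed above.

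The forward inclusion $\overline{\delta(g\Gamma g^{-1})} \subseteq \Delta_x$ is essentially immediate. The hypothesis $g\Gamma g^{-1} \subseteq \Xi_x$ combined with \Cref{lemma_delta_Xi_in_Delta} yields $\delta(g\Gamma g^{-1}) \subseteq \delta(\Xi_x) \subseteq \Delta_x$. Since $\Delta_x$ is closed in $MA$, taking closures preserves the inclusion.

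For the reverse inclusion $\Delta_x \subseteq \overline{\delta(g\Gamma g^{-1})}$, I would start from an arbitrary $\ell \in \Delta_x$, which means $\ell x \in \overline{Nx}$, and lift to $G_d$: choose sequences $n_j \in N$ and $\gamma_j \in \Gamma$ with $n_j g \gamma_j \to \ell g$. Setting $\eta_j := g \gamma_j g^{-1} \in g \Gamma g^{-1}$, this rearranges to the key convergence
\[ n_j \eta_j \to \ell \quad \text{in } G_d. \]
The goal is then to extract from this the convergence $\delta(\eta_j) \to \ell$, which would place $\ell$ in $\overline{\delta(g\Gamma g^{-1})}$.

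The main technical point exploits openness of the big Bruhat cell. Since $\ell \in MA \subset NMAU$ and $NMAU$ is open in $G_d$, for $j$ large we have $n_j \eta_j \in NMAU$, and hence $\eta_j = n_j^{-1}(n_j \eta_j) \in NMAU$ as well (using that $N \cdot NMAU = NMAU$). Writing the unique Bruhat factorization $n_j \eta_j = n_j' \ell_j' u_j'$ and invoking the fact that $N \times MA \times U \to NMAU$ is a diffeomorphism, the convergence $n_j \eta_j \to e \cdot \ell \cdot e$ forces $n_j' \to e$, $\ell_j' \to \ell$ and $u_j' \to e$. Then $\eta_j = (n_j^{-1} n_j') \, \ell_j' \, u_j'$ is itself the unique Bruhat factorization of $\eta_j$, so by definition $\delta(\eta_j) = \ell_j' \to \ell$, as desired. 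I do not anticipate any serious obstacle: the small Bruhat cell $NMA\omega$ plays no role because elements there cannot limit to an element of $MA$, and the uniqueness of the decomposition in $NMAU$ makes the passage from $n_j \eta_j$ to $\eta_j$ entirely formal.
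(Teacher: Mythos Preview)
Your proof is correct and follows essentially the same approach as the paper's. Both arguments obtain the forward inclusion from \Cref{lemma_delta_Xi_in_Delta}, and for the reverse inclusion both lift the relation $\ell x \in \overline{Nx}$ to $G_d$, use openness of the big Bruhat cell $NMAU$ together with the diffeomorphism $N\times MA\times U \to NMAU$ to read off the $MA$-component of the relevant element of $g\Gamma g^{-1}$, and conclude that it converges to $\ell$.
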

	
	\begin{remark}
		Note that the hypothesis of this lemma holds for instance whenever the geodesic ray $ (a_t g\Gamma)_{t \geq 0} $ in $ \GdmodGamma $ has $ x=g\Gamma $ as an accumulation point. In such a case $ x $ is not quasi-minimizing in $ \GdmodGamma $ and $ \overline{Nx}=\GdmodGamma $ (assuming $ \Gamma $ was Zariski-dense) implying in particular that $ \Delta_x=MA $. Hence we conclude that $ g\Gamma g^{-1} $ has a dense $ \delta $-projection into $ MA $.
	\end{remark}

	\begin{proof}
		The inclusion $ (\subseteq) $ is immediate from \cref{lemma_delta_Xi_in_Delta} and the fact that $ \Delta_x $ is closed in $ MA $. For the other direction, let $ \ell_0 \in \Delta_x $, then by definition there exist sequences $ \tilde{n}_j \in N $, $ \gamma_j \in \Gamma $ and $ \epsilon_j \in G $ with
		\[ \tilde{n}_j g = \epsilon_j \ell_0 g \gamma_j \]
		and $ \epsilon_j \to e $. Recall that the set $ NMAU $ is an open neighborhood of the identity in $ G_d $, therefore for all large $ j $ we may write $ \epsilon_j=n_j \ell_j u_j $. Thus
		\[ \tilde{n}_jg = n_j \ell_j u_j \ell_0 g \gamma_j \]
		and
		\[ g \gamma_j^{-1} g^{-1} = (\tilde{n}_j^{-1}n_j)(\ell_j \ell_0)u_j. \]
		The right hand side above is written in $ NMAU $ form with $ \delta $ projection equal $ \ell_0 \ell_j $. In other words we get $ \ell_0 \ell_j \in \delta(g\Gamma g^{-1}) $ and since $ \ell_j \to e $, the claim follows.
	\end{proof}
	
	\begin{remark}
		As mentioned the entire discussion above holds just as well for any discrete group $ \Gamma < G_d $, not necessarily a normal subgroup of a uniform lattice. We note that actually much of the discussion extends even further to other ambient groups. Let $ G $ be any connected semisimple real algebraic group $ G $ (of arbitrary rank) and let $ \Gamma < G $ be a discrete subgroup. Let $ P $ be a minimal parabolic subgroup with Langlands decomposition $ P=MAN $ where $ A $ is the Cartan subgroup of $ \R $-diagonalizable elements, $ N $ a contracting horospherical subgroup with respect to a choice of Weyl chamber $ \frak a^+ $, and $ M $ the compact centralizer of $ A $. Given $ x \in \GmodGamma $, one may define $ \Delta_x $ as above and the map $ \delta $ as a projection from the open Bruhat cell $ NMAU $ (where $ U $ is the corresponding expanding horospherical subgroup). The proof of \cref{lemma_delta of geometric limit in Delta} holds almost verbatim for any geometric limit of $ \exp(v_j)g\Gamma g^{-1}\exp(-v_j) $ (i.e.~an accumulation point in the Chabauty topology) for a regular sequence $ v_j \to \infty $ in $ \frak a^+ $.
		One can also consider a form of \cref{cor_positivity of geometric limits} with respect to a higher rank notion of horospherical limit point in the Furstenberg boundary $ P\backslash G $, see \cite{LO}.
	\end{remark}
	
	\subsection{Returning to $ \Z $-covers}
	
	Let us apply what we have learned to the setting of this paper.
	Assume from this point on that $ \Gamma $ is a normal subgroup of a uniform lattice $ \Gamma_0 < G_d $ with $ \Gamma_0/\Gamma \cong \Z $.

	\begin{lemma}\label{lemma:stabilizer of accum point downstairs is a geom limit}
		If $ a_{t_j}\pZ{x} \to h_0\Gamma_0 \in \GdmodGamma_0 $ for some $ x \in \GdmodGamma $ and $ t_j \to \infty $, then $ h_0\Gamma h_0^{-1} \subset \Xi_x $.
	\end{lemma}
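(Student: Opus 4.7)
The plan is to exploit the normality $\Gamma \triangleleft \Gamma_0$ to transfer the given convergence in $\GdmodGamma_0$ into sequences of elements of the conjugates $a_{t_j} g \Gamma g^{-1} a_{-t_j}$. The key point is that although $a_{t_j}\pZ{x}$ only converges in the compact quotient $\GdmodGamma_0$, this is enough to produce a genuine convergence in $G_d$ after adjusting by a sequence from $\Gamma_0$, and then conjugating any element of $\Gamma$ by such an adjusting sequence stays in $\Gamma$.

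First I would unpack the hypothesis. Writing $x=g\Gamma$, the convergence $a_{t_j}g\Gamma_0 \to h_0\Gamma_0$ in $\GdmodGamma_0$ provides elements $\gamma_j^0 \in \Gamma_0$ with
\[
k_j := a_{t_j} g \gamma_j^0 \to h_0 \qquad \text{in } G_d.
\]
Next, fix an arbitrary $\gamma \in \Gamma$. Since $\Gamma$ is normal in $\Gamma_0$, the element $\gamma_j := \gamma_j^0 \gamma (\gamma_j^0)^{-1}$ lies in $\Gamma$ for every $j$. Therefore the conjugate
\[
a_{t_j} g \gamma_j g^{-1} a_{-t_j} \;\in\; a_{t_j} g \Gamma g^{-1} a_{-t_j}
\]
belongs to the set whose Chabauty accumulation yields $\Xi_x$.

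Now I rewrite this conjugate using $k_j$:
\[
a_{t_j} g \gamma_j g^{-1} a_{-t_j} \;=\; (a_{t_j} g \gamma_j^0)\,\gamma\,(a_{t_j} g \gamma_j^0)^{-1} \;=\; k_j\, \gamma\, k_j^{-1} \;\longrightarrow\; h_0 \gamma h_0^{-1},
\]
with the limit valid since $k_j \to h_0$ and conjugation is continuous. Since $t_j \to \infty$, for every $k \geq 1$ this convergence stays inside $\bigcup_{t \geq k} a_t g \Gamma g^{-1} a_{-t}$ for all sufficiently large $j$, so $h_0\gamma h_0^{-1}$ lies in the closure of each such union, hence in $\Xi_x$. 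As $\gamma \in \Gamma$ was arbitrary, we conclude $h_0 \Gamma h_0^{-1} \subset \Xi_x$.

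There is no real obstacle: once the normality of $\Gamma$ in $\Gamma_0$ is invoked, the argument is a direct manipulation of the defining sequences. The only conceptual point worth emphasizing is that lifting the convergence from $\GdmodGamma_0$ to $G_d$ requires $\Gamma_0$ (not just $\Gamma$) to act cocompactly, which is exactly where the uniform-lattice hypothesis enters.
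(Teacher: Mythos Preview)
Your proof is correct and essentially identical to the paper's: both pick $\eta_j\in\Gamma_0$ with $a_{t_j}g\eta_j\to h_0$, invoke normality $\eta_j\Gamma\eta_j^{-1}=\Gamma$, and conclude that $a_{t_j}g\Gamma g^{-1}a_{-t_j}$ Chabauty-converges to $h_0\Gamma h_0^{-1}$; you simply phrase the last step element by element rather than for the whole subgroup at once. One minor quibble with your closing commentary: lifting the convergence from $\GdmodGammazero$ to $G_d$ uses only discreteness of $\Gamma_0$, not cocompactness.
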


	\begin{proof}
In the language of the Gromov-Hausdorff topology on manifolds with basepoints, this is what happens for any regular cover $\pi:M\to M_0$ of Riemannian manifolds: For any element $\gamma$ of the deck group and $x\in M$, the pair $(M,x)$ and $(M,\gamma x)$ are isometric. If $x_i\in M$ is a sequence such that $\pi(x_i)\to y\in M_0$, then up to the action of the deck group we can assume $x_i\to x$ with $\pi(x)=y$, and so $(M,x_i)$ converges to $(M,x)$. 

 In our more group-theoretic language, denote $ x=g\Gamma $ and $ \pZ{x}=g\Gamma_0 $. Since $ a_{t_j} g \Gammazero \to h_0 \Gammazero $ in $ \GdmodGammazero $, then there exists a sequence of elements $ \eta_j \in \Gammazero $ satisfying $ a_{t_j}g\eta_j \to h_0 $. This in turn implies the following geometric convergence of subgroups
		\[ a_{t_j} g \eta_j \Gamma \eta^{-1}_j g^{-1} a_{-t_j} \to h_0 \Gamma h_0^{-1}. \]
		But as $ \Gamma \lhd \Gammazero $ we have $ \eta_j \Gamma \eta^{-1}_j = \Gamma $, implying that $ h_0\Gamma h_0^{-1} $ is a geometric limit of $ \{a_{t_j} g \Gamma g^{-1} a_{-t_j}\}_{j \geq 1} $ and hence contained in $ \Xi_x $.
	\end{proof}

	In certain cases, one geometric limit completely determines the semigroup:
	
	\begin{prop}\label{prop:lift of recurrent point has Delta=delta of conjugate}
		If $ \pZ{x} $ is an accumulation point of the geodesic ray $ (a_t \pZ{x})_{t \geq 0} $ in $ \GdmodGammazero $, then
		$\overline{\delta(g\Gamma g^{-1})} = \Delta_x$.
	\end{prop}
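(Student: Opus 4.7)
The plan is to simply chain together the two preceding lemmas, \Cref{lemma:stabilizer of accum point downstairs is a geom limit} and \Cref{lemma_basepoint_conj_in_Xi_determines_Delta}, with the observation that the hypothesis on $p_\Z(x)$ provides exactly the input needed by the former with $h_0 = g$.

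More precisely, write $x = g\Gamma$, so $p_\Z(x) = g\Gamma_0$. By hypothesis, there exists a sequence $t_j \to \infty$ with $a_{t_j} g\Gamma_0 \to g\Gamma_0$ in $G_d/\Gamma_0$. Applying \Cref{lemma:stabilizer of accum point downstairs is a geom limit} with $h_0 = g$, we deduce
\[ g\Gamma g^{-1} \subseteq \Xi_x. \]
This is precisely the hypothesis of \Cref{lemma_basepoint_conj_in_Xi_determines_Delta}, which then yields $\overline{\delta(g\Gamma g^{-1})} = \Delta_x$.

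There is no real obstacle here; the proposition is essentially a direct consequence of the two earlier lemmas, and the only content is recognizing that ``accumulation of $p_\Z(x)$ on itself'' is exactly the situation needed to make the geometric limit argument return $g\Gamma g^{-1}$ itself (rather than some conjugate $h_0 \Gamma h_0^{-1}$), which is what lets \Cref{lemma_basepoint_conj_in_Xi_determines_Delta} apply. Accordingly the write-up should be at most a few lines.
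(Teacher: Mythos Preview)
Your proposal is correct and matches the paper's own proof essentially verbatim: the paper simply writes that \Cref{lemma:stabilizer of accum point downstairs is a geom limit} together with \Cref{lemma_basepoint_conj_in_Xi_determines_Delta} imply the claim, and you have spelled out exactly how (apply the former with $h_0=g$ to get $g\Gamma g^{-1}\subset\Xi_x$, then invoke the latter).
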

	
	\begin{proof}
		\Cref{lemma:stabilizer of accum point downstairs is a geom limit} together with \Cref{lemma_basepoint_conj_in_Xi_determines_Delta} imply the claim.
	\end{proof}
	
	Note that the conditions of this proposition are satisfied for example whenever $ \pZ{x} $ lies on a closed geodesic in $ \GdmodGammazero $. Recall the definitions and notations from \cref{Subsec:Tight map summary of notations}.
	\medskip
	
	\begin{prop}\label{cor:minimal components have constant Deltas}
		If $ \mu $ is a minimal component of $ \lambda_0 $ and
		\[ \L_\mu=p_\Z^{-1}(p_K^{-1}(\mu)) \cap \L \]
		is the set of tangent frames to its lift in the $ \Z $-cover, then the function $ x \mapsto \Delta_x $ is constant on $ \L_\mu $.
	\end{prop}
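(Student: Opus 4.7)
The plan is to fix $x, y \in \L_\mu$ facing the same end of $\Sigma$ (say the positive one) and show $\Delta_x = \Delta_y$; by symmetry it suffices to prove $\Delta_y \subseteq \Delta_x$. Both ingredients I need are supplied by the minimality of $\mu$: (a) the forward $A$-orbit of $p_\Z(x)$ in $\GdmodGammazero$ accumulates onto $p_\Z(y)$, and (b) $p_\Z(y)$ is itself an accumulation point of its own forward $A$-orbit in $\GdmodGammazero$.

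First I would establish (a). Since $x$ and $y$ face the same end, the projections $p_\Z(x), p_\Z(y) \in p_K^{-1}(\mu) \subset \GdmodGammazero$ determine the same orientation of leaves of $\mu$, and minimality of $\mu$ as a lamination (density of each of its leaves in $\mu$) is equivalent, in dimension $2$, to minimality of the forward $A$-flow on the corresponding oriented component of the tangent bundle $T^1\mu^+ \subset T^1\Sigmazero$. Consequently there is a sequence $t_j \to \infty$ with $a_{t_j} p_\Z(x) \to p_\Z(y)$.

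With this sequence in hand, \Cref{lemma:stabilizer of accum point downstairs is a geom limit} provides a lift $h_0 \in G_d$ of $p_\Z(y)$ with $h_0 \Gamma h_0^{-1} \subseteq \Xi_x$; normality of $\Gamma$ in $\Gammazero$ lets me further arrange $y = h_0 \Gamma$. Assertion (b), which is immediate from the same minimality argument applied to $y$ alone, allows me to invoke \Cref{prop:lift of recurrent point has Delta=delta of conjugate} at $y$ to obtain
\[
\Delta_y \;=\; \overline{\delta(h_0 \Gamma h_0^{-1})}.
\]
Combining with $h_0 \Gamma h_0^{-1} \subseteq \Xi_x$, with \Cref{lemma_delta_Xi_in_Delta}, and with the closedness of $\Delta_x$ in $MA$, I get
\[
\Delta_y \;=\; \overline{\delta(h_0 \Gamma h_0^{-1})} \;\subseteq\; \overline{\delta(\Xi_x)} \;\subseteq\; \Delta_x.
\]
Exchanging the roles of $x$ and $y$ produces the reverse inclusion, so $\Delta_x = \Delta_y$.

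The main obstacle I anticipate is step (a): promoting density of leaves of $\mu$ in $\Sigmazero$ to density of forward $A$-orbits in the (unit tangent or) frame bundle. This is folklore in dimension $2$, but in dimension $d \geq 3$ the nontrivial $M$-action on frames causes genuine difficulty, and the statement likely needs to be reinterpreted as constancy of $\Delta_x$ up to an appropriate $M$-conjugation. The remaining ingredients are essentially bookkeeping with the geometric-limit / recurrence-semigroup machinery developed earlier in the section.
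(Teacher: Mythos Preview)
Your proposal is correct and follows essentially the same route as the paper: minimality of $\mu$ gives that the forward $A$-orbit of $p_\Z(x)$ accumulates onto $p_\Z(y)$, so $h_0\Gamma h_0^{-1}\subset \Xi_x$ and hence $\delta(h_0\Gamma h_0^{-1})\subset \Delta_x$; self-recurrence of $p_\Z(y)$ together with \Cref{prop:lift of recurrent point has Delta=delta of conjugate} identifies $\Delta_y=\overline{\delta(h_0\Gamma h_0^{-1})}$, and one concludes by symmetry. Your explicit care about $x,y$ facing the same end and your caveat on the $M$-action for $d\ge 3$ are slightly more scrupulous than the paper's own presentation, which tacitly works within one orientation class.
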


	\begin{proof}
		Let $ g_1\Gamma, g_2\Gamma \in \L_\mu $ be any two points. By definition, since the geodesic flow along $ \mu $ is minimal, the ray $ (a_tg_1 \Gammazero)_{t \geq 0} $ accumulates onto $ g_2 \Gammazero $. Hence $ g_2 \Gamma g_2^{-1} \subset \Xi_{g_1 \Gamma} $ and by \cref{lemma_delta of geometric limit in Delta}
		\[ \delta(g_2 \Gamma g_2^{-1}) \subset \Delta_{g_1 \Gamma}. \]
		
		On the other hand, the ray $ (a_tg_2 \Gammazero)_{t \geq 0} $ also accumulates onto $ g_2 \Gammazero $ and hence by \Cref{prop:lift of recurrent point has Delta=delta of conjugate}
		we get
		\[ \Delta_{g_2 \Gamma} = \overline{\delta(g_2 \Gamma g_2^{-1})}. \]
		Since $ \Delta_{g_1 \Gamma} $ is closed we deduce $ \Delta_{g_2 \Gamma} \subseteq \Delta_{g_1 \Gamma} $. By symmetry, the claim follows.
	\end{proof}
	\medskip
	
	Let us now establish a few basic properties of these semigroups:
	\begin{prop}\label{prop:covers_have_Delta_non-trivial}
		The semigroup $ \Delta_x $ is non-compact for all $ x \in \GdmodGamma $.
	\end{prop}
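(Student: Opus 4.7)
The plan is to split into cases on whether $x$ is quasi-minimizing. If $x=g\Gamma$ is not quasi-minimizing, then by \Cref{lemma:nonhoro-quasimini} its forward endpoint $g^+$ is horospherical; since $\Gamma\lhd\Gamma_0$ with $\Gamma_0$ a uniform lattice, $\Gamma$ is Zariski-dense in $G_d$ and shares with $\Gamma_0$ the full limit set $S^{d-1}$, so the Eberlein--Dal'bo--Maucourant--Schapira theorem gives $\overline{Nx}=\mathcal{E}_\Gamma=\GdmodGamma$, whence $\Delta_x=MA$, which is non-compact.

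The substantive case is $x=g\Gamma\in\Qm$. I would first extract an accumulation point: since $\GdmodGamma_0$ is compact, the projected ray $(a_t p_\Z(x))_{t\geq 0}$ has some accumulation point $h_0\Gamma_0$, and \Cref{lemma:stabilizer of accum point downstairs is a geom limit} then places $h_0\Gamma h_0^{-1}$ inside $\Xi_x$. Next I would exploit Zariski density. The conjugate $h_0\Gamma h_0^{-1}$ is Zariski-dense in $G_d$, and $NMAU\smallsetminus NMU$ is a non-empty Zariski-open subset of $G_d$ (its complement is a proper algebraic subvariety of smaller dimension, using that $N\times MA\times U\to NMAU$ is an isomorphism of varieties, see \Cref{subsec:bruhat}). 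Hence there exists an element
\[
\theta\in h_0\Gamma h_0^{-1}\cap(NMAU\smallsetminus NMU),
\]
and its projection $\delta(\theta)=m_0 a_{t_0}$ satisfies $t_0\neq 0$. By \Cref{lemma_delta_Xi_in_Delta}, $m_0 a_{t_0}\in\Delta_x$.

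To close the argument I would combine two earlier ingredients: \Cref{lemma_positive_recurrence_semigp} forces $\Delta_x\subseteq MA_{\geq 0}$, so $t_0>0$; and the semigroup property produces the unbounded sequence $(m_0 a_{t_0})^n=m_0^n a_{n t_0}\in\Delta_x$ for all $n\geq 1$, proving $\Delta_x$ is non-compact. I do not foresee any real obstacle: the only genuine content is recognizing that a single non-central element of $\Delta_x$, combined with the one-sided constraint $\Delta_x\subseteq MA_{\geq 0}$ and the semigroup property, upgrades automatically to an unbounded sequence in $MA$.
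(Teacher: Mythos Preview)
Your proof is correct and follows essentially the same route as the paper's: compactness of $\GdmodGamma_0$ yields an accumulation point $h_0\Gamma_0$, \Cref{lemma:stabilizer of accum point downstairs is a geom limit} places the Zariski-dense group $h_0\Gamma h_0^{-1}$ inside $\Xi_x$, and one then extracts an element of $\Delta_x$ with non-trivial $A$-component whose powers are unbounded. The paper packages this last step as \Cref{prop:Z-dense geometric limit implies Delta non-compact} and does not split into cases; your appeal to \Cref{lemma_positive_recurrence_semigp} to force $t_0>0$ is unnecessary, since any $m_0 a_{t_0}\in\Delta_x$ with $t_0\neq 0$ already generates an unbounded subsemigroup regardless of the sign of $t_0$.
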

	
	\begin{proof}
		Since $ \GdmodGamma_0 $ is compact, for any $ x \in \GdmodGamma $ there exists a point $ h_0\Gamma_0 $ which is an accumulation point of $ (a_t \pZ{x})_{t \geq 0} $. This implies that there exists a Zariski dense discrete subgroup $ h_0 \Gamma h_0^{-1} $ contained in $ \Xi_x $. By \cref{prop:Z-dense geometric limit implies Delta non-compact} the claim follows.
	\end{proof}
	
	\begin{rmk}
		We deduce in particular that the horospherical flow on a regular cover of a compact hyperbolic $ d $-manifold has no minimal components. Indeed, given $ x \in \GdmodGamma $ the set $ F=\overline{Nx} $ is not $ N $-minimal. If $ F=\GdmodGamma $ then the existence of a non-horospherical limit point in a geometrically infinite manifold implies that there exists some $ y \in F $ with $ \overline{Ny} \neq F $. Otherwise, by \cref{lemma_positive_recurrence_semigp} we know that $ \Delta_x \cap MA_{<0} = \emptyset $. On the other hand \cref{prop:covers_have_Delta_non-trivial} implies that $ ma_t \in \Delta_x $ for some $ m \in M $ and $ t > 0 $. Consider $ y=ma_tx \in F $. Using \cref{lemma_positive_recurrence_semigp} once more we conclude that $ x \notin \overline{Ny} $, implying that $ F $ is not minimal.
		
		One can formulate weaker conditions under which the conclusion of the corollary above holds. See \cite[Corollary 1.2]{Bellis} for a much general statement in $ d=2 $.
	\end{rmk}
	\medskip
	
	Given a set $ D \subset MA $ denote by $ D/M $ its projection onto $ A \cong MA/M $. Since $ M $ and $ A $ commute, this projection is a group homomorphism.
	\begin{prop}\label{prop:Delta_x/M non discrete}
		The $ A $-component $ \Delta_x/M $ of the recurrence semigroup is non-discrete for all $ x \in \GdmodGamma $.
	\end{prop}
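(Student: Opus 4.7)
If $x$ is not quasi-minimizing then $\overline{Nx}=\GdmodGamma$ by the Eberlein--Dal'bo--Maucourant--Schapira theorem, so $\Delta_x=MA$ and the claim is immediate. Assume therefore that $x=g\Gamma$ is quasi-minimizing; by Proposition \ref{prop:covers_have_Delta_non-trivial}, $\Delta_x$ is already non-compact, so $\Delta_x/M$ is a closed, unbounded subsemigroup of $A_{\ge 0}$ containing the identity, and the task is to rule out its being discrete.

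By compactness of $\GdmodGammazero$, I would first extract an accumulation point $y_0=h_0\Gammazero$ of the projected ray $(a_t\pZ{x})_{t\ge 0}$. Lemma \ref{lemma:stabilizer of accum point downstairs is a geom limit} places the Zariski-dense conjugate $\Theta:=h_0\Gamma h_0^{-1}$ inside $\Xi_x$, and Corollary \ref{lemma_delta_Xi_in_Delta} then upgrades this to
\[ \overline{\delta(\Theta\cap NMAU)}\subseteq\Delta_x. \]
Under a discreteness assumption on $\Delta_x/M$, the $A$-components of every such Bruhat projection would be trapped in a single discrete sublattice of $A$.

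To contradict this, I would rely on non-arithmeticity of the length spectrum of Zariski-dense discrete subgroups in rank one (Dal'bo; Kim), which furnishes two loxodromic elements $\gamma_1,\gamma_2\in\Theta$ with rationally independent translation lengths $\ell_1,\ell_2$ and whose attracting and repelling fixed points lie in general position relative to $g^\pm$. For large exponents $n_1,n_2\ge 1$ the products $\gamma_1^{n_1}\gamma_2^{n_2}$ sit in the open Bruhat cell $NMAU$ and are loxodromic of translation length $n_1\ell_1+n_2\ell_2+O(1)$. A Bruhat-decomposition calculation of the type carried out in Lemma \ref{lemma_delta of geometric limit in Delta} then identifies the $A$-part of $\delta(\gamma_1^{n_1}\gamma_2^{n_2})$ with this translation length up to a bounded correction depending only on $\gamma_1,\gamma_2$ and the basepoint $g$. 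By rational independence, these $A$-components populate $A$ densely modulo any cyclic subgroup, contradicting the discreteness hypothesis.

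The main obstacle is precisely this Bruhat-decomposition calculation for long loxodromic products: one must control how the $U$- and $M$-components of $\delta(\gamma_1^{n_1}\gamma_2^{n_2})$ evolve under taking powers. The standard tool is north--south dynamics --- high powers of a loxodromic element concentrate projectively at their attracting fixed point, so their Bruhat components asymptote to those of a purely $A$-diagonal conjugate up to bounded error, with the key point being that this bound is uniform in the exponents $n_1,n_2$ so that the $M$-factor cannot conspire to keep the projection discrete.
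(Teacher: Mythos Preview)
Your setup is fine up to the point where you place $\Theta=h_0\Gamma h_0^{-1}\subset\Xi_x$ and deduce $\delta(\Theta)\subset\Delta_x$. The gap is in the last step. You assert that because $\ell_1,\ell_2$ are rationally independent, the $A$-parts $n_1\ell_1+n_2\ell_2+O(1)$ of $\delta(\gamma_1^{n_1}\gamma_2^{n_2})$ populate $A$ densely modulo any cyclic subgroup, contradicting discreteness. Both halves of this fail. First, a closed discrete subsemigroup of $A_{\ge0}\cong\R_{\ge0}$ need not lie in a cyclic group: the multiplicative semigroup $\{\log k:k\in\Z_{>0}\}$ is closed under addition, discrete, and not contained in any lattice. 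Second, and more seriously, the ``$O(1)$'' correction, while bounded, depends on $(n_1,n_2)$ and can conspire to keep the projection on a discrete set. The paper records exactly this phenomenon in the remark following the proof: for $\Theta=\PSL_2(\Z)$ one has $\delta(\Theta)=\{a_{\log k}:k\in\Z_{>0}\}$, discrete, even though the length spectrum of $\PSL_2(\Z)$ is non-arithmetic. In that example the $(1,1)$-entry of every $\gamma_1^{n_1}\gamma_2^{n_2}$ is an integer, so the asymptotic $n_1\log\lambda_1+n_2\log\lambda_2+C+o(1)$ always snaps to $\log(\text{integer})$. Non-arithmeticity of translation lengths is therefore not enough; your proposed Bruhat calculation, even if carried out, would not yield the conclusion.

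The paper's argument supplies the missing dynamical ingredient: it chooses $h_0\Gammazero$ inside an $A$-\emph{minimal} subset of $\GdmodGammazero$, so that $a_{t_j}h_0\Gammazero\to h_0\Gammazero$ for some $t_j\to\infty$. This self-recurrence produces $\gamma_j\in\Theta$ with $a_{t_j}\gamma_j a_{-t_j}\to\gamma_0$ for a fixed $\gamma_0=n_0\ell_0u_0\in\Theta$ chosen with $n_0,u_0,\ell_0$ all nontrivial. One then compares $\delta(\gamma_j)$ with $\delta(\gamma_j\gamma_0)$: an explicit matrix computation in the standard representation of $\SO^+(d,1)$ shows $\delta(\gamma_j\gamma_0)\to\ell_0^2$ while $\delta(\gamma_j\gamma_0)/M\neq\ell_0^2/M$ for large $j$, directly exhibiting a non-isolated point in $\Delta_x/M$. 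The self-recurrence is what circumvents the $\PSL_2(\Z)$ obstruction; you should look for a way to exploit it rather than relying on length-spectrum irrationality alone.
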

	
	\begin{proof}
		Denote $ x=g\Gamma $ and consider the set of accumulation points in $ \GdmodGamma_0 $ of $ (a_t g\Gamma_0)_{t \geq 0} $. Since $ \GdmodGamma_0 $ is compact and the above set of accumulation points is closed and $ A $-invariant, it contains an $ A $-minimal subset. In particular, the ray $ (a_t g\Gamma_0)_{t \geq 0} $ accumulates onto a point $ h_0\Gamma_0 $ satisfying that
		\begin{equation}\label{eq:h_0 self recurrent in lemma that Delta is non-discrete}
			a_{t_j}h_0 \Gamma_0 \to h_0 \Gamma_0 \quad \text{for some } t_j \to \infty.
		\end{equation}
		\Cref{lemma:stabilizer of accum point downstairs is a geom limit} implies that $ h_0 \Gamma h_0^{-1} \subset \Xi_x $ and hence $ \delta(h_0 \Gamma h_0^{-1}) \subset \Delta_x $, by \Cref{lemma_delta of geometric limit in Delta}.
		We will show that the projected set $ \delta(h_0 \Gamma h_0^{-1})/M $ is non-discrete.
		\medskip
		
		The group $ h_0\Gamma h_0^{-1} $ is Zariski-dense and hence contains an element
		\[ \gamma_0 \in h_0\Gamma h_0^{-1} \cap NMAU \smallsetminus \left(NMA \cup MAU \cup NU \right). \]
		In other words, $ \gamma_0 = n_0\ell_0 u_0 $ for some $ n_0 \in N $, $ u_0 \in U $, and $ \ell_0 \in MA $, all not equal to the identity. 
		
		By \eqref{eq:h_0 self recurrent in lemma that Delta is non-discrete}, we have the following convergence
		\[ a_{t_j}h_0 \Gamma h_0^{-1}a_{-t_j} \to h_0 \Gamma h_0^{-1}, \]
		in the sense of Hausdorff convergence on compact subsets.
		In particular, there exists a sequence $ \gamma_j \in h_0 \Gamma h_0^{-1} $ satisfying
		\begin{equation}\label{eq:non-discreteness proof limit of gamma_js}
			a_{t_j}\gamma_j a_{-t_j} \to \gamma_0.
		\end{equation}
		
		The multiplication map $ N \times MA \times U \to NMAU $ is a diffeomorphism implying by \eqref{eq:non-discreteness proof limit of gamma_js} that there exist
		$ n_j \in N, \ell_j \in MA, $ and $ u_j \in U $ satisfying
		\[ a_{t_j}\gamma_j a_{-t_j} = n_j \ell_j u_j \quad \text{for all } j \]
		with
		\[ n_j \to n_0 \quad,\quad \ell_j \to \ell_0 \quad\text{and} \quad u_j \to u_0. \]
		In particular
		\[ \gamma_j = a_{-t_j}n_j\ell_j u_j a_{t_j} = (a_{-t_j}n_j a_{t_j})\ell_j (a_{-t_j}u_j a_{t_j}) \in h_0 \Gamma h_0^{-1}. \]
		
		Note that $ \delta(\gamma_j)=\ell_j $. If $ \ell_j/M \neq \ell_0/M $ for some infinite subsequence of $ j $'s, this would immediately imply the non-discreteness of $ \delta(h_0 \Gamma h_0^{-1})/M $, as claimed.
		\medskip
		
		Assume therefore the converse, that $ \ell_j/M=\ell_0/M $ for all but finitely many $ j $'s. Now consider the sequence of elements $ \gamma_j \gamma_0 \in h_0 \Gamma h_0^{-1} $. We claim that
		\begin{claim*} $ \delta(\gamma_j \gamma) \to \ell_0^2 \quad \text{but}\quad \delta(\gamma_j \gamma)/M\neq \ell_0^2/M \quad \text{for all large } j. $
		\end{claim*}
		\noindent This would conclude the proof of the proposition.
		\medskip
		
		Consider 
		\[ \gamma_j \gamma = (a_{-t_j}n_j a_{t_j})\ell_j (a_{-t_j}u_j a_{t_j}) n_0 \ell_0 u_0, \]
		and note that by the definition of the map $ \delta $ we have
		\[ \delta(\gamma_j \gamma) = \delta(\ell_j (a_{-t_j}u_j a_{t_j}) n_0 \ell_0).  \]
		As $ j \to \infty $ we have $ a_{-t_j}u_j a_{t_j} \to e $ and $ \ell_j \to \ell_0 $, hence 
		\[ \ell_j (a_{-t_j}u_j a_{t_j}) n_0 \ell_0 \to \ell_0 n_0 \ell_0 \in NMA. \]
		Since $ NMAU $ is an open set containing $ n_0 $, we are ensured that the element $ (a_{-t_j}u_j a_{t_j}) n_0 $ may be written as
		\[ (a_{-t_j}u_j a_{t_j}) n_0=n'_j \ell'_j u'_j \quad \text{for all large }j. \]
		In particular,
		\begin{align*}
		\ell_j ((a_{-t_j}u_j a_{t_j}) n_0) \ell_0 &= \ell_j  (n'_j \ell'_j u'_j) \ell_0 =\\
		&= (\ell_j n'_j \ell_j^{-1}) \ell_j \ell'_j \ell_0 (\ell_0^{-1} u'_j \ell_0).
		%& \to (\ell_0 n'_j \ell_j^{-1}) \ell_j \ell'_j \ell_0 (\ell_0^{-1} u'_j \ell_0),
		\end{align*}
		Since $ \ell_j n'_j \ell_j^{-1} \in N $ and $ \ell_0^{-1} u'_j \ell_0 \in U $ we have
		\[ \delta(\gamma_j \gamma_0)=\delta(\ell_j (a_{-t_j}u_j a_{t_j}) n_0 \ell_0)=\ell_j \ell'_j \ell_0. \]
		Since $ n'_j\ell'_ju'_j \to n_0 $, we know $ \ell'_j \to e $ implying that $ \delta(\gamma_j \gamma) \to \ell_0^2 $.
		\medskip
		
		Showing that $ \delta(\gamma_j \gamma)/M\neq \ell_0^2/M $ amounts to showing 
		\[ \ell'_j=\delta((a_{-t_j}u_j a_{t_j}) n_0) \notin M, \quad \text{for all large }j. \]
		Since $ (a_{-t_j}u_j a_{t_j}) n_0 \in NMAU $ for all large $ j $ it suffices to show that
		\[ (a_{-t_j}u_j a_{t_j}) n_0 \notin NMU, \quad \text{for all large }j. \]
		We will do so by using an explicit representation of $ G_d $.
		\medskip
		
		Let us consider the matrix representation of $ G_d=\SO^+(d,1) $ as the identity component of the group of $ (d+1)\times(d+1) $ matrices preserving the quadratic form $ -2x_0 x_d + \sum_{i=1}^{d-1} x_i^2 $. Under this representation we have the following subgroups
		\[ MA= \left\{ \begin{bmatrix}
		e^{t} & & \\ & m & \\ & & e^{-t}
		\end{bmatrix} \;:\; m \in \SO(d-1) \quad \text{and} \quad t \in \R  \right\}, \]
		\[ N=\left\{ n(s) = \begin{bmatrix}
		1 & & \\ s & I_{d-1} & \\ \frac{1}{2}\|s\|^2 & s^{t} & 1
		\end{bmatrix} \;:\; s \in \R^{d-1} \;\;\text{a column vector} \right\}, \]
		where $ \|\cdot\| $ denotes the Euclidean norm on $ \R^{d-1} $, and
		\[ U = \left\{ u(s) = n(s)^t \;:\; s \in \R^{d-1} \right\}. \]
		
		Returning to our previous notations of $ n_0 \in N $ and $ u_j \to u_0 $ in $ U $, set non-zero vectors $ s_0,r_0 $ and $ r_j $ in $ \R^{d-1} $ satisfying
		\[ n_0=n(s_0) \quad,\quad u_0=u(r_0) \quad \text{and}\quad u_j=u(r_j). \]
		Recall that
		\[ a_{-t_j}u(r_j)a_{t_j} = u(e^{-t_j}r_j) \quad \text{for all }j. \]

		Now assume in contradiction that for an unbounded sequence of $ j $'s
		\begin{equation}\label{eq: lemma non-discrete Delta - un in NMN+}
		u(e^{-t_j}r_j) n(s_0) = n(v_j) \begin{bmatrix}
		1 & & \\ & m_j & \\ & & 1
		\end{bmatrix} u(w_j) \in NMU,
		\end{equation}
		for some sequence of vectors $ v_j,w_j \in \R^{d-1} $ and matrices $ m_j \in \SO(d-1) $.
		
		The $ 11 $-coordinate of the left-hand side of equation \eqref{eq: lemma non-discrete Delta - un in NMN+} is
		\[ \left( u(e^{-t_j}r_j) n(s_0) \right)_{11} = 1 + e^{-t_j}r_j \cdot s_0 + \frac{1}{4}\|e^{-t_j}r_j\|^2 \|s_0\|^2, \]
		where $ \cdot $ denotes the regular dot product in $ \R^{d-1} $. On the other hand, the $ 11 $-coordinate of the right-hand side of the equation is $ 1 $. Hence we get
		\begin{equation}\label{eq: lemma non discrete Delta - dot product vs norms squared}
		e^{-t_j}r_j \cdot s_0 = -\frac{1}{4}\|e^{-t_j}r_j\|^2 \|s_0\|^2
		\end{equation}
		But since $ r_j \to r_0 \neq 0 $ we see that
		\[ e^{-t_j}r_j \cdot s_0 \sim e^{-t_j} \]
		while
		\[ \frac{1}{4}\|e^{-t_j}r_j\|^2 \|s_0\|^2 \sim e^{-2t_j},  \]
		making the equality in \eqref{eq: lemma non discrete Delta - dot product vs norms squared} impossible for large enough $ j $.
	\end{proof}

	\begin{remark}
		Note that one should not expect a general claim of the sort that the $ \delta $ projection of any Zariski-dense group in $ G_d $ is non-discrete. Indeed, a counter example in $ \PSL_2(\R) $ is
		\[ \delta(\PSL_2(\Z)) = \left\{\pm \begin{bmatrix}
		k & \\ & k^{-1}
		\end{bmatrix} : k \in \Z \right\} \]
		which is discrete.
	\end{remark}
	\medskip
	
	While $ \Delta_x $ in a $ \Z $-cover is never discrete, under certain geometric conditions the point $ e \in \Delta_x $ is isolated. Recall the definition of $ \L \subset \GdmodGamma $ from \Cref{Subsec:Tight map summary of notations}.
	\begin{lemma}
		For any $ \rho>0 $ there exists $ \varepsilon >0 $ such that if $ \tau(x)-\beta_+(x) < \varepsilon $ then $ (a_t x)_{t\geq 0} \subset \L^{(\rho)} $, the $ \rho $-neighborhood of $ \L $.
	\end{lemma}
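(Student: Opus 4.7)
The plan is first to reduce the lemma to a uniform statement involving only a single point: for any $\rho>0$ there exists $\varepsilon>0$ such that if $\phi(y):=\tau(y)-\beta_+(y)<\varepsilon$ then $d(y,\L)<\rho$. This reduction follows from the identity $\phi(a_ty)=(\tau(a_ty)-t)-\beta_+(y)$ (a consequence of \Cref{lemma_properties_beta}(3)) combined with the fact that $\tau(a_ty)-t$ is non-increasing in $t\ge 0$; thus $\phi(x)<\varepsilon$ forces $\phi(a_tx)<\varepsilon$ for every $t\ge 0$, and applying the reformulated statement to each $y=a_tx$ places the entire forward ray in $\L^{(\rho)}$.

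Next, I would prove the reformulated statement by contradiction. Suppose there is a sequence $y_n\in \GdmodGamma$ with $\phi(y_n)<\varepsilon_n\to 0$ and $d(y_n,\L)\ge\rho$. The definition of $\beta_+$ together with $1$-Lipschitzness of $\tau$ give the one-sided tightness estimate $0\le (\tau(y_n)+s)-\tau(a_sy_n)\le \varepsilon_n$ for every $s\ge 0$; feeding this into the definition of the local stretch function $L$ from \Cref{Section_Tight functions} yields $L(a_sy_n)\ge 1-\varepsilon_n/\delta$ for every $s\ge\delta/2$. Since $\L=p_\Z^{-1}(\L_0)$ and $p_\Z$ is a Riemannian covering, the projections $z_n:=p_\Z(y_n)\in\GdmodGammazero$ satisfy $d(z_n,\L_0)\ge\rho$, and by compactness we may pass to a subsequence $z_n\to z_\infty$ with $d(z_\infty,\L_0)\ge\rho$. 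Continuity of $L$ then gives $L(a_sz_\infty)=1$ for every $s\ge\delta/2$.

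The final step --- and the main obstacle --- is upgrading this one-sided condition on $L$ to the two-sided $A$-invariant membership $z_\infty\in\L_0$. The delicate point is that $\L_0$ is defined as the \emph{$A$-invariant} part of $L^{-1}(1)$, yet we only control $L$ on a forward half-orbit of $z_\infty$. However, $L(a_sz_\infty)=1$ for all $s\ge\delta/2$ means that $\tau_0$ is locally isometric along the forward geodesic ray emanating from $p_K(z_\infty)$ in $\Sigma_0$; by \Cref{proposition:definition of lamination L} this ray must be contained in the maximum stretch lamination $\lambda_0=p_K(\L_0)$. The structural property of geodesic laminations that a geodesic sharing a non-trivial arc with a lamination is a full bi-infinite leaf now forces the entire geodesic through $z_\infty$ --- and in particular $z_\infty$ itself --- to be tangent to a leaf of $\lambda_0$, with the positive orientation inherited from the forward stretching. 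Hence $z_\infty\in\L_0$, contradicting $d(z_\infty,\L_0)\ge\rho>0$ and completing the proof.
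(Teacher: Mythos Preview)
Your argument is essentially the paper's, repackaged as a contradiction argument via the function $L$. The paper's proof is more direct: it notes that $\varphi_+=\tau-\beta_+$ is lower semicontinuous (since $\beta_+$ is upper semicontinuous), so the sets $F_\eta=p_\Z\bigl(\varphi_+^{-1}([0,\eta])\bigr)$ form a nested family of compacta in $\GdmodGammazero$ with $\bigcap_\eta F_\eta=p_\Z\bigl(\varphi_+^{-1}(0)\bigr)$; invoking $\varphi_+^{-1}(0)\subset\L$ together with a standard compactness argument gives $F_\varepsilon\subset\L_0^{(\rho)}$ for small $\varepsilon$, and the monotonicity of $\varphi_+$ along forward orbits (your first paragraph) finishes. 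Your limit point $z_\infty$ with $L(a_sz_\infty)=1$ for all $s\ge\delta/2$ is exactly a point of $p_\Z\bigl(\varphi_+^{-1}(0)\bigr)$, so the two arguments reduce to the same crux.

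On the crux itself (which you rightly flag as the main obstacle): your appeal to \Cref{proposition:definition of lamination L} to place the forward-isometric ray inside $p_K(\L_0)$ is not justified by that proposition's \emph{statement}, which only says $p_K(\L_0)$ is a lamination. The crossing argument in its proof does rule out the ray meeting a leaf transversely, but it does not by itself exclude the ray spiraling onto a boundary leaf from a complementary region --- and in that scenario the ambient geodesic shares no arc with the lamination, so your structural remark about shared arcs does not bite. The paper's own proof handles this same point by simply asserting ``$\beta_+(y)=\tau(y)$ if and only if $y\in\L$'' as a recall, so you are on the same footing as the paper; but the additional justification you attempt is incomplete as written. (A minor notational slip: in the paper $\lambda_0$ denotes the canonical lamination $p_K(p_\Z(\Qm_\omega))$, which is contained in but in general not equal to $p_K(\L_0)$.)
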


	\begin{proof}
		Consider the function $ \varphi_+(y)=\tau(y)-\beta_+(y) \geq 0 $. Recall that $ \beta_+(y) \leq \tau(y) $ for all $ y \in \GmodGamma $ and that $ \beta_+(y)=\tau(y) $ if and only if $ y \in \L $. In particular, $ \varphi_+^{-1}(0)=\L $. The function $ \varphi_+ $ roughly measures how much ``spare time'' the trajectory $ (a_t y)_{t\geq 0} $ has to waste during its voyage into the positive end of $ \GmodGamma $.
		Since $ \beta_+ $ is upper semi-continuous, $ \varphi_+ $ is lower semi-continuous. Therefore the set $ \varphi_+^{-1}([0,\eta]) $ is closed for any $ \eta > 0 $, and
		\[ F_{\eta}=p_\Z \left( \varphi_+^{-1}([0,\eta]) \right) \]
		is compact (the function $ \varphi_+ $ is invariant under the deck transformation because $ \beta_+(k.y)=(\tau(k.y)-\tau(y))+\beta_+(y) $). 
		
		Since
		\[ \bigcap_{\eta > 0} F_{\eta} = p_\Z(\varphi_+^{-1}(0))=p_\Z(\L)=\L_0 \]
		we conclude that for any $\rho>0$ there exists  $ \varepsilon>0 $ for which 
		$ F_\varepsilon \subset \L_0^{(\rho)} $, implying
		\[ \varphi_+^{-1}([0,\varepsilon]) \subset \L^{(\rho)}. \]
		
		The function $ \varphi_+(a_t x) $ is monotonically decreasing in $ t $, for all $ x \in \GmodGamma $ by the $ 1 $-Lipschitz of $ \tau $ and item (2) of \cref{lemma_properties_beta}. Therefore if the point $ x $ satisfies $ \tau(x)-\beta_+(x)<\varepsilon $ then the entire ray admits $ (a_t x)_{t \geq 0} \subset F_\varepsilon \subset \L^{(\rho)} $, as claimed.
	\end{proof}
	
	\begin{cor}\label{cor:isolated leaf implies isolated e in Delta}
		If $ x $ lies on a uniformly isolated ray in $ \L $, that is
		\[ (Ax)^{(\rho)} \cap \L=Ax \quad \text{ for some } \rho > 0, \]
		then $ p_K(Nx) $ does not accumulate onto $ p_K(x) $ in $ \Sigma $. In particular, there exists an $ \varepsilon > 0 $ with $ \Delta_x \cap MA_{[0,\varepsilon)}=\{e\} $.
	\end{cor}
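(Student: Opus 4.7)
I would prove the main assertion --- that $p_K(Nx)$ does not accumulate onto $p_K(x)$ non-trivially --- by contradiction, and then deduce the ``in particular'' claim from it. Choose $\rho > 0$ small enough that $(Ax)^{(\rho)} \cap \L = Ax$ and also that $3\rho$ is smaller than the systole $\ell_0$ of $\Sigma_0$. Apply the preceding lemma with this $\rho$ to obtain $\varepsilon > 0$ such that $\tau(y) - \beta_+(y) < \varepsilon$ implies $(a_s y)_{s \geq 0} \subset \L^{(\rho)}$. For clarity I restrict to $d=2$; the higher-dimensional version is proved analogously after replacing $Ax$ by $MAx$ in the isolation hypothesis.

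Suppose there exist $n_j \in N$ with $|n_j|_N \to \infty$ (in a fixed norm) and $d_\Sigma(p_K(n_j x), p_K(x)) \to 0$. Since $\tau$ is $1$-Lipschitz on $\Sigma$, $\tau(n_j x) \to \tau(x)$; combined with the $N$-invariance of $\beta_+$ (giving $\beta_+(n_j x) = \beta_+(x) = \tau(x)$ since $x \in \L$), we obtain $\tau(n_j x) - \beta_+(n_j x) \to 0$, and the preceding lemma forces $n_j x \in \L^{(\rho)}$ for $j$ large. Pass to a subsequence so $n_j x \to y_\infty \in \GdmodGamma$ with $p_K(y_\infty) = p_K(x)$. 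Upper semicontinuity of $\beta_+$ and continuity of $\tau$ give $\beta_+(y_\infty) = \tau(y_\infty) = \tau(x)$, so the forward geodesic from $y_\infty$ is $\tau$-maximizing. At $p_K(x) \in \lambda_0$ the unique forward $\tau$-maximizing direction is the forward leaf direction; since each $n_j x \in \L_+$ the limit $y_\infty$ also faces the positive end. In dimension two this forces $y_\infty = x$.

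Now select $y_j' \in \L$ with $d(n_j x, y_j') < \rho$; the triangle inequality yields $d(y_j', x) < 2\rho < 3\rho$ for $j$ large, so $y_j' \in (Ax)^{(\rho)} \cap \L = Ax$, giving $y_j' = a_{\sigma_j} x$ with $\sigma_j \to 0$. Lifting to $G_d$, there exists $\gamma_j \in \Gamma$ with $n_j g \gamma_j = a_{\sigma_j} g \cdot h_j$ and $\|h_j\|_{G_d} < 2\rho$, so $\gamma_j = g^{-1}(n_j^{-1} a_{\sigma_j}) g \cdot h_j$. The element $n_j^{-1} a_{\sigma_j} \in NA$ is loxodromic of translation length $|\sigma_j|$ (e.g.\ in $\PSL_2(\R)$ its trace equals $2\cosh(\sigma_j/2)$, and the analogous statement holds in higher dimensions), and conjugation preserves translation length. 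The standard displacement estimate $\ell(gh) \le \ell(g) + O(\|h\|_{G_d})$ then yields $\ell(\gamma_j) \le |\sigma_j| + O(\rho) < \ell_0$ for $j$ large. Since $\gamma_j \ne e$ (else $n_j = a_{\sigma_j} g h_j g^{-1}$ would be bounded, contradicting $|n_j|_N \to \infty$), this violates the systole bound for $\Gamma \subseteq \Gamma_0$.

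For the ``in particular'' claim, suppose $\ell_j \in \Delta_x \setminus \{e\}$ with $\ell_j \to e$ in $MA_{\ge 0}$. Torsion-freeness and uniformity of $\Gamma_0$ ensure $g\Gamma g^{-1}$ contains no parabolic elements, so $\ell_j x \ne nx$ for any bounded $n \in N$. A diagonal extraction then produces $n_j \in N$ with $|n_j|_N \to \infty$ and $d(n_j x, \ell_j x) \to 0$, so $n_j x \to x$ in $\GdmodGamma$, hence $p_K(n_j x) \to p_K(x)$ with $|n_j|_N \to \infty$ --- contradicting the main assertion. The most delicate step above is establishing $y_\infty = x$; it requires carefully combining the semicontinuity of $\beta_+$, the uniqueness of $\tau$-maximizing directions along $\lambda_0$, and the facing-positive-end constraint inherited from the $N$-invariance of $\beta_+$.
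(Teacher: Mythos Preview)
Your argument has a genuine gap at the ``standard displacement estimate'' $\ell(gh) \le \ell(g) + O(\|h\|_{G_d})$. No such estimate holds with a uniform implied constant: the translation length of a product depends on where the axis of $g$ sits, not just on $\ell(g)$ and $\|h\|$. In your situation $A_j := g^{-1} n_j^{-1} a_{\sigma_j} g$ has translation length $|\sigma_j|\to 0$, but since $|n_j|_N\to\infty$ the axis of $n_j^{-1}a_{\sigma_j}$ (and hence of $A_j$) escapes to the boundary of $\bH^d$. For a point $p_0$ on this axis one has $d(p_0,A_jp_0)=|\sigma_j|$, but $d(p_0,h_jp_0)$ is typically of order $e^{d(p_0,o)}\|h_j\|$ and blows up; at points near the basepoint the roles reverse. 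Concretely in $\PSL_2(\R)$, a matrix of the form $\begin{pmatrix} e^{\sigma/2} & 0\\ -s\,e^{\sigma/2} & e^{-\sigma/2}\end{pmatrix}$ with $s$ large has trace $2\cosh(\sigma/2)$, but right-multiplying by $I+\epsilon E$ perturbs the trace by a term of order $\epsilon s$, which need not be small. So nothing prevents $\ell(\gamma_j)\ge \ell_0$, and the contradiction does not follow.

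The underlying issue is that knowing $d_{\GdmodGamma}(n_jx,a_{\sigma_j}x)<\rho$ only says that $n_j g$ is $\rho$-close to \emph{some} $\Gamma$-translate $a_{\sigma_j} g\gamma_j^{-1}$; you never rule out $\gamma_j\ne e$. The paper closes this gap by using not just the single time $t=0$ but the entire forward ray: since $(a_t n_j x)_{t\ge 0}\subset \L^{(\rho)}$ and (with $\rho<\tfrac12 d(Ax,\L\smallsetminus Ax)$) one has a disjoint decomposition $\L^{(\rho)}=(Ax)^{(\rho)}\sqcup(\L\smallsetminus Ax)^{(\rho)}$, connectedness of the ray and its asymptoticity to $Ax$ force $(a_t n_j x)_{t\ge 0}\subset (Ax)^{(\rho)}$. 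Lifting and using that $\rho$ is below half the injectivity radius, the ray $(a_t n_j\tilde x)_{t\ge 0}$ stays in the $\rho$-tube of the \emph{specific} lift $A\tilde x$; in particular $n_j\tilde x$ lies in that tube, which directly bounds $\|n_j\|_N$ and contradicts $|n_j|_N\to\infty$. Your steps up through $y_\infty=x$ and $y_j'\in Ax$ (modulo replacing $\rho$ by $\rho/2$ so that $d(y_j',x)<2\rho$ really lands in the isolation neighborhood) are fine, but to finish you must invoke this full-ray argument rather than a translation-length bound.
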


	\begin{proof}
		Assume without loss of generality that $ \rho $ is smaller than half the infimal injectivity radius in $ \GdmodGamma $ and
		\[ \rho < \frac{1}{2}d_{\GdmodGamma}(Ax,\L \smallsetminus Ax). \]
		Let $ \varepsilon > 0 $ be the constant corresponding to $ \rho $ as per the previous lemma.
		
		Now assume in contradiction that $ Nx $ accumulates onto $ x' \in \GdmodGamma $ where $ p_K(x')=p_K(x) $. Recall that $ \beta_+(x)=\tau(x) $, since $ x \in \L $, and that $ \beta_+(nx)=\beta_+(x) $ for all $ n \in N $. Furthermore, since $ \tau $ is continuous, if $ n_j x \to x' $ then $ \tau(n_j x) \to \tau(x') $. Since $ \tau $ depends only on basepoints we further have $ \tau(x')=\tau(x) $. This in turn implies that for all large $ j $ we have $ \tau(n_j x) - \beta_+(n_j x) < \varepsilon $. By the previous lemma, we conclude that $ (a_t n_j x)_{t \geq 0} $ is contained in $ (Ax)^{(\rho)} $. But $ (a_t n_j x)_{t \geq 0} $ and $ (a_t x)_{t \geq 0} $ are asymptotic and $ \rho $ is smaller than half the injectivity radius, implying that $ n_j x=n' x $ for some small $ n' \in N $, with $ \|n\|<\rho $. In other words, $ \Gamma $ contains a parabolic element, in contradiction.
	\end{proof}

	\subsection{The case of a single bi-minimizing line}
	We will end this section with a description of the horocycle orbit closures in surfaces such as the ones constructed in \Cref{Subsection:single bi-minimizing ray}.
	
	\begin{theorem}
		Let $ \Sigma = \GmodGamma $ be a $ \Z $-cover of a compact hyperbolic surface $ \Sigma_0 $, which contains a single bi-minimizing geodesic as its lamination $ \lambda $ and where $ \lambda_0 $ is a closed geodesic in $ \Sigma_0 $. Let $ y_0=h_0 \Gamma \in \L=T^1\lambda $ be any bi-minimizing point, then
		\[ \overline{N y_0} = N \Delta y_0 \]
		where $ \Delta = \overline{\delta(h_0\Gamma h_0^{-1})} $, is non-trivial and non-discrete and the horocycle $ p_K(Ny_0) \subset \Sigma $ does not accumulate onto $ p_K(y_0) $.
		
		Furthermore, all non-maximal horocycle orbit closures in $ \GmodGamma $ are $ A $-translates of $ \overline{N y_0} $ or $ \overline{N \omega y_0} $, where $ \omega y_0 $ is the opposite vector to $ y_0 $ in $ T^1\lambda $.
	\end{theorem}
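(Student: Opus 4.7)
The identification of $\Delta$ is immediate from the theory of \Cref{sec:semigroup}. Since $\lambda_0$ is closed, $\pZ{y_0}$ is periodic under the $A$-flow on $\GmodGammazero$, so \Cref{prop:lift of recurrent point has Delta=delta of conjugate} yields $\Delta_{y_0} = \overline{\delta(h_0\Gamma h_0^{-1})}$. Non-compactness and non-discreteness follow respectively from \Cref{prop:covers_have_Delta_non-trivial,prop:Delta_x/M non discrete}. Because $\L = T^1\lambda = Ay_0 \cup A\omega y_0$, the forward orbit $Ay_0$ is uniformly isolated in $\L$, so \Cref{cor:isolated leaf implies isolated e in Delta} yields that $p_K(Ny_0)$ does not accumulate onto $p_K(y_0)$ and that $e$ is isolated in $\Delta$.

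The heart of the theorem is the identity $\overline{Ny_0} = N\Delta y_0$. The inclusion $\supseteq$ is immediate from the definition of $\Delta$ and the $N$-invariance of the closure. For $\subseteq$, fix $z \in \overline{Ny_0}$. By \Cref{cor:Nx contained in beta+(x) to infinity}, $z \in \mathcal{H}_+(y_0)$ and hence $(a_t z)_{t \ge 0}$ is quasi-minimizing and escapes to the positive end. The hypothesis that $\Qm_\omega = T^1\lambda$ consists of a single bi-minimizing $A$-orbit forces $(a_t z)$ to approach $Ay_0$ in $\GmodGamma$: for large $t$, there is $s(t)$ with $d_{\GmodGamma}(a_t z, a_{s(t)} y_0) \to 0$. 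The $1$-Lipschitz property of $\tau$ combined with the definition of $\beta_+$ forces $s(t) - t \to s_\infty := \beta_+(z) - \tau(y_0)$, and because the two rays share a forward endpoint in $\bH^2$ (after a compatible choice of lifts) the distance decays exponentially in $t$.

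The Bruhat pullback trick of \Cref{subsec:bruhat} now does the job. Writing $a_t h_z = \epsilon_t \cdot a_{s(t)} h_0 \cdot \gamma(t)$ with $\gamma(t) \in \Gamma$ and $\|\epsilon_t\|_G = O(e^{-t})$, and decomposing $\epsilon_t = n^{(1)}_t a_{\alpha_t} u_t$ via Bruhat, we pull back by $a_{-t}$:
\[ h_z \gamma(t)^{-1} = \bigl(a_{-t} n^{(1)}_t a_t\bigr) \cdot a_{\alpha_t} \cdot \bigl(a_{-t} u_t a_t\bigr) \cdot a_{s(t) - t} \cdot h_0. \]
The $U$-factor shrinks to $e$ (since $U$ is unstable for $a_{-t}$), $a_{\alpha_t} \to e$, and $a_{s(t) - t} \to a_{s_\infty}$. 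The $N$-factor $\bigl(a_{-t} n^{(1)}_t a_t\bigr)$ remains bounded because the exponential expansion from conjugation by $a_{-t}$ is exactly balanced by the exponential decay of $\epsilon_t$. Passing to a subsequence $t_k \to \infty$, we extract $\bigl(a_{-t_k} n^{(1)}_{t_k} a_{t_k}\bigr) \to n^\ast \in N$, and discreteness of $\Gamma$ forces $\gamma(t_k)$ to stabilize to some $\gamma_0 \in \Gamma$. Therefore $z = n^\ast a_{s_\infty} y_0$, and $a_{s_\infty} y_0 = (n^\ast)^{-1} z \in \overline{Ny_0}$ gives $a_{s_\infty} \in \Delta$.

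For the Furthermore claim, observe that the Bruhat argument above uses only $x \in \Qm_+$, not the stronger $x \in \overline{Ny_0}$. Applying it verbatim to any $x \in \Qm_+$ shows $x = n a_s y_0$ for some $n \in N$ and $s \in \R$ (with $a_s$ not necessarily in $\Delta$). Since $A$ normalizes $N$, $\overline{Nx} = \overline{N n a_s y_0} = \overline{N a_s y_0} = a_s \overline{Ny_0}$. The analogous analysis for $x \in \Qm_-$ starting from $\omega y_0$ yields $\overline{Nx} = a_s \overline{N\omega y_0}$. By the Eberlein--Dal'bo--Maucourant--Schapira theorem, every non-maximal horocycle orbit closure arises from some $x \in \Qm_+ \cup \Qm_-$, completing the classification. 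The main technical obstacle throughout is verifying the exponential rate of forward-asymptotic approach together with the tight balance between the conjugation expansion rate on $N$ and the decay rate of $\epsilon_t$, which is precisely what makes the $N$-factor remain bounded and admit a convergent subsequence.
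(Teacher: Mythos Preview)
Your proof is correct and follows essentially the same route as the paper, which also reduces everything to the observation that a single bi-minimizing line forces $\Qm_+ = NAy_0$ (whence $\overline{Ny_0}\subset NAy_0$ gives $\overline{Ny_0}=N\Delta y_0$, and any $x\in\Qm_+$ satisfies $\overline{Nx}=a_s\overline{Ny_0}$). Note, however, that your Bruhat pullback is redundant: once you assert that the two rays share a forward endpoint in $\bH^2$ you already have $z\in NAy_0$ by definition, so the exponential-decay estimate and the balancing of the $N$-factor are reproving a fact you have just stated; the paper dispenses with all of this and simply writes $z=na_sy_0$, then reads off $a_s\in\Delta$ from $N$-invariance of the closure.
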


	\begin{proof}
		Assume without loss of generality that $ y_0 $ is facing the positive end. Since $ \L $ contains exactly one geodesic, we conclude that all quasi-minimizing rays in $ \Qm_+ $ are asymptotic to this unique bi-minimizing trajectory. In other words, $ \Qm_+ = NAy_0 $ for the point $ y_0 $ as described above. Therefore
		\[ \overline{Ny_0} = N\Delta_{y_0}y_0. \]
		As $ y_0 $ is a lift of a point on a closed geodesic in $ p_\Z (G / \Gamma) $, we have by \cref{prop:lift of recurrent point has Delta=delta of conjugate} that
		\[ \Delta_{y_0} = \overline{\delta(h_0\Gamma h_0^{-1})}. \]
		In addition, since $ \L $ contains a single, clearly isolated, leaf we deduce from \cref{cor:isolated leaf implies isolated e in Delta} that $ p_K(Ny_0) $ does not accumulate onto $ p_K(y_0) $. 
		
		Now given any $ x \in \Qm_+ $, we have $ Nx \cap Ay_0 = \{ a_b y_0 \} $ with
		\[ b = \beta_+(x) -\beta_+(y_0),  \] 
		by \Cref{lemma_properties_beta}. Therefore $ \overline{Nx}=\overline{Na_b y_0}=a_b \overline{Ny_0} $. An analogous claim holds for $ \Qm_- $ with respect to $ \omega y_0 $.
	\end{proof}
	
	\section{The proximality relation}
	In the previous section we have studied the intersection of horospherical orbit closures with the geodesic from which they emanate. In this section we study how the horospherical orbit accumulates onto other geodesic rays, showing how this is affected by the way the associated quasi-minimizing rays ``fellow travel''. 
	\medskip
	
	Let $ \Gamma'<G_d $ be any Zariski-dense discrete group. The notion of proximality is standard in the theory of topological dynamical systems. We recall its definition in our context:
	\begin{definition}
		Two points $ x,y \in \GdmodGamma' $ are called \emph{proximal} if 
		\[ \liminf_{t \to \infty} d(a_t x, a_t y) = 0. \]
	\end{definition}
	
	For example, any two asymptotic points are proximal. On the other hand, for any $ x \in \Qm $, the points $ x $ and $ a_sx $ are \emph{not} proximal for any $ s \neq 0 $.	
	
	We begin with the following proposition:
	\begin{prop}\label{prop:fellow travelers in horo closure}
		Let $ \Gamma' < G_d $ be any discrete group and let $ x $ and $y $ be two quasi-minimizing points in $ \GdmodGamma' $ satisfying
		\[ D=\liminf_{t \to \infty} d_{\GdmodGamma'}(a_tx,a_ty) < \infty. \]
		Then there exists $ \ell_0 \in MA $ for which $ \ell_0 y \in \overline{Nx} $. Moreover, if $ D = 0 $, that is if $ x $ and $ y $ are proximal, then $ y \in \overline{Nx} $.
	\end{prop}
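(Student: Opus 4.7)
The plan is to apply the Bruhat-decomposition trick described in \Cref{subsec:bruhat} and illustrated in \Cref{fig:bruhat-trick}. The hypothesis provides a sequence of times $t_j \to \infty$ along which suitable lifts of $a_{t_j}x$ and $a_{t_j}y$ to $G_d$ stay uniformly close; the Bruhat decomposition of the ``small'' difference between these lifts should yield the desired element $\ell_0 \in MA$ with $\ell_0 y \in \overline{Nx}$.

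First I fix lifts $\tilde x,\tilde y \in G_d$ of $x,y$ and, realizing the liminf, choose $t_j \to \infty$ together with $\gamma_j \in \Gamma'$ so that $d_{G_d}(a_{t_j}\tilde x\gamma_j,\,a_{t_j}\tilde y) \to D$ (tending to $0$ in the proximal case). Setting
\[\theta_j = (a_{t_j}\tilde x \gamma_j)(a_{t_j}\tilde y)^{-1} \in G_d,\]
right-invariance of the metric on $G_d$ gives $\|\theta_j\|_{G_d} = d_{G_d}(a_{t_j}\tilde x\gamma_j,\, a_{t_j}\tilde y) \to D$, so the $\theta_j$ stay in a compact ball. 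After passing to a subsequence, $\theta_j \to \theta_0$ with $\|\theta_0\|_{G_d} \leq D$, and $\theta_0 = e$ in the proximal case.

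Assume first $\theta_0 \in NMAU$. Openness of $NMAU$ and the fact that the multiplication map $N \times MA \times U \to NMAU$ is a diffeomorphism let me write $\theta_j = n_j\ell_j u_j$ for all large $j$, with $n_j \to n_0$, $\ell_j \to \ell_0 \in MA$, $u_j \to u_0$. Substituting into the identity $\theta_j\cdot a_{t_j}\tilde y = a_{t_j}\tilde x\gamma_j$ and left-multiplying by $a_{-t_j}$ gives
\[\tilde x\gamma_j = (a_{-t_j}n_j a_{t_j})\,\ell_j\,(a_{-t_j}u_j a_{t_j})\,\tilde y = \hat n_j\,\ell_j\,\hat u_j\,\tilde y,\]
where $\hat n_j \in N$ may be unbounded while $\hat u_j = a_{-t_j}u_j a_{t_j} \in U$ satisfies $\hat u_j \to e$ because $U$ contracts under conjugation by $a_{-t_j}$. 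Rearranging and projecting to $\GdmodGamma'$ yields
\[\hat n_j^{-1} x = \ell_j\,\hat u_j\, y \xrightarrow{j\to\infty} \ell_0 y,\]
and since $\hat n_j^{-1}x \in Nx$ for every $j$, we conclude $\ell_0 y \in \overline{Nx}$. In the proximal case, $\theta_0 = e$ forces $\ell_0 = e$ directly, giving $y \in \overline{Nx}$.

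The main obstacle will be excluding the degenerate Bruhat limit $\theta_0 \in NMA\omega$, which forms a codimension-$(d{-}1)$ subvariety of $G_d$. This cannot arise in the proximal setting since $e$ lies in the open cell $NMAU$. For $D > 0$, I would handle it either by selecting the subsequence and the representatives $\gamma_j$ carefully so that the Bruhat limit is forced into the open cell $NMAU$, or by running the analogous Bruhat reduction with $\tilde y$ replaced by the reflected lift $\omega \tilde y$ (which interchanges the two Bruhat cells for the relevant pair of frames), thereby producing an $\ell_0 \in MA$ with $\ell_0 y \in \overline{Nx}$ in all cases.
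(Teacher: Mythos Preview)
Your reduction in the open Bruhat cell $NMAU$ is correct and essentially identical to the paper's: once $\theta_0\in NMAU$, the decomposition $\theta_j=n_j\ell_j u_j$ and the contraction $a_{-t_j}u_j a_{t_j}\to e$ give $\hat n_j^{-1}x\to\ell_0 y$, hence $\ell_0 y\in\overline{Nx}$, and $\ell_0=e$ when $D=0$.

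The gap is in the degenerate case $\theta_0\in NMA\omega$. Neither of your proposed fixes works. Choosing different representatives $\gamma_j'\in\Gamma'$ replaces $\theta_j$ by $\theta_j\cdot(a_{t_j}\tilde y)\gamma_j^{-1}\gamma_j'(a_{t_j}\tilde y)^{-1}$; there is no reason the conjugates $a_{t_j}\tilde y\,\Gamma'\,\tilde y^{-1}a_{-t_j}$ contain bounded nontrivial elements, let alone ones that push the limit off the thin cell. Replacing $\tilde y$ by $\omega\tilde y$ gives $\theta_j'=\theta_j\,a_{2t_j}\omega^{-1}$, which is unbounded, so no limit exists to decompose; and even formally this would at best relate $\overline{Nx}$ to $MA\omega y$, not to $MAy$.

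The point you are missing is that the quasi-minimizing hypothesis on $y$ is exactly what excludes $\theta_0\in NMA\omega$; your argument never invokes it. The paper's proof writes $\theta_0=n_0\ell_0\omega$ and uses $\omega a_L=a_{-L}\omega$ to show that for any large $L$ and suitably large $j$ one has $d(a_{t_j+L}x,\,a_{t_j-L}y)\le E+1$ for a constant $E$ independent of $L$. Combining this with $d(a_{t_{j+1}}x,a_{t_{j+1}}y)\le D+1$ and the triangle inequality bounds $d(a_{t_j-L}y,\,a_{t_{j+1}}y)$ from above by $(t_{j+1}-t_j)+D+E+2-L$, while quasi-minimality of $y$ bounds it from below by $(t_{j+1}-t_j)+L-C$. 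This forces $2L\le D+E+2+C$, contradicting the arbitrariness of $L$. You should replace your final paragraph with this argument (or an equivalent use of quasi-minimality).
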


	\begin{proof}
		Let $ x,y \in \GdmodGamma' $ and $ D \geq 0 $ be as above. By definition, there exist $ t_j \to \infty$ and $ h_j \in G_d $ satisfying
		\[ a_{t_j} x = h_j a_{t_j}y, \]
		where $ \|h_j\|_{G_d} \to D $. As all $ h_j $ are bounded we may further assume $ h_j \to h_0 $ with $ \|h_0\|_{G_d}=D $.
		
		We will later show that $ h_0 = n_0 \ell_0 u_0 \in NMA\ U $. Under this assumption the proof is quite similar to the proof of \cref{lemma_delta of geometric limit in Delta}. There exist
		\[  n_j \to n_0, \quad \ell_j \to \ell_0, \quad \text{and}\quad  u_j \to u_0 \]
		in $ N,MA $ and $ U $ respectively for which $ h_j=n_j \ell_j u_j $.
		Hence we have
		\[ a_{t_j} x = n_j \ell_j u_j a_{t_j}y \]
		for all $ j $, and consequently
		\[ (a_{-t_j}n^{-1}_j a_{t_j})x=\ell_j (a_{-t_j}u_j a_{t_j}) y \to \ell_0 y. \]
		Since $ a_{-t_j}n^{-1}_j a_{t_j} \in N $ for all $ j $ we deduce $ \ell_0 y \in \overline{Nx} $.
		
		Whenever $ D=0 $ we have $ h_0 = \ell_0 = e $ and hence $ y \in \overline{Nx} $.
		\medskip

        It is left to show $ h_0 \in NMAU $. The idea is the following, assume in contradiction that  \[
		h_0 \in G_d \smallsetminus NMAU = NMA\omega,  \]
see \eqref{eq:Bruhat decomposition of G_d}. Recall $ a_{-s} \omega = \omega a_{s} $, which
means that $h_0 a_t y$ has the property that	its forward endpoint is	equal to the backward endpoint of $y$.
Roughly this	means that if $a_tx$ is	close to $h_0 a_t y$ then, after
flowing forward	a moderately long time $L$,  $a_{t+L} x$ will be close to $a_{t-L}y$, see \Cref{fig:Fellow travelers}. On the
other hand, the	line $\{a_t y\}$ is quasi-minimizing, which means that for all times
$s>t+L$, $a_s y$ will be ahead of $a_s x$ by a margin of roughly $2L$. Choosing	$L$ large
enough relative to $D$, this contradicts the hypothesis.

\begin{figure}[ht]
	\includegraphics[scale=1]{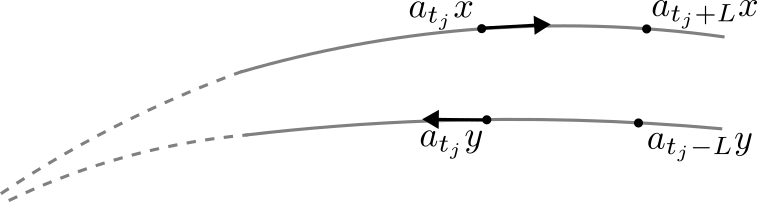}
	\caption{}
	\label{fig:Fellow travelers}
\end{figure}

  More precisely, denote $ h_0=n_0\ell_0 \omega $ with $ n_0 \in N, \ell_o \in MA $ and $ \epsilon_j = h_j h_0^{-1} \to e $. Let $ L > 0 $ be arbitrarily large, then for all sufficiently large $ j $ we have $ \|a_{L}\epsilon_j a_{-L}\| \leq 1 $. Compute
		\begin{align*}
			a_{t_j}x &= \epsilon_j n_0 \ell_0 \omega a_{t_j}y = \epsilon_j n_0 \ell_0 (\omega   a_L)  a_{t_j-L}y =\\
			&=\epsilon_j n_0 \ell_0  (a_{-L} \omega) a_{t_j-L}y =\\
			&= a_{-L} (a_{L}\epsilon_j a_{-L}) (a_{L}n_0 a_{-L}) \ell_0  \omega a_{t_j-L}y.
		\end{align*}
		Since $ n_0 \in N $ we have for $ L $ large enough
		\[ \|(a_L n_0 a_{-L})\ell_0 \omega \| \leq \|\ell_0 \omega\|+1 = E, \]
		implying for all large $ j $
		\begin{equation}\label{eq:lemma_inf distance orb closure}
			d\left(a_{t_j+L}x,a_{t_j-L}y\right) \leq \|a_{L}\epsilon_j a_{-L}\|+\|(a_L n_0 a_{-L})\ell_0 \omega \| \leq E+1.
		\end{equation}
		Fix $ j $ so large so as to satisfy both \eqref{eq:lemma_inf distance orb closure} and 
		\[ d(a_{t_{j\!+\!1}}x,a_{t_{j\!+\!1}}y) \leq D+1. \]
		Now we have
		\begin{align*}
			d(a_{t_j-L}y,a_{t_{j\!+\!1}}y) &\leq d(a_{t_j-L}y,a_{t_j+L}x) + d(a_{t_j+L}x,a_{t_{j\!+\!1}}x)\\
			&\quad + d(a_{t_{j\!+\!1}}x,a_{t_{j\!+\!1}}y) \\
			&\leq  (E+1)+(t_{j\!+\!1}-(t_j+L))+(D+1) =\\
			&= t_{j\!+\!1}-t_j +D+D'+2 - L.
		\end{align*}
		On the other hand since $ y $ is quasi-minimizing, there exists a constant $ C>0 $ for which 
		\[ t_{j\!+\!1}-(t_j-L) - C \leq d(a_{t_j-L}y,a_{t_{j\!+\!1}}y). \]
		This in turn implies
		\[ 2L \leq D+E+2+C, \]
		where all constants $ C,D,E $ are independent of $ L $, in contradiction to the fact that $ L $ was arbitrary.
	\end{proof}
	
	Symmetry of the proximality relation immediately implies the following:
	\begin{corollary}\label{cor:proximality implies identical closures}
		If $ x, y \in \GdmodGamma' $ are proximal then $ \overline{Nx}=\overline{Ny} $.
	\end{corollary}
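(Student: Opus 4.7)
The plan is to deduce this directly from the ``moreover'' clause of \Cref{prop:fellow travelers in horo closure}, using the fact that proximality is a symmetric relation. Specifically, I would first observe that since $d_{\GdmodGamma'}(a_tx,a_ty)=d_{\GdmodGamma'}(a_ty,a_tx)$, the hypothesis that $x$ and $y$ are proximal is symmetric in the two points. Note also that both $x$ and $y$ must be quasi-minimizing: if, say, $x$ were not, then $\overline{Nx}=\mathcal{E}_{\Gamma'}$ (by the Eberlein--Dal'bo--Maucourant--Schapira theorem), and the $N$-orbit of $x$ would accumulate on sequences whose forward endpoints visit horospheres arbitrarily deep, incompatible with fellow-traveling a fixed ray; alternatively one can just split into cases---if either point is non-quasi-minimizing we directly get $\overline{Nx}=\overline{Ny}=\mathcal{E}_{\Gamma'}$, see \Cref{subsec:horospherical orbit closures}.

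Assuming both $x$ and $y$ are quasi-minimizing, apply \Cref{prop:fellow travelers in horo closure} with $D=0$ to obtain $y\in\overline{Nx}$. Swapping the roles of $x$ and $y$ in the proposition gives $x\in\overline{Ny}$.

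To conclude, use that $\overline{Nx}$ is closed and $N$-invariant: from $y\in\overline{Nx}$ we get $Ny\subseteq\overline{Nx}$, hence $\overline{Ny}\subseteq\overline{Nx}$. Symmetrically, $\overline{Nx}\subseteq\overline{Ny}$, and equality follows.

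There is no real obstacle here, since all of the work is packaged into the proposition; the only subtlety worth flagging explicitly is that proximality is a symmetric relation (so both inclusions are accessible from the same statement), and that the non-quasi-minimizing case is handled trivially by the horocycle density result of Eberlein--Dal'bo--Maucourant--Schapira recalled in \Cref{subsec:horospherical orbit closures}.
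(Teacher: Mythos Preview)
Your proposal is correct and matches the paper's own argument: the paper deduces the corollary in one line from the symmetry of proximality together with the ``moreover'' clause of \Cref{prop:fellow travelers in horo closure}, exactly as you do.

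One small remark on your digression about the quasi-minimizing hypothesis. Your attempted side argument (``if either point is non-quasi-minimizing we directly get $\overline{Nx}=\overline{Ny}=\mathcal{E}_{\Gamma'}$'') has a gap: you have not explained why non-quasi-minimizing of one point forces the same for the other. In fact this case analysis is unnecessary. If you inspect the proof of \Cref{prop:fellow travelers in horo closure}, the quasi-minimizing hypothesis is used only to rule out the possibility $h_0\in NMA\omega$; when $D=0$ one has $h_0=e\in NMAU$ automatically, so the ``moreover'' conclusion $y\in\overline{Nx}$ holds with no quasi-minimizing assumption at all. Alternatively, since every application of the corollary in the paper is to points already known to lie in $\Qm$, one may simply regard the hypothesis as implicit.
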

	
	Let us return to the setting of $ \GdmodGamma $ a $ \Z $-cover of a compact hyperbolic $ d $-manifold. We draw the following conclusion:

	\begin{cor}\label{Cor:Horospherical orbit closure intersect all Q of the same end}
		If $ \GdmodGamma $ is a $ \Z $-cover of a compact hyperbolic $ d $-manifold, for any $ x,y \in \Qm_+ $ there exists $ \ell \in MA_{\geq 0} $ with $ \ell y \in \overline{Nx} $. In other words, $ \overline{Nx} $ intersects every geodesic line in $ \Qm_+ $. Similarly for $ \Qm_- $.
	\end{cor}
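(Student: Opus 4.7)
The plan is to reduce the statement to the fellow-traveling criterion of Proposition~\ref{prop:fellow travelers in horo closure} and then polish the resulting element to lie in the positive part $MA_{\geq 0}$ using the non-triviality of the recurrence semigroup (\Cref{prop:covers_have_Delta_non-trivial}).

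First I would verify that any $x,y\in\Qm_+$ fellow travel in $\GdmodGamma$, i.e., that $\liminf_{t\to\infty}d(a_tx,a_ty)<\infty$. The key inputs are (i) both $p_\Z(a_tx)$ and $p_\Z(a_ty)$ lie in the compact quotient $\GdmodGammazero$, and (ii) by \Cref{lemma:Qm=beta(R)} together with part (3) of \Cref{lemma_properties_beta}, $\tau(a_tx)-\tau(a_ty)\to\beta_+(x)-\beta_+(y)\in\R$. Passing to a subsequence $t_j\to\infty$ along which $p_\Z(a_{t_j}x)\to z_1$ and $p_\Z(a_{t_j}y)\to z_2$ in $\GdmodGammazero$, I fix lifts $\tilde z_1,\tilde z_2\in\GdmodGamma$ and choose deck translates $k_j,m_j\in\Z$ so that $k_j\cdot a_{t_j}x\to\tilde z_1$ and $m_j\cdot a_{t_j}y\to\tilde z_2$ in $\GdmodGamma$. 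The $\Z$-equivariance $\tau(k.\,\cdot\,)=\tau(\cdot)+ck$ combined with the boundedness of $\tau(a_{t_j}x)-\tau(a_{t_j}y)$ forces the integer difference $k_j-m_j$ to be eventually constant, say equal to $n_0$. Since the deck action is by isometries,
\[d(a_{t_j}x,a_{t_j}y)=d\bigl(k_j\cdot a_{t_j}x,\;n_0\cdot(m_j\cdot a_{t_j}y)\bigr)\longrightarrow d(\tilde z_1,n_0\cdot\tilde z_2)<\infty.\]

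Next, \Cref{prop:fellow travelers in horo closure} produces $\ell_0=m_0 a_{s_0}\in MA$ with $\ell_0 y\in\overline{Nx}$. If $s_0\geq 0$ we are done, so assume $s_0<0$. The point $\ell_0 y$ still lies in $\Qm_+$: by $M$-invariance of $\tau$ (hence of $\beta_+$) and part (3) of \Cref{lemma_properties_beta}, $\beta_+(\ell_0 y)=\beta_+(y)+s_0\in\R$, so \Cref{lemma:Qm=beta(R)} applies. By \Cref{lemma_positive_recurrence_semigp} and \Cref{prop:covers_have_Delta_non-trivial}, $\Delta_{\ell_0 y}$ is a non-compact closed subsemigroup of $MA_{\geq 0}$; compactness of $M$ forces its $A$-projection to be unbounded in $A_{\geq 0}$. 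Choose $\ell'=m'a_{s'}\in\Delta_{\ell_0 y}$ with $s'\geq -s_0$, so that $\ell:=\ell'\ell_0=m'm_0\,a_{s'+s_0}\in MA_{\geq 0}$. Since $\overline{N(\ell_0 y)}\subseteq\overline{Nx}$ (because $\overline{Nx}$ is closed and $N$-invariant and contains $\ell_0 y$), we obtain $\ell y=\ell'(\ell_0 y)\in\overline{N(\ell_0 y)}\subseteq\overline{Nx}$, as desired. The statement for $\Qm_-$ is entirely analogous, working with $\beta_-$ and $MA_{\leq 0}$ instead.

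The main obstacle is Step~1: a priori two rays in $\Qm_+$ could drift apart inside the $\Z$-cover even while their projections to $\Sigmazero$ come arbitrarily close together, and one needs the Busemann-type cocycle of $\tau$ to pin down the difference of deck translates up to a bounded integer. Once this bounded-distance estimate is established, the rest is essentially bookkeeping: \Cref{prop:fellow travelers in horo closure} yields \emph{some} $MA$-element, and the non-compactness of $\Delta_{\ell_0 y}\subseteq MA_{\geq 0}$ is exactly the flexibility needed to absorb a possibly negative $A$-component of $\ell_0$.
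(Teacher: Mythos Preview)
Your proposal is correct and follows essentially the same two-step strategy as the paper: establish a uniform bound on $d(a_tx,a_ty)$, apply \Cref{prop:fellow travelers in horo closure} to get some $\ell_0\in MA$ with $\ell_0 y\in\overline{Nx}$, and then use non-compactness of a recurrence semigroup inside $MA_{\geq 0}$ to adjust $\ell_0$ into $MA_{\geq 0}$.

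The only substantive difference is in Step~1. Your subsequence/deck-translate argument is valid, but the paper's route is considerably shorter: from $t+\beta_+(x)\le\tau(a_tx)\le\tau(x)+t$ (and the analogous bounds for $y$) one gets $|\tau(a_tx)-\tau(a_ty)|\le C$ for all $t$, and then \Cref{lemma:u_is_a_1C-qi} immediately yields $d(a_tx,a_ty)\le C+C_\tau$ for all $t$. No compactness of the quotient, no subsequences, no tracking of integer shifts is needed---the quasi-isometry property of $\tau$ does all the work. For the positivity adjustment the paper uses $\Delta_x$ (sub-invariance of $\overline{Nx}$) rather than your $\Delta_{\ell_0 y}$; either choice works for the same reason.
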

	
	\noindent This proves \Cref{Thm intro: structure of Nx bar}(ii) from the introduction for arbitrary dimension.
	
	\begin{proof}
		Fix $ x,y \in \Qm_+ $ and denote $ b_x=\beta_+(x) $ and $ b_y =\beta_+(y) $. For all $ t $ we have
		\[ t+b_x \leq \tau(a_t x) \leq \tau(x)+t \quad \text{and}\quad t+b_y \leq \tau(a_t y) \leq \tau(y)+t,   \]		
		and therefore
		\[ |\tau(a_t x)-\tau(a_t y)| \leq C=\max \{|\tau(y)-b_x|,|\tau(x)-b_y|\} \quad \text{for all }t. \]
		By \cref{lemma:u_is_a_1C-qi} we deduce that
		\[ d(a_t x, a_t y) \leq C+C_\tau \quad \text{for all }t. \]
		The claim now follows from \cref{prop:fellow travelers in horo closure} and the sub-invariance of $ \overline{Nx} $ by the non-compact semigroup $ \Delta_x \subseteq MA_{\geq 0} $.
	\end{proof}
	
	Recall the notations from \Cref{Subsec:Tight map summary of notations} and denote $ \Qm_\omega^\pm=\Qm_\omega \cap \Qm_\pm $, the set of points in $ \Qm_\omega $ facing the positive/negative end of $ \GdmodGamma $. The following theorem tells us that it suffices to study the horospherical orbit closures of points in $ \Qm_\omega^\pm $:
	
	\begin{prop}\label{Prop:existence of proximal point in L}
		For any $ x \in \Qm_+ $ there exists $ x_0 \in \Qm_\omega^+ $ such that $ x $ and $ x_0 $ are proximal in $ \GdmodGamma $.
	\end{prop}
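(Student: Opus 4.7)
My strategy is to first produce a candidate $x_0 \in \mathcal{Q}_\omega^+$ as an $\omega$-limit of $x$ in the $\Z$-cover, and then upgrade closeness to proximality using \Cref{Cor:Horospherical orbit closure intersect all Q of the same end} together with the stable contracting properties of the horospherical group $N$ under the geodesic flow. By compactness of $G_d/\Gamma_0$, extract $t_j \to \infty$ so that $a_{t_j}p_\Z(x)$ converges in the compact quotient, and choose deck shifts $m_j \in \Z \cong \Gamma_0/\Gamma$ so that $\tau(m_j.a_{t_j}x)$ stays in a bounded interval---this is possible since $\tau$ is $c\Z$-equivariant and $\tau(a_{t_j}x) = t_j + \beta_+(x) + o(1)$ by $x \in \mathcal{Q}_+$. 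Passing to a further subsequence, set $x_0 := \lim_j m_j.a_{t_j}x$. By construction $p_\Z(x_0)$ is an accumulation point of $a_t p_\Z(x)$, so $x_0 \in \mathcal{Q}_\omega$, and the analogous $\tau$-calculation along $Ax_0$ gives $\tau(a_sx_0) = \tau(x_0) + s$, placing $x_0$ in $\mathcal{L}\cap \mathcal{Q}_+ \subseteq \mathcal{Q}_\omega^+$.

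By \Cref{Cor:Horospherical orbit closure intersect all Q of the same end} there exists $\ell \in MA_{\geq 0}$ with $\ell x_0 \in \overline{Nx}$; since $MA$ preserves $\mathcal{Q}_\omega^+$, I replace $x_0$ by $\ell x_0$ and assume $x_0 \in \overline{Nx}\cap \mathcal{Q}_\omega^+$. Pick $n_j \in N$ with $n_jx \to x_0$ and let $\epsilon_j := d(n_jx,x_0)$. If $\{\|n_j\|\}$ remains bounded, then a subsequential limit lies in $N$ and so $x_0 \in Nx$, whence proximality is immediate from $a_t n a_{-t} \to e$ as $t \to \infty$. Otherwise I use the triangle inequality
\[
d(a_tx, a_tx_0) \leq d(a_tx, a_tn_jx) + d(a_tn_jx, a_tx_0),
\]
combined with the stable-contraction bound $d(a_tx, a_tn_jx) \lesssim \|n_j\|e^{-t}$ (coming from $\mathrm{Ad}(a_t)|_{\mathfrak{n}}$) and the Lipschitz bound $d(a_tn_jx, a_tx_0) \lesssim \epsilon_j e^{t}$ (coming from $\|\mathrm{Ad}(a_t)\|_{\mathrm{op}}$). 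Optimizing at $t_j = \tfrac{1}{2}\log(\|n_j\|/\epsilon_j)$ produces $d(a_{t_j}x, a_{t_j}x_0) \lesssim \sqrt{\|n_j\|\epsilon_j}$, so it suffices to arrange the approximating sequence with $\|n_j\|\epsilon_j \to 0$.

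The main obstacle is securing this last condition, since for $x_0 \notin Nx$ one necessarily has $\|n_j\| \to \infty$, and the speed of approximation by the horocycle is not controlled a priori. I plan to extract the required quantitative decay directly from the $\omega$-limit construction by a Bruhat-decomposition argument in the spirit of \Cref{lemma_delta of geometric limit in Delta}: lifting the convergence $m_j.a_{t_j}x \to x_0$ to $G_d$ and writing the small element $a_{t_j}\tilde{x}\tilde{\sigma}_j \tilde{x}_0^{-1}= n_j'\ell_j' u_j'$ in Bruhat form with factors of size comparable to the convergence rate $\delta_j\to 0$, and then conjugating back by $a_{-t_j}$, one produces $\tilde n_j := a_{-t_j}(n_j')^{-1}a_{t_j} \in N$ with $\tilde n_j x$ approximating $x_0$; a diagonal choice balancing $\delta_j$ against $e^{-t_j}$ is then meant to force $\|\tilde n_j\|\epsilon_j \to 0$. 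This rate-matching step is the technical heart of the proof.
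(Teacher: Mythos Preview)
Your approach differs from the paper's and has a genuine gap at the ``rate-matching'' step. The paper works downstairs in the compact system $(\GdmodGammazero,(a_t))$: \Cref{Theorem:omega limit of Qm is a lamination in max stretch locus of u} gives $d(a_t\pZ{x}, \pZ{\Qm_\omega^+}) \to 0$, and then a classical result of Furstenberg \cite[Proposition~8.6]{Furstenberg_Book} (every point in a compact flow is proximal to some point of its $\omega$-limit set) produces $\pZ{x_0}\in \pZ{\Qm_\omega^+}$ proximal to $\pZ{x}$. A short $\tau$-bookkeeping argument then selects the unique $\Z$-translate $x_0 \in \Qm_\omega^+$ for which proximality lifts to $\GdmodGamma$. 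No quantitative horocycle estimates are used.

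Your Bruhat computation does not do what you assert. From $a_{t_j}\tilde x\,\tilde\sigma_j = n_j'\ell_j'u_j'\,\tilde x_0$ one obtains
\[
\tilde n_j\,\tilde x\,\tilde\sigma_j \;=\; \ell_j'\,(a_{-t_j}u_j'a_{t_j})\,a_{-t_j}\tilde x_0,
\]
so in $\GdmodGamma$ the point $\tilde n_j x$ is, after the deck shift $m_j$, close to $a_{-t_j}x_0$ rather than to $x_0$; undoing $m_j$ lands $\tilde n_j x$ on a $\Z$-translate of the leaf $Ax_0$ at $\tau$-level roughly $\tau(x_0)+\beta_+(x)$, with no mechanism forcing convergence to any fixed target. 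There is also no independent diagonal freedom to exploit: $\delta_j$ is dictated by the sequence $t_j$, not a parameter you may tune against $e^{-t_j}$. And once you replace $x_0$ by $\ell x_0$ via \Cref{Cor:Horospherical orbit closure intersect all Q of the same end}, the relation $m_j.a_{t_j}x\to x_0$ is lost, so the last paragraph is being applied to the wrong point. More conceptually, membership $x_0\in\overline{Nx}$ cannot by itself force proximality (every $a_s x_0$ with $a_s\in\Delta_x$ also lies in $\overline{Nx}$, yet proximal pairs must share the same $\beta_+$-value); Furstenberg's theorem is precisely the missing ingredient that produces proximality rather than mere accumulation.
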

	
	\begin{proof}
		Consider the compact topological dynamical system $ \left(\GmodGammazero,(a_t)_{t \in \R}\right) $ which has $ p_\Z(\Qm_\omega^+) $ as a closed $ A $-invariant subset. \Cref{Theorem:omega limit of Qm is a lamination in max stretch locus of u} tells us that $ (a_t \pZ{x})_{t \geq 0} $ accumulates onto $ \pZ{\Qm_\omega^+} $, that is, \[ \lim_{t \to \infty} d(a_t \pZ{x},\pZ{\Qm_\omega^+}) = 0. \]
		
		A classical result in topological dynamics \cite[Proposition 8.6]{Furstenberg_Book} implies that there exists $ \pZ{x_0} \in \pZ{\Qm_\omega^+} $ such that $ \pZ{x} $ and $ \pZ{x_0} $ are proximal. 
		
		We claim that there exists a lift $ x_0 \in \Qm_\omega^+ \subset \GmodGamma $ of $ \pZ{x_0} $ for which $ x $ and $ x_0 $ are proximal. 
		Indeed, assume $ t_j \to \infty $ with $ d(a_{t_j}\pZ{x},a_{t_j}\pZ{x_0}) \to 0 $. As $ x \in \Qm_+=\beta_+^{-1}(\R) $ there exists $ T \geq 0 $ so large as to ensure
		\[ |[\tau(a_t x)-\tau(a_s x)] - [t-s]| \leq \frac{c}{4} \quad \text{for all }s,t \geq T, \]
		where $ c $ is the equivariance constant for the $ \Z $-action, i.e.~$ \tau(k.z)=\tau(z)+kc $ for all $ z \in \GdmodGamma $ and $ k \in \Z $. Without loss of generality we may assume that all $ t_j $ satisfy $ t_j \geq T $ and $ d(a_{t_j}\pZ{x},a_{t_j}\pZ{x_0}) < \frac{c}{4} $. 
		
		The injectivity radius of the $ \Z $-action at every point in $ \GdmodGamma $ is at least $ c $, since if $ y_1=k.y_2 $ for some $ 0\neq k \in \Z $ then
		\[ c \leq |k|c = |\tau(y_2) - \tau(y_1)| \leq d(y_1,y_2).  \]
		Now for any $ j\leq 1 $ let $ x_j \in \GdmodGamma $ be the unique lift of $ \pZ{x_0} $ satisfying
		\begin{equation}\label{eq:x_j sequence proximal}
			d(a_{t_j}x,a_{t_j}x_j)=d(a_{t_j}\pZ{x},a_{t_j}\pZ{x_0}) < \frac{c}{4}.
		\end{equation}		
		
		Fix some $ j \geq 1 $, we will show that $ x_j=x_1 $. 
		By our assumption that $ t_j \geq t_1 \geq T $ we know
		\[ 0 \leq \tau(a_{t_j} x)-\tau(a_{t_1} x) \leq t_j-t_1+\frac{c}{4}. \]
		On the other hand, since $ x_j \in \Qm_\omega^+ $ we know
		\[ \tau(a_{t_j} x_j)-\tau(a_{t_1} x_j)=t_j-t_1. \]
		Therefore
		\begin{align}
			|\tau(a_{t_1} x_j)-\tau(a_{t_1}x_1)| &= |[\tau(a_{t_1} x_j)-\tau(a_{t_j}x_j)]+ [\tau(a_{t_j} x_j)-\tau(a_{t_j}x)]+\nonumber\\
			&\quad +[\tau(a_{t_j} x)-\tau(a_{t_1}x)]+[\tau(a_{t_1} x)-\tau(a_{t_1}x_1)]|=\nonumber\\
			&\leq |[t_1-t_j]+[\tau(a_{t_j} x)-\tau(a_{t_1}x)]| +2\frac{c}{4} \label{eq:u-values of a_t_1 x_1 and a_t_1 x_j} \leq \\
			&\leq \frac{3c}{4} \nonumber
		\end{align}		
		
		By definition $ a_{t_1}x_j=k.a_{t_1}x_1 $ for some $ k \in \Z $, the inequality in \eqref{eq:u-values of a_t_1 x_1 and a_t_1 x_j} then implies $ k=0 $ and $ x_j=x_1 $ as claimed. 		
		Hence, by \eqref{eq:x_j sequence proximal} we conclude $ x_0:=x_1 \in \Qm_\omega^+ $ is proximal to $ x $.
	\end{proof}
	
	We further draw the following relation between different recurrence semigroups in $ \GdmodGamma $:
	\begin{prop}
		For all $ x,y \in \Qm $ there exists an isometry $ f_{x,y} $ of $ MA $ satisfying
		\[ f_{x,y}(\Delta_y) \subseteq \Delta_x.  \]
		Furthermore, $ f_{x,y} $ acts as a translation on the $ A $-component, that is, there exists an $ s \in \R $ for which $ f_{x,y}(ma_t)=m'a_{t+s} $ for all $ m \in M $ and $ t $.
	\end{prop}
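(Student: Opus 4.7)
My plan is to build $f_{x,y}$ as a composition of a left and a right translation on $MA$, using two elements guaranteed by \Cref{Cor:Horospherical orbit closure intersect all Q of the same end} applied symmetrically to the pair $(x,y)$. I first handle the main case $x, y \in \Qm_+$ (the $\Qm_-$ case is identical using the opposite horospherical $U$; the mixed-end case needs a separate reduction that I sketch at the end). By \Cref{Cor:Horospherical orbit closure intersect all Q of the same end} applied to both orderings, pick $\ell_0, \ell_0' \in MA_{\geq 0}$ with
\[
  \ell_0 y \in \overline{Nx} \qquad\text{and}\qquad \ell_0' x \in \overline{Ny},
\]
and define $f_{x,y}(\ell) := \ell_0\, \ell\, \ell_0'$. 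That this is an isometry of $MA$ uses that the metric on $G_d$ is right $G_d$-invariant and left $K$-invariant: right multiplication by $\ell_0'$ is directly an isometry; writing $\ell_0 = m_0 a_{s_0}$, left multiplication by $m_0 \in M \subset K$ is an isometry by left-$K$-invariance, while left multiplication by $a_{s_0}$ coincides with right multiplication by $a_{s_0}$ on $MA$ because $A$ is central in $MA = M \times A$. Commutativity of $M$ and $A$ then gives
\[
  f_{x,y}(ma_t) = (m_0\, m\, m_0')\, a_{t + s_0 + s_0'},
\]
so $f_{x,y}$ acts on the $A$-component as translation by $s := s_0 + s_0'$, as required.

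The inclusion $f_{x,y}(\Delta_y) \subseteq \Delta_x$ then follows from two applications of sub-invariance. For $\ell \in \Delta_y$, the relation $\ell y \in \overline{Ny}$ together with $\ell$ normalising $N$ gives
\[
  \ell\,\overline{Ny} \;=\; \overline{N\ell y} \;\subseteq\; \overline{Ny},
\]
and applying this to $\ell_0' x \in \overline{Ny}$ yields $\ell \ell_0' x \in \overline{Ny}$. Likewise $\ell_0 y \in \overline{Nx}$ gives $\ell_0\overline{Ny} = \overline{N\ell_0 y} \subseteq \overline{Nx}$. Combining,
\[
  f_{x,y}(\ell)\, x \;=\; \ell_0\, \ell\, \ell_0'\, x \;\in\; \ell_0\, \overline{Ny} \;\subseteq\; \overline{Nx},
\]
so $f_{x,y}(\ell) \in \Delta_x$.

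The main obstacle is the mixed-end case $x \in \Qm_+$, $y \in \Qm_-$: here \Cref{Cor:Horospherical orbit closure intersect all Q of the same end} is unavailable, since $\overline{Nx}$ and $\overline{Ny}$ are contained in disjoint uniform horoballs at opposite ends (so $\beta_+\equiv -\infty$ on $MAy$ and no $MA$-translate of $x$ can meet $\overline{Ny}$ unless $x$ is bi-minimising). My plan for this case is to pass through the Weyl involution $\omega$, which conjugates $N$ to $U$ and inverts the $A$-factor of $MA$: when $y$ is bi-minimising, $\omega y \in \Qm_+$ and $\Delta_y$ is carried isometrically onto the unstable recurrence semigroup at $\omega y$ via $\omega$-conjugation, after which the main argument applies to $(x,\omega y)$. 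The remaining non-bi-minimising subcase would require a further argument tailored to the structure of opposite-end horocycle orbit closures, which I regard as the most delicate piece of the proof.
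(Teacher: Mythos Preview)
Your same-end argument is correct and essentially identical to the paper's: both pick $\ell_1,\ell_2\in MA$ with $\ell_1 y\in\overline{Nx}$ and $\ell_2 x\in\overline{Ny}$ (via \Cref{Cor:Horospherical orbit closure intersect all Q of the same end}) and set $f_{x,y}(\ell)=\ell_1\ell\ell_2$. Your justification that this is an isometry is in fact more explicit than the paper's. One small remark: for $x,y\in\Qm_-$ you do \emph{not} switch to $U$; the corollary applies verbatim to $\Qm_-$ and $\Delta$ is still defined using $N$.

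The genuine gap is the mixed-end case, which you flag but do not resolve. The paper's resolution is short and avoids any case analysis on whether $y$ is bi-minimising. The key observation you are missing is that the relation ``$x\sim y$ iff some $f_{x,y}$ as in the statement exists'' is transitive (composition of two maps of the form $\ell\mapsto\ell_1\ell\ell_2$ is again of this form), and the same-end argument already shows each of $\Qm_+$ and $\Qm_-$ is a single $\sim$-class. Hence it suffices to exhibit \emph{one} pair $x_+\in\Qm_+$, $x_-\in\Qm_-$ with $x_+\sim x_-$. The paper chooses $x_+=h_+\Gamma$ on a minimal component of $\Qm_\omega$, so that $p_\Z(x_+)$ is self-recurrent and \Cref{prop:lift of recurrent point has Delta=delta of conjugate} gives $\Delta_{x_+}=\overline{\delta(h_+\Gamma h_+^{-1})}$. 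Setting $x_-=\omega x_+\in\Qm_-$, the same proposition gives $\Delta_{x_-}=\overline{\delta(\omega h_+\Gamma h_+^{-1}\omega^{-1})}$. A direct Bruhat computation then shows $\delta(\omega\gamma^{-1}\omega^{-1})=f_\omega(\delta(\gamma))$ with $f_\omega(ma_t)=(\omega m\omega^{-1})a_t$, yielding $\Delta_{x_-}=f_\omega(\Delta_{x_+})$. This is an isometry of $MA$ acting trivially on the $A$-component, so $x_+\sim x_-$ and the proof is complete. Your sketch via ``the unstable recurrence semigroup at $\omega y$'' is heading in a more complicated direction; the decisive trick is to pick the \emph{one} convenient point where both $\Delta$'s are computable from $\delta(h\Gamma h^{-1})$.
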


	\begin{proof}
		Let us first consider $ x,y \in \Qm_+ $. By \cref{prop:fellow travelers in horo closure} there exists $ \ell_1,\ell_2 \in MA $ satisfying $ \ell_1 y \in \overline{Nx} $ and $ \ell_2 x \in \overline{Ny} $. Therefore
		\[ \ell_1 \overline{N y}=\overline{N \ell_1 y} \subseteq \overline{Nx}, \]
		and hence for any $ \ell_y \in \Delta_y $ 
		\[ \ell_1 \ell_y \overline{N y} = \ell_1 \overline{N \ell_y y} \subseteq \ell_1 \overline{N y} \subseteq \overline{Nx}. \]
		Now since 
		\[ \ell_2 \overline{N x} \subseteq \overline{Ny}, \]
		we obtain
		\[ \ell_1 \ell_y \ell_2 \overline{N x} \subseteq \ell_1 \ell_y \overline{N y} \subseteq \overline{N x}, \]
		and conclude
		\[ \ell_1 \Delta_y \ell_2 \subseteq \Delta_x. \]
		Clearly the map $ f_{x,y}(\ell)= \ell_1 \ell \ell_2 $ satisfies the conditions claimed in the proposition. The same holds when $ x,y \in \Qm_- $.
		
		Note that the relation on $ \Qm $ defined by $ x \sim y $ if there exists $ f_{x,y} $ as in the statement above, is an equivalence relation. We have shown that $ \Qm_+ \subset [x]_\sim $ whenever $ x \in \Qm_+ $, and similarly for $ \Qm_- $. To show $ [x]_\sim=\Qm $ as claimed, it suffices to find two points $ x_\pm \in \Qm_\pm $ with $ x_+ \sim x_- $. 
		
		Indeed, let $ x_+ \in \Qm_\omega^+ $ be a point in $ \Qm_\omega $ with $ \pZ{x_+} $ contained in a minimal $ A $-component of $ \L_0 $. Therefore, $ (a_t \pZ{x_+})_{t \geq 0} $ accumulates onto $ \pZ{x_+} $ implying that
		\[ \Delta_{x_+} = \overline{\delta (h_+ \Gamma h_+^{-1})} \]
		by \Cref{lemma_basepoint_conj_in_Xi_determines_Delta}, where $ x_+=h_+ \Gamma $. 
		
		Let $ \omega \in K $ be an element satisfying $ \omega a_t \omega^{-1}=a_{-t} $ for all $ t $. Set $ x_-=\omega x_+ $ and note that $ x_- \in \Qm_- $ as $ x_+ \in \Qm_\omega^+ $. Since $ (a_t x_-)_{t \geq 0} $ also accumulates onto $ x_- $ we have
		\[ \Delta_{x_-} = \overline{\delta(\omega h_+ \Gamma h_+^{-1}\omega^{-1})}. \]
		Given an element $ nma_t u \in NMAU $ we have
		\[ [\omega (n\ell u) \omega^{-1}]^{-1} = [u' m'a_{-t} n']^{-1} = n' m'a_t u'  \]
		where $ n' \in N, m' \in M, u' \in U $, since conjugation by $ \omega $ maps $ N \leftrightarrow U $ and normalizes $ M $.
		In particular, for any $ \gamma \in h_+ \Gamma h_+^{-1} $ we have
		\[ \delta (\omega \gamma^{-1} \omega^{-1} ) = f_\omega (\delta(\gamma)) \]
		where $ f_\omega(ma_t)=(\omega m \omega^{-1})a_t $ is an isometry of $ MA $ as claimed.
		We have thus shown that 
		\[ \Delta_{x_-}=\overline{\delta(\omega h_+ \Gamma h_+^{-1}\omega^{-1})}=\overline{f_\omega(\delta(h_+ \Gamma h_+^{-1}))}= f_\omega(\Delta_{x_+}), \]
		concluding the proof.
	\end{proof}
	
	\section{Weakly mixing laminations}\label{Sec: Weakly mixing}
	
	In this section we study the non-maximal horocycle orbit closures in surfaces with a weakly mixing first return map to a transversal to the maximal stretch lamination.
	
	Let $ \Sigma \to \Sigma_0 $ be any $ \Z $-cover as constructed in \Cref{Thm:Construction of surface with weak mixing IET} together with a 1-Lipschitz tight map $ \tau_0: \Sigma_0 \to \R/c\Z $ and $ \tau: \Sigma \to \R $ its lift. 
	
	Recall our definition of a ``uniform horoball'' from \Cref{eq:definition of betaplus horoball}:
	\[ \Horobetaplus{x} = \beta_+^{-1}\left( [\beta_+(x),\infty) \right) \quad \text{and} \quad \Horobetaminus{x} = \beta_-^{-1}\left( (-\infty, \beta_-(x)] \right). \]
	
	In this section we will prove \Cref{Thm intro:Nx bar in horoball} from the introduction:
	\begin{theorem}\label{Theorem:weakly mixing orbit closure is horobal}
		If $ \Sigma $ is a $ \Z $-cover surface constructed in \Cref{Thm:Construction of surface with weak mixing IET} from a weakly-mixing and minimal IET, then all non-maximal horocycle orbit closures in $ \Sigma $ are uniform horoballs. That is, for all $ x \in \Qm_\pm $
		\[ \overline{Nx}=\mathcal{H}_\pm(x). \]
	\end{theorem}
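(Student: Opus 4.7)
The inclusion $\overline{Nx}\subseteq\mathcal{H}_{\pm}(x)$ is Corollary~\ref{cor:Nx contained in beta+(x) to infinity}, so the task is the reverse inclusion; I focus on the positive case, the negative being symmetric. The plan is to first reduce to an intrinsic statement on the lamination $\L^+$ using proximality, next handle the ``same $\tau$-level'' case via minimality of $\lambda_0$ and the Bruhat trick (which does \emph{not} require weak mixing), and finally invoke the weak-mixing hypothesis to promote the recurrence semigroup $\Delta_{x_0}$ to all of $A_{\geq 0}$ and thereby reduce higher $\tau$-levels to the same-level case.

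Combining Proposition~\ref{Prop:existence of proximal point in L} and Corollary~\ref{cor:proximality implies identical closures} with the minimality of $\lambda_0$ (so $\Qm_\omega^+=\L^+$), every $x\in\Qm_+$ satisfies $\overline{Nx}=\overline{Nx_0}$ for some $x_0\in\L^+$ with $\beta_+(x_0)=\tau(x_0)$, and every $z\in\mathcal{H}_+(x_0)$ is proximal to some $z_0\in\L^+$ with $\overline{Nz}=\overline{Nz_0}$ and $\tau(z_0)=\beta_+(z)\geq\tau(x_0)$. It therefore suffices to prove: \emph{for any $x_0,z_0\in\L^+$ with $\tau(z_0)\geq\tau(x_0)$, one has $z_0\in\overline{Nx_0}$.} I first dispatch the \emph{same-level case} $\tau(x_0)=\tau(z_0)$. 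Minimality of $\lambda_0$ produces $s_j\to\infty$ with $a_{s_j}\bar x_0\to\bar z_0$ in $\Sigma_0$. Writing $x_0=g_0\Gamma$, $z_0=h_0\Gamma$ and lifting via the $\tau$-equivariance, there exist deck translates $\tilde z_j$ of $z_0$ with winding numbers $k_j$ satisfying $ck_j-s_j\to 0$ and a displacement $\epsilon_j\to e$ in $G$ between the chosen lifts of $a_{s_j}x_0$ and $\tilde z_j$. Bruhat-decomposing $\epsilon_j=n_j a_{\sigma_j}u_j$ and conjugating by $a_{-s_j}$ as in \S\ref{subsec:bruhat}, the $U$-component contracts to $e$, while $\hat n_j:=a_{-s_j}n_ja_{s_j}\in N$ may grow; a direct rearrangement yields $\hat n_j^{-1}x_0=(\ell_j\hat u_j)\cdot a_{-s_j}\tilde z_j$ with $\ell_j\hat u_j\to e$. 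The right-hand side has $\tau$-value $\tau(z_0)-s_j+ck_j\to\tau(z_0)$ and projects in $\Sigma_0$ to $a_{-s_j}\bar z_0$. Extracting a subsequence along which $a_{-s_j}\bar z_0\to\bar z_0$ (minimality once more), the $\tau$-equivariance pins the lift uniquely to $z_0$, giving $\hat n_j^{-1}x_0\to z_0$ and hence $z_0\in\overline{Nx_0}$.

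For the general case $\tau(z_0)>\tau(x_0)$, set $s=\tau(z_0)-\tau(x_0)$. If one knows $a_s\in\Delta_{x_0}$, then $a_sx_0\in\overline{Nx_0}$ sits at the same $\tau$-level as $z_0$, and applying the same-level case to the pair $(a_sx_0,z_0)$ gives $z_0\in\overline{Na_sx_0}\subseteq\overline{Nx_0}$. Since $\bar x_0$ recurs on the minimal lamination, Proposition~\ref{prop:lift of recurrent point has Delta=delta of conjugate} identifies $\Delta_{x_0}$ with the closure of $\delta(g_0\Gamma g_0^{-1})$ in $A_{\geq 0}$, which is a non-discrete closed sub-semigroup by Proposition~\ref{prop:Delta_x/M non discrete}. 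The main obstacle is to upgrade this to the \emph{full} semigroup $\Delta_{x_0}=A_{\geq 0}$: this is where the weak-mixing hypothesis on the IET enters decisively, in principle ruling out closed sub-semigroups like $\{0\}\cup[s_0,\infty)$ by identifying their existence with a non-trivial continuous eigenfunction for the Poincar\'e first-return map on a transversal to $\lambda_0$, contradicting weak mixing. Once $\Delta_{x_0}=A_{\geq 0}$ is established, the reduction to the same-level case completes the proof.
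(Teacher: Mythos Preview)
Your reduction to $\L^+$ via proximality and the overall two-step strategy (handle the equal-$\tau$-level case, then invoke $\Delta_{x_0}=A_{\ge 0}$) coincides with the paper's. However, the ``same-level'' argument has a genuine gap at the step ``extracting a subsequence along which $a_{-s_j}\bar z_0\to\bar z_0$ (minimality once more)''. The times $s_j$ were already chosen so that $a_{s_j}\bar x_0\to\bar z_0$; you are now asking that \emph{among these same} $s_j$ a subsequence also satisfies $a_{-s_j}\bar z_0\to\bar z_0$, i.e.\ that $s_j$ is simultaneously an approximate transit time from $\bar x_0$ to $\bar z_0$ and an approximate return time of $\bar z_0$. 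Minimality produces each of these as a syndetic set of times, but two syndetic sets need not intersect; finding a common sequence is precisely the assertion that the $(a_t\times a_t)$-orbit of $(\bar x_0,\bar z_0)$ visits every neighborhood of $(\bar z_0,\bar z_0)$, which is a weak-mixing condition, not a consequence of minimality. The paper accordingly uses weak mixing already at this stage, via Glasner's theorem that in a minimal topologically weakly mixing system every proximality cell is dense: a dense set of $z_0\in W_0$ is then $A$-proximal to $x_0$, hence lies in $\overline{Nx_0}$ by \Cref{cor:proximality implies identical closures}, and closedness gives $W_0\subset\overline{Nx_0}$.

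The second gap is that you do not actually prove $\Delta_{x_0}=A_{\ge 0}$. Your sketch proposes that a proper sub-semigroup would yield a nontrivial continuous eigenfunction for $P$, but you give no mechanism for this, and it is not clear how a non-discrete sub-semigroup such as $\{e\}\cup a_{[s_0,\infty)}$ would be detected by a single eigenvalue. The paper's argument (\Cref{Theorem:semigroup is everything}) is quite different: it introduces a ``Bruhat displacement'' $\Delta(x,y)\ge 0$ on nearby pairs in $W_0$, shows it is strictly positive whenever the cross-over geodesic is not a leaf of $\L$ (\Cref{Delta positive}), and then uses ergodicity of $P\times P$ to find, for a generic $x_0$, points $y_j\to x_0$ and times $T_j\to\infty$ with $\Delta(a_{T_j}x_0,a_{T_j}y_j)\in(\eta/2,\eta)$ for arbitrarily small $\eta$. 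The Bruhat trick converts these into elements of $\Delta_{x_0}$ in $(\eta/2,\eta)$, forcing $\Delta_{x_0}=A_{\ge 0}$.
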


    Recall the definitions of weakly mixing dynamical systems from \Cref{Def:weakly mixing}.
	We have from the construction that the first return map (\Cref{Sec_construction of minimizing laminations})
	\[ P_0: \tau_0^{-1}([0]) \cap \lambda_0 \to \tau_0^{-1}([0]) \cap \lambda_0 \]
	is minimal and measure theoretically weakly mixing with respect to a measure of full support. 
    This implies in particular that $P$ is topologically weakly mixing.

	Lifting this to $ \L_+ = \L \cap \Qm_+ $ in the unit tangent bundle $ T^1\Sigma $ gives:
	\[ P: \tau^{-1}(0)\cap \L_+ \to \tau^{-1}(0)\cap \L_+ \]
	defined for all $ x \in \L_+ $ by
	\[ P(x)=(-1).a_c x \]
	where $ k.\square $ denotes the isometric action of the element $ k \in \Z $ of the deck group, and $ c $ is the circumference of $ \R / c\Z $, the codomain of $ \tau_0 $.

	Denoting $W_m = \tau^{-1}([m])\cap \L_+$, the systems
	\[ (\tau_0^{-1}([0]) \cap \lambda_0,P_0) \quad \text{and}\quad (W_0,P) \]
	are isomorphic as topological dynamical systems, implying $ (W_0,P) $ is weakly mixing and minimal.
	
	Recall that in a topological dynamical system $ (X,T) $ two points $ x_1,x_2 \in X $ are called proximal if 
	\begin{equation*}
		\liminf_{j \to \infty} d_X(T^j x_1,T^j x_2) = 0.
	\end{equation*}
	The following can be found in \cite[Theorem 1.13]{Glasner_Book}:
	\begin{theorem}\label{thm:Glasner dense proximality cell}
		If a continuous dynamical system $ (X,T) $ is minimal and topologically weakly mixing then for all $ x \in X $ the set
		\[ \cal P_x = \{y \in X : y \text{ is proximal to } x \} \]
		is dense in $ X $.
	\end{theorem}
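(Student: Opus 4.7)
The plan is a Baire category argument. For each $\epsilon>0$, set
\[
U_{\epsilon,x} := \{y \in X : \exists\, n \geq 0,\ d(T^n x, T^n y) < \epsilon\},
\]
which is open as a union of preimages of open balls under the continuous maps $T^n$. Assuming each $U_{\epsilon,x}$ is dense, the Baire category theorem (applied to the compact metric space $X$, which is the standard setting for Glasner's theorem and the case in which it is used above) gives that $\bigcap_{k\geq 1}U_{1/k,x}$ is a dense $G_\delta$. Every $y$ in this intersection satisfies $\inf_n d(T^n x,T^n y)=0$, which upgrades to $\liminf_n d(T^n x, T^n y)=0$: either the infimum is realized at some $n_0$, in which case $T^{n_0}x=T^{n_0}y$ forces the distance to vanish for all $n\geq n_0$, or else the infimum is achieved along an unbounded sequence. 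In both cases $y\in\cal P_x$.

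It remains to establish density. Fix a nonempty open $V\subset X$ and an open ball $B$ of diameter less than $\epsilon$; I seek $n\geq 0$ with $T^n x\in B$ and $T^n V\cap B\neq\emptyset$. Introduce the time sets
\[
N_1 := \{n \geq 0 : T^n x \in B\}, \qquad N_2 := \{n \geq 0 : T^n V \cap B \neq \emptyset\}.
\]
By minimality of $(X,T)$ on the compact space $X$, the return-time set $N_1$ is syndetic: there is an integer $K$ such that every block of $K$ consecutive non-negative integers meets $N_1$.

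The heart of the argument is to show $N_2$ is thick, i.e.\ contains arbitrarily long intervals. For this I would invoke the classical equivalence that topological weak mixing of $T\times T$ upgrades to topological transitivity of every finite product $T^{\times(L+1)}$ acting diagonally on $X^{L+1}$. Applied to the two nonempty open sets $V^{L+1}$ and $\prod_{i=0}^{L}T^{-i}B$ in $X^{L+1}$ (the factors $T^{-i}B$ being nonempty since $T$ is surjective on any compact minimal system), this transitivity produces $m$ and $(v_0,\ldots,v_L)\in V^{L+1}$ with $T^{m+i}v_i\in B$ for each $i=0,\dots,L$, equivalently $\{m,m+1,\dots,m+L\}\subset N_2$. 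Any thick set meets any syndetic set (an interval of length $K$ inside $N_2$ must contain an element of $N_1$), so $N_1\cap N_2\neq\emptyset$, completing the proof of density. The main obstacle is the upgrade from weak mixing of $T\times T$ to transitivity of all higher diagonal products, which is precisely the step using the weak-mixing hypothesis beyond mere transitivity; this upgrade is a classical Baire category argument at the product level and does not require anything beyond the definitions.
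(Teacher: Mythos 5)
Your argument is correct, but note that the paper does not prove this statement at all: it is quoted verbatim from Glasner's book (cited as \cite[Theorem 1.13]{Glasner_Book}), so there is no in-paper proof to compare against. Your proof is essentially the standard one and in fact yields the stronger conclusion that $\mathcal P_x$ is residual: the sets $U_{\epsilon,x}$ are open, their density follows from the intersection of a syndetic return-time set (minimality plus compactness) with a thick set of times (transitivity of the diagonal product on $X^{L+1}$ applied to $V^{L+1}$ and $\prod_{i=0}^{L}T^{-i}B$), and your upgrade from $\inf_n d(T^nx,T^ny)=0$ to $\liminf_n d(T^nx,T^ny)=0$ correctly handles the case where the infimum is attained. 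The only ingredient you invoke without proof, namely that topological weak mixing (transitivity of $T\times T$) forces transitivity of all finite diagonal products $T^{\times k}$, is indeed the classical theorem of Furstenberg; a small quibble is that its standard proof is not a Baire category argument but the Furstenberg intersection lemma, which shows that the transition-time sets $N(U,V)=\{n: T^{-n}U\cap V\neq\emptyset\}$ of a weakly mixing system form a filter base, so that any finite intersection of them is nonempty. Your uses of compactness and of surjectivity of a minimal map (to ensure $T^{-i}B\neq\emptyset$) are all legitimate, so modulo that classical input the proof is complete and self-contained, which is more than the paper itself supplies.
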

	
	We may now prove the main result of this section:

	\begin{proof}[Proof of \Cref{Theorem:weakly mixing orbit closure is horobal}]
		Let us first consider a point $ x_0 \in W_0 = \tau^{-1}(0)\cap \L_+ $.
		Notice that if $ y_0 \in W_0 $ is $ P $-proximal to $ x_0 $ then it is also $ A $-proximal. That is, if
		\[ \liminf_{j \to \infty} d(P^j x_0,P^j y_0) = 0 \]
		then
		\[ \liminf_{t \to \infty} d(a_t x_0,a_t y_0) = 0. \]
		Indeed, since the $ \Z $-action and the $ A $-action on $ \GmodGamma $ commute we have
		\[ P^j x = (-j).a_{jc}x. \]
		The metric on $ W_0 $ is the one induced from $ \GmodGamma $, which is $ \Z $-invariant hence
		\[ d(P^j x_0,P^j y_0) < \varepsilon \Longrightarrow d(a_{jc} x_0,a_{jc} y_0) < \varepsilon \]
		and the implication follows.
		
		By \cref{thm:Glasner dense proximality cell}, the set of points $ y_0 \in W_0 $ that are $ P $-proximal, and hence also $ A $-proximal, to $ x_0 $ is dense in $ W_0 $. Therefore by \cref{cor:proximality implies identical closures} we get that $ \overline{Nx_0} $ contains all of $ W_0 $ and so for any $ y_0 \in W_0 $ we have $ \overline{Ny_0} \subseteq \overline{Nx_0} $. By symmetry we conclude
		\begin{equation}\label{eq:orbit closure constant on W_0}
			\overline{Nx_0} = \overline{Ny_0} \quad \text{for all }x_0,y_0 \in W_0.
		\end{equation}
		\medskip
		
		Now consider $ x \in \Qm_+ $. Note that 
		\[ \overline{Nx} = \Horobetaplus{x} \iff \forall s \in \R \quad \overline{Na_s x} = \Horobetaplus{a_s x} \]
		by property (3) of \cref{lemma_properties_beta}, hence without loss of generality we may replace $ x $ with $ a_{-\beta_+(x)} x $ and assume $ \beta_+(x)=0 $. 		
		
		If $ y \in \beta_+^{-1}(0) $ is any other point, we claim $ \overline{Ny}=\overline{Nx} $. By \Cref{Prop:existence of proximal point in L}, there exist $ x_0, y_0 \in \L_+ $ such that $ x $ and $ x_0 $ are proximal, and $ y $ and $ y_0 $ are proximal. By \cref{cor:proximality implies identical closures} we have
		\[ \overline{Nx}=\overline{Nx_0} \quad \text{and} \quad \overline{Ny}=\overline{Ny_0}. \]
		In particular, we deduce $ \beta_+(x_0)=\beta_+(x)=0 $ and similarly for $ y_0 $. Recall that on $ \L_+ $ we have $ \beta_+(x_0)=\tau(x_0) $. Therefore $ x_0,y_0 \in \tau^{-1}(0) $ and $ x_0,y_0 \in W_0 $. From \eqref{eq:orbit closure constant on W_0} we conclude
		\[ \overline{Nx} = \overline{Ny} \quad \text{for all }x,y \in \beta_+^{-1}(0). \]
		\medskip
		
		Showing that $ \Delta_{x_0}=A_{\geq 0} $ for some $ x_0 \in W_0 $ (and hence for all $ W_0 $ by \cref{cor:minimal components have constant Deltas}), implies
		\[ \overline{Nx_0} = \beta_+^{-1}([0,\infty)), \]
		concluding the proof. This remaining claim is the content of \Cref{Theorem:semigroup is everything} below.
	\end{proof}

\subsection{Full recurrence semigroup}
Using the notations above, we will show:
\begin{theorem}\label{Theorem:semigroup is everything}
	There exists $ x_0 \in W_0 $ for which
	\[ \Delta_{x_0} = A_{\geq 0}. \]
\end{theorem}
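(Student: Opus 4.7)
The plan is to reduce the assertion to showing that $\Delta_{x_0}$ contains elements arbitrarily close to $e$ in $A_{\geq 0}$, and then to produce such elements by combining the Bruhat trick with the topological weak mixing of $(W_0, P)$.

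\emph{Reduction.} By minimality of $P$ on $W_0$, we choose $x_0 = g_0\Gamma\in W_0$ whose forward $P$-orbit is recurrent to $x_0$, so that $\pZ{x_0}$ is an accumulation point of $(a_t\pZ{x_0})_{t\ge 0}$ in $\GmodGammazero$. Proposition~\ref{prop:lift of recurrent point has Delta=delta of conjugate} then gives $\Delta_{x_0} = \overline{\delta(g_0\Gamma g_0^{-1})}$. Since $\lambda_0$ is minimal (the IET being minimal), $\Delta_x$ is constant on $W_0$ by Proposition~\ref{cor:minimal components have constant Deltas}, so it is enough to treat this particular $x_0$. With $M$ trivial in dimension $2$, $\Delta_{x_0}$ is a closed sub-semigroup of $A_{\ge 0}\cong[0,\infty)$ (Lemma~\ref{lemma_positive_recurrence_semigp}). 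A standard approximation shows that any such sub-semigroup containing a sequence $s_k\searrow 0$ is automatically all of $A_{\ge 0}$: for any $t>0$, $(a_{s_k})^{\lfloor t/s_k\rfloor}\in \Delta_{x_0}$ converges to $a_t$, so by closedness $a_t\in \Delta_{x_0}$. Hence it suffices to produce $a_{s_k}\in \Delta_{x_0}$ with $s_k\searrow 0$.

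\emph{Construction of small elements.} Topological weak mixing of $(W_0, P)$ furnishes a pair $(x_0, y_0)\in W_0\times W_0$, with $y_0$ on a leaf of $\lambda$ distinct from that of $x_0$, whose $(P\times P)$-orbit is dense in $W_0\times W_0$. Joint recurrence along the diagonal then produces $n_j\to\infty$ with $P^{n_j}x_0\to x_0$ and $P^{n_j}y_0\to y_0$. Lifting these convergences via the right-$\Gamma_0$-action produces $\zeta_j,\xi_j\in\Gamma_0$ in the common coset $\eta^{-n_j}\Gamma$ (for a fixed generator $\eta$ of the deck group $\Gamma_0/\Gamma\cong\Z$), so that $\gamma_j := \zeta_j\xi_j^{-1}$ lies in $\Gamma$. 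Applying the Bruhat trick of \Cref{subsec:bruhat} to conjugate $g_0\gamma_j g_0^{-1}$ by $a_{n_jc}$ and extract a limit in the open Bruhat cell $NAU$, Lemma~\ref{lemma_delta of geometric limit in Delta} places the resulting $\delta$-projection in $\Delta_{x_0}$. The extracted $A$-component measures the ``displacement cocycle'' between the two returning orbits in terms of the connecting element $h_0 := g_1 g_0^{-1}$ with $y_0 = g_1\Gamma$. Running $y_0$ through a family of companions whose connecting elements $h_0$ carry Bruhat $A$-components accumulating at $e$ from above then yields the required $a_{s_k}\in\Delta_{x_0}$ with $s_k\searrow 0$.

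\emph{Main obstacle.} The technical crux is the Bruhat computation itself: because $y_0$ lies generically on a different leaf of $\lambda$ from $x_0$, the element $h_0$ has a nontrivial $U$-component in its Bruhat form, so the naive limit $a_{n_jc}h_0 a_{-n_jc}$ diverges. One must carefully balance the $N$-contraction against the $U$-expansion --- using the small size of the error terms $\zeta_j g_0\to a_{-n_jc}g_0$ and $\xi_j g_1\to a_{-n_jc}g_1$ --- to land inside the open Bruhat cell and extract a well-defined $A$-component. The essential role of topological weak mixing of $P$ is then to exclude the possibility that the resulting $A$-components lie in a discrete cyclic sub-semigroup $\alpha\N_{\ge 0}$: such a confinement would furnish a nontrivial continuous eigenfunction for $P$, contradicting weak mixing. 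Combined with the unconditional non-discreteness of $\Delta_{x_0}$ from Proposition~\ref{prop:Delta_x/M non discrete}, this forces the $\delta$-projections to have a limit point at $e$, producing the sequence $a_{s_k}$ needed to conclude that $\Delta_{x_0} = A_{\ge 0}$.
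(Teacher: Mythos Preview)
Your reduction step is fine: it suffices to show that $\Delta_{x_0}$ contains arbitrarily small positive elements. But the construction that follows has a genuine gap.

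The issue is that you never actually produce small positive elements of $\Delta_{x_0}$. Your ``Main obstacle'' paragraph correctly identifies that the naive Bruhat limit diverges because $h_0$ has a nontrivial $U$-component, but your proposed resolution does not work. You appeal to two things: an unspecified eigenfunction argument to rule out $\Delta_{x_0}\subseteq\alpha\N_{\ge 0}$, and Proposition~\ref{prop:Delta_x/M non discrete} for non-discreteness. But these are redundant (any $\alpha\N_{\ge 0}$ is already discrete), and more importantly neither yields accumulation at $e$: the closed sub-semigroup $\{0\}\cup[1,\infty)\subset A_{\ge 0}$ is non-discrete, is not of the form $\alpha\N_{\ge 0}$, and does not accumulate at $e$. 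So ``non-discrete plus not cyclic'' does not force $\Delta_{x_0}=A_{\ge 0}$. The eigenfunction argument itself is only gestured at; you would need to explain precisely which function on $W_0$ would be an eigenfunction and why its existence follows from a gap at $e$ in $\Delta_{x_0}$, and this is far from clear.

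The paper takes a different and more geometric route. It defines, for nearby $x,y\in W_0$, the quantity $\Delta(x,y)$ as the $A$-component of the Bruhat decomposition of $hg^{-1}$ (for local lifts $h,g$), and proves the key Lemma~\ref{Delta positive}: $\Delta(x,y)=\beta_-(\omega z)-\beta_+(z)\ge 0$, with equality exactly when the cross-over geodesic $\mu$ joining $g^-$ to $h^+$ lies in $\L$. Since the minimal IET forces $\L$ to have no isolated leaves, generic nearby pairs in a small ball $C$ have $\Delta(x,y)>0$, so $\eta=\sup_C\Delta>0$ and can be made as small as desired by shrinking $C$. Weak mixing is then used only to steer the $P\times P$-orbit of $(x_0,y_j)$ (with $y_j\to x_0$) into the locus where $\Delta\in(\eta/2,\eta)$; the standard Bruhat trick at that return time then exhibits a point of $Nx_0$ limiting onto $a_s x_0$ with $s\in[\eta/2,\eta]$. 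The positivity lemma is the missing ingredient in your argument: it is what guarantees that the extracted $A$-components are nonnegative and can be made strictly positive yet arbitrarily small, rather than merely non-discrete.
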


\noindent As discussed above, this theorem would imply $ \Delta_x=A_{\geq 0} $ for all $ x \in \Qm_+ $.

Fix $\ep$ less than half the minimal injectivity radius of $\Sigma$. If $x,y \in W_m$
are in a ball of radius $\ep$, we can define $\Delta(x,y)\in\R$ as follows: Lift the
$\ep$-ball containing them to $T^1\bH^2 = G$, and let $h,g\in G$ denote the lifts of $x,y$
respectively. Recall the Bruhat decomposition $hg^{-1} = n\ell u$ as in \Cref{subsec:bruhat} and \Cref{figure_bruhat}. (Note that
this is defined because $x$ and $y$ are either equal or on distinct leaves and hence $h^+
\ne g^-$).

In particular $\ell = \delta(hg^{-1})$ has the form $a_r$ and we define $\Delta(x,y)$ to be $r$.

Note that this definition does not depend on the choices, and  $\Delta$ is continuous as
$x,y$ vary in an $\ep$-ball. Moreover $\Delta $ is $\Z$-invariant, i.e. $\Delta(m.x,m.y) =
\Delta(x,y)$. 
Let $\mu$ denote the geodesic joining $g^-$ to $h^+$.

\begin{lemma}\label{Delta positive}
  With definitions as above $\Delta(x,y) \ge 0$, with equality if and only if the geodesic
  $\mu$ is  (a lift of) a leaf of $\L$. 
\end{lemma}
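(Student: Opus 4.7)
The plan is to work in $\bH^2$ with the lift $\tilde\tau \colon \bH^2 \to \R$ of $\tau$ and to analyze its behavior along three geodesics: $\alpha = KAg$, $\beta = KAh$, and $\mu$ itself. I would first lift $x, y \in W_m$ to nearby frames $h, g \in G$ with $\tilde\tau(Kh) = \tilde\tau(Kg) = m$, so that the Bruhat decomposition $hg^{-1} = n\, a_r\, u$ (in dimension $2$, $M$ is trivial so $\ell = a_r$) produces the geodesic $\mu$ from $g^{-}$ to $h^{+}$, with the distinguished points $Kug$ and $K a_r u g = K n^{-1} h$ both lying on $\mu$. Because $x, y \in \mathcal{L}_{+}$, Proposition~\ref{proposition:definition of lamination L} forces $\tilde\tau(K a_t g) = \tilde\tau(K a_t h) = m + t$ for all $t \in \R$; this linearity of $\tilde\tau$ along $\alpha$ and $\beta$ is the main input.

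Next I would set $\mu(s) := K a_s u g$ and consider the auxiliary function $\phi(s) := \tilde\tau(\mu(s)) - s$, which is monotonically non-increasing by the $1$-Lipschitz property of $\tilde\tau$ (and the fact that $a_s$ acts by arc length on $\mu$). The crux is to identify the two asymptotic values of $\phi$. As $s \to -\infty$, the fact that $u \in U$ fixes $g^{-}$ places $Kug$ on the horocycle through $Kg$ based at $g^{-}$; flowing backwards drags $\mu(s)$ and $\alpha(s) = K a_s g$ through a common family of $g^{-}$-horospheres whose diameters contract exponentially, so $d(\mu(s), \alpha(s)) \to 0$, and combined with $\tilde\tau(\alpha(s)) = m + s$ this gives $\phi(s) \to m$. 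As $s \to +\infty$, the identity $\mu(r) = K n^{-1} h$ together with the fact that $n \in N$ fixes $h^{+}$ places $\mu(r)$ on the horocycle through $K h = \beta(0)$ based at $h^{+}$; the analogous horocycle identification at $h^{+}$ gives $d(\mu(s), \beta(s - r)) \to 0$, hence $\phi(s) \to m - r$.

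Monotonicity of $\phi$ then yields $m - r \leq m$, i.e.\ $r \geq 0$. For the equality case, $r = 0$ forces the two asymptotic limits of $\phi$ to coincide, and monotonicity then forces $\phi \equiv m$, so $\tilde\tau(\mu(s)) = m + s$ for all $s \in \R$. By Proposition~\ref{proposition:definition of lamination L} this linearity places the $A$-orbit $\{a_s u g\}$ in the $A$-invariant subset $\mathcal{L}$ of $L^{-1}(1)$, i.e.\ $\mu$ is a lift of a leaf of $\mathcal{L}$; the converse is immediate since on leaves of $\mathcal{L}$ the function $\tilde\tau$ is exactly linear of slope $1$, making $\phi$ constant and equal to its asymptotic value $m$. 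The step I expect to be the main obstacle is the horocycle bookkeeping at the two ends of $\mu$: the precise roles of $U$ and $N$ in the Bruhat decomposition are used to align $\mu$ with $\alpha$ backwards at $g^{-}$ and with $\beta$ forwards at $h^{+}$, and a careful check that these alignments produce exactly the shift by $r$ on the $\beta$ side is what converts the purely geometric Bruhat data into a $\tilde\tau$-comparison.
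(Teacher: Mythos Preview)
Your argument is correct and is essentially the paper's proof in different clothing. The paper fixes a frame $z$ on $\mu$ and establishes the identity $\Delta(x,y)=\beta_-(\omega z)-\beta_+(z)$, then invokes the inequalities $\beta_+\le\tau\le\beta_-\circ\omega$ from \S\ref{Sec:Uniform Busemann function}; your function $\phi(s)=\tilde\tau(\mu(s))-s$ has $\phi(+\infty)=\beta_+(ug)$ and $\phi(-\infty)=\beta_-(\omega\, ug)$, so your monotonicity argument is exactly the same comparison, just carried out directly along $\mu$ without naming the Busemann functions. The horocycle bookkeeping you flag as the potential obstacle is precisely the computation the paper does (their $t-s=\Delta(x,y)$ is your shift by $r$ on the $\beta$ side), and your version handles it correctly.
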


\begin{proof}
  Fix $z$ tangent to $\mu$ so that $z^+ = h^+$. Let $\omega\in K$ be the involution
  conjugating $a$ to $a^{-1}$ as in \Cref{subsec:bruhat}.   We claim that
  \begin{equation}\label{Delta beta}
\Delta(x,y) = \beta_-(\omega z) - \beta_+(z).
  \end{equation}
This will imply the inequality because $\beta_+ \le \tau$ and $\beta_-\circ \omega \ge
\tau$. Moreover, $\Delta(x,y)=0$ only occurs if  $\beta_+(z)=\tau(z)=\beta_-(\omega z)$, and this
corresponds to $z$ (and hence the whole leaf) being in $\L_+$.

\begin{figure}[htbp]
	\includegraphics{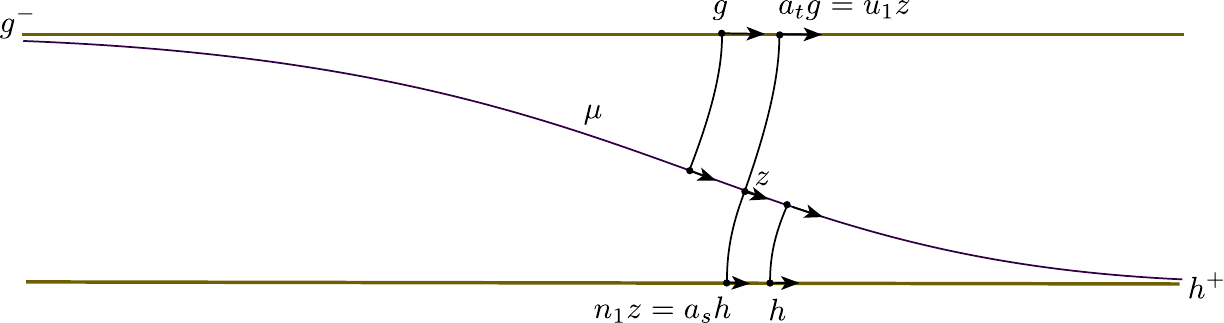}
	\caption{Where $z$ lands on the axes $Ag$ and $Ah$ determines the value of $\Delta$.}
	\label{fig:Delta-positive}
\end{figure}

It remains to prove \Cref{Delta beta}.
Move $z$ along a stable horocycle to the geodesic $Ax$, and along an unstable horocycle to
the geodesic $Ay$. In the lifted picture (see \Cref{fig:Delta-positive}) we have 
$$n_1 z = a_s h \quad \text{and} \quad u_1 z = a_t g$$
for some $n_1\in N$, $u_1\in U$. 
This implies
\begin{align*} h= a_{-s} n_1 z &= a_{-s} n_1 u_1^{-1} a_t g \\
  &= n' a_{t-s} u' g
\end{align*}
where $n'\in N, u'\in U$. In other words,
$$t-s = \Delta(x,y).
$$

Now, recall that $\beta_+$ is $N$-invariant and (since $\omega U \omega = N$) $\beta_-\circ \omega$ is $U$-invariant. Thus we
have
$$
\beta_+(z) = \beta_+(a_s h) \quad \text{and} \quad \beta_-(\omega z) = \beta_-(\omega a_t
g).
$$
Now on $\L_+$, $\beta_+ = \tau$ and $\beta_-\circ\omega = \tau$, so
$$\beta_+(z) = \tau(a_s h) = \tau(h)+s = s,$$
$$\beta_-(\omega z) = \tau(a_t g) = \tau(g)+t = t.$$
\Cref{Delta beta} follows. 
\end{proof}

\begin{proof}[Proof of \Cref{Theorem:semigroup is everything}]
Recall our notation $ W_0 = \tau^{-1}(0) \cap \L_+ $ and let $ d\lambda $ denote the
transverse measure on $ W_0 $ under which $ (W_0,P,d\lambda) $ is a weakly mixing system,
or equivalently, the system $ (W_0 \times W_0, P \times P, d\lambda \times d\lambda) $ is
ergodic. In particular, almost every $ (x,y) \in W_0 \times W_0 $ has a dense $P\times
P$-orbit. By Fubini there exists a point $ x_0 \in W_0 $ for which almost every $ y \in
W_0 
$ satisfies that $ (x_0,y) $ has a dense $ P \times P $-orbit in $ W_0 \times W_0 $. Fix
such a point $ x_0 $ and let $y_j$ be a sequence of points as above having $ y_j \to x_0
$. Note that every $ y_j $ is also $ A $-proximal to $ x_0 $ in $ \GmodGamma $ (because $
x_0 $ and $ y_j $ are $ P $-proximal). 

Now set $ C = B^{\GmodGamma}_r(x_0) \cap W_0 $, where $ r>0 $ is smaller than half the 
injectivity radius of $\Sigma$. 
Note that $\forall x,y\in C$, their corresponding geodesic $\mu$ is not contained in $\L$, unless $\mu$, $Ax$ and $Ay$ are boundary leaves. In our case $\L$ has no isolated leaves so this occurs only if $Ax$ and $Ay$ are asymptotic and $\mu$ is equal to one of them. Since non-boundary leaves are dense in $\L$, there exist $x,y\in C$ for which $\mu$ is not in $\L$. By Lemma \ref{Delta positive}
we are assured that  $\eta = \sup_{x,y \in C}(\Delta(x,y)) > 0$. 

By the continuity of $ \Delta $, choosing $r$ even smaller we can have $\eta$ as small as
we like. Then, again by continuity of $\Delta$
and by the density of the $P$-orbit of
$(x_0,y_j)$, we can find for any $ j $ a $q$ arbitrarily large so that $\Delta(P^q
x_0,P^q y_j)$ lies in $(\eta/2,\eta)$. Since  $ \Delta $ is $ \Z $-invariant, 
 we deduce that for every $ j $ there exist
arbitrarily large $ T = qc >0$ satisfying $ \Delta(a_T x_0,a_T y_j) \in (\eta/2,\eta)
$. Choose one such $T_j$ for 
each $j$, so that $T_j\to\infty$, and denote $s_j = \Delta(a_{T_j} x_0,a_{T_j} y_j)$ for
convenience.

\begin{figure}[htbp]
	\includegraphics{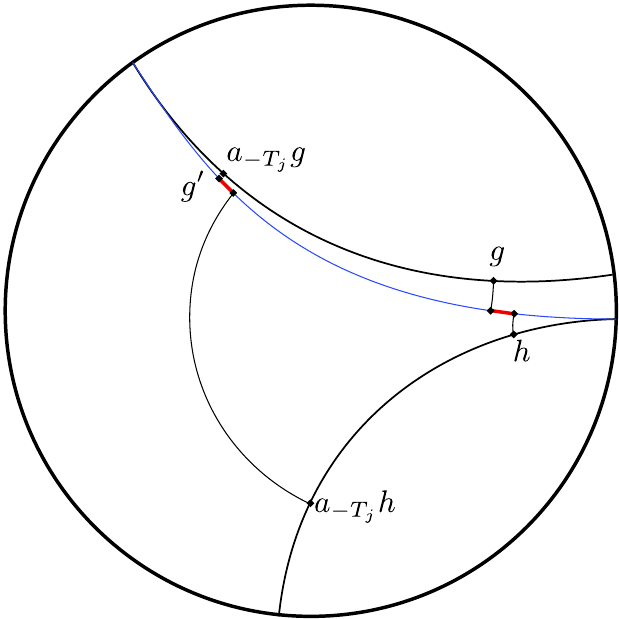}
	\caption{In this figure $h$ and $g$ project to $a_{T_j}x_0$ and $a_{T_j}y_j$ respectively. The point $g'$ projects to $y'_j$. The red segments, of equal length $s_j$, indicate the value of $\Delta$ and the intersection of $Nx_0$ on $Ay_j'$.}
	\label{fig:finish-semigroup}
\end{figure}

Lift $a_{T_j} x_0$ and $a_{T_j} y_j$ to $h$ and $g$ (as in the definition of $\Delta$ above),
and let $h = n \ell u g$ with $\ell = a_{s_j}$. Note that $a_{-T_j} h$ projects to $x_0$,
and $a_{-T_j}g$ projects to $y_j$ (See \Cref{fig:finish-semigroup}).
Let
$$g' = a_{-T_j} ug.$$
That is, starting at $g$ move along an unstable horocycle to the cross-over geodesic $\mu$
and then back up by $T_j$ along $\mu$. This is a point that projects to $y'_j$ which is
very close to $y_j$. Indeed,
$$g' = a_{-T_j} u a_{T_j} (a_{-T_j} g)$$
and $|| a_{-T_j} u a_{T_j}|| = O(e^{-T_j})$, since $u$ is uniformly bounded by the fact
that $a_{T_j}y_j$ and $a_{T_j}x_0$ are in a translate of the small ball $C$. In particular
we have
\begin{equation}\label{yjprime converge}
d(y_j,y'_j) = O(e^{-T_j}).
\end{equation}

On the other hand,
\begin{align*} g' &= a_{-T_j} \ell^{-1} n^{-1} h \\
  & = \hat n a_{-s_j} (a_{-T_j} h)
\end{align*}
for some $\hat n \in N$. 
Thus we have $y'_j\in N a_{-s_j} x_0,$ or equivalently
\begin{equation}\label{ayprime in N}
  a_{s_j} y'_j \in Nx_0.
\end{equation}
Assume (taking a subsequence) that $s_j \to s\in [\eta/2,\eta]$. Since $y_j\to x_0$,
\Cref{yjprime converge} implies
$a_{s_j} y'_j \to a_s x_0$, so
$$
a_s x_0 \in \overline{Nx_0}.
$$
Since $\eta$ was arbitrarily small, we conclude that $\Delta_{x_0}$ contains arbitrarily
small non-zero elements, so $\Delta_{x_0}$ is all of $A_{\ge 0}$. 
\end{proof}

\subsection{Non-rigidity of horocycle orbit closures}
We may now conclude the non-rigidity result stated in the introduction.
\begin{proof}[Proof of \Cref{Theorem:non-rigidity}]
	By \Cref{subsec:geometric convergence}, given a $ \Z $-cover $ \Sigma $ constructed from a weakly-mixing IET there exist arbitrarily small deformations $ \Sigma' $ of $ \Sigma $ having a bi-minimizing locus $ \lambda' $ consisting of finitely many uniformly isolated geodesics. Since quasi-minimizing points tend to the bi-minimizing locus, we conclude that all quasi-minimizing points in $ T^1\Sigma' $ are asymptotic to one of the geodesics in $ \lambda' $. In particular, such a quasi-minimizing point has the same orbit closure as some point on $ T^1\lambda' $. By \Cref{cor:isolated leaf implies isolated e in Delta}, all horocycles based at $ T^1\lambda' $ do not accumulate onto their base. 
	
	The above discussion implies in particular, that for any non-maximal $ N $-orbit closure $ \overline{Nx} $ in $ T^1\Sigma' $, there exists a ball $ B $ such that  
	\begin{equation}\label{eq:isolated horo segment}
	\overline{Nx} \cap B \quad \text{is equal to a single horocyclic segment},
	\end{equation}
	this also holds after projecting onto $ \Sigma' $.
	
	On the other hand, in $ \Sigma $ all $ N $-orbit closures are $ NA_{\geq 0} $-sub-invariant implying in particular that no ball $ B $ satisfies \eqref{eq:isolated horo segment}. We therefore conclude that no two non-maximal horocycle orbit closures in $ \Sigma $ and in $ \Sigma' $ are homeomorphic.
\end{proof}

\providecommand{\bysame}{\leavevmode\hbox to3em{\hrulefill}\thinspace}
\providecommand{\MR}{\relax\ifhmode\unskip\space\fi MR }
% \MRhref is called by the amsart/book/proc definition of \MR.
\providecommand{\MRhref}[2]{%
  \href{http://www.ams.org/mathscinet-getitem?mr=#1}{#2}
}
\providecommand{\href}[2]{#2}

\bibliographystyle{amsalpha.bst}

\end{document}